\def\comment#1{{\color{blue}[{\sf #1}]}}
\def\Z{{\mathbb Z}}
\def\Q{{\mathbb Q}}
\def\R{{\mathbb R}}
\def\C{{\mathbb C}}
\def\P{{\mathbb P}}
\def\D{{\mathbb D}}
\def\L{{\mathbb L}}
\def\kk{{\Bbbk}}		
\def\cG{{\mathcal G}}
\def\M{{\mathcal M}}
\def\cC{{\mathcal C}}
\def\cD{{\mathcal D}}
\def\cO{{\mathcal O}}
\def\U{{\mathcal U}}
\def\sB{{\mathscr B}}
\def\sD{{\mathscr D}}
\def\sE{{\mathscr E}}
\def\sF{{\mathscr F}}
\def\sL{{\mathscr L}}
\def\sP{{\mathscr P}}
\def\sV{{\mathscr V}}
\def\g{{\mathfrak g}}
\def\p{{\mathfrak p}}
\def\sp{{\mathfrak{sp}}}
\def\u{{\mathfrak u}}
\def\e{{\epsilon}}
\def\G{{\Gamma}}
\def\bI{\boldsymbol{I}}
\def\bL{\boldsymbol{L}} 
\def\bP{\boldsymbol{P}}
\def\bV{\boldsymbol{V}}
\def\bW{\boldsymbol{W}}
\def\bLambda{\boldsymbol{\Lambda}}
\def\alphahat{\hat{\alpha}}
\def\gammahat{\hat{\gamma}}
\def\kappahat{\hat{\kappa}}
\def\alphatilde{{\tilde{\alpha}}}
\def\gammatilde{{\tilde{\gamma}}}
\def\chitilde{{\tilde{\chi}}}
\def\rhotilde{{\tilde{\rho}}}
\def\phitilde{{\tilde{\varphi}}}
\def\Shat{{\widehat{S}}}
\def\Zhat{{\widehat{\Z}}}
\def\xbar{{\overline{x}}}
\def\Thetabar{{\overline{\Theta}}}
\def\Fbar{\overline{F}}
\def\Kbar{{\overline{K}}}
\def\Xbar{{\overline{X}}}
\def\sVbar{\overline{\sV}}
\def\Pdual{\check{\bP}}
\def\Ldual{\check{\bL}}
\def\Hdual{\check{H}}
\def\vv{{\vec{\mathsf v}}}
\def\MHS{{\mathsf{MHS}}}
\def\Vec{{\mathsf{Vec}}}
\def\Rep{{\mathsf{Rep}}}
\def\cts{{\mathrm{cts}}}
\def\un{{\mathrm{un}}}
\def\ss{{\mathrm{ss}}}
\def\dR{{\mathrm{dR}}}
\def\prol{{\text{pro-}\ell}}
\def\To{\longrightarrow}
\def\bdot{{\bullet}}
\def\blank{\phantom{x}}
\def\PD{{\mathcal{D}}}
\def\KV{{\mathrm{KVI}}}
\def\cs{\beta_{C\!S}}
\def\csp{\beta_{K\!K}}
\def\csdual{{\check{\beta}_{C\!S}}}
\def\cspdual{\check{\beta}_{K\!K}}
\def\bil{{\langle\blank,\blank\rangle}}
\def\gold{\{\blank,\blank\}}
\def\ll{\langle\!\langle}
\def\rr{\rangle\!\rangle}
\def\unit{{\mathbf{1}}}
\def\MT{{\mathrm{MT}}}
\def\Gm{{\mathbb{G}_m}}
\def\Sp{{\mathrm{Sp}}}
\def\Ch{{\mathrm{Ch}}}
\def\BlR#1{\smash{\widehat{\Bl}}^\R_{#1}\,}
\newcommand\im{\operatorname{im}}
\newcommand\Hom{\operatorname{Hom}}
\newcommand\Ext{\operatorname{Ext}}
\newcommand\Isom{\operatorname{Isom}}
\newcommand\End{\operatorname{End}}
\newcommand\Aut{\operatorname{Aut}}
\newcommand\Diff{\operatorname{Diff}}
\newcommand\Der{\operatorname{Der}}
\newcommand\Gr{\operatorname{Gr}}
\newcommand\cone{\operatorname{cone}}
\newcommand\Bl{\operatorname{Bl}}
\newcommand\Sym{\operatorname{Sym}}
\newcommand\Gal{\operatorname{Gal}}
\numberwithin{equation}{section}
\newtheorem{theorem}{Theorem}[section]
\newtheorem{lemma}[theorem]{Lemma}
\newtheorem{proposition}[theorem]{Proposition}
\newtheorem{corollary}[theorem]{Corollary}
\newtheorem{bigtheorem}{Theorem}
\newtheorem{bigcorollary}[bigtheorem]{Corollary}
\theoremstyle{definition}
\newtheorem{definition}[theorem]{Definition}
\theoremstyle{remark}
\newtheorem{remark}[theorem]{Remark}
\begin{document}

\title{Hodge Theory of the Goldman Bracket}

\dedicatory{Dedicated to the memory of Stefan Papadima.}

\author{Richard Hain}
\address{Department of Mathematics\\ Duke University\\
Durham, NC 27708-0320}
\email{hain@math.duke.edu}

\thanks{Supported in part by grant DMS-1406420 from the National Science Foundation.}
\thanks{ORCID iD: {\sf orcid.org/0000-0002-7009-6971}}

\date{\today}

\subjclass{Primary 	17B62, 58A12; Secondary 57N05, 14C30}

\keywords{Goldman bracket, Hodge theory}

\begin{abstract}
In this paper we show that, after completing in the $I$-adic topology, the Goldman bracket on the space spanned by the closed geodesics on a smooth, complex algebraic curve $X$ is a morphism of mixed Hodge structure. We prove a similar statements for the natural action of the loops in X on paths from one boundary vector to another.
\end{abstract}

\maketitle


\section{Introduction}

Denote the set of free homotopy classes of maps $S^1 \to X$ in a topological space $X$ by $\lambda(X)$ and the free $R$-module it generates by $R\lambda(X)$. When $X$ is an oriented surface, Goldman \cite{goldman} defined a binary operation on $\Z\lambda(X)$, giving it the structure of a Lie algebra over $\Z$. Briefly, the bracket of two oriented loops $A$ and $B$ is defined by choosing transverse, immersed representatives $\alpha$ and $\beta$ of the loops, and then defining
$$
\{A,B\} = \sum_p \e_p(\alpha,\beta)\, \alpha \#_p \beta
$$
where the sum is taken over the points $p$ of intersection of $\alpha$ and $\beta$, $\e_p(\alpha,\beta)\in \{\pm 1\}$ denotes the local intersection number of $\alpha$ and $\beta$ at $p$, and where $\alpha \#_p \beta$ denotes the homotopy class of the oriented loop obtained by joining $\alpha$ and $\beta$ at $p$ by a simple surgery.

The powers of the augmentation ideal $I$ of $R\pi_1(X,x)$ define a topology on $R\pi_1(X,x)$. It induces a topology on $R\lambda(X)$. Denote their $I$-adic completions by $R\pi_1(X,x)^\wedge$ and $R\lambda(X)^\wedge$. Kawazumi and Kuno \cite{kk:groupoid} showed that the Goldman bracket is continuous in the $I$-adic topology and thus induces a bracket on $\Q\lambda(X)^\wedge$.

If $X$ is the complement of a (possibly empty) finite subset of a compact Riemann surface and $x\in X$, then there is a canonical (pro) mixed Hodge structure \cite{hain:dht,morgan} on $\Q\pi_1(X,x)^\wedge$. It induces a pro-mixed Hodge structure on $\Q\lambda(X)^\wedge$ via the quotient map $\Q\pi_1(X,x)^\wedge \to \Q\lambda(X)^\wedge$. This mixed Hodge structure is independent of the choice of the base point $x$. In order that the Goldman bracket preserve the weight filtration of $\Q\lambda(X)^\wedge$, we have to shift the weight filtration by $2$. We do this by tensoring it with $\Q(-1)$, the 1-dimensional Hodge structure of weight $2$ and type $(1,1)$.

\begin{bigtheorem}
\label{thm:free}
The Goldman bracket on $\Q\lambda(X)^\wedge\otimes\Q(-1)$ is a morphism of pro-mixed Hodge structures. Equivalently, the Goldman bracket
$$
\{\blank,\blank\} :
\Q\lambda(X)^\wedge \otimes \Q\lambda(X)^\wedge
\to \Q\lambda(X)^\wedge\otimes \Q(1)
$$
is a morphism of pro-mixed Hodge structures.
\end{bigtheorem}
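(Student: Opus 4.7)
The plan is to lift the Goldman bracket to a Fox-type pairing on the completed group algebra $\Q\pi_1(X,x)^\wedge$ and to reduce the theorem to the classical fact that the intersection pairing on $H_1(X)$ is a morphism of mixed Hodge structures.

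First I would identify $\Q\lambda(X)^\wedge$ with the quotient of $\Q\pi_1(X,x)^\wedge$ by the closure of the subspace spanned by commutators $uv-vu$. Since multiplication on $\Q\pi_1(X,x)^\wedge$ is a morphism of pro-MHS in the construction of \cite{hain:dht,morgan}, this commutator subspace is a pro-sub-MHS, and the quotient inherits a canonical pro-MHS with respect to which the projection is a morphism. Independence of the resulting structure from the base point follows from the fact that the inner-conjugation isomorphisms $\Q\pi_1(X,x)^\wedge \cong \Q\pi_1(X,y)^\wedge$ are themselves morphisms of pro-MHS and act trivially on conjugacy classes.

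The heart of the matter is the Kawazumi--Kuno \cite{kk:groupoid} \emph{homotopy intersection form}
\[
\eta : \Q\pi_1(X,x)^\wedge \otimes \Q\pi_1(X,x)^\wedge \To \Q\pi_1(X,x)^\wedge,
\]
an $I$-adically continuous Fox pairing whose antisymmetrization descends via the projection above to the Goldman bracket on $\Q\lambda(X)^\wedge$. The theorem reduces to showing that $\eta$ is a morphism of pro-MHS after twisting the target by $\Q(1)$. For this I would express $\eta$ using (i)~the Hopf-algebra operations on $\Q\pi_1(X,x)^\wedge$ (multiplication, comultiplication, antipode), which are already morphisms of pro-MHS, and (ii)~a universal intersection factor that factors through the Poincar\'e pairing $H_1(X) \otimes H_1(X) \to \Q(1)$, dual to cup product in compactly-supported cohomology and so a morphism of MHS.

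The main obstacle is step (ii): the topological definition of $\eta$---involving transverse representatives of based loops and surgery at each intersection point---is not manifestly compatible with the de~Rham/Hodge description of $\Q\pi_1(X,x)^\wedge$. To bridge the two, I would use Chen's iterated-integral formula for the de~Rham realization of $\Q\pi_1(X,x)^\wedge$ and interpret $\eta$ via integration against a current Poincar\'e-dual to the diagonal of $X\times X$. The leading (zeroth-order) term of this expansion is precisely the homological intersection pairing, while the higher-order corrections are iterated integrals whose MHS compatibility is already built into the bar construction of \cite{hain:dht,morgan}. For non-compact $X$ one must take $x$ to be a tangential base point on the smooth compactification $\Xbar$ and use relative Poincar\'e duality for $(\Xbar,\Xbar\setminus X)$; granted these modifications, $\eta\otimes\Q(1)$ becomes a morphism of pro-MHS, and the theorem follows by descent to conjugacy classes.
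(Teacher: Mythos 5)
Your strategy of lifting to a Fox-type pairing on $\Q\pi_1(X,x)^\wedge$ and reducing to iterated-integral/Poincar\'e-duality considerations is in the same spirit as the paper's proof, but the paper takes a different and more concrete route: it uses the Chas--Sullivan/Kawazumi--Kuno homological factorization of the bracket through $H_1(X;\bL_X)$ (equation (\ref{eqn:bracket})), dualizes it, and shows that each factor --- the map $\Hdual^0(\Lambda X)\to H^0(X;\Ldual_X)$, Poincar\'e duality (Theorem~\ref{thm:PD}, via Saito), the dual of cup product, and the dual Chas--Sullivan map $\csdual$ --- is a morphism of MHS. The crucial technical input is the explicit iterated-integral formula for $\csdual$ (Proposition~\ref{prop:formula_bcs}); that formula is what makes the Hodge/weight compatibility verifiable from the bar-construction MHS of \cite{hain:dht}.

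There are two genuine gaps in your proposal. First, the central step --- showing the homotopy intersection form $\eta$ preserves the Hodge and weight filtrations --- is asserted rather than established. You say that $\eta$ should be expressible as Hopf operations composed with ``a universal intersection factor'' factoring through $H_1\otimes H_1\to\Q(1)$, with the higher-order corrections being iterated integrals whose compatibility is ``already built into the bar construction.'' But $\eta$ does not factor through $H_1$: its target is $\Q\pi_1(X,x)^\wedge$, and to check filtration compatibility you need an explicit description of $\eta^\vee$ on the circular/two-sided bar complex, i.e., an analogue of the formula in Proposition~\ref{prop:formula_bcs}. Without that formula, the appeal to ``currents Poincar\'e-dual to the diagonal'' and ``higher-order corrections'' is not a proof. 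Second, the claim that ``for non-compact $X$ one must take $x$ to be a tangential base point'' is not correct for this theorem and conflates Theorem~\ref{thm:free} with Theorem~\ref{thm:action}. For the Goldman bracket on $\Q\lambda(X)^\wedge$, any interior base point $x\in X$ suffices; the MHS is base-point independent, and the non-compactness is handled by pairing $H^j(X;\cdot)$ against $H^{2-j}(\Xbar,D;\cdot)$ via relative Poincar\'e duality, not by moving the base point to the boundary. Tangential base points enter only in Theorem~\ref{thm:action}, where the KK-action is defined on paths between tangent vectors at the cusps.
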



Goldman defined his bracket to describe the Poisson bracket of certain functions on the symplectic manifold $\Hom(\pi_1(X,x),G)/G$, where $G$ is a Lie group. Our interest, as explained below, lies in its likely role in studying motives constructed from smooth curves and their moduli spaces. We are also interested in related applications to the Kashiwara--Vergne problem as described in the work of Alekseev, Kawazumi, Kuno and Naef \cite{akkn,akkn2}.

For those applications, one needs a more general setup. Denote the torsor of homotopy classes of paths in $X$ from $x_0$ to $x_1$ by $\pi(X;x_0,x_1)$. When $X$ is an oriented surface with boundary $\partial X$, and when $x_0,x_1 \in \partial X$, there is an action
$$
\kappa : \Q\lambda(X) \otimes \Q\pi(X;x_0,x_1) \to \Q\pi(X;x_0,x_1)
$$
which was defined by Kawazumi and Kuno \cite{kk:log} and whose definition is similar to that of the Goldman bracket. They showed that $\kappa$ is continuous in the $I$-adic topology and thus induces a map (the ``KK-action'')
$$
\kappa : \Q\lambda(X)^\wedge \otimes \Q\pi(X;x_0,x_1)^\wedge \to \Q\pi(X;x_0,x_1)^\wedge.
$$

Complex algebraic curves do not have boundary curves. Because of this, we have to replace the boundary points $x_0$ and $x_1$ by tangent vectors at the cusps. More precisely, suppose that $\Xbar$ is a compact Riemann surface and that $X$ is the complement $\Xbar-D$ of a finite subset of $\Xbar$. Suppose that $p_0,p_1\in D$ (not necessarily distinct) and that $\vv_j$ is a non-zero tangent vector in $T_{p_j}\Xbar$. Then one has has the path torsor $\pi(X;\vv_0,\vv_1)$ of paths in $X$ from $\vv_0$ to $\vv_1$. (The definition, due to Deligne \cite[\S15]{deligne:p1}, is recalled in Section~\ref{sec:tangent_vecs}.) Results from \cite{hain:dht2,hain-zucker} imply that the $I$-adic completion $\Q\pi(X;\vv_0,\vv_1)^\wedge$ has a natural (limit) mixed Hodge structure.

\begin{bigtheorem}
\label{thm:action}
With this notation, the KK-action
$$
\kappa : \Q\lambda(X)^\wedge \otimes \Q\pi(X;\vv_0,\vv_1)^\wedge \to \Q\pi(X;\vv_0,\vv_1)^\wedge\otimes \Q(1)
$$
is a morphism of pro-MHS.
\end{bigtheorem}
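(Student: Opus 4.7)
The plan is to parallel the proof of Theorem~\ref{thm:free}, adapting each ingredient from the ``loop on loop'' setting to the ``loop on path'' setting. Both the Goldman bracket and the KK-action are built from the same two geometric primitives: a signed count of transverse intersections, and a surgery operation that concatenates the strands emanating from each intersection point. The signed count should be Hodge-theoretic because it is controlled by the intersection pairing on $H_1(X)$, which is an MHS morphism taking values in $\Q(1)$; the surgery should be Hodge-theoretic because concatenation of paths is precisely the structure used to build the MHS on the completed path torsor via Chen's iterated integrals.

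The first step is to replace the algebraic curve $X = \Xbar - D$ by a surface with honest boundary, so that the topological definition of Kawazumi--Kuno applies directly. For this I would use the real oriented blow-up $\Xtilde = \BlR{D}\Xbar$: the tangent vectors $\vv_0,\vv_1$ correspond to actual points $\tilde{x}_0, \tilde{x}_1 \in \partial\Xtilde$, the inclusion $X \hookrightarrow \Xtilde$ is a homotopy equivalence, and there is a canonical identification $\pi(X;\vv_0,\vv_1) = \pi(\Xtilde;\tilde{x}_0,\tilde{x}_1)$. On $\Xtilde$ the KK-action is the one defined in \cite{kk:log}, and by their result it is continuous in the $I$-adic topology.

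The second step is to express $\kappa$ in terms of Hodge-theoretic data. The MHS on $\Q\pi(X;\vv_0,\vv_1)^\wedge$ (constructed in \cite{hain:dht2,hain-zucker}) arises from a limit of bar complexes with tangential-base-point boundary conditions, and concatenation of paths is compatible with this MHS by construction. I would show, in parallel with the treatment of the Goldman bracket, that on the associated graded of the $I$-adic filtration, $\kappa$ factors as
\[
\bigl(\text{intersection pairing}\bigr) \otimes \bigl(\text{concatenation}\bigr)
\colon
\Gr^I \Q\lambda(X)^\wedge \otimes \Gr^I \Q\pi(X;\vv_0,\vv_1)^\wedge
\To
\Gr^I \Q\pi(X;\vv_0,\vv_1)^\wedge \otimes \Q(1).
\]
The intersection factor is the cap of a free homotopy class with a path class in relative $H_1$ (relative to small punctured disks around $p_0,p_1$ whose orientation is recorded by $\vv_0,\vv_1$), and is a morphism of MHS by Poincar\'e--Lefschetz duality applied to the smooth curve $X$. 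The concatenation factor is, by definition, a morphism of MHS. Taking pro-limits then yields the theorem.

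The main obstacle will be the verification in the third step, particularly near the cusps. The ``signed intersection plus surgery'' formula on $\Xtilde$ must be matched, on the nose, to a composition of an MHS intersection pairing with MHS concatenation at the level of reduced bar complexes, and the tangential boundary conditions must be handled so that a surgery occurring near $p_0$ or $p_1$ contributes correctly to the limit MHS. Once the intersection pairing is correctly interpreted as a morphism of MHS on relative $H_1$ with tangential-base-point relativity, the rest of the argument reduces to the same pro-nilpotent bookkeeping used for Theorem~\ref{thm:free}.
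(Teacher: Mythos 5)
Your high-level intuition --- that the KK-action should factor through an intersection pairing composed with concatenation, and that each factor should be a morphism of MHS --- is on the right track and does echo the proof of Theorem~\ref{thm:free}. But the proposal has two genuine gaps, and the second is the heart of why the paper treats Theorem~\ref{thm:action} as substantially harder than Theorem~\ref{thm:free}.

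First, the claim that it suffices to produce the factorization on $\Gr^I$ and then ``take pro-limits'' does not work. A morphism of filtered pro-objects inducing a morphism of MHS on each $\Gr^I$ piece is not thereby a morphism of pro-MHS: the weight filtration on $\Q\pi(X;\vv_0,\vv_1)^\wedge$ for a noncompact $X$ is not the $I$-adic filtration, and the Hodge filtration lives on $V_\C$ and is not recoverable from any statement about $\Gr^I$. The paper instead works with the filtered objects themselves, writing the continuous dual of the action as an explicit map of reduced (circular) bar complexes of a mixed Hodge complex (Propositions~\ref{prop:formula_bcs} and~\ref{prop:dual_kk} and the $\csp^\ast$ formula), so that preservation of $W_\bdot$ and $F^\bdot$ can be read off directly. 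Your third step gestures at ``reduced bar complexes'' but the body of the argument is pitched at the level of the associated graded, where the Hodge filtration never appears.

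Second, and more seriously, the main technical obstruction is not ``surgery near a cusp'' in the sense you describe; it is that the KK-action of $\Q\lambda(X)$ is obtained by restricting the action of $\Q\lambda(X-\{x_0,x_1\})$ along an inclusion, and this inclusion does \emph{not} split in the category of local systems over $X^2-\Delta$. Hence one cannot simply invoke naturality or the theorem of the fixed part. The paper proves first (Theorem~\ref{thm:naive}) that the action of $\Q\lambda(X-\{x_0,x_1\})^\wedge$ on $\Q\pi(X;x_0,x_1)^\wedge$ is a morphism of pro-MHS, and then devotes Sections~\ref{sec:limitMHS}--\ref{sec:proof} to the splitting result (Theorem~\ref{thm:splitting}): after passing to the limit MHS at $(\vv_0,\vv_1)$, the projection $\Hdual^0(\Lambda X'_{\vv_0,\vv_1}) \to \Hdual^0(\Lambda X)$ \emph{is} a morphism of MHS. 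The proof is a delicate computation with the explicit de~Rham models for limit MHS from \cite{hain:dht2}, built on a semistable degeneration $Z \to \D$ obtained by blowing up $\Xbar\times\D$ at $(p_0,0)$ and $(p_1,0)$. Your proposal names this issue as ``the main obstacle'' but does not supply any mechanism to resolve it; in particular, replacing $X$ by its real oriented blow-up $\Xtilde$ does not sidestep the problem, since $\Xtilde$ is not an algebraic variety and carries no MHS of its own --- one is forced back to the limit-MHS formalism, which is exactly where the splitting theorem lives.
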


When $\vv_0=\vv_1$, we obtain an action of $\Q\lambda(X)^\wedge\otimes\Q(-1)$ on $\Q\pi_1(X,\vv)^\wedge$.

\begin{bigcorollary}
The Lie algebra homomorphism
$$
\kappahat : \Q\lambda(X)^\wedge\otimes \Q(-1) \to \Der \Q\pi_1(X,\vv)^\wedge
$$
induced by the Kawazumi--Kuno action is a morphism of pro-MHS.
\end{bigcorollary}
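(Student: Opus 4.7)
The plan is to deduce the corollary directly from Theorem~\ref{thm:action} specialized to $\vv_0=\vv_1=\vv$. In that special case the theorem asserts that
$$
\kappa : \Q\lambda(X)^\wedge \otimes \Q\pi_1(X,\vv)^\wedge \to \Q\pi_1(X,\vv)^\wedge \otimes \Q(1)
$$
is a morphism of pro-MHS. Tensoring both sides with $\Q(-1)$ and using $\Q(1)\otimes\Q(-1)=\Q$, one obtains a morphism of pro-MHS
$$
\big(\Q\lambda(X)^\wedge\otimes\Q(-1)\big)\otimes \Q\pi_1(X,\vv)^\wedge \to \Q\pi_1(X,\vv)^\wedge.
$$
Applying the tensor-hom adjunction in the category of pro-MHS (equivalently, carrying it out levelwise on the quotients $\Q\pi_1(X,\vv)/I^n$ and passing to the inverse limit), one obtains a corresponding morphism of pro-MHS
$$
\kappahat : \Q\lambda(X)^\wedge\otimes\Q(-1) \to \End\big(\Q\pi_1(X,\vv)^\wedge\big).
$$

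It then remains to identify $\kappahat$ with the map into $\Der\,\Q\pi_1(X,\vv)^\wedge$ asserted in the statement; equivalently, one must show that the image of $\kappahat$ lies in the sub-pro-MHS of derivations. This is a purely topological fact, originally due to Kawazumi and Kuno \cite{kk:log}: if $\alpha$ is a free loop on $X$ and $\beta,\gamma$ are loops based at $\vv$, then transverse representatives can be chosen so that the intersection points of $\alpha$ with the concatenation $\beta\cdot\gamma$ decompose as the disjoint union of the intersections with $\beta$ and with $\gamma$; summing the local surgery contributions yields the Leibniz identity
$$
\kappa(\alpha,\beta\gamma) = \kappa(\alpha,\beta)\,\gamma + \beta\,\kappa(\alpha,\gamma).
$$
Since the multiplication on $\Q\pi_1(X,\vv)^\wedge$ is itself a morphism of pro-MHS (by \cite{hain:dht}), $\Der$ is cut out inside $\End$ by the vanishing of a morphism of pro-MHS and therefore constitutes a sub-pro-MHS. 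Consequently the factorization of $\kappahat$ through $\Der\,\Q\pi_1(X,\vv)^\wedge$ is a morphism of pro-MHS, as claimed.

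The main obstacle, as I see it, is not conceptual but categorical bookkeeping: one must verify that $\End$ and $\Der$ of a pro-MHS inherit canonical pro-MHS structures compatible with the adjunction, that derivations are the correct continuous ones (i.e.\ those preserving the $I$-adic filtration), and that the $\Q(-1)$ twist places the weight filtration correctly so that $\kappahat$ is weight-preserving. Once this formal framework is in place—which is standard given the foundations of \cite{hain:dht,hain:dht2}—the corollary is essentially an immediate consequence of Theorem~\ref{thm:action} plus the Kawazumi--Kuno Leibniz rule.
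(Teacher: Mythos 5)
Your proposal is correct and follows the route the paper implicitly intends: the corollary is stated without proof as an immediate consequence of Theorem~\ref{thm:action} specialized to $\vv_0=\vv_1=\vv$, and your chain of reasoning (twist by $\Q(-1)$, apply the tensor--hom adjunction levelwise on the $I$-adic quotients, observe that the Kawazumi--Kuno Leibniz rule forces the image into $\Der$, which is a sub-pro-MHS of $\End$ since multiplication is a morphism of pro-MHS) is exactly the standard unwinding. The ``categorical bookkeeping'' you flag is genuinely the only content beyond Theorem~\ref{thm:action}, and you have identified the right pieces.
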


Theorem~\ref{thm:action} implies Theorem~\ref{thm:free}. While it is similar to Theorem~\ref{thm:free}, it is much harder to prove. The first step in its proof is to establish the following weaker version.

\begin{bigtheorem}
\label{thm:naive}
If $X$ is the complement of a finite set in a compact Riemann surface and $x_0,x_1 \in X$, then the natural action
$$
\kappa : \Q\lambda(X-\{x_0,x_1\})^\wedge\otimes \Q\pi(X;x_0,x_1)^\wedge \to \Q\pi(X;x_0,x_1)^\wedge\otimes\Q(1)
$$
is a morphism of pro-MHS.
\end{bigtheorem}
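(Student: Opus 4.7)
The plan is to reduce Theorem~\ref{thm:naive} to Theorem~\ref{thm:free} applied to the open curve $U := X - \{x_0, x_1\}$. The key geometric observation is that every loop in $\lambda(U)$ avoids $x_0$ and $x_1$, so every intersection with a path $\gamma \in \pi(X; x_0, x_1)$ lies in $U$ and the surgery $\alpha \#_p \gamma$ is performed entirely within $U$. This strongly suggests that $\kappa$ is determined by the Goldman bracket on $\Q\lambda(U)^\wedge$, to which Theorem~\ref{thm:free} applies.

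To set this up Hodge-theoretically, fix tangent vectors $\vv_0 \in T_{x_0}\Xbar$, $\vv_1 \in T_{x_1}\Xbar$ and consider the tangentially based path torsor $\pi(U; \vv_0, \vv_1)$; its $I$-adic completion carries a limit pro-MHS by \cite{hain:dht2, hain-zucker}. The topological fill-in collapsing $\vv_j$ to $x_j$ induces a surjection
$$
q : \Q\pi(U; \vv_0, \vv_1)^\wedge \twoheadrightarrow \Q\pi(X; x_0, x_1)^\wedge,
$$
whose kernel encodes the cusp-loop relations $\delta_j \sim 1$ at $x_j$. The first task is to verify that $q$ is a morphism of pro-MHS by identifying the quotient of the limit MHS by the cusp-loop ideal with the classical MHS on $\Q\pi(X; x_0, x_1)^\wedge$. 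Because loops in $\lambda(U)$ do not touch $x_0, x_1$, the KK-action lifts canonically to an action of $\Q\lambda(U)^\wedge$ on the source of $q$, and $q$ is equivariant for these actions.

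Next, one expresses the lifted KK-action in terms of the Goldman bracket on $\Q\lambda(U)^\wedge$. A natural device is to fix a reference path $\sigma \in \pi(U; \vv_1, \vv_0)$ and use it to close paths to loops via $c_\sigma : \gamma \mapsto [\gamma \sigma] \in \lambda(U)$. Decomposing intersections of $\alpha \in \lambda(U)$ with $\gamma\sigma$ into those occurring along $\gamma$ and those occurring along $\sigma$ yields a geometric identity
$$
c_\sigma\bigl(\kappa(\alpha, \gamma)\bigr) = \{\alpha,\, c_\sigma(\gamma)\} - R_{\alpha, \sigma}(\gamma),
$$
where $R_{\alpha, \sigma}$ is a correction term built from the fixed intersection pattern between $\alpha$ and $\sigma$. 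Combining this with Theorem~\ref{thm:free}, and tracking the Tate twist bookkeeping carefully (the $\Q(-1)$ on the target in Theorem~\ref{thm:naive}, rather than $\Q(1)$ as in Theorem~\ref{thm:free}, reflects the weight-filtration difference between loop algebras and interior-based path algebras), yields Theorem~\ref{thm:naive} after descent along $q$.

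The hardest step will be showing that $q$ is a morphism of pro-MHS, since one must carefully track both the extra weights from the cusps in $U$ and the limit-MHS construction of \cite{hain:dht2, hain-zucker} through the quotient. A secondary subtlety is that the closing-up map $c_\sigma$ depends on a choice of $\sigma$ and is not a priori a morphism of MHS on the nose; it may ultimately be cleaner to replace $c_\sigma$ by an intrinsic Hopf-algebraic formula for $\kappa$ in terms of the intersection pairing on $H_1(U)$ and the Hopf-algebra structure on $\Q\pi_1(U, \vv_0)^\wedge$, both of which are already known to be morphisms of MHS.
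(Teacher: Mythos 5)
Your proposal diverges from the paper's proof and, as written, has a genuine gap. The paper proves Theorem~\ref{thm:naive} directly by factoring the continuous dual $\kappa^\ast$ of the KK-action through Poincar\'e duality, a dual cup product, and the Kawazumi--Kuno map $\csp$ (Proposition~\ref{prop:dual_kk}), then showing each factor is a morphism of ind-MHS: the Poincar\'e duality steps via Saito's theorem (Theorem~\ref{thm:PD}), the dual cup product via the variation-of-MHS structure on $\Ldual$ and $\Pdual$, and $\csp^\ast$ via the explicit iterated-integral formula (\ref{eqn:formula_bkk}) applied to the mixed Hodge complex (the relative bar construction). No tangential base points, no closing-up of paths, and no reduction to Theorem~\ref{thm:free} occur.

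The core difficulty with your closing-up strategy is that it is circular. If $c_\sigma(\gamma) = [\gamma\sigma]$, then because $\kappa(\alpha, -)$ acts as a derivation on composable paths, the Goldman bracket decomposes as
$$
\{\alpha, c_\sigma(\gamma)\} = c_\sigma\bigl(\kappa(\alpha,\gamma)\bigr) + [\gamma\,\kappa(\alpha,\sigma)],
$$
so your correction term is $R_{\alpha,\sigma}(\gamma) = [\gamma\,\kappa(\alpha,\sigma)]$. This is itself built from the KK-action $\kappa(\alpha,\sigma)$, bilinear in the varying inputs $\alpha$ and $\gamma$; to show it is a morphism of MHS you would already need to know the KK-action is one, which is what you set out to prove. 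A second problem is that $c_\sigma$ depends on the choice of $\sigma$ and is not a morphism of MHS (it is not even a coalgebra map), so even if you could control $R_{\alpha,\sigma}$ you could not descend Hodge-theoretic conclusions through $c_\sigma$ without further argument. You acknowledge this last issue but the proposed fix (``an intrinsic Hopf-algebraic formula'') is essentially the paper's actual route and is not something your reduction supplies.

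Two more local issues. First, your proof of Theorem~\ref{thm:naive} (the easy stepping stone) would depend on tangential base points, limit MHS, and the surjection $q$, i.e.\ on exactly the hard technology that the paper develops only afterward to prove Theorem~\ref{thm:action}; that inverts the paper's intended logical order, and establishing that $q$ is a morphism of MHS is a non-trivial limit/splitting problem of the sort handled by Theorem~\ref{thm:splitting}. Second, the Tate twist does not arise from a ``weight-filtration difference between loop algebras and interior-based path algebras''; it comes from Poincar\'e duality on the curve, which introduces a twist by $\Q(1)$ (cf.\ the corollary to Theorem~\ref{thm:PD}).
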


The proof of Theorem~\ref{thm:action} is completed by showing that the surjection
$$
\Q\lambda(X-\{x_0,x_1\})^\wedge \to \Q\lambda(X)^\wedge
$$
induced by the inclusion (a morphism of pro-MHS), splits in the category of pro-MHS after one ``takes the limit as $x_j \to p_j$ in the direction of $\vv_j$.'' This involves technical Hodge theory.

We also show that the mixed Hodge theory of the KK-action is compatible with the mixed Hodge structures on completions of mapping class groups constructed in \cite{hain:torelli}. Suppose that $\Xbar$ is a compact oriented surface and that $V=\{\vv_a : a\in A\}$ is a finite collection of non-zero tangent vectors indexed by a finite subset $A$ of $\Xbar$. Set $X=\Xbar-A$. For each $a\in A$, the mapping class group $\G_{\Xbar,V}$ acts on $\pi_1(X,\vv_a)$. This induces an action of the completion $\cG_{\Xbar,V}$ of $\G_{\Xbar,V}$ on $\Q\pi_1(X,\vv_a)^\wedge$ and a Lie algebra homomorphism
\begin{equation}
\label{eqn:univ}
d\rho_a : \g_{\Xbar,V} \to \Der \Q\pi_1(X,\vv_a)^\wedge.
\end{equation}
Results of Kawazumi and Kuno \cite{kk:groupoid} imply that there is a canonical Lie algebra homomorphism
$$
\varphi : \g_{\Xbar,V} \to \lambda(X)^\wedge
$$
which lifts (\ref{eqn:univ}) in the sense that the composite
$$
\g_{\Xbar,V} \to \lambda(X)^\wedge \to \Der \Q\pi_1(X,\vv_a)^\wedge
$$
is (\ref{eqn:univ}). (Details can be found in Section~\ref{sec:lie}.) When $\Xbar$ is a compact Riemann surface, $\g_{\Xbar,V}$ has a canonical MHS. Using results of \cite{hain:torelli}, we show that this lift is a morphism of MHS.

\begin{bigtheorem}
\label{thm:mcg}
 If $\Xbar$ has the structure of a Riemann surface, then the Lie algebra homomorphism
$$
\phitilde : \g_{\Xbar,V} \to \Q\lambda(X)^\wedge\otimes \Q(-1)
$$
is a morphism of pro-MHS.
\end{bigtheorem}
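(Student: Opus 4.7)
The plan is to exploit the commutative triangle
$$
d\rho_a \;=\; \kappahat \circ \phitilde : \g_{\Xbar,V} \longrightarrow \Der \Q\pi_1(X,\vv_a)^\wedge,
$$
in which two of the three arrows are already known to be morphisms of pro-MHS. The representation $d\rho_a$ is Hodge-compatible by the main results of \cite{hain:torelli}---the canonical MHS on $\g_{\Xbar,V}$ is defined precisely so that its action on the Malcev completion of $\pi_1(X,\vv_a)$ is a morphism of pro-MHS---and the Kawazumi--Kuno action $\kappahat$ is a morphism of pro-MHS by the Corollary to Theorem~\ref{thm:action}, proved earlier in the paper.

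Were $\kappahat$ injective, the statement would follow at once. It is not: the kernel of $\kappahat$ is the sub-pro-MHS of Tate type spanned (in the $I$-adic completion) by the small boundary loops around the points of $A$ together with the augmentation. The strategy is to produce a canonical splitting of pro-MHS
$$
\Q\lambda(X)^\wedge\otimes\Q(-1) \;\cong\; W \oplus \ker\kappahat
$$
through which $\phitilde$ factors; given this, $\kappahat|_W$ is an injective morphism of pro-MHS, so the factorization of the pro-MHS morphism $d\rho_a$ through $W$---which is exactly $\phitilde$---is itself a morphism of pro-MHS. The natural choice for $W$ is the subspace of loops with vanishing boundary-loop components, and the inclusion $\im \phitilde \subset W$ should follow directly from the Kawazumi--Kuno normalization recalled in Section~\ref{sec:lie}.

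The hard part is verifying that $W$ is indeed a sub-pro-MHS, since in general extensions of pro-MHS of Tate type need not split. One must show that the ``coefficient-extraction'' map projecting $\Q\lambda(X)^\wedge\otimes\Q(-1)$ onto the span of the small boundary loops is itself a morphism of pro-MHS. This is where the Riemann-surface hypothesis is used essentially: via the tangential-base-point formalism of Section~\ref{sec:tangent_vecs} and the limit MHS of \cite{hain-zucker}, each small loop around $p\in A$ is a canonically normalized pro-Tate generator, and extracting its coefficient should be a Hodge-compatible operation. Once this splitting is in place, the theorem follows formally from the two input results.
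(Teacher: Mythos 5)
Your reduction to the triangle $d\rho_a = \kappahat\circ\phitilde$ is the right starting point, and you correctly spot that a single $\kappahat_a$ fails to be injective when $|A|>1$. But your proposed repair---constructing a Hodge-compatible splitting $\Q\lambda(X)^\wedge\otimes\Q(-1)\cong W\oplus\ker\kappahat$---is both harder than necessary and not established in your writeup. The paper avoids the splitting entirely by using a different vertical arrow: not a single $\kappahat_a$, but the \emph{direct sum} $\kappahat_A=\bigoplus_{a\in A}\kappahat_a$ over all punctures. By Kawazumi--Kuno (\cite[Thm.~4.13]{kk:johnson} for $|A|=1$, \cite[Thm.~6.2.1]{kk:groupoid} for $|A|>1$) this sum has kernel spanned by the unit, so its restriction to $IH_0(\Lambda X')^\wedge$---which contains $\im\phitilde$ by construction in Proposition~\ref{prop:factorization}---is injective. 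Each summand $\kappahat_a$ is a morphism of pro-MHS by the Corollary to Theorem~\ref{thm:action} applied at $\vv_a$, and each $d\varphi_a$ is a morphism of pro-MHS by \cite{hain:torelli}. Since $\kappahat_A$ is an injective (hence strict) morphism of pro-MHS and $\bigoplus_a d\varphi_a=\kappahat_A\circ\phitilde$ is one too, $\phitilde$ is immediately a morphism of pro-MHS. No splitting is needed.

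As for the gap: even granting your identification of $\ker\kappahat_a$, the assertion that it splits off $\Q\lambda(X)^\wedge\otimes\Q(-1)$ as a direct summand in pro-$\MHS$ would itself require proof. Extensions of pro-MHS of Tate type do not split in general, and ``extracting its coefficient should be a Hodge-compatible operation'' is a claim rather than an argument. You would need something like a theorem-of-the-fixed-part or limit-MHS argument tailored to this particular extension, on a par with the work done in Section~\ref{sec:proof_technical} for the splitting that the paper actually does need (Theorem~\ref{thm:splitting}). Summing over all $a\in A$ renders all of that unnecessary here.
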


These results are proved by factoring the Goldman bracket and the KK-action as a composite of maps each of whose continuous duals can be expressed in terms of differential forms. These factorizations appear in the work \cite[\S3]{kk:log} of Kawazumi and Kuno, but have antecedents in the work of Chas and Sullivan \cite{chas-sullivan} on string topology. We express several of the maps in the factorization in terms of Chen's iterated integrals. These formulas allow us to prove that, with the appropriate hypotheses, these maps preserve the Hodge and weight filtrations defined in \cite{hain:dht} and are thus morphisms of mixed Hodge structure.

\subsection{Splittings}

One motivation for this work was to use Hodge theory to find and study compatible splittings of the various objects (such as $\Q\lambda(X)^\wedge$, $\Q\pi_1(X,\vv)^\wedge$, $\g_{\Xbar,V}$, etc.) associated to a compact Riemann surface $\Xbar$ with a finite collection $V$ of non-zero tangent vectors. As explained in \cite{akkn2}, finding such compatible splittings is related to constructing solutions of the Kashiwara--Vergne problem. The category $\MHS$ of graded polarizable $\Q$-MHS is a $\Q$-linear tannakian category, and therefore the category of representations of an affine $\Q$ group, which we denote by $\pi_1(\MHS)$.\footnote{The fiber functor is the underlying $\Q$ vector space.} Its reductive quotient is $\pi_1(\MHS^\ss)$, the tannakian fundamental group of the category of semi-simple MHS. As we explain in Section~\ref{sec:tannaka}, there is a canonical cocharacter $\chi : \Gm \to \pi_1(\MHS^\ss)$. Each of its lifts $\chitilde : \Gm \to \pi_1(\MHS)$ gives a splitting
$$
V_\Q \cong \bigoplus_{m\in \Z} \Gr^W_m V_\Q
$$
of the weight filtration of the underlying $\Q$ vector space of each graded polarizable MHS. These isomorphisms are natural in that they are respected by morphisms of MHS, but are not canonical as they depend on the choice of the lift $\chitilde$.

The {\em Mumford--Tate} group $\MT_V$ of a graded polarizable MHS $V$ is defined to be the image of the homomorphism
$$
\pi_1(\MHS) \to \Aut(V).
$$
The Hodge splittings form a principal homogeneous space under the action of the unipotent radical $U_V$ of $\MT_V$.

As above, $\Xbar$ is a compact oriented surface and $V=\{\vv_a : a\in A\}$ is a finite collection of non-zero tangent vectors indexed by a finite subset $A$ of $\Xbar$. Set $X=\Xbar-A$. The Lie algebra $\p(X,\vv_a)$ of the unipotent completion of $\pi_1(X,\vv_a)$ is the set of primitive elements of the the completed group algebra $\Q\pi_1(X,\vv_a)^\wedge$. Denote the Mumford--Tate group of $\p(X,\vv_o)$ by $\MT_{X,\vv_o}$ and its prounipotent radical by $U_{X,\vv_o}^\MT$. This group depends non-trivially on $(X,\vv_o)$.

\begin{bigtheorem}
\label{thm:splittings}
Each choice of a lift $\chitilde : \Gm \to \pi_1(\MHS)$ of the canonical central cocharacter $\chi : \Gm \to \pi_1(\MHS^\ss)$ determines compatible splittings of the weight filtrations on each of
$$
\p(X,\vv_a),\quad \Q\pi_1(X,\vv_a)^\wedge,\quad \g_{\Xbar,V},\quad \Q\lambda(X)^\wedge\otimes \Q(-1),\quad \Der \Q\pi_1(X,\vv_a)^\wedge.
$$
Under all such splittings, the algebraic operations, such as Goldman bracket, are graded. Each choice of $\chitilde$ determines a ``symplectic Magnus expansion'' of $\pi_1(X,\vv_a)$. The splittings constructed from Hodge theory are a torsor under $U_{X,\vv_o}^\MT$.
\end{bigtheorem}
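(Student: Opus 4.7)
The plan is to deduce this theorem formally from the tannakian formalism of $\MHS$ combined with the Hodge-theoretic results already established in Theorems~\ref{thm:free}--\ref{thm:mcg}. By the discussion promised in Section~\ref{sec:tannaka}, choosing a lift $\chitilde : \Gm \to \pi_1(\MHS)$ of the canonical cocharacter $\chi$ is equivalent to specifying a tensor splitting of the weight filtration on the underlying $\Q$-vector space of every object of $\MHS$, and hence of every pro-object. Applying this functorial splitting to each of the five pro-MHS
\[
\p(X',\vv_a),\quad \Q\pi_1(X',\vv_a)^\wedge,\quad \g_{\Xbar,V},\quad H_0(\Lambda X')^\wedge,\quad \Der\Q\pi_1(X,\vv_a)^\wedge
\]
immediately produces the asserted splittings. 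The first three carry natural pro-MHS by the references cited in the introduction; $H_0(\Lambda X')^\wedge \cong \Q\lambda(X')^\wedge$ is the pro-MHS underlying Theorem~\ref{thm:free}; and the MHS on $\Der\Q\pi_1(X,\vv_a)^\wedge$ is induced from that of $\Q\pi_1(X,\vv_a)^\wedge$ via the internal Hom in $\pro\MHS$.

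Compatibility of the five splittings is automatic once one notes that the maps linking these objects are morphisms of pro-MHS, since any tannakian splitting is a tensor functor and therefore carries every morphism in $\pro\MHS$ to a weight-preserving map of split objects. The structural morphisms one needs are: the Goldman bracket on $\Q\lambda(X')^\wedge\otimes\Q(-1)$, which is a morphism by Theorem~\ref{thm:free}; the infinitesimal KK-action $\kappahat : \Q\lambda(X')^\wedge\otimes\Q(-1) \to \Der\Q\pi_1(X',\vv_a)^\wedge$, which is a morphism by the corollary of Theorem~\ref{thm:action}; the lift $\phitilde : \g_{\Xbar,V} \to \Q\lambda(X')^\wedge\otimes\Q(-1)$, which is a morphism by Theorem~\ref{thm:mcg}; and the inclusion of primitives $\p(X',\vv_a) \hookrightarrow \Q\pi_1(X',\vv_a)^\wedge$. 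No further work is required: naturality of the Hodge splitting implies that each of these structural maps becomes weight-graded after the splitting is applied, which is precisely the compatibility asserted.

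For the symplectic Magnus expansion, I would combine the splitting of $\p(X',\vv_a)$ with the standard computation of its associated graded. Since $A$ is non-empty, $\pi_1(X',\vv_a)$ is free, so $\p(X',\vv_a)$ is the completion of a free pronilpotent Lie algebra, and $\Gr^W_\bullet \p(X',\vv_a)$ is canonically identified with the free pronilpotent Lie algebra on $H_1(X',\Q)$ placed in the Hodge weights $-1,-2$ coming from the tangential basepoint MHS. The splitting then produces a preferred isomorphism of pro-Lie algebras from $\p(X',\vv_a)$ onto this completed free Lie algebra, which is by definition a Magnus expansion of $\pi_1(X',\vv_a)$. Because the tannakian splitting is a tensor functor, it preserves the MHS morphism given by the intersection pairing on $H_1$ and the canonical Lie element representing the boundary loop around each puncture; this is what makes the expansion ``symplectic'' in the sense of \cite{akkn2}.

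Finally, the torsor statement is purely tannakian. Two lifts $\chitilde_1,\chitilde_2$ of $\chi$ differ by a cocharacter of the prounipotent radical of $\pi_1(\MHS)$, and the resulting splittings of $\p(X',\vv_o)$ differ by the image of this element in $\Aut\p(X',\vv_o)$, which lies in the prounipotent radical $U_{X',\vv_o}^\MT$ of the Mumford--Tate group. Conversely, every element of $U_{X',\vv_o}^\MT$ lifts (by the tannakian description of the Mumford--Tate group as the image of $\pi_1(\MHS)$) to such a cocharacter, so the space of Hodge splittings of $\p(X',\vv_o)$ forms a $U_{X',\vv_o}^\MT$-torsor. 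The step I expect to be the main obstacle is the symplectic Magnus expansion: one must check that the natural identification of $\Gr^W\p(X',\vv_a)$ with the free Lie algebra on $H_1$ is the one furnished by the limit MHS at a tangential basepoint, and that under this identification the boundary element corresponds to the symplectic Hamiltonian $\sum_i [a_i,b_i]$ required in \cite{akkn2}. Everything else in the proof is formal once the four preceding theorems are in hand.
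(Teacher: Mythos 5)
Your overall strategy is the one the paper follows: existence and the torsor statement are read off from Proposition~\ref{prop:splittings} and Remark~\ref{rem:splittings}, and compatibility of the five splittings follows formally from the naturality of $\sigma^{\chitilde}$ together with the fact that the structural maps in question (inclusion of primitives, $\phitilde$, $\kappahat$, and the bracket/action) are morphisms of pro-MHS by Theorems~\ref{thm:free}--\ref{thm:mcg}. Your tannakian treatment of the torsor statement is essentially a restatement of Remark~\ref{rem:splittings}.

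The piece you have correctly singled out as ``the main obstacle'' --- the symplectic Magnus expansion --- is, however, also the part of the paper's proof that carries the substantive content, and your sketch stops short of it. The paper makes the claim precise by showing that the splitting solves the Kashiwara--Vergne problem $\KV^{(g,|A|)}$ of \cite[Def.~4]{akkn1} (of which a symplectic Magnus expansion is the $|A|=1$ case). Concretely: it fixes a free generating set $\{\alpha_j,\beta_j,\gamma_k\}$ of $\pi_1(X,\vv_0)$ compatible with the boundary relation $\gamma_0 = \prod_j\{\alpha_j,\beta_j\}\prod_k\gamma_k$, builds the normalization $\Theta$ to $\Q\ll x_j,y_j,z_k\rr$, defines $\Theta^{\chitilde} = \Thetabar\circ\sigma^{\chitilde}$, and shows $\Phi := \Thetabar\circ\sigma^{\chitilde}\circ(\Theta^{\chitilde})^{-1}$ satisfies the $\KV$ conditions. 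The key computation is that $\Theta^{\chitilde}_{k,k}(\log\sigma_k)=-z_k$: this uses the fact that the inclusion $(\D',\partial/\partial t)\hookrightarrow(X,\vv_k)$ is a morphism of MHS and that $\log\sigma_k$ therefore spans a copy of $\Q(1)$, which is pure, so $\sigma^{\chitilde}$ acts on it as the identity. The surface relation then forces $\Theta^{\chitilde}(\log\gamma_0)=\sum_j[x_j,y_j]+\sum_k z_k$. Your sketch has the right ingredients (limit MHS at a tangential basepoint, the boundary element landing on the required ``Hamiltonian''), but you state the target as $\sum_i[a_i,b_i]$ rather than the full $\sum[x_j,y_j]+\sum z_k$ that the $|A|>1$ case requires, and you do not articulate the purity argument for $\log\sigma_k$ that actually pins the splitting down. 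Without those two points the reduction to \cite{akkn1,akkn2} is not complete.
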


The isomorphism of the completed Goldman Lie algebra with the ``degree completion'' of its associated graded Lie algebra was originally proved by Kawazumi and Kuno in \cite{kk:groupoid} for surfaces with one boundary component. The general case was proved by Alekseev, Kawazumi, Kuno and Naef in \cite{akkn,akkn2}.

In general, the splitting of $\p(X,\vv_a)$ determined by $\chitilde$ gives a solution of the first Kashiwara--Vergne problem $\mathrm{KVI}^{(g,|A|)}$ as defined in \cite{akkn1}. Complete solutions of the Kashiwara--Vergne problem correspond, by \cite{akkn2}, to splittings of the weight filtration of $\Q\lambda(X)$ that are also compatible with the completed Turaev cobracket \cite{turaev:78,turaev:skein,kk:intersections}
$$
\delta_\xi : \Q\lambda(X)^\wedge \to \Q\lambda(X)^\wedge\otimes \Q\lambda(X)^\wedge
$$
corresponding to a framing $\xi$ of $X$. This is achieved in \cite{hain:turaev} using Hodge theory.

\subsection{Are the Goldman bracket and KK-action motivic?}

The results above suggest that the Goldman bracket and the KK-action are motivic. In particular, they suggest that if $X$ is defined over a subfield $K$ of $\C$, then the profinite completion
$$
\widehat{\Z\lambda(X_{/\Kbar})} \otimes \widehat{\Z\lambda(X_{/\Kbar})} \to \widehat{\Z\lambda(X_{/\Kbar})}\otimes \Zhat(1)
$$
of the Goldman bracket is $\Gal(\Kbar/K)$-invariant. Here ${\lambda}(X_{/\Kbar})$ denotes the set of conjugacy classes in geometric \'etale fundamental group of $X$ and $\Zhat$ denotes the profinite completion of the integers. The analogous statement should also hold for the KK-action. In the case of prounipotent completions used in this paper, we expect to be able to prove that in the prounipotent case, after extending scalars to $\Q_\ell$, that the Goldman bracket
$$
\Q_\ell \lambda(X_{\Kbar})^\prol \otimes \Q_\ell \lambda(X_{\Kbar})^\prol \to \Q_\ell \lambda(X_{\Kbar})^\prol\otimes\Q_\ell(1)
$$
is $\Gal(\Kbar/K)$-equivariant, and similarly for the KK-action.

\subsection{Outline}

The main results are proved using known factorizations of the Goldman bracket and the KK-action. These are recalled in Sections~\ref{sec:path_space} and \ref{sec:goldman}.  The strategy is to prove that each map in the factorization is a morphism of mixed Hodge structure. The first step is to show that each can be expressed in terms of differential forms. Those maps that are not Poincar\'e duality are expressed in terms of iterated integrals. This is done in Sections~\ref{sec:it_ints} to \ref{sec:kk_DR}. The proofs are completed using these de~Rham descriptions to show that all maps in the factorizations of the bracket and action are morphisms of mixed Hodge structure. Below is a section by section description of the contents of the paper.

Since we use the de~Rham theory of path spaces to prove the results, we begin, in Section~\ref{sec:path_space}, with a review of path spaces and the associated fibrations and local systems. In Section~\ref{sec:goldman} we recall the homological definitions of the Goldman bracket and KK-action. In preparation for stating the path space de~Rham theorems needed later in the paper, we review, in Section~\ref{sec:topology}, several basic facts from rational homotopy theory, including unipotent completion and the definition of rational $K(\pi,1)$ spaces. Section~\ref{sec:it_ints} is a review of Chen's iterated integrals, the cyclic bar construction and several of the de~Rham theorems for path spaces. We also prove a new (though not unexpected) de~Rham theorem for the degree 0 de~Rham cohomology of the free loop space of a non-simply connected manifold. This is in preparation for giving the de~Rham description of the continuous dual of the Goldman bracket, which we derive in Section~\ref{sec:goldman_DR}. This is followed with a de~Rham description of the continuous dual of the KK-action in Section~\ref{sec:kk_DR}.

With the de~Rham description of the duals of the Goldman bracket and the KK-action established, we embark on the Hodge theoretic aspects in Section~\ref{sec:Hodge}. This begins with a very brief review of the basics of mixed Hodge structures (e.g., definition, exactness properties). Because of the relevance of splittings of weight filtrations to the Kashiwara--Vergne problem \cite{akkn1}, there is a brief review of Tannaka theory, which is used to explain the existence of natural splittings of the weight filtrations of mixed Hodge structures and to define Mumford--Tate groups of mixed Hodge structures. Admissible variations of MHS are reviewed and the compatibility of Poincar\'e duality with Hodge theory with coefficients in an admissible variation is established using Saito's fundamental work \cite{saito:mhm}. Results from \cite{hain-zucker} are recalled, which imply that certain local systems constructed from iterated integrals are admissible variations of MHS. The section concludes with a brief review of limit mixed Hodge structures.

The de~Rham descriptions of the Goldman bracket and the KK-action are combined with results from Hodge theory to prove Theorems~\ref{thm:free} and \ref{thm:naive} in Section~\ref{sec:proofs}. The proof of Theorem~\ref{thm:action} is considerably more difficult and is proved in Section~\ref{sec:proof_technical} using a splitting result for limit mixed Hodge structures. Theorems~\ref{thm:mcg} and \ref{thm:splittings} are proved in Section~\ref{sec:mcg}.

\bigskip

\noindent{\em Acknowledgments:} I would like to thank Anton Alekseev for stimulating my interest in the Kashiwara--Vergne problem. That and his work \cite{akkn1,akkn,akkn2} with Nariya Kawazumi, Yusuke Kuno and Florian Naef aroused my interest in trying to prove that the Goldman bracket was compatible with Hodge theory. I would like to thank all of them for their patient and helpful explanations, especially Nariya Kawazumi who showed me the cohomological factorization of the Goldman bracket. I am also grateful to the referees for their careful reading of the manuscript.

\section{Notation and Conventions}

Suppose that $X$ is a topological space. There are two conventions for multiplying paths. We use the topologist's convention: The product $\alpha\beta$ of two paths $\alpha,\beta : [0,1]\to X$ is defined when $\alpha(1) = \beta(0)$. The product path traverses $\alpha$ first, then $\beta$. We will denote the set of homotopy classes of paths from $x$ to $y$ in $X$ by $\pi(X;x,y)$. In particular, $\pi_1(X,x) = \pi(X;x,x)$. The fundamental groupoid of $X$ is the category whose objects are $x\in X$ and where $\Hom(x,y) = \pi(X;x,y)$.

A {\em local system} $\bV$ over $X$ of $R$-modules ($R$ a ring) is a functor from the fundamental groupoid of $X$ to the category of $R$-modules. The fiber of the the local system over $x\in X$ (i.e., value at $x$) will be by $V_x$. When $X$ is locally contractible, there is an equivalence between the category of local systems of $R$-modules and the category of locally constant sheaves of $R$-modules. Sometimes we will regard a local system as a locally constant sheaf, and vice-versa.

When $X$ is compact, $Y$ a closed subspace, and $\sF$ a sheaf on $X-Y$, we define the relative cohomology group $H^\bdot(X,Y;\sF)$ by
$$
H^\bdot(X,Y;\sF) := H^\bdot(X; i_!\sF),
$$
where $i : X-Y \hookrightarrow X$ denotes the inclusion.

If $\bV$ is a local system of finite dimensional $\kk$-vector spaces over $X$, and if $X$ and $Y$ are finite complexes, define
$$
H_\bdot(X,Y;\bV) = \Hom_\kk(H^\bdot(X,Y;\bV^\ast),\kk)
$$
where $\bV^\ast$ denotes the dual local system.

Complex algebraic varieties will be considered as complex {\em analytic} varieties. In particular, $\cO_X$ will denote the sheaf of holomorphic functions on $X$ in the complex topology.

For clarity, we have attempted to denote complex algebraic and analytic varieties by the roman letter $X$, $Y$, etc and arbitrary smooth manifolds (and differentiable spaces) by the letters $M$, $N$, etc. This is not always possible. The complex of smooth $\kk$-valued differential forms on a smooth manifold $M$ will be denoted by $E^\bdot_\kk(M)$.

\section{Path Spaces and Local Systems}
\label{sec:path_space}

Throughout this section, $M$ is a smooth manifold and $\kk$ is a commutative ring.

\subsection{Path spaces}

The {\em path space} of $M$ is the set
$$
PM = \{\gamma : [0,1] \to M : \gamma \text{ is piecewise smooth}\}
$$
endowed with the compact-open topology. For each $t\in [0,1]$ we have the evaluation map
$$
p_t : PM \to M,\qquad \gamma \mapsto \gamma(t).
$$
The map
\begin{equation}
\label{eqn:PM}
p_0 \times p_1 : PM \to M\times M
\end{equation}
is a fibration (i.e., a {\em Hurewicz fibration}). Its fiber over $(x_0,x_1)$, the space of paths in $M$ from $x_0$ to $x_1$, will be denoted by $P_{x_0,x_1}M$.

The total space of the restriction
$$
p : \Lambda M \to M
$$
of (\ref{eqn:PM}) to the diagonal $M \subset M\times M$ is the free loops space $\{\alpha : S^1 \to M\}$ of $M$; the projection $p$ takes $\alpha : S^1 \to M$ to $\alpha(0)$, where we view $S^1$ as $[0,1]/(0\sim 1)$. The space of loops in $M$ based at $x\in M$, denoted $\Lambda_x M$, is $p^{-1}(x)$. Note that $P_{x,x}M = \Lambda_x M$ for all $x\in M$.

\begin{proposition}
We have
$$
\lambda(M) = \pi_0(\Lambda M),\quad \pi(M;x_0,x_1) = \pi_0(P_{x_0,x_1}M)\text{ and } \pi_1(M,x) = \pi_0(\Lambda_x M)
$$
and
$$
H_0(\Lambda M;\kk) = \kk \lambda(M) \text{ and } H_0(P_{x_0,x_1}M;\kk) = \kk\pi(M;x_0,x_1).
$$
\end{proposition}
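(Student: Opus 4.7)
The plan is to verify each identification in turn; the statements are essentially unpackings of definitions, the only subtle point being that $PM$ was defined using piecewise smooth, rather than merely continuous, paths.

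First I would handle $\pi_0(P_{x_0,x_1}M) = \pi(M;x_0,x_1)$. By the exponential law, a continuous map $[0,1]\to P_{x_0,x_1}M$ is adjoint to a continuous map $H:[0,1]\times[0,1]\to M$ sending $\{0\}\times[0,1]$ to $x_0$ and $\{1\}\times[0,1]$ to $x_1$, that is, a homotopy rel endpoints between the two piecewise smooth path endpoints of $H$. To see that one obtains every continuous homotopy class, invoke a standard smoothing argument: every continuous path in $M$ with fixed endpoints is homotopic rel endpoints to a smooth path, and a continuous homotopy rel endpoints between piecewise smooth paths can be replaced by a piecewise smooth one. (The smoothing is local in charts and uses a convolution-plus-straight-line argument; here one uses only that $M$ is a smooth manifold.) Consequently the quotient $\pi_0(P_{x_0,x_1}M)$ agrees with the set of continuous-homotopy classes of paths from $x_0$ to $x_1$, namely $\pi(M;x_0,x_1)$. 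The case $\pi_1(M,x)=\pi_0(\Lambda_x M)$ is the specialization to $x_0=x_1=x$, since by definition $\Lambda_x M = P_{x,x}M$.

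Next I would treat $\lambda(M) = \pi_0(\Lambda M)$. By definition $\Lambda M$ is the space of piecewise smooth maps $S^1\to M$, and by the exponential law a continuous arc in $\Lambda M$ is a continuous map $[0,1]\times S^1\to M$, i.e., a free homotopy of loops. The same smoothing argument as above (now applied to maps from $S^1$ and from $[0,1]\times S^1$) shows that every continuous free homotopy class contains a piecewise smooth representative, and that two piecewise smooth loops are freely homotopic via a continuous homotopy iff they are freely homotopic via a piecewise smooth one. Thus $\pi_0(\Lambda M) = \lambda(M)$.

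Finally, for the two $H_0$ statements, recall that for any topological space $Y$ one has $H_0(Y;\kk) = \kk\,\pi_0(Y)$, the free $\kk$-module on the set of path components; this is the standard computation from the definition of singular $H_0$. Applied to $Y=\Lambda M$ and $Y=P_{x_0,x_1}M$ and combined with the identifications already established, this gives $H_0(\Lambda M;\kk) = \kk\lambda(M)$ and $H_0(P_{x_0,x_1}M;\kk) = \kk\pi(M;x_0,x_1)$. The only step that requires any real argument is the smoothing of continuous homotopies, and this is a routine chart-by-chart mollification; I do not anticipate any genuine obstacle.
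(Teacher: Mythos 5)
The paper presents this proposition without any proof, treating it as an unpacking of definitions. Your argument is correct and identifies the one genuine subtlety --- that $PM$ consists of piecewise-smooth rather than merely continuous paths, while $\pi(M;x_0,x_1)$, $\pi_1(M,x)$ and $\lambda(M)$ are defined via continuous maps --- and resolves it via the exponential law together with a standard smoothing argument, then reduces the two $H_0$ statements to the identity $H_0(Y;\kk)=\kk\,\pi_0(Y)$. This is exactly the right thing to check; the paper simply omits the justification, and your proof fills the gap appropriately and at a sensible level of detail.
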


\subsection{Local systems}

Two local systems play an important role in the story.

\begin{definition}
\label{def:L}
Define $\bL_M$ to be the local system of $\kk$-modules over $M$ whose fiber $\bL_{M,x}$ over $x\in M$ is $H_0(\Lambda_x M;\kk)$. 
\end{definition}

The parallel transport action
$$
\pi(M;x,y) \to  \Isom_\kk\big(H_0(\bL_x M;\kk), H_0(\bL_y M;\kk)\big)
$$
is given by conjugation.

For each pair $(x_0,x_1) \in M^2$, the evaluation map
$$
p : P_{x_0,x_1} M \to M,\qquad \gamma \mapsto \gamma(1/2)
$$
is a fibration, where $p = p_{\frac{1}{2}}$. Denote its fiber over $x \in M$ by $P_{x_0,x,x_1} M$. Observe that path multiplication induces a homeomorphism
$$
P_{x_0,x}M \times P_{x,x_1} M \to P_{x_0,x,x_1}M
$$
as each element $\gamma$ of $P_{x_0,x,x_1}M$ can be uniquely factored $\gamma'\gamma''$, where $\gamma' \in P_{x_0,x} M$ and $\gamma'' \in P_{x,x_1}M$.

\begin{definition}
\label{def:P}
Define $\bP_{M;x_0,x_1}$ to be the local system of $\kk$-modules over $M$ whose fiber $\bP_{M;x_0,x,x_1}$ over $x\in M$ is $H_0(P_{x_0,x,x_1}M;\kk)$. 
\end{definition}

The parallel transport function
$$
\pi(M;x,y) \to
\Isom_\kk\big(H_0(P_{x_0,x,x_1}M;\kk), H_0(P_{x_0,y,x_1}M;\kk)\big)
$$
is induced by the map
\begin{align*}
P_{x,y}M \times \big(P_{x_0,x}M \times P_{x,x_1}M\big)
&\to P_{x_0,y}M \times P_{y,x_1} M
\cr
(\mu,\gamma',\gamma'')\qquad &\mapsto \quad (\gamma'\mu,\mu^{-1}\gamma'').
\end{align*}

\begin{proposition}
\label{prop:pairing}
For each subset $U$ of $M-\{x_0,x_1\}$, there are well-defined pairings
\begin{equation}
\label{eqn:pairings}
\mu : \bL_U \otimes j^\ast\bP_{M;x_0,x_1} \to j^\ast\bP_{M;x_0,x_1}
\text{ and } \mu : \bL_U \otimes \bL_U \to \bL_U,
\end{equation}
where $j : U \hookrightarrow M$ denotes the inclusion.
\end{proposition}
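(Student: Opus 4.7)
The plan is to construct both pairings fiberwise as continuous maps of topological spaces (via path concatenation), apply $H_0(-;\kk)$, and then verify compatibility with parallel transport so that the result is a morphism of local systems over $U$. Concretely, for $x\in U$ I would use the inclusion $\Lambda_x U\hookrightarrow \Lambda_x M$ together with the canonical homeomorphism $P_{x_0,x}M\times P_{x,x_1}M \to P_{x_0,x,x_1}M$ recalled above to define an insertion map
$$
\Lambda_x U\times P_{x_0,x,x_1}M\to P_{x_0,x,x_1}M,\qquad (\alpha,(\gamma',\gamma''))\mapsto (\gamma'\alpha,\gamma''),
$$
and a loop-concatenation map $\Lambda_x U\times\Lambda_x U\to\Lambda_x U$, $(\alpha,\beta)\mapsto\alpha\beta$. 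Both are continuous, and since $H_0(-;\kk)$ is symmetric monoidal with respect to Cartesian product, applying it produces the two desired $\kk$-bilinear pairings on fibers.

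The main step is then to check that these fiberwise pairings assemble into morphisms of local systems, i.e., that they commute with parallel transport along any path $\mu\in\pi(U;x,y)$. For the first pairing, transporting $\alpha\otimes(\gamma',\gamma'')$ before inserting yields
$$
(\mu^{-1}\alpha\mu)\otimes(\gamma'\mu,\mu^{-1}\gamma'')\mapsto (\gamma'\mu\cdot\mu^{-1}\alpha\mu,\mu^{-1}\gamma'')=(\gamma'\alpha\mu,\mu^{-1}\gamma''),
$$
which agrees in $\pi_0$ with the result $(\gamma'\alpha\cdot\mu,\mu^{-1}\gamma'')$ of inserting first and then transporting. The identical one-line calculation $(\mu^{-1}\alpha\mu)(\mu^{-1}\beta\mu)=\mu^{-1}(\alpha\beta)\mu$ verifies the claim for the loop-concatenation pairing.

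The hypothesis $U\subset M-\{x_0,x_1\}$ enters only to guarantee that the insertion point $x$ is distinct from the endpoints $x_0,x_1$, so that the factorization $P_{x_0,x,x_1}M=P_{x_0,x}M\times P_{x,x_1}M$ is genuine; once this is in place the argument is purely formal. I do not anticipate a real obstacle — the only delicate bookkeeping is to apply the two parallel-transport conventions (conjugation on $\bL$ versus the rule $(\gamma',\gamma'')\mapsto (\gamma'\mu,\mu^{-1}\gamma'')$ on $\bP$) consistently with the topologist's convention for path multiplication.
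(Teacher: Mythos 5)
Your proof takes essentially the same approach as the paper: both define the pairings fiberwise on $H_0$ of the relevant loop and path spaces, with well-definedness amounting to compatibility with parallel transport, which the paper leaves implicit and you spell out correctly. One small aside is inaccurate: the factorization $P_{x_0,x,x_1}M\cong P_{x_0,x}M\times P_{x,x_1}M$ holds for \emph{every} $x\in M$, including $x=x_0$ or $x=x_1$, since a path with a marked interior time $t=1/2$ always splits by restriction to $[0,1/2]$ and $[1/2,1]$; the hypothesis $U\subset M-\{x_0,x_1\}$ is not what makes this step work, but rather is there because $U$ plays the role of $X'=X-\{x_0,x_1\}$ in the downstream factorization of the KK-action. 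This does not affect the correctness of your argument.
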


\begin{proof}
On the fiber over $x\in U$, the first pairing is induced by the map
$$
\pi_1(U,x) \times \big(\pi(M;x_0,x) \times \pi(M;x,x_1)\big)
\to \pi(M;x_0,x)\times\pi(M;x,x_1)
$$
that takes $(\alpha,\gamma',\gamma'')$ to $(\gamma'\alpha,\gamma'')$. The second pairing is induced by the multiplication map $\Lambda_x U \times \Lambda_x U \to \Lambda_x U$.
\end{proof}

\begin{proposition}
\label{prop:k_pi_1}
If $M$ is connected, then there are natural isomorphisms
$$
H_0(M;\bL_M) \cong H_0(\Lambda M)
\text{ and }
H_0(M;\bP_{M;x_0,x_1}) \cong H_0(P_{x_0,x_1}M).
$$
If $M$ is a $K(\pi,1)$, then there are natural isomorphisms
$$
H_j(M;\bL_M) \cong H_j(\Lambda M)
\text{ and }
H_j(M;\bP_{M;x_0,x_1}) \cong H_j(P_{x_0,x_1}M).
$$
\end{proposition}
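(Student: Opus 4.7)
The plan is to apply the Leray--Serre spectral sequence to the Hurewicz fibrations $p : \Lambda M \to M$ and $p_{1/2} : P_{x_0,x_1} M \to M$. The fibers over $x \in M$ are $\Lambda_x M$ and $P_{x_0,x,x_1} M = P_{x_0,x}M \times P_{x,x_1} M$ respectively, so the local systems on $M$ whose fibers are the zeroth homology of the fibers of $p$ and $p_{1/2}$ are, by construction, precisely $\bL_M$ and $\bP_{M;x_0,x_1}$. One has to verify that the parallel transport obtained by path-lifting in each fibration agrees with the formulas described after Definitions~\ref{def:L} and \ref{def:P}.

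For the $H_0$ assertion (with $M$ merely connected), the Serre spectral sequence
$$
E^2_{p,q} = H_p(M; \mathcal{H}_q(F)) \Rightarrow H_{p+q}(E),
$$
where $\mathcal{H}_q(F)$ denotes the local system $x \mapsto H_q(F_x;\kk)$, has only $E^2_{0,0}$ contributing in total degree zero. All differentials $d_r$ entering or leaving $E^r_{0,0}$ land in bidegrees with a negative index, so $E^\infty_{0,0} = E^2_{0,0}$, which yields $H_0(\Lambda M) \cong H_0(M; \bL_M)$ and, by the same argument, $H_0(P_{x_0,x_1}M) \cong H_0(M; \bP_{M;x_0,x_1})$.

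For the $K(\pi,1)$ case, the key observation is that each fiber becomes homotopy equivalent to a discrete set. Since $\pi_n(\Lambda_x M) = \pi_{n+1}(M, x) = 0$ for $n \geq 1$ when $M$ is aspherical, each path component of $\Lambda_x M$ is weakly contractible, and $\Lambda_x M$ is weakly equivalent to the discrete set $\pi_1(M, x)$. Similarly, $P_{x_0, x}M$ is homeomorphic to $\Lambda_{x_0} M$ via multiplication by a chosen path from $x$ to $x_0$, hence is homotopically discrete, as is the product $P_{x_0, x, x_1} M$. Thus $\mathcal{H}_q(F) = 0$ for all $q > 0$, the Serre spectral sequence collapses at $E^2$, and we conclude $H_j(M; \bL_M) \cong H_j(\Lambda M)$ and $H_j(M; \bP_{M; x_0, x_1}) \cong H_j(P_{x_0, x_1} M)$ for every $j$.

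The main (largely bookkeeping) obstacle is matching the monodromy representations. One must check that the parallel transport in $\Lambda M \to M$ along a path $\beta$ sends a loop $\alpha$ at its starting point to a loop conjugate to $\alpha$ by $\beta$, and that the monodromy in $P_{x_0,x_1}M \to M$ recovers the prescription $(\mu, \gamma', \gamma'') \mapsto (\gamma' \mu, \mu^{-1} \gamma'')$ given after Definition~\ref{def:P}. Both follow from explicit path-lifting arguments using the canonical factorization $P_{x_0,x,x_1}M = P_{x_0,x}M \times P_{x,x_1} M$.
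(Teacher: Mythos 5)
Your proof is correct and follows essentially the same route as the paper: apply the Serre spectral sequence to the fibrations $\Lambda M \to M$ and $p_{1/2} : P_{x_0,x_1}M \to M$, read off the $H_0$ statement from $E^2_{0,0}$, and observe that when $M$ is aspherical the fibers are homotopy-discrete so the spectral sequence degenerates at $E^2$. The monodromy check you flag is the same identification the paper implicitly makes via Definitions~\ref{def:L} and \ref{def:P}.
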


\begin{proof}
The assertions for $\Lambda M$ are proved by applying the Serre spectral sequence to the fibration $\Lambda M \to M$; the assertions for $P_{x_0,x_1}M$ are obtained by applying the Serre spectral sequence to the fibration $p_{1/2} : P_{x_0,x_1} M\to M$. This give the result in degree 0. If $M$ is a $K(\pi,1)$, then for all $x,y\in M$, $P_{x,y}M$ is a disjoint union of contractible spaces, which implies that the spectral sequence collapses at $E^2$, which proves the remaining assertions.
\end{proof}

\subsection{Chas--Sullivan maps}
\label{sec:CS}

Each $\alpha \in \Lambda M$ gives rise to a section $\alphahat$ of $\alpha^\ast\bL_M$. It is induced by the lift $\alphatilde$
$$
\xymatrix{
& \Lambda M \ar[d]^p \cr
[0,1] \ar[ur]^{\alphatilde} \ar[r]_\alpha & M.
}
$$
of $\alpha$ defined as follows. For each $t \in [0,1]$, write $\alpha : [0,1] \to M$ as the product
$$
\alpha = \alpha_t \alpha^t,
$$
where $\alpha_t$ is the restriction of $\alpha$ to $[0,t]$ and $\alpha^t$ is its restriction to $[t,1]$.\footnote{Strictly speaking, we should reparameterize each so that it is defined on $[0,1]$, but since the value of an iterated line integral on a path is independent of its parameterization, this is not necessary.} Then $\alphahat : [0,1] \to \Lambda M$ is the loop $\alphatilde(t) = \alpha^t \alpha_t$ in $\Lambda_{\alpha(t)} M$. The Chas--Sullivan map \cite{chas-sullivan}
$$
\cs : H_0(\Lambda M) \to H_1(M;\bL_M)
$$
takes $[\alpha]$ to $[\alphahat]$.

There is a similar construction for path spaces due to Kawazumi and Kuno. Each $\gamma \in P_{x_0,x_1} M$ gives rise to a section $\gammahat$ of $\gamma^\ast\bP_{M;x_0,x_1}$. It is induced by the lift $\gammatilde$
$$
\xymatrix{
& P_{x_0,x_1} M \ar[d]^p \cr
[0,1] \ar[ur]^{\gammatilde} \ar[r]_\gamma & M
}
$$
of $\gamma$ defined as follows. For each $t \in [0,1]$, write $\gamma : [0,1] \to M$ as the product $\gamma = \gamma_t' \gamma_t''$ where $\gamma_t'$ is the restriction of $\gamma$ to $[0,t]$ and $\gamma_t''$ is its restriction to $[t,1]$. Then $\gammatilde : [0,1] \to P_{x_0,x_1} M$ is the lift of $\gamma$ defined by $\gammatilde(t) = \gamma_t'\gamma_t''$. It is a cycle relative to $\{x_0,x_1\}$. The Kawazumi--Kuno map
$$
\csp : H_0(P_{x_0,x_1} M) \to H_1(M,\{x_0,x_1\};\bP_{M;x_0,x_1})
$$
takes $\gamma$ to $[\gammahat]$.

\begin{remark}
The first map appears in the paper \cite[\S 7]{chas-sullivan} of Chas and Sullivan, where it was used to give a homological formula for the Goldman bracket. Both maps appear in the paper \cite[\S 3]{kk:log} of Kawazumi and Kuno, who rediscovered the first map and defined the second. They rediscovered the Chas--Sullivan formula for the Goldman bracket and used the second to give an analogous homological description of the action.
\end{remark}

\section{The Goldman Bracket and the KK-Action}
\label{sec:goldman}

Now suppose that $X$ is a compact, connected, oriented surface with (possibly empty) boundary $\partial X$. Suppose that $x_0,x_1\in X$. These may or may not be boundary points. Set $X' = X - (\{x_0,x_1\}\cup \partial X)$. Let $j : X'\hookrightarrow X$ denote the inclusion. As in the previous section, we fix a commutative ring $\kk$. All homology groups will have coefficients in $\kk$, and tensor products will be over $\kk$ unless otherwise noted.

\subsection{Intersection pairings}

Suppose that $\bV$ is a local system of $\kk$-modules over $X'$ and that $\bW$ is a local system of $\kk$-modules over $X$. The intersection pairing
$$
H_1(X';\bV)\otimes H_1(X,\{x_0,x_1\}\cup \partial X;\bW)
\to H_0(X';\bV\otimes j^\ast\bW)
$$
induces a well-defined pairing
$$
\bil : H_1(X';\bV)\otimes H_1(X,\{x_0,x_1\};\bW) \to H_0(X';\bV\otimes j^\ast\bW).
$$
Similarly, for local systems $\bV$ and $\bW$ of $\kk$-modules over $X$, the intersection pairing
$$
H_1(X;\bV)\otimes H_1(X,\partial X;\bW) \to H_0(X;\bV\otimes\bW)
$$
restricts to an intersection pairing
$$
\bil : H_1(X;\bV)\otimes H_1(X;\bW) \to H_0(X;\bV\otimes\bW).
$$

\subsection{The Goldman bracket}

As is well-known (cf.\ \cite{chas,kk:groupoid}), the Goldman bracket can be expressed in terms of the intersection pairing and the Chas--Sullivan map. It is the composite
\begin{multline}
\label{eqn:bracket}
H_0(\Lambda(X))^{\otimes 2}
\overset{\cs^{\otimes 2}}{\To}
H_1(X;\bL_X)^{\otimes 2} \overset{\bil}{\To} H_0(X;\bL_X^{\otimes 2})
\cr
\overset{\mu_\ast}{\To} H_0(X;\bL_X) = H_0(\Lambda(X)),
\end{multline}
where the last map is induced by the multiplication pairing $\mu:\bL_X\otimes \bL_X \to \bL_X$.

\subsection{The Kawazumi--Kuno action}

Likewise, the Kawazumi--Kuno action can be expressed in terms of the intersection pairing. Let $j : X' \hookrightarrow X$ denote the inclusion.

\begin{proposition}
The KK-action
$$
\kappa : H_0(\Lambda X')\otimes H_0(P_{x_0,x_1}X) \to H_0(P_{x_0,x_1}X)
$$
is the composite
\begin{multline}
\label{eqn:action}
H_0(\Lambda(X')) \otimes H_0(P_{x_0,x_1}X) \overset{\cs\otimes\csp}{\To}
H_1(X';\bL_{X'})\otimes H_1(X,\{x_0,x_1\};\bP_{X;x_0,x_1})
\cr
\to H_0(X';\bL_{X'}\otimes j^\ast\bP_{X;x_0,x_1}) 
\overset{\mu_\ast}{\To} H_0(X';j^\ast\bP_{X;x_0,x_1})
\cong H_0(P_{x_0,x_1}X),
\end{multline}
where $\mu : \bL_{X'}\otimes j^\ast\bP_{X;x_0,x_1} \to j^\ast\bP_{X;x_0,x_1}$
denotes the natural pairing.
\end{proposition}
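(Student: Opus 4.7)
The plan is to verify the identity on transverse representatives and match it directly with the Kawazumi--Kuno definition of $\kappa$. Choose smooth immersed representatives $\alpha$ of the free homotopy class in $\Lambda X'$ and $\gamma$ of the path class in $P_{x_0,x_1}X$ whose images meet transversely at finitely many interior points of $X'$. Such representatives exist by standard transversality, and both $\kappa$ and the composite (\ref{eqn:action}) depend only on homotopy classes, so it suffices to compare them on these choices.

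By the construction in Section~\ref{sec:CS}, the Chas--Sullivan cycle $\alphahat$ is a 1-cycle in $X'$ with coefficients in $\bL_{X'}$ whose fiber value at $\alpha(t)$ is the class of $\alpha^t\alpha_t$, while $\gammahat$ is a 1-cycle in $X$ relative to $\{x_0,x_1\}$ with coefficients in $\bP_{X;x_0,x_1}$ whose fiber value at $\gamma(s)$ is the class of $(\gamma'_s,\gamma''_s)$. Because both are smooth and meet transversely, the intersection pairing of $\alphahat$ and $\gammahat$ is computed as a sum over crossings $p=\alpha(t_p)=\gamma(s_p)$, each summand equal to $\e_p(\alpha,\gamma)$ times the tensor product of the two fiber values at $p$. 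Applying $\mu_\ast$ in the form given by Proposition~\ref{prop:pairing}, the $p$-th summand is sent to the class of $\gamma'_{s_p}\,\alpha^{t_p}\alpha_{t_p}\,\gamma''_{s_p}$ in the fiber $\bP_{X;x_0,p,x_1}$, and pushing this forward under the final identification yields the class of that path in $H_0(P_{x_0,x_1}X)$. This is precisely the contribution of the intersection $p$ to the Kawazumi--Kuno formula for $\kappa$: the path $\gamma$ cut at $p$, with $\alpha$ inserted after being reparameterized to start and end at $p$.

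The one point requiring care is that the signs produced by the intersection pairing of the $\bL_{X'}$- and $\bP_{X;x_0,x_1}$-valued 1-cycles agree with the local intersection numbers $\e_p(\alpha,\gamma)$ appearing in the definition of $\kappa$. Since both are determined locally at each crossing by the orientation of $X$ and the frame $(\dot\alpha(t_p),\dot\gamma(s_p))$ of $T_pX$, matching them reduces to a fixed orientation convention. Together with the routine fact that the final isomorphism $H_0(X';j^\ast\bP_{X;x_0,x_1})\cong H_0(P_{x_0,x_1}X)$ holds because $\pi_1(X')\twoheadrightarrow\pi_1(X)$ is surjective (removing the two interior points $x_0,x_1$ from $X$ only adds generators), this completes the verification. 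No step is expected to be a serious obstacle; the hardest bookkeeping will be the sign comparison.
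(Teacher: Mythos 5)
The paper does not give a proof of this proposition: it is stated as a known identity, following the treatment in Kawazumi--Kuno (\cite{kk:log}, \S3) and the Chas--Sullivan formula. Your proposal is the standard geometric verification that the cited sources give, so it is in the same spirit as what the paper implicitly invokes. Unwinding the Chas--Sullivan/Kawazumi--Kuno sections to $1$-cycles with local coefficients, computing their homological intersection as a signed sum over transverse crossings, and applying the fiberwise pairing $\mu$ of Proposition~\ref{prop:pairing} indeed reproduces the Kawazumi--Kuno surgery formula term by term; the final identification $H_0(X';j^\ast\bP_{X;x_0,x_1})\cong H_0(P_{x_0,x_1}X)$ follows, as you say, because the monodromy of $\bP_{X;x_0,x_1}$ factors through the surjection $\pi_1(X')\twoheadrightarrow\pi_1(X)$ together with Proposition~\ref{prop:k_pi_1}. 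Two small points of care: (i) $X'$ is defined as $X\setminus(\{x_0,x_1\}\cup\partial X)$, not merely $X\setminus\{x_0,x_1\}$ --- this does not affect surjectivity of $\pi_1(X')\to\pi_1(X)$ since deleting $\partial X$ is a deformation retract, but your parenthetical remark should be adjusted; (ii) the sign bookkeeping you flag is genuine but routine, as both the homological intersection number and $\e_p(\alpha,\gamma)$ are determined by the orientation of $X$ and the ordered frame $(\dot\alpha(t_p),\dot\gamma(s_p))$ at $p$, so they agree once conventions are fixed consistently. In short: the argument is correct, and it reproduces the proof in the literature that the paper is tacitly relying on.
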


When $x_0,x_1 \in \partial X$, this action descends to the pairing
$$
H_0(\Lambda X)\otimes H_0(P_{x_0,x_1}X) \to H_0(P_{x_0,x_1}X).
$$
When $x_0=x_1$, it further descends to the Goldman bracket
$$
H_0(\Lambda X) \otimes H_0(\Lambda X) \to H_0(\Lambda X).
$$

\section{Unipotent Completion and Continuous Duals}
\label{sec:topology}

Suppose that $\cC$ is a $\kk$-linear abelian category with a faithful functor $\omega : \cC \to \Vec_\kk$ to the category of $\kk$-vector spaces. We also require that $\cC$ has a unit object $\unit$ with $\omega(\unit)$ a one dimensional vector space and that $\cC$ is closed under direct and inverse limits. Define the dimension of an object $V$ of $\cC$ to be $\dim_\kk \omega(V)$.

The two such categories relevant in this paper are:
\begin{enumerate}

\item The category of $\kk$-local systems over a path connected topological space $X$, where $\omega$ takes a local system to its fiber over a fixed base point $x\in X$. The unit object $\unit$ is the trivial local system of rank 1.

\item The category of representations over $\kk$ of a group $\Gamma$ and $\omega$ takes a representation to its underlying vector space. The unit object is the trivial 1-dimensional representation.

\end{enumerate}
These are related as the category of $\kk$-local systems over $X$ (path connected and locally contractible) is equivalent to the category of representations of $\pi_1(X,x)$ over $\kk$. The equivalence takes a local system to its fiber over the base point $x$.

An object $V$ of $\cC$ is said to be {\em trivial} if it is isomorphic to a finite direct sum of unit objects.

\begin{definition}
An object $V$ of $\cC$ is {\em unipotent} if it has a finite filtration
$$
0 = V_0 \subseteq V_1 \subseteq \cdots \subseteq V_{r-1} \subseteq V_r = V
$$
in $\cC$, where each $V_j/V_{j-1}$ is trivial.
\end{definition}

Each object $V$ of $\cC$ has a natural topology, where the neighbourhoods of $0$ are the kernels of homomorphisms $V \to U$, where $U$ is a unipotent object of $\cC$.

\begin{definition}
The {\em unipotent completion} $V^\un$ of an object $V$ of $\cC$ is its completion in this topology. In concrete terms
$$
V^\un = \varprojlim_{\substack{V/K \cr\text{unipotent}}} V/K
$$

\end{definition}

The {\em continuous dual} $\check{V}$ of a pro-object $V$ of $\cC$ is
$$
\check{V} = \Hom^\cts_\cC(V,\unit) :=
\varinjlim_{\substack{V/K \cr\text{unipotent}}}\Hom_\cC(V/K,\unit).
$$

If $\G$ is a discrete group and $H_1(\G;\kk)$ is finite dimensional, then the unipotent completion of the group algebra $\kk\G$ is its $I$-adic completion
$$
\kk\G^\wedge := \varprojlim_{n\ge 0} \kk\G/I^n,
$$
where $I$ is the augmentation ideal. This is a complete Hopf algebra. In this case, the {\em unipotent completion} (sometimes called the {\em Malcev completion}) $\G^\un_{/\kk}$ of $\G$ over $\kk$ is the group-like elements of $\kk\G^\wedge$ and its Lie algebra is the set of primitive elements of $\kk\G^\wedge$.\footnote{More precisely, the unipotent completion $\G^\un_{/\kk}$ over $\kk$ of an arbitrary discrete group $\G$ is the affine $\kk$-group whose coordinate ring is the Hopf algebra $\Hom^\cts(\kk\G,\kk)$. When $H_1(\G;\kk)$ is finite dimensional its group of $A$-rational points, where $A$ is a $\kk$-algebra, is the set of group-like elements of $A\G^\wedge$.} When $\kk = \Q$, we will write $\G^\un$ in place of $\G^\un_{/\Q}$. The {\em unipotent fundamental group} $\pi_1^\un(X,x)_{/\kk}$ of a pointed topological space is defined to be the unipotent completion of $\pi_1(X,x)$ over $\kk$. In this case the unipotent completion of a finitely generated $\G$-module $V$ (or one with $V/IV$ finite dimensional) is its $I$-adic completion:
$$
V^\un = V^\wedge := \varprojlim_n V/I^n V.
$$

The category $\Rep^\un_\kk(\G)$ of unipotent representations of $\G$ over $\kk$ is equivalent to the category of representations of $\G^\un$. Consequently, there are isomorphisms
$$
\Ext^\bdot_{\Rep^\un_\kk(\G)}(\kk,V)
\cong H^\bdot(\G^\un_{/\kk},V) \cong H^\bdot_\cts(\p,V),
$$
where $\p$ is the Lie algebra of $\G^\un_{/\kk}$ and $H^\bdot_\cts(\p,V)$ is its continuous cohomology.

\subsection{Rational $K(\pi,1)$ spaces}

The natural homomorphism $\G \to \G^\un(\Q)$ induces a homomorphism
$$
H^\bdot(\G^\un;\Q) \to H^\bdot(\G;\Q).
$$
When $\G = \pi_1(M,x)$, it can be composed with $H^\bdot(\pi_1(M,x)) \to H^\bdot(M)$, the canonical homomorphism, to get a natural homomorphism
\begin{equation}
\label{eqn:canon_homom}
H^\bdot(\pi_1^\un(M,x);\Q) \to H^\bdot(M;\Q)
\end{equation}

\begin{definition}
A connected topological space $M$ is a rational $K(\pi,1)$ if the homomorphism (\ref{eqn:canon_homom}) is an isomorphism.
\end{definition}

An equivalent version of the definition is that $M$ is a rational $K(\pi,1)$ if and only if its Sullivan minimal model is generated in degree 1. The equivalence of the definitions corresponds to the fact that that the 1-minimal model of $M$ is isomorphic to the DGA $\Hom^\cts(\Lambda^\bdot\p,\kk)$ of continuous Chevalley--Eilenberg cochains on $\p$.

\begin{theorem}
\label{thm:surfaces}
Every connected oriented surface with finite topology, except for the 2-sphere, is a rational $K(\pi,1)$. Equivalently, every oriented surface with non-positive Euler characteristic is a rational $K(\pi,1)$.
\end{theorem}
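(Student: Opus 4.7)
The plan is to handle open and closed surfaces separately, in each case using the equivalent characterization recorded in the excerpt: $M$ is a rational $K(\pi,1)$ iff its Sullivan $1$-minimal model is its full minimal model, equivalently iff the natural map $H^\bdot_\cts(\p;\Q)\to H^\bdot(M;\Q)$ is an isomorphism, where $\p$ denotes the Lie algebra of $\pi_1^\un(M,x)$.

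First, if $M$ is non-compact of finite topology (or compact with non-empty boundary), then $M$ deformation retracts onto a finite $1$-complex, so $M\simeq \bigvee_{i=1}^n S^1$ and $\pi_1(M)$ is free on $n$ generators. The Malcev Lie algebra is then the completed free Lie algebra $\widehat{\L}(V)$ on $V = H_1(M;\Q)$, and its universal enveloping algebra is the completed tensor algebra $\widehat{T}(V)$. The augmentation sequence $0\to \widehat{T}(V)\otimes V\to \widehat{T}(V)\to \Q\to 0$ is a length-one free resolution of the trivial module over $\widehat{T}(V)$, so continuous cohomology gives $H^0_\cts = \Q$, $H^1_\cts = V^\ast$, and $H^i_\cts = 0$ for $i\geq 2$. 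These match $H^\bdot(M;\Q)$ degree-by-degree, and the comparison map is the evident identification.

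Next, suppose $M = \Sigma_g$ is closed of genus $g\geq 1$. Its universal cover is the Euclidean or hyperbolic plane and hence contractible, so $\Sigma_g$ is aspherical. Being compact K\"ahler, $\Sigma_g$ is formal by Deligne--Griffiths--Morgan--Sullivan, so its minimal model is quasi-isomorphic to one built from the cohomology ring $H^\bdot(\Sigma_g;\Q)$. Classically, $\pi_1(\Sigma_g)$ is $1$-formal with quadratic Malcev Lie algebra $\p = \widehat{\L}(V)/\langle\omega\rangle$, where $V = H_1(\Sigma_g;\Q)$ and $\omega = \sum_{i=1}^g[x_i,y_i]\in \Lambda^2 V$ is the symplectic relation dual to the intersection pairing. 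A Koszul-dual cohomology computation then gives $H^\bdot_\cts(\p;\Q) \cong \Q\oplus V^\ast\oplus \Q$, matching $H^\bdot(\Sigma_g;\Q)$ in every degree, which is equivalent to saying the minimal model is generated in degree $1$.

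The main obstacle is the closed case: since $\pi_1(\Sigma_g)$ is not nilpotent, the standard Sullivan dictionary between minimal models and rational homotopy groups does not apply directly, so one cannot simply invoke asphericity to conclude that the minimal model is generated in degree $1$. The essential inputs are (a) formality of K\"ahler manifolds, which guarantees that the minimal model is determined by the cohomology ring, and (b) the explicit cohomology computation of the quadratic Malcev Lie algebra $\p$, which verifies in particular that $H^2_\cts(\p;\Q)$ is one-dimensional rather than larger and so no generators are needed above degree $1$.
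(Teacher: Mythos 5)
Your proposal is correct and follows essentially the same strategy as the paper: the non-closed case via the freeness of $\pi_1$ and the vanishing of free Lie algebra cohomology above degree one, and the closed genus-$g$ case via K\"ahler formality together with the cohomology computation for the quadratic Malcev Lie algebra $[\L(H)/(\theta)]^\wedge$. The one step you cite but do not carry out is the verification that the quadratic algebra $A_\bdot = T(H)/(\theta)$ is Koszul (i.e., that the trivial module $\Q$ admits a length-two linear free resolution), which is precisely what the paper's explicit resolution $0\to A_\bdot\to H\otimes A_\bdot\to A_\bdot\to\Q\to 0$ supplies.
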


\begin{proof}
When $\pi$ is free, the result follows trivially from the fact that the cohomology of a free Lie algebra vanishes in degrees $>1$. When $\pi$ is the fundamental group of a genus $g$ surface, this seems to be well-known folklore.\footnote{See, for example, pages 290 and 316 of \cite{sullivan}.} However, I will include a very brief proof as I do not know any good references.

The starting point is that a compact Riemann surface $X$, being a compact K\"ahler manifold, is ``formal''.\footnote{Alternatively, this follows from the existence of a MHS on $\p$ and the purity of $H_1(X)$.} This implies that the Lie algebra $\p$ of the unipotent completion of $\pi = \pi_1(X,x)$ is isomorphic to the completion of its associated graded:
$$
\p \cong [\L(H)/(\theta)]^\wedge,
$$
where $H=H_1(X;\Q)$ and $\theta = \sum_{j=1}^g [a_j,b_j]$ with $a_1,\dots,b_g$ a symplectic basis of $H$. The continuous cohomology of $\p$ is isomorphic to the cohomology of the associated graded Lie algebra $\p_\bdot := \L(H)/(\theta)$. It is clear that $H_0(\p_\bdot) = \Q$, $H_1(\p_\bdot) \cong H$. To prove the result, it suffices to show that $H_j(\p_\bdot)$ is is 1-dimensional when $j=2$ and vanishes when $j>2$.

The universal enveloping algebra $U\p_\bdot$ is graded isomorphic to $A_\bdot := T(H)/(\theta)$, where $T(H)$ denotes the tensor algebra on $H$. The homological assertion is proved by observing that, for all $n\ge 0$,
$$
\xymatrix{
0 \ar[r] & A_n \ar[r]^(.38){\partial_2} & H\otimes A_{n+1} \ar[r]^(.56){\partial_1} & A_{n+2} \ar[r] &  0
}
$$
is exact, where $\partial_2(u) = \sum_{j=1}^g (a_j \otimes b_ju - b_j\otimes a_ju)$ and $\partial_1(x\otimes v) = xv$. It implies that
$$
\xymatrix{
0 \ar[r] & A_\bdot \ar[r]^(.4){\partial_2} & H\otimes A_\bdot \ar[r]^(.58){\partial_1} & A_\bdot \ar[r] &  \Q \ar[r] & 0
}
$$
is a left $A_\bdot$ resolution of the trivial $A_\bdot$-module $\Q$ from which the homological computation follows.
\end{proof}

\section{Iterated Integrals and Path Space De~Rham Theorems}
\label{sec:it_ints}

We will assume that the reader has a basic familiarity with Chen's iterated integrals. Three relevant references are Chen's comprehensive survey article \cite{chen:bams}, the introduction \cite{hain:bowdoin} to iterated integrals and Hodge theory, and \cite{hain:dht}, in which the mixed Hodge structures on the cohomology of path spaces used in this paper are constructed. Throughout this section, $\kk$ will be either $\R$ or $\C$.\footnote{It is possible to take $\kk = \Q$ if ones take $E^\bdot(X)$ to be the DGA of Sullivan's rational PL forms on the simplicial set of singular simplices on a topological space $X$.} The complex of smooth $\kk$-valued forms on a manifold $M$ (or differentiable space) will be denoted by $E^\bdot_\kk(M)$.

We begin with a very quick overview of Chen's theory. Suppose that $M$ is a smooth manifold. One needs a notion of differential forms on $PM$. Chen uses his useful and elementary notion of differential forms on a ``differentiable space''. But iterated integrals can be defined for any reasonable notion of differential forms on $PM$.

The {\em time ordered $r$-simplex} is
$$
\Delta^r :=
\{(t_1,\dots,t_r) \in \R^r : 0 \le t_1 \le \cdots \le t_r \le 1\}.
$$
For each $r\ge 0$, there is a ``sampling map''
$\phi_r : \Delta^r \times PM \to M \times M^r \times M$. It is defined by
$$
\phi_r(t_1,\dots,t_r,\gamma) =
\big(\gamma(0),\gamma(t_1),\dots,\gamma(t_r),\gamma(1)\big).
$$
Now suppose that $w_1,\dots,w_r \in E^\bdot(M)$. Then (up to a sign, which depends on conventions)
$$
\int(w_1|w_2|\dots|w_r)
$$
is the differential form $\pi_\ast \phi_r^\ast(1\times w_1 \times \dots \times w_r \times 1)$ on $PM$, where $\pi : \Delta^r \times PM \to PM$ is the projection. It has degree $\sum_j(-1+\deg w_j)$ and vanishes when any of the $w_j$ is of degree 0. In particular, when each $w_j$ is a 1-form, then $\int(w_1|\dots|w_r)$ is a smooth function on $PM$. Its value on $\gamma \in PM$ is the time ordered integral
$$
\int_\gamma (w_1|\dots| w_r) := \int_{\Delta^r} w_1 \times \dots \times w_r.
$$
Its value on a path does not depend on the parameterization of $\gamma$. We will use Chen's sign conventions (used in both \cite{chen:bams} and \cite{hain:dht}).

The space of iterated integrals is the subspace of $E^\bdot(PM)$ spanned by the elements
$$
p_0^\ast w' \wedge \int(w_1|\dots|w_r) \wedge p_1^\ast w''
$$
where $w',w'' \in E^\bdot(M)$. The space of iterated integrals is closed under exterior derivative and wedge product and is therefore a DGA.

\subsection{Reduced bar construction}

Iterated integrals on $PM$ and on ``pullback path fibrations'' are described algebraically by Chen's {\em reduced bar construction}. The following discussion summarizes results in Chen \cite[Chapt.~IV]{chen:bams}, especially in Section~4.2. There are also relevant discussions in Sections 1, 2 and 3.4 of \cite{hain:dht}.

Suppose that $f : N \to M\times M$ is a smooth map of smooth manifolds. One can pullback the free path fibration $p_0\times p_1: PM \to M\times M$ to obtain a fibration $p : P_f M \to N$:
\begin{equation}
\label{eqn:pullback}
\xymatrix{
P_f M \ar[r]^{Pf} \ar[d]_p \ar[r] & PM \ar[d]^{p_0\times p_1} \cr
N \ar[r]^f & M\times M
}
\end{equation}
The total space $P_fM$ is naturally a differentiable space. Two relevant cases are:
\begin{enumerate}

\item $N = \{x_0,x_1\}$ and $f$ is the inclusion, in which case $P_f M$ is $P_{x_0,x_1} M$;

\item $N=M$ and $f$ is the diagonal, in which case $P_f M$ is $\Lambda M$.

\end{enumerate}

Pullbacks of iterated integrals on $PM$ along $Pf$ and pullbacks of forms on $N$ along $p$ generate a sub-DGA of $E^\bdot(P_f M)$, which we denote by $\Ch^\bdot(P_f M)$. It admits an algebraic description in terms of the reduced bar construction. We review a special case.

Suppose that $A^\bdot$, $A_0^\bdot$ and $A_1^\bdot$ are non-negatively graded DGAs over $\kk$. Suppose that 
$$
\epsilon_j : A^\bdot \to A^\bdot_j,\qquad j=0,1
$$
are DGA homomorphisms. Then one can define (\cite[\S~4.1]{chen:bams}, \cite[\S~1.2]{hain:dht}) the reduced bar construction
$$
B(A_0^\bdot,A^\bdot,A_1^\bdot).
$$
With these assumptions, it is a non-negatively graded commutative DGA.

It is the $\kk$-span of elements $a'[a_1|\dots|a_r]a''$ which are multilinear in the $a$'s, where $a'\in A^\bdot_0$, $a''\in A^\bdot_1$ and each $a_j \in A^+$.\footnote{The empty symbol $[\blank]$, which corresponds to $r=0$, is interpreted as $1$.} It is an $A_0^\bdot \otimes A_1^\bdot$-module.

For future reference, we note that if $a_1,\dots a_r \in A^1$, then
\begin{multline}
\label{eqn:diff}
d[a_1|\dots|a_r] = 
-\sum_{j=1}^r [a_1|\dots|d a_j| \dots |a_r]
-\sum_{j=1}^{r-1} [a_1|\dots|a_j\wedge a_{j+1}|\dots|a_r]
\cr
+ [a_1|\dots|a_{r-1}]a_r - a_1[a_2|\dots|a_r].
\end{multline}
In addition, for each $f\in A^0$, there are also the relations
\begin{equation}
\label{eqn:relns}
\begin{aligned}[b]
[a_1|\dots|a_{j-1}|df|a_j|\dots|a_r]
&= [a_1|\dots |a_{j-1}|fa_j|\dots|a_r] - [a_1|\dots |fa_{j-1}|a_j|\dots|a_r]
\cr
[df|a_1\dots|a_r] &= [fa_1|\dots|a_r] - f[a_1|\dots|a_r]
\cr
[a_1|\dots|a_r|df] &= [a_1|\dots|fa_r] - [a_1|\dots|a_r]f
\cr 
[df] &= [\blank]f - f[\blank]
\end{aligned}
\end{equation}
where $r>0$ in the first three relations and $j>1$ in the first relation.

The map
$$
B\big(E^\bdot(M),E^\bdot(M),E^\bdot(M)\big)
\overset{\simeq}{\To} \Ch^\bdot(PM),
$$
defined by
$$
w'[w_1|\dots|w_r]w'' \mapsto p_0^\ast w' \wedge \int(w_1|\dots|w_r) \wedge p_1^\ast w''
$$
where $\e_0$ and $\e_1$ are both the identity, is a well-defined DGA isomorphism.

If $C^\bdot$ is a commutative DGA and $\e : A^\bdot \otimes A^\bdot \to C^\bdot$ is a DGA homomorphism, one defines the (reduced) circular bar construction by
$$
B(A^\bdot;C^\bdot) := B(A^\bdot,A^\bdot,A^\bdot)\otimes_{(A^\bdot\otimes A^\bdot)} C^\bdot.
$$
It is spanned by elements of the form $[a_1|\dots|a_r]c$ where each $a_j \in A^+$ and $c\in C^\bdot$.

A smooth map $f : N \to M\times M$ induces a homomorphism
$$
f^\ast : E^\bdot(M)\otimes E^\bdot(M) \to E^\bdot(N).
$$
We can thus form the circular bar construction $B(E^\bdot(M);E^\bdot(N))$. In terms of the notation above, the map
$$
B(E^\bdot(M);E^\bdot(N)) \to \Ch^\bdot(P_f M)
$$
that takes
$$
[w_1|\dots|w_r]w \mapsto (Pf)^\ast \Big(\int(w_1|\dots | w_r)\Big) \wedge p^\ast w
$$
is a well-define DGA isomorphism. See \cite[Thm.~4.2.1]{chen:bams}.

It is useful to know that complexes of iterated integrals constructed from quasi-isomorphic sub-DGAs of $E^\bdot(M)$ and $E^\bdot(M)$ are quasi-isomorphic.

\begin{proposition}[{Chen, cf.\ \cite[Cor.~1.2.3]{hain:dht}}]
\label{prop:qism}
Suppose that
$$
\xymatrix{
A_1^\bdot \otimes A_1^\bdot \ar[r]^{e_1}\ar[d]_{\phi_A} & C_1^\bdot \ar[d]^{\phi_C} \cr
A_2^\bdot \otimes A_2^\bdot \ar[r]^{\e_2} & C^\bdot_2
}
$$
is a diagram of DGA homomorphisms in which $A^\bdot_j$ and $C^\bdot_j$ are commutative and non-negatively graded ($j=1,2$). If $\phi_A$ and $\phi_C$ are quasi-isomorphisms and $A^\bdot_j$ are homologically connected, then the induced DGA homomorphism
$$
\phi : B(A^\bdot_1;C_1^\bdot) \To B(A^\bdot_2;C_2^\bdot)
$$
is a quasi-isomorphism.
\end{proposition}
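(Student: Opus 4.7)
The plan is to compare the two reduced circular bar constructions via the bar-length filtration. For $A^\bdot$ augmented to $C^\bdot$, set $F_p B(A^\bdot;C^\bdot)$ equal to the $\kk$-span of the monomials $[a_1|\cdots|a_r]c$ with $r\le p$. Inspecting (\ref{eqn:diff}), one sees that the ``internal'' summands (those carrying a $d a_j$) together with the internal differential on $c$ preserve $r$, while the juxtaposition summands $\pm[\cdots|a_j\wedge a_{j+1}|\cdots]$ and the two boundary summands $[a_1|\cdots|a_{r-1}]a_r$ and $a_1[a_2|\cdots|a_r]$ all strictly lower $r$; the auxiliary relations (\ref{eqn:relns}) also lower $r$. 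Hence $F_\bdot$ is an exhaustive increasing filtration by subcomplexes with $F_{-1}=0$, and the associated graded is, after accounting for the relations (\ref{eqn:relns}) (which have the effect of replacing $A^+$ by its quotient $\bar A := A^+/d A^0$),
\[
\Gr^F_p B(A^\bdot;C^\bdot)\;\cong\;\bar A^{\otimes p}\otimes C^\bdot
\]
equipped with the tensor-product differential.

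The map $\phi$ preserves $F_\bdot$ and, on the $p$-th graded piece, reduces to $\bar\phi_A^{\otimes p}\otimes \phi_C$. Using the hypothesis that each $A_j^\bdot$ is homologically connected, the subcomplex $dA_j^0\subset A_j^+$ consists of coboundaries and is itself acyclic, so $H^\bdot(\bar A_j)\cong H^{\ge 1}(A_j^\bdot)$ and $\bar\phi_A$ is again a quasi-isomorphism. Since $\kk$ is a field, the K\"unneth theorem then forces each $\bar\phi_A^{\otimes p}\otimes\phi_C$ to be a quasi-isomorphism; equivalently, $\Gr^F_p\cone(\phi)$ is acyclic for every $p$.

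It remains to deduce acyclicity of $\cone(\phi)$ from that of its associated graded pieces. Because the filtration $F_\bdot$ on $\cone(\phi)$ is bounded below ($F_{-1}=0$) and exhaustive, a descending induction suffices: given a cocycle $x\in\cone(\phi)$, let $p$ be the minimal integer with $x\in F_p$; acyclicity of $\Gr^F_p\cone(\phi)$ yields $y_p\in F_p$ with $x-dy_p\in F_{p-1}$, and iterating down to $F_{-1}=0$ expresses $x$ as $d(y_p+y_{p-1}+\cdots+y_0)$. Hence $\cone(\phi)$ is acyclic and $\phi$ is a quasi-isomorphism.

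The hard part, in my view, is not the homological algebra at the end but the first step: the clean identification of $\Gr^F_p B$ with $\bar A^{\otimes p}\otimes C^\bdot$, which requires careful bookkeeping with the relations (\ref{eqn:relns}) and the shuffle structure implicit in the word ``reduced''. Once this identification is verified, the K\"unneth computation and the descending induction are formal, and the argument follows the same template as Chen's treatment of path-space de Rham theorems in \cite[Chap.~IV]{chen:bams}, simply repackaged for the reduced circular setting used here.
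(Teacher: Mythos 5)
Your overall strategy — filter by bar length, show the induced map on the associated graded is a quasi-isomorphism by K\"unneth, and then conclude by the usual bounded-below/exhaustive filtration argument — is the standard route and is essentially what the cited reference \cite[\S1.2]{hain:dht} does. The descending induction at the end is correct as stated. But two of the intermediate steps are not right as written, and between them they are precisely where the hypotheses of the proposition are actually doing work.

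First, the assertion that $dA_j^0 \subset A_j^+$ ``is itself acyclic'' is false: $dA^0$ is a complex concentrated in degree $1$ with zero differential, so $H^1(dA^0) = dA^0$, which is typically nonzero. The correct route to $H^\bdot(\bar A) \cong H^{\ge 1}(A^\bdot)$ is the long exact sequence of
$$
0 \to dA^0 \to A^+ \to \bar A \to 0,
$$
together with the observations $H^0(\bar A)=0$ (since $\bar A^0=0$), $H^1(A^+)=\ker(d|_{A^1})$, and $H^n(A^+)=H^n(A^\bdot)$ for $n\ge 2$. These give $H^1(\bar A)=\ker(d|_{A^1})/dA^0 = H^1(A^\bdot)$ and $H^n(\bar A)=H^n(A^\bdot)$ for $n\ge 2$. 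Notice this computation does \emph{not} use homological connectedness, so your attribution of that hypothesis to this step is a misdirection.

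Second, and more seriously, the identification $\Gr^F_p B(A^\bdot;C^\bdot)\cong \bar A^{\otimes p}\otimes C^\bdot$ — which you correctly flag as ``the hard part'' and then do not prove — is exactly where homological connectedness enters. The relations (\ref{eqn:relns}) have leading term of bar length $r+1$ only when $df\neq 0$. When $f\in A^0$ is a $d$-cocycle, the left-hand side $[\,\cdots|df|\cdots\,]$ vanishes by multilinearity, and the relation degenerates to an identity among length-$r$ words, e.g.\ $[fa_1|\cdots|a_r]=f[a_1|\cdots|a_r]$ from the second relation, or $[\cdots|fa_j|\cdots]=[\cdots|fa_{j-1}|a_j|\cdots]$ from the first. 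These live entirely in $F_r$, so they impose further relations in $\Gr^F_r$ beyond merely killing $dA^0$ in each slot. If $H^0(A_j)=\kk$, the only $d$-cocycles in degree $0$ are scalars and all such degenerate relations are automatic; if $H^0(A_j)\neq\kk$, they are not, and the clean description of the associated graded fails. Without identifying this as the role of the hypothesis, your argument would (incorrectly) appear to prove the proposition with no connectedness assumption at all. (The remark about ``shuffle structure'' is also off target here; ``reduced'' in Chen's sense refers to the relations (\ref{eqn:relns}), not to the shuffle product.) Once this step is done carefully, the rest of your argument goes through.
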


\begin{corollary}
\label{cor:stalk}
If $g : Y \to N$ is a homotopy equivalence, then
$$
(Ph)^\ast : \Ch^\bdot(P_f M) \to \Ch^\bdot(P_{f\circ g}M)
$$
is a quasi-isomorphism.
\end{corollary}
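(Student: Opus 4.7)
The plan is to reduce the statement directly to Proposition~\ref{prop:qism} via the DGA isomorphism
$$
B(E^\bdot(M); E^\bdot(N)) \overset{\simeq}{\To} \Ch^\bdot(P_f M), \qquad [w_1|\dots|w_r]w \mapsto (Pf)^\ast \textstyle\int(w_1|\dots|w_r)\wedge p^\ast w,
$$
recalled just above the corollary, together with its analogue for $f\circ g : Y \to M\times M$ in place of $f$. First I would observe that the pullback square (\ref{eqn:pullback}) for $f$ and the corresponding one for $f\circ g$ fit into a commutative rectangle
$$
\xymatrix{
P_{f\circ g} M \ar[r]^{Pg} \ar[d]_{p'} & P_f M \ar[r] \ar[d]^p & PM \ar[d] \cr
Y \ar[r]^g & N \ar[r]^f & M\times M,
}
$$
so that $(Pf)^\ast\circ(Pg)^\ast = (P(f\circ g))^\ast$ and $p\circ Pg = g\circ p'$. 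Chasing a generator $[w_1|\dots|w_r]w$ through these identities shows that, under the bar-construction identification, $(Pg)^\ast$ corresponds to the map $B(E^\bdot(M);E^\bdot(N)) \to B(E^\bdot(M);E^\bdot(Y))$ induced by $\id_{E^\bdot(M)}$ on the algebra and by $g^\ast : E^\bdot(N) \to E^\bdot(Y)$ on the coefficient DGA.

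Next I would invoke de~Rham's theorem: since $g$ is a homotopy equivalence, $g^\ast : E^\bdot(N) \to E^\bdot(Y)$ is a quasi-isomorphism of DGAs. Applying Proposition~\ref{prop:qism} with $A_1^\bdot = A_2^\bdot = E^\bdot(M)$, $\phi_A = \id$, $C_1^\bdot = E^\bdot(N)$, $C_2^\bdot = E^\bdot(Y)$ and $\phi_C = g^\ast$ then yields the desired quasi-isomorphism of bar constructions. The only side hypothesis to verify is that $E^\bdot(M)$ is homologically connected; this holds (possibly after restricting to a connected component of $M$) because $H^0$ of the smooth de~Rham complex of a connected manifold is $\kk$.

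The corollary is therefore little more than a translation of Proposition~\ref{prop:qism} through Chen's DGA model for iterated integrals on a pulled-back path fibration, and I do not expect any serious obstacle. The one step that merits careful checking is the identification of $(Pg)^\ast$ with the map induced by $g^\ast$ on coefficients; this reduces to the straightforward compatibility of pullback along $Pg$ with the sampling maps $\phi_r$ used to define the iterated integrals and with the base projection $p$.
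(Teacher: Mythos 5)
Your argument is correct and is precisely how the paper intends the corollary to be read: it follows from Proposition~\ref{prop:qism} by translating $(Pg)^\ast$ through the DGA isomorphism $B(E^\bdot(M);E^\bdot(N))\cong\Ch^\bdot(P_f M)$ into the map induced by $g^\ast$ on the coefficient DGA, then using that $g^\ast$ is a quasi-isomorphism by de~Rham. The paper gives no separate proof, treating it as an immediate consequence, and your identification of $(Pg)^\ast$ with the coefficient map, together with checking that $E^\bdot(M)$ is homologically connected, supplies exactly the missing details.
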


\begin{remark}
\label{rem:qism}
If $A^\bdot$ is a non-negatively graded DGA with $A^0 = \kk$, then the associated reduced bar and circular bar constructions are canonically graded. Specifically, there are canonical isomorphisms
$$
B(A_1^\bdot;A^\bdot,A_2^\bdot) \cong \bigoplus_{n\ge 0} A_1^\bdot \otimes (A^+)^{\otimes n} \otimes A_2^\bdot
$$
and
$$
B(A^\bdot;C^\bdot) \cong \bigoplus_{n\ge 0} (A^+)^{\otimes n} \otimes C^\bdot
$$
where $A^+ = \oplus_{n>0} A^n$. This fact, in conjunction with Proposition~\ref{prop:qism}, is a useful technical tool. One can replace $E^\bdot(M)$ by quasi-isomorphic sub-DGA $A^\bdot$ with $A^\bdot = \kk$. This is particularly useful when doing relative de~Rham theory, as in Sections~\ref{sec:pullback_fibrations} and \ref{sec:rel-DR-PM}.
\end{remark}

\subsection{De~Rham theorems}

Denote the cohomology of the complex of $\kk$-valued iterated integrals on the pullback path fibration $P_f M$ by
$$
H^\bdot_\dR(P_f M;\kk).
$$

\begin{theorem}[Chen]
\label{thm:chen_dr}
If $M$ is connected, then integration induces an isomorphism
$$
H^0_\dR(\Lambda_x M;\kk) \overset{\simeq}{\To}
\varinjlim_n \Hom_\kk(\kk\pi_1(M,x)/I^n,\kk).
$$
Consequently, if $H^1(M;\kk)$ is finite dimensional, then for each $x\in M$ integration induces a Hopf algebra isomorphism
$$
\cO(\pi_1^\un(M,x)_{/\kk}) \cong H^0_\dR(\Lambda_x M;\kk).
$$
\end{theorem}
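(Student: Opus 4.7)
The plan is to identify $H^0_\dR(\Lambda_x M;\kk)$ with the degree-$0$ cohomology of Chen's reduced bar construction, then pass to the Sullivan $1$-minimal model of $M$, where both sides can be computed directly and matched via integration.

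First I would invoke the pullback-path discussion of Section~\ref{sec:it_ints} with $f:\{x\}\hookrightarrow M\times M$, $x\mapsto(x,x)$. Then $P_fM=\Lambda_xM$ and the DGA isomorphism $\Ch^\bdot(\Lambda_xM)\cong B(\kk,E^\bdot(M),\kk)$ holds, with both augmentations given by evaluation at $x$. In bar degree zero the summands are $[w_1|\dots|w_r]$ with all $w_j\in E^1(M)$; because $1$-forms vanish at a point, the last two boundary terms of (\ref{eqn:diff}) vanish, and the cocycle condition on $\xi=\sum c_I[w_{i_1}|\dots|w_{i_r}]$ is precisely Chen's classical homotopy-invariance condition. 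Hence $\gamma\mapsto\int_\gamma\xi$ descends to a $\kk$-linear map $\kk\pi_1(M,x)\to\kk$. Chen's multiplicative formula
$$
\int_{\alpha\beta}(w_1|\dots|w_r)=\sum_{j=0}^{r}\int_\alpha(w_1|\dots|w_j)\int_\beta(w_{j+1}|\dots|w_r)
$$
shows that iterated integrals of length $\le r$ kill $I^{r+1}$, so integration lands in $\varinjlim_n\Hom_\kk(\kk\pi_1(M,x)/I^n,\kk)$.

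To prove bijectivity, I would replace $E^\bdot(M)$ by its Sullivan $1$-minimal model $\M_1$. Proposition~\ref{prop:qism} with $A_0=A_1=\kk$ upgrades the quasi-isomorphism $\M_1\hookrightarrow E^\bdot(M)$ to a quasi-isomorphism $B(\kk,\M_1,\kk)\to B(\kk,E^\bdot(M),\kk)$, so it suffices to treat $\M_1$. By the identification recalled just before Theorem~\ref{thm:surfaces}, $\M_1\cong\Hom^\cts(\Lambda^\bdot\p,\kk)$ where $\p$ is the Lie algebra of $\pi_1^\un(M,x)_{/\kk}$. A direct computation then identifies $H^0B(\kk,\M_1,\kk)$ with $\Hom^\cts_\kk(U\p,\kk)=\cO(\pi_1^\un(M,x)_{/\kk})$, which equals $\varinjlim_n\Hom_\kk(\kk\pi_1(M,x)/I^n,\kk)$ by the definition of unipotent completion in Section~\ref{sec:topology}.

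The main obstacle — and the step most likely to need care — is verifying that the identification $H^0B(\kk,\M_1,\kk)\cong\cO(\pi_1^\un(M,x)_{/\kk})$ is realized by integration. I would do this by induction on bar length matched against the $I$-adic filtration: both are exhaustive and complete, and integration is a filtered map. The length-one layer reduces to ordinary de~Rham duality $H^1_\dR(M;\kk)\cong\Hom_\kk(I/I^2,\kk)$ composed with Hurewicz; the inductive step uses that in $\M_1$ the differential of a length-$(n{+}1)$ generator encodes precisely the next-step relations in the lower central series of $\p$, which dualize to the relations cutting $\kk\pi_1(M,x)/I^{n+2}$ out of the tensor algebra quotient of $I/I^2$. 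Since integration is an isomorphism on every graded piece, it is an isomorphism on the limits. The Hopf-algebra refinement in the ``Consequently'' clause then follows at no extra cost: shuffle product of iterated integrals, which makes the bar construction a commutative DGA, corresponds to the algebra structure on $\cO(\pi_1^\un(M,x)_{/\kk})$, while concatenation of loops dualizes to the coproduct. Finite-dimensionality of $H^1(M;\kk)$ is used only to ensure that $\pi_1^\un(M,x)_{/\kk}$ is an honest prounipotent affine group scheme.
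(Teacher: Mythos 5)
The paper states this result as a cited theorem of Chen and gives no proof, so there is no in-text argument to compare against; I can only assess your sketch on its own terms.

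The outline is in the right spirit, but there is a genuine gap at the point where you appeal to Proposition~\ref{prop:qism}. That proposition requires the DGA map $\phi_A$ to be a quasi-isomorphism, and the inclusion $\M_1 \hookrightarrow E^\bdot(M)$ of the Sullivan $1$-minimal model is \emph{not} a quasi-isomorphism in general: by construction it induces an isomorphism on $H^1$ and only an injection on $H^2$, and in higher degrees it can miss arbitrarily much. It is a quasi-isomorphism precisely when $M$ is a rational $K(\pi,1)$, which is not assumed here. So the claimed quasi-isomorphism $B(\kk,\M_1,\kk) \to B(\kk,E^\bdot(M),\kk)$ does not follow from the cited proposition. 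You cannot repair this by passing to the full minimal model $\M$ instead: then $\M \to E^\bdot(M)$ \emph{is} a quasi-isomorphism and Proposition~\ref{prop:qism} applies, but $\M$ is no longer isomorphic to $\Hom^\cts(\Lambda^\bdot\p,\kk)$ unless $M$ is a rational $K(\pi,1)$, so the explicit computation of $H^0(B(\kk,\M,\kk))$ as $\cO(\pi_1^\un)$ breaks down.

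The statement you actually need — that the length-zero cohomology $H^0(B(\kk,A,\kk))$ depends only on the $1$-minimal model of $A$ (equivalently, that a map inducing an iso on $H^1$ and an injection on $H^2$ induces an iso on $H^0$ of the reduced bar construction) — is true, but it is a separate nontrivial statement. It is essentially the content of Chen's $\pi_1$ de~Rham theorem itself, proved by a careful comparison of the bar-length filtration against the $I$-adic filtration, with injectivity (the ``homotopy functionals'' characterization) being the hard direction. Your closing paragraph gestures in exactly this direction, but since you have already used the $\M_1$-reduction to \emph{set up} the filtration comparison, the argument is circular without an independent proof of that reduction. A minor point in the same vein: the reason the boundary terms $[w_1|\dots|w_{r-1}]w_r$ and $w_1[w_2|\dots|w_r]$ vanish in $B(\kk,E^\bdot(M),\kk)$ is that the augmentations $E^\bdot(M)\to\kk$ annihilate positive-degree forms, not that ``$1$-forms vanish at a point''; the conclusion is right but the phrasing suggests a pointwise vanishing that is not what is going on.
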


\begin{corollary}
\label{cor:path-dr}
If $H_1(M;\kk)$ is finite dimensional, then for all $x_0,x_1\in M$, integration induces an isomorphism
$$
H^0_\dR(P_{x_0,x_1} M;\kk) \cong \Hom^\cts(H_0(P_{x_0,x_1}M),\kk).
$$
\end{corollary}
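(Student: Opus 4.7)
The plan is to reduce the statement to the loop-space case given by Theorem~\ref{thm:chen_dr}, by transporting the whole structure along a fixed reference path. Choose $\gamma_0 \in P_{x_0,x_1}M$ and consider the (smooth, in Chen's sense) path-concatenation map
$$
R_{\gamma_0} : \Lambda_{x_0} M \to P_{x_0,x_1} M, \qquad \alpha \mapsto \alpha\gamma_0,
$$
together with its partner $R_{\gamma_0^{-1}} : P_{x_0,x_1}M \to \Lambda_{x_0}M$, $\beta \mapsto \beta\gamma_0^{-1}$. The two composites $R_{\gamma_0} \circ R_{\gamma_0^{-1}}$ and $R_{\gamma_0^{-1}} \circ R_{\gamma_0}$ are homotopic to the identity rel the relevant basepoints, which is all we shall need.

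On $\pi_0$, $R_{\gamma_0}$ induces the bijection $\pi_1(M,x_0) \to \pi(M;x_0,x_1)$, $[\alpha] \mapsto [\alpha\gamma_0]$, which is an isomorphism of left $\kk\pi_1(M,x_0)$-modules. It therefore identifies the $I$-adic topology on $\kk\pi_1(M,x_0)$ with the natural topology on $\kk\pi(M;x_0,x_1)$, and dualizing gives an isomorphism $\Homcts(\kk\pi(M;x_0,x_1),\kk) \cong \Homcts(\kk\pi_1(M,x_0),\kk)$. On the de~Rham side, Chen's path-composition formula
$$
\int_{\alpha\gamma_0}(w_1|\cdots|w_r) = \sum_{i=0}^{r} \int_\alpha(w_1|\cdots|w_i)\,\int_{\gamma_0}(w_{i+1}|\cdots|w_r)
$$
exhibits $R_{\gamma_0}^\ast$ as carrying $\Ch^\bdot(P_{x_0,x_1}M)$ into $\Ch^\bdot(\Lambda_{x_0}M)$; the same formula with $\gamma_0$ replaced by $\gamma_0^{-1}$ furnishes a pullback in the opposite direction. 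Since closed degree-zero iterated integrals factor through $\pi_0$ and both composites act as the identity on $\pi_0$, $R_{\gamma_0}^\ast$ and $R_{\gamma_0^{-1}}^\ast$ are mutually inverse isomorphisms on $H^0_\dR$.

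The identity $\langle R_{\gamma_0}^\ast\omega,\,[\alpha]\rangle = \int_{\alpha\gamma_0}\omega = \langle \omega,\,[\alpha\gamma_0]\rangle$ makes the square
$$
\xymatrix{
H^0_\dR(P_{x_0,x_1}M;\kk) \ar[r]^{R_{\gamma_0}^\ast} \ar[d] & H^0_\dR(\Lambda_{x_0} M;\kk) \ar[d] \cr
\Homcts(\kk\pi(M;x_0,x_1),\kk) \ar[r] & \Homcts(\kk\pi_1(M,x_0),\kk)
}
$$
commute, where the horizontal arrows are the isomorphisms constructed above and the vertical arrows are the integration maps. The right vertical arrow is an isomorphism by Theorem~\ref{thm:chen_dr} (this is the one place where the hypothesis that $H_1(M;\kk)$ is finite dimensional is used), so the left vertical arrow --- the integration map of the corollary --- is an isomorphism as well. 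The only technical wrinkle is ensuring that $R_{\gamma_0}^\ast$ and $R_{\gamma_0^{-1}}^\ast$ genuinely act on the Chen complexes (which Chen's composition formula provides) and invert each other on $H^0_\dR$; everything else is transport of structure along $\gamma_0$.
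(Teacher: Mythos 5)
Your argument is correct and is precisely the standard argument that the paper leaves implicit: Corollary~\ref{cor:path-dr} is stated without a written proof, as an immediate consequence of Theorem~\ref{thm:chen_dr}, and the intended filling is exactly the transport of structure along a reference path $\gamma_0$, using Chen's path-composition formula to show that $R_{\gamma_0}^\ast$ and $R_{\gamma_0^{-1}}^\ast$ intertwine the de~Rham and dual-group-algebra sides and are mutually inverse on $H^0_\dR$. The one small point worth flagging is where the finite-dimensionality hypothesis enters: what Theorem~\ref{thm:chen_dr} gives unconditionally is the $I$-adic dual $\varinjlim_n\Hom_\kk(\kk\pi_1(M,x_0)/I^n,\kk)$, and the hypothesis $\dim_\kk H_1(M;\kk)<\infty$ is used (as in Section~5 and the ``Notational Interlude'') to identify the $I$-adic topology with the unipotent topology, hence $\varinjlim_n\Hom_\kk(\kk\pi_1/I^n,\kk)$ with $\Hom^\cts$; your phrasing attributes the use of the hypothesis to Theorem~\ref{thm:chen_dr} itself, which is harmless but slightly conflates these two steps.
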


\subsection{Iterated integrals and rational $K(\pi,1)$ spaces}

Since the loop space $\Lambda_x X$ of a path connected topological space is a disjoint union of contractible spaces, a space $X$ is a $K(\pi,1)$ if and only if $H^j(\Lambda_x X)$ vanishes for all $j>0$. There is an analogous statement for rational $K(\pi,1)$. It is an immediate consequence of \cite[Thm.~2.6.2]{hain:dht} and the definition of a rational $K(\pi,1)$.

\begin{proposition}
\label{prop:rat_Kpi1}
A connected manifold $M$ is a rational $K(\pi,1)$ if and only if $H^j_\dR(\Lambda_x M)$ vanishes when $j>0$ for one (and hence all) $x\in M$. Equivalently, it is a rational $K(\pi,1)$ if and only if $H^j_\dR(P_{x_0,x_1}M) = 0$ when $j>0$ for all $x_0,x_1\in M$.
\end{proposition}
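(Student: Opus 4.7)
The plan is to use Chen's iterated integral de~Rham theorem to identify both $H^\bdot_\dR(\Lambda_x M)$ and $H^\bdot_\dR(P_{x_0,x_1}M)$ with the cohomology of the reduced bar construction of the Sullivan minimal model $\M$ of $M$, and then verify algebraically that positive-degree vanishing characterizes 1-minimality of $\M$.

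First I would apply \cite[Thm.~2.6.2]{hain:dht} to the map $f : \mathrm{pt} \to M\times M$ sending the point to $(x,x)$, resp.\ to $(x_0,x_1)$. That theorem gives a natural quasi-isomorphism $B(E^\bdot(M); E^\bdot(N)) \overset{\simeq}{\To} \Ch^\bdot(P_f M)$, so that
$$H^\bdot_\dR(\Lambda_x M) \cong H^\bdot(B(E^\bdot(M); \kk_x)), \qquad H^\bdot_\dR(P_{x_0,x_1} M) \cong H^\bdot(B(E^\bdot(M); \kk_{(x_0,x_1)})),$$
where in each case $\kk$ carries the $E^\bdot(M)\otimes E^\bdot(M)$-module structure via basepoint evaluation. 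Proposition~\ref{prop:qism} then lets me replace $E^\bdot(M)$ by a Sullivan minimal model $\M$, giving
$$H^\bdot_\dR(\Lambda_x M) \cong H^\bdot(B(\M; \kk)) \cong H^\bdot_\dR(P_{x_0,x_1} M).$$
Since $\M$ depends only on the rational homotopy type of $M$ and the $\kk$-module structure reduces to the augmentation of $\M$, the right-hand side is independent of all basepoint choices. This already settles the ``for one (hence all)'' clause of the loop-space statement and the equivalence of the loop-space and path-space formulations.

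It remains to prove the purely algebraic equivalence that $H^j(B(\M; \kk)) = 0$ for all $j > 0$ if and only if $\M$ is generated in degree $1$ (which, by the characterization recalled in the excerpt, is precisely the rational $K(\pi,1)$ property). For the forward direction, when $\M$ is generated in degree $1$ by a vector space $V^1$, an induction on bar length using the differential (\ref{eqn:diff}) together with the vanishing of positive-degree forms at the basepoint shows that every positive-degree element of $B(\M; \kk)$ is a coboundary; for example, $d[v_1|v_2 \cdots v_k] = \pm [v_1 v_2 \cdots v_k]$ modulo bar elements of length $< k$, and iterating this produces an explicit contracting homotopy on the positive-degree part.

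For the converse I would filter $B(\M; \kk)$ by bar length; the associated spectral sequence has $E_1$ page built out of the indecomposables of $\M$, so a minimal Sullivan generator $v$ in some degree $n \geq 2$ contributes a nonzero class to $H^{n-1}(B(\M; \kk))$, contradicting the vanishing hypothesis. The main obstacle is this converse: one must verify that higher-degree minimal generators genuinely survive to nonzero bar cohomology rather than being killed by later differentials. This ultimately reflects the standard fact in rational homotopy theory that the bar construction of the 1-minimal part of $\M$ is quasi-isomorphic to $B(\M;\kk)$ exactly when $\M$ itself is 1-minimal, and any higher generator leaves a nontrivial obstruction in the bar-length spectral sequence.
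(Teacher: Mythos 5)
Your reduction to the algebraic claim --- that $H^{>0}(B(\M;\kk)) = 0$ if and only if the minimal model $\M$ is generated in degree $1$ --- is the right target, and the basepoint-independence observation that disposes of the ``one, hence all'' and the $P_{x_0,x_1}M$ clauses is correct. But the paper's own proof is simply the sentence preceding the statement: it cites \cite[Thm.~2.6.2]{hain:dht}, whose content is precisely this equivalence. You are therefore attempting to reprove the cited theorem, and both directions of your algebraic argument have gaps.

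In the forward direction, the contracting-homotopy sketch is not rigorous. By (\ref{eqn:diff}), $d[v_1|v_2\cdots v_k]$ equals $-[v_1\wedge v_2\cdots v_k]$ only modulo the internal-differential terms $-[dv_1|v_2\cdots v_k]-[v_1|d(v_2\cdots v_k)]$, and these do not vanish: a $1$-minimal $\M$ still has nontrivial $\M^{\geq 2}$, so $dv_1 \in \Lambda^2 V^1$ is generally nonzero. A correct version must filter (e.g.\ by the total number of degree-one generators appearing, so that wedging preserves the weight while internal differentials raise it), identify the $E_0$ page with $B(\kk,\Lambda V^1,\kk)$ with trivial differential, and check $E_0$-acyclicity in positive degree --- your phrase ``modulo bar elements of length $<k$'' does not specify this. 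In the converse you flag the obstacle yourself: to show a minimal generator $v$ of degree $n\geq 2$ contributes a nonzero class in $H^{n-1}(B(\M;\kk))$, one must rule out that $[v]$ is killed by later differentials in the bar-length spectral sequence, and the key input --- that $dv$ is decomposable by minimality, so it lives in a different filtration level --- is not made. These are exactly the details that \cite[Thm.~2.6.2]{hain:dht} supplies and that the paper chooses to cite rather than reprove.
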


\subsection{Relative de~Rham Theory for pullback path fibrations}
\label{sec:pullback_fibrations}

Suppose that $M$ is a connected manifold. Fix a smooth map $f : N \to M\times M$. Then one has the pullback path fibration (\ref{eqn:pullback}). Chen \cite{chen:bams} proved that when $M$ is simply connected, integration induces a graded algebra isomorphism
$$
H^\bdot_\dR(P_f M;\R) \to H^\bdot(P_f M;\R).
$$
Here we investigate what happens when $M$ is connected, but not simply connected.\footnote{These results do not seem to have previously appeared in the literature.} We are particularly interested in the case where $P_f M$ is the free loop space $\Lambda M$. In the current context, it is best to do this using sheaf theory. For simplicity, we assume that $M$ and $N$ are finite complexes.

Denote the restriction of the fibration $P_f M \to M$ to an open subset $U$ of $N$ by $P_f M|_U$. Let $\Pdual_f^j$ be the sheaf over $N$ associated to the presheaf
$$
U \mapsto H^j_\dR(P_f M|_U;\kk).
$$
Corollary~\ref{cor:stalk}, applied to the inclusion $\{x\}\to U$, implies that it is locally constant. The finiteness assumptions imply that it is a direct limit of unipotent local systems.

When $M$ is a rational $K(\pi,1)$, $\Pdual_f^j$ will vanish when $j>0$. In this case, we will sometimes write $\Pdual_f$ instead of $\Pdual_f^0$.

Denote the sheaf of smooth $\kk$-valued $k$-forms on $N$ (a manifold) by $\sE_N^k$. Define
$$
\sP_f^k := \sE_N^0 \otimes_\kk \Pdual_f^k.
$$
It is a direct limit of flat vector bundles.

\begin{proposition}
\label{prop:ss}
There is a spectral sequence converging to $H^{j+k}_\dR(P_f M;\kk)$ with $E_1^{j,k} = E^j(N;\sP_f^k)$ and $d_1 = \nabla$, so that $E_2^{j,k} = H^j(N;\sP^k_f)$.
\end{proposition}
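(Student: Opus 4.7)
The plan is to construct the spectral sequence from a natural filtration on $\Ch^\bdot(P_f M)$ coming from the circular bar construction, and to identify its pages by combining the stalk computation of Corollary~\ref{cor:stalk} with a sheafification over $N$.

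First, using the DGA isomorphism $\Ch^\bdot(P_f M)\cong B(E^\bdot(M);E^\bdot(N))$, I would filter by the degree of the ``base'' factor: let $F^p$ be the subspace spanned by elements $[w_1|\cdots|w_r]\,w$ with $w\in E^{\geq p}(N)$. Since each $w_i$ lies in $E^+(M)$, inspection of the bar differential (\ref{eqn:diff}) together with the end-boundary terms of the circular bar construction — those that absorb $w_1$ or $w_r$ into the base via $f^\ast$ — shows that every piece of $d$ either preserves $\deg w$ (the inner terms involving $dw_i$ and $w_i\wedge w_{i+1}$) or strictly raises it. Hence $F^\bullet$ is a filtration by subcomplexes; it is exhaustive and locally finite in each total degree, so the associated spectral sequence converges to $H^\bdot_\dR(P_f M;\kk)$.

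To identify $E_1^{j,k}$ I would work sheaf-theoretically on $N$. Let $\sCh^\bdot_f$ be the sheaf associated to the presheaf $U\mapsto \Ch^\bdot(P_f M|_U)$, filtered analogously; this filtration is compatible with restriction. Over a contractible $U\subseteq N$, Corollary~\ref{cor:stalk} (applied to the inclusion of any point of $U$) identifies up to quasi-isomorphism the iterated integrals on $P_f M|_U$ with those on a single fiber $P_{f(x)}M$, so the $d_0$-cohomology presheaf of the column $E_0^{j,\bdot}$ sheafifies to $\Pdual_f^k$. Tensoring with the $E^j(N)$ factor split off by the filtration (free over $\sE_N^0$) yields the desired isomorphism $E_1^{j,k}\cong E^j(N;\sP_f^k)$.

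The differential $d_1$ collects the parts of $d$ that raise $\deg w$ by exactly one, taken modulo $d_0$-boundaries. These are the de~Rham differential on $E^\bdot(N)$ coming from $\pm[w_1|\cdots|w_r]\,dw$, together with the end-boundary terms involving a $1$-form $w_1$ or $w_r$; the latter, once we pass to $d_0$-cohomology along the fiber, encode the parallel transport action of $\pi_1(N)$ on $\Pdual_f^k$. Together they reconstitute the covariant exterior derivative $\nabla$ on $\sP_f^k$, so $E_2^{j,k}\cong H^j(N;\sP_f^k)$.

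The hard part is the sheafification: the $E_0$-page is built from global forms on $M$, while the target $E^j(N;\sP_f^k)$ requires a sheaf-theoretic description on $N$. To bridge this cleanly I would apply Proposition~\ref{prop:qism} and Corollary~\ref{cor:stalk} to the entire presheaf of circular bar constructions on $N$, so that the identification of its cohomology with fiber cohomology is natural in restriction and hence sheafifies to $\Pdual_f^k$; this naturality is also what guarantees that the end-boundary contributions to $d_1$ assemble into the parallel-transport connection on $\sP_f^k$.
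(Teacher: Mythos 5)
Your overall strategy — filter the circular bar construction by base degree, sheafify the $d_0$-cohomology over $N$ to recover $\sP_f^k$, and identify $d_1$ with the flat connection — is the same as the paper's. But you have silently dropped the paper's first move, which is to replace $E^\bdot(M)$ by a quasi-isomorphic sub-DGA $A^\bdot$ with $A^0=\kk$, and that step is what makes the rest of the argument go through. The point is that when the fiber DGA has $A^0 = \kk$, the relations (\ref{eqn:relns}) are vacuous (there are no nontrivial $f\in A^0$), so $B(A^\bdot;E^\bdot(U))$ is a \emph{free} object and splits as a bigraded vector space $B(\kk,A^\bdot,\kk)\otimes E^\bdot(U)$. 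That splitting is exactly what makes the filtration by base degree the filtration of a genuine double complex, so that $E_0^{p,\bdot}$ is literally the tensor product $(\text{bar complex})\otimes E^p(U)$ and the $E_1$ page follows by K\"unneth.

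With $E^\bdot(M)$ itself, the reduced circular bar construction has the relations (\ref{eqn:relns}) with $f$ ranging over all of $E^0(M)$. These relations do respect your filtration (so $F^\bullet$ is a filtration by subcomplexes, and your spectral sequence exists), but they descend to nontrivial relations on each $\Gr^p$ that tangle the bar part with the base factor via $f^\ast$. Consequently $\Gr^p B(E^\bdot(M);E^\bdot(U))$ is \emph{not} of the form $(\text{fixed complex})\otimes E^p(U)$, so the K\"unneth-style identification of $E_1$ you are implicitly relying on is not available. Your appeal to Corollary~\ref{cor:stalk} and Proposition~\ref{prop:qism} does not repair this: those give quasi-isomorphisms of the total complexes $\Ch^\bdot(P_f M|_U)\to\Ch^\bdot(P_{f(x)}M)$, which control $E_\infty$ but do not, on their own, give filtered quasi-isomorphisms — hence do not identify $E_0$ or $E_1$. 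The paper's choice of $A^\bdot$ with $A^0=\kk$ is precisely what turns the "filtered complex with complicated associated graded" into a double complex whose $E_1$ page and $d_1$ differential can be written down explicitly (and the $d_1=\nabla$ verification is then carried out concretely via the Fundamental Theorem of Calculus, a calculation your last paragraph gestures at but omits). To fix your argument, insert the replacement $E^\bdot(M)\rightsquigarrow A^\bdot$ at the outset, as Remark~\ref{rem:qism} anticipates; everything else you wrote then becomes correct.
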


\begin{proof}
To simplify the proof, we choose a sub-DGA $A^\bdot$ of $E^\bdot(M)$ with $A^0 = \kk$ and such that the inclusion $A^\bdot \hookrightarrow E^\bdot(M)$ is a quasi-isomorphism. Any will do. Proposition~\ref{prop:qism} implies that the inclusion
$$
B(A^\bdot;E^\bdot N) \hookrightarrow \Ch^\bdot(P_f M)
$$
induces an isomorphism
$$
H^\bdot(B(A^\bdot;E^\bdot N)) \cong H^\bdot_\dR(P_f M;\kk).
$$
The first benefit of replacing $E^\bdot(M)$ by $A^\bdot$ is that for all submanifolds $U$ of $N$ (such as $U$ open or a point), there is a natural isomorphism
\begin{equation}
\label{eqn:gr_isom}
B(A^\bdot;E^\bdot(U)) \cong B(\kk,A^\bdot,\kk) \otimes E^\bdot(U)
\end{equation}
of graded vector spaces. So, unlike $\Ch^\bdot(P_f M|_U)$ which is only a filtered complex, $B(A^\bdot;E^\bdot(U))$ is a double complex. This will simplify the homological algebra. The second benefit is that, since the restriction of the augmentation $\e_x$ to $A^\bdot$ is independent of $x\in M$, the inclusion
$$
B(A^\bdot;\kk) \cong B(\kk,A^\bdot,\kk) \hookrightarrow \Ch^\bdot(P_{x,y} M;\kk)
$$
is a quasi-isomorphism for all $x,y\in M$. This gives a trivialization of the vector bundles $\sP_f^k$ for all $k\ge 0$; the trivializing sections are those in $H^k(B(A^\bdot;\kk))\otimes \kk$.

Consider the sheaf $\sB_f^\bdot$ of complexes on $N$ associated to the presheaf
$$
U \mapsto B(A^\bdot;E^\bdot(U)).
$$
Corollary~\ref{cor:stalk} and Corollary~\ref{cor:path-dr} imply that the associated homology sheaf is the graded sheaf $\Pdual_f^\bdot\otimes_\kk \sE^\bdot_N$. It has a natural connection, which we denote by $\nabla$.

The complex $B(A^\bdot;E^\bdot(N))$ can be filtered by degree in $E^\bdot(N)$. The corresponding spectral sequence converges to
$$
H^\bdot(B(A^\bdot;E^\bdot(N))) = H^\bdot_\dR(P_f M;\kk).
$$
It satisfies
$$
E_1^{j,k} = E^j(N;\sP_f^k) = H^k(B(A^\bdot;\kk))\otimes E^j(N)
$$
and has $E_2$ term $E_2^{j,k} = H^j(N;\sP_f^k)$.

To complete the proof, we need to show that the differential $d_1$ coming from the double complex $B(A^\bdot;E^\bdot(N))$ equals the differential induced by the flat connection $\nabla$ on $\sP_f^\bdot$. We do this when $k=0$, which is all we shall need, as surfaces with non-positive Euler characteristic are rational $K(\pi,1)$ spaces. The proof in the case $k>0$ is similar, but more technical as it uses the definition of iterated integrals of higher degree.

In the case $k=0$, the equality of $d_1$ and $\nabla$ reduces to the Fundamental Theorem of Calculus as follows: Suppose that if $w_1,\dots,w_r \in A^1$ and that $\gamma \in PM$. For $0\le s < t \le 1$, denote the restriction of $\gamma$ to $[s,t]$ by $\gamma_s^t$. By the Fundamental Theorem of Calculus, the the value of the exterior derivative of the function
$$
(s,t) \mapsto \int_{\gamma_s^t} (w_1|\dots|w_r)
$$
at $(0,1)$ is
$$
\langle w_r,\dot{\gamma}(1)\rangle\int_\gamma (w_1|\dots|w_{r-1}) 
- \langle w_1,\dot{\gamma}(0)\rangle \int_\gamma (w_2|\dots|w_r).
$$
That $d_1=\nabla$ follows as every section of $\sP_f$ is an $\sE^0_M$-linear combination of elements of $H^0(B(A^\bdot;\kk))$ and these, in turn, are $\kk$-linear combination of the sections of $\sB^0$ considered above.
\end{proof}

Proposition~\ref{prop:rat_Kpi1} implies that $\sP^j_f$ vanishes for all $j>0$ when $M$ is a rational $K(\pi,1)$.

\begin{corollary}
\label{cor:vanishing}
For all manifolds $M$ with finite dimensional $H^1(M;\kk)$, we have $H^0_\dR(P_f M;\kk) \cong H^0(M;\sP_f^0)$. If $M$ is a rational $K(\pi,1)$, then for all $j\ge 0$
$$
H^j_\dR(P_f M;\kk) \cong H^j(M;\sP_f^0).
$$
\end{corollary}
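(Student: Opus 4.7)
The plan is to apply the spectral sequence of Proposition~\ref{prop:ss}, which has $E_2^{j,k} = H^j(N;\sP_f^k)$ and converges to $H^{j+k}_\dR(P_f M;\kk)$, and deduce both assertions by locating where the relevant $E_2$-terms vanish.

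For the first assertion, I would note that $\sP_f^k = 0$ for $k<0$, so on the total-degree-zero diagonal only the bidegree $(0,0)$ can contribute. The differentials $d_r \colon E_r^{0,0} \to E_r^{r,1-r}$ are zero for $r \geq 2$ since their targets have negative $k$-index, and symmetrically no nonzero differential can land in $E_r^{0,0}$. Hence $E_\infty^{0,0} = E_2^{0,0} = H^0(N;\sP_f^0)$, and since the abutment filtration on $H^0_\dR(P_f M;\kk)$ has only this single graded piece, the desired isomorphism follows. The finite-dimensionality of $H^1(M;\kk)$ enters through Corollary~\ref{cor:path-dr}, which identifies the fibers of $\Pdual_f^0$ with continuous duals of sets of path components so that the spectral sequence input is meaningful.

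For the second assertion, assume $M$ is a rational $K(\pi,1)$. By Proposition~\ref{prop:rat_Kpi1}, $H^k_\dR(P_{x_0,x_1}M;\kk) = 0$ for every $k > 0$ and all $x_0,x_1 \in M$. Choosing a contractible neighborhood of any point $y \in N$ and applying Corollary~\ref{cor:stalk} identifies the stalk of $\Pdual_f^k$ at $y$ with $H^k_\dR(P_{f(y)}M;\kk)$, which therefore vanishes for $k > 0$. Local constancy of $\Pdual_f^k$ then forces the entire sheaf, and hence also $\sP_f^k = \sE_N^0 \otimes_\kk \Pdual_f^k$, to be zero for all $k > 0$. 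Consequently the $E_2$-page of the spectral sequence is supported on the single row $k=0$, the spectral sequence degenerates at $E_2$, and one concludes $H^j_\dR(P_f M;\kk) \cong E_2^{j,0} = H^j(N;\sP_f^0)$ for every $j \geq 0$.

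There is no real obstacle once Proposition~\ref{prop:ss} is in hand: the corollary is a formal consequence of degeneration, with the only genuine geometric input being the vanishing of higher fiberwise de Rham cohomology over a rational $K(\pi,1)$, which is precisely the content of Proposition~\ref{prop:rat_Kpi1}.
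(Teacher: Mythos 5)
Your proof is correct and follows the same route the paper leaves implicit: the low-degree edge map of the spectral sequence in Proposition~\ref{prop:ss} gives the isomorphism in degree zero, and the vanishing of $\sP_f^k$ for $k>0$ (recorded by the paper in the sentence immediately before the corollary, via Proposition~\ref{prop:rat_Kpi1}) forces the $E_2$-page onto the single row $k=0$, giving degeneration at $E_2$ and the isomorphism for all $j$. Note that your consistent use of $N$ as the base of the sheaves $\sP_f^k$ is the correct reading (the corollary's statement writes $M$, but the sheaves are defined over $N$; in the key applications $N=M$, e.g.\ the free loop space, so no harm results).
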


The second statement is the analogue of Proposition~\ref{prop:k_pi_1} for rational $K(\pi,1)$ spaces.

These results can be assembled into a de~Rham theorem for the degree 0 cohomology of pullback path spaces, such as the free loop space of $M$. If $x\in N$, then $\pi_1(N,x)$ acts naturally on $\pi(M;y_0,y_1)$, where $f(x) = (y_0,y_1)$; it acts by left multiplication via the homomorphism $\pi_1(N,x) \to \pi_1(M,y_0)$ and right by multiplication by the inverse of $\pi_1(N,x) \to \pi_1(M,y_1)$.

\begin{theorem}
\label{thm:pullback_DR}
If $N$ is connected and $M$ has finite dimensional $H^1(M;\kk)$, then
$$
H^0_\dR(P_f M;\kk) \cong
\Hom^\cts_\kk(\kk\pi(M;y_0,y_1),\kk)^{\pi_1(N,x)}.
$$
In particular, when $f$ is the diagonal map $\Delta : M \to M\times M$, we have
$$
H^0_\dR(\Lambda M;\kk) \cong H^0(M;\sL_M) \cong \Hom^\cts_\kk(\kk\pi_1(M,x),\kk)^{\pi_1(M,x)}
$$
where $\pi_1(M,x)$ acts on itself by conjugation and where $\sL_M := \sP_\Delta^0$, which has fiber $H^0_\dR(\Lambda_x M;\kk)$ over $x\in M$.
\end{theorem}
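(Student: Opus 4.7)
The plan is to combine the vanishing statement of Corollary~\ref{cor:vanishing} with Chen's de~Rham theorem for a fixed path space and the standard description of $H^0$ of a local system on a connected base as the monodromy invariants on a fiber.

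First I would apply Corollary~\ref{cor:vanishing} to $P_f M$. Since $H^1(M;\kk)$ is finite dimensional, the spectral sequence of Proposition~\ref{prop:ss} in degree~$0$ collapses to give
$$
H^0_\dR(P_f M;\kk) \cong H^0(N;\sP_f^0).
$$
By construction (the paragraph preceding Proposition~\ref{prop:ss}), $\sP_f^0 = \sE_N^0 \otimes_\kk \Pdual_f^0$, where $\Pdual_f^0$ is the locally constant sheaf on $N$ whose stalk at $x\in N$ is $H^0_\dR(P_f M|_{\{x\}};\kk) = H^0_\dR(P_{y_0,y_1}M;\kk)$ with $(y_0,y_1) = f(x)$. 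Applying Corollary~\ref{cor:path-dr}, which uses the finite dimensionality of $H_1(M;\kk)$, Chen integration identifies this stalk with $\Hom^\cts_\kk(\kk\pi(M;y_0,y_1),\kk)$.

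Next I would invoke the general fact that for a local system $\bV$ on a connected, locally path connected space $N$ with base point $x$, flat global sections are monodromy invariants on the fiber: $H^0(N;\sE^0_N\otimes_\kk \bV) = \bV_x^{\pi_1(N,x)}$. It remains to identify the monodromy action of $\pi_1(N,x)$ on $\Pdual_{f,x}^0$ with the action dual to left multiplication by $\alpha\in\pi_1(M,y_0)$ and right multiplication by $\beta^{-1}\in\pi_1(M,y_1)$, where $(\alpha,\beta) = f_\ast\gamma$ for a loop $\gamma$ at $x$. This is precisely the parallel transport on the pullback of the fibration $p_0\times p_1: PM\to M\times M$ along $f$: a loop $\gamma$ in $N$ lifts $\delta\in P_{y_0,y_1}M$ to the path $s\mapsto f_0\gamma|_{[0,s]}\cdot \delta \cdot(f_1\gamma|_{[0,s]})^{-1}$, where $f_0,f_1$ are the two components of $f$. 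This gives
$$
H^0(N;\sP_f^0) \cong \Hom^\cts_\kk(\kk\pi(M;y_0,y_1),\kk)^{\pi_1(N,x)},
$$
which is the first assertion.

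For the specialization to $f = \Delta$, we have $N = M$, $y_0=y_1=x$, $f_0=f_1=\id_M$ so that the two sides of the parallel transport formula coincide with $\gamma$ itself; the action of $\gamma\in\pi_1(M,x)$ on $\delta\in\pi_1(M,x)$ is therefore conjugation $\delta\mapsto\gamma\delta\gamma^{-1}$. This identifies $\sL_M := \sP_\Delta^0$ as the local system $\bL_M$ of Definition~\ref{def:L} tensored up to a flat vector bundle, and yields the second isomorphism. The main (mild) obstacle is verifying the identification of the monodromy action with the one described; this is a direct computation using the definition of parallel transport on a Hurewicz fibration together with the explicit pullback diagram~(\ref{eqn:pullback}), and it mirrors the parallel transport formulas already recorded for $\bL_M$ and $\bP_{M;x_0,x_1}$ in Section~\ref{sec:path_space}.
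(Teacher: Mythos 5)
Your proposal is correct and assembles the ingredients in exactly the way the paper intends (the paper does not write out a proof of this theorem, stating only that ``these results can be assembled''): Corollary~\ref{cor:vanishing} reduces $H^0_\dR(P_f M;\kk)$ to the flat sections of $\sP_f^0$ over $N$, Corollary~\ref{cor:path-dr} identifies the stalk via Chen integration, and flat sections of a (direct limit of) local system(s) over connected $N$ are the $\pi_1(N,x)$-invariants of the fiber. You also correctly read the base of the fibration as $N$, repairing the typo ``$H^0(M;\sP_f^0)$'' in Corollary~\ref{cor:vanishing}.

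One small wrinkle: the explicit lift you write, $s\mapsto f_0\gamma|_{[0,s]}\cdot \delta\cdot(f_1\gamma|_{[0,s]})^{-1}$, does not typecheck with the paper's topologist's convention for path composition (the endpoints do not match). The lift along $\gamma$ starting at $\delta$ is $s\mapsto (f_0\gamma|_{[0,s]})^{-1}\cdot \delta\cdot f_1\gamma|_{[0,s]}$, whose endpoint at $s=1$ is $\gamma_0^{-1}\delta\gamma_1$; the paper packages the resulting $\pi_1(N,x)$-action as $\delta\mapsto\gamma_0\,\delta\,\gamma_1^{-1}$. Since these two actions are mutually inverse, the invariant subspaces agree, so the slip does not affect the conclusion, and the specialization to $f=\Delta$ still yields the conjugation action.
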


\begin{corollary}
If $M$ is a rational $K(\pi,1)$ with $\dim H^1(M;\kk) < \infty$, then
$$
H^j_\dR(\Lambda M;\kk) \cong H^j(M;\sL_M)
$$
for all $j\ge 0$.
\end{corollary}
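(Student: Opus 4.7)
The plan is to deduce this as a direct specialization of Corollary~\ref{cor:vanishing}. The key observation is that when we take the smooth map $f$ in the pullback path fibration setup to be the diagonal $\Delta : M \to M\times M$, we have $P_\Delta M = \Lambda M$ by definition of the free loop space, and the local system $\sP_\Delta^0$ is, by the definition given in Theorem~\ref{thm:pullback_DR}, precisely $\sL_M$. Thus the hypotheses of Corollary~\ref{cor:vanishing} match exactly the hypotheses we are given: $M$ is a rational $K(\pi,1)$ with $\dim H^1(M;\kk) < \infty$.

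Concretely, the proof proceeds in two short steps. First, invoke Proposition~\ref{prop:ss} with $N = M$ and $f = \Delta$ to obtain the spectral sequence
$$
E_2^{j,k} = H^j(M;\sP_\Delta^k) \Longrightarrow H^{j+k}_\dR(\Lambda M;\kk).
$$
Second, observe that $\sP_\Delta^k$ is the sheaf associated to $U \mapsto H^k_\dR(P_\Delta M|_U;\kk)$, whose stalks are computed (via Corollary~\ref{cor:stalk}) by restriction to a point $x \in M$, giving $H^k_\dR(\Lambda_x M;\kk)$. Since $M$ is a rational $K(\pi,1)$, Proposition~\ref{prop:rat_Kpi1} says these groups vanish for $k > 0$. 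Therefore $\sP_\Delta^k = 0$ for $k > 0$, the spectral sequence collapses to the single row $k = 0$, and we obtain $H^j_\dR(\Lambda M;\kk) \cong H^j(M;\sP_\Delta^0) = H^j(M;\sL_M)$.

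There is no substantive obstacle here; all the work was done in establishing Proposition~\ref{prop:ss} and in identifying the stalks of $\sP_\Delta^k$. The corollary is simply the free loop space instance of the general degeneration criterion already recorded as Corollary~\ref{cor:vanishing}, presented separately because the sheaf $\sL_M$ and the free loop space $\Lambda M$ are the objects that appear throughout the rest of the paper.
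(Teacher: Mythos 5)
Your proposal is correct and coincides with the paper's intended argument: the corollary is simply the $f = \Delta$ specialization of Corollary~\ref{cor:vanishing}, using the identification $P_\Delta M = \Lambda M$ and $\sL_M := \sP_\Delta^0$ made in Theorem~\ref{thm:pullback_DR}. The paper gives no separate proof because, as you note, all the work is in Proposition~\ref{prop:ss} and the vanishing of $\sP_\Delta^k$ for $k>0$ from Proposition~\ref{prop:rat_Kpi1}.
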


\section{Notational Interlude}

Suppose that $M$ is a smooth manifold. Give $H_0(P_{x_0,x_1}M;\kk)$ the unipotent topology and $H_0(\Lambda M;\kk)$ the quotient topology induced by the canonical surjection $H_0(P_{x,x} M;\kk) \to H_0(\Lambda M;\kk)$. Set
$$
\Hdual^0(P_{x_0,x_1}M;\kk) := \Hom^\cts_\kk(H_0(P_{x_0,x_1}M),\kk)
$$
and
$$
\Hdual^0(\Lambda M;\kk) := \Hom^\cts_\kk(H_0(\Lambda M),\kk).
$$
If $H_1(M;\kk)$ is finite dimensional, then the $I$-adic and unipotent topologies on $\kk\pi_1(M,x)$ coincide. This implies that the $I$-adic and unipotent completions of $H_0(P_{x_0,x_1}M;\kk)$ and $H_0(\Lambda M;\kk)$ also coincide. In this case (and with $\kk = \R$ or $\C$), the path space de~Rham theorems above assert that integration induces isomorphisms
$$
H_\dR^0(P_{x_0,x_1} M;\kk) \overset{\simeq}{\To} \Hdual^0(P_{x_0,x_1} M;\kk)
\text{ and }
H_\dR^0(\Lambda M;\kk) \overset{\simeq}{\To} \Hdual^0(\Lambda M;\kk).
$$
In subsequent sections, the manifolds $M$ considered have finitely generated fundamental groups. In particular, they satisfy the condition that $H_1(M;\kk)$ is finite dimensional.

\section{Iterated Integrals and the Goldman Bracket}
\label{sec:goldman_DR}

In this section we show that the continuous dual of the Goldman bracket is the composition of maps, each of which can be expressed in terms of Poincar\'e duality or iterated integrals. This is essentially dual to to the factorization (\ref{eqn:bracket}) of the Goldman bracket. In this section, we fix $\kk$ to be $\R$ or $\C$.

\subsection{A formula for $\cs$}

In this section we give a formula for the continuous dual of $\cs$, which holds for all smooth manifolds $M$. For simplicity, we suppose that $M$ has finite Betti numbers. Recall the definition of $\cs$ from Section~\ref{sec:CS}.

Denote the continuous dual of $\bL_M$ by $\Ldual_M$. Chen's de~Rham Theorem (Thm.~\ref{thm:chen_dr}) implies that it is the locally constant sheaf associated to the flat bundle $\sL_M$.

Set $\bL_M^\ast = \Hom_\Z(\bL_M,\kk)$. Then $\cs$ induces a map
$$
\cs^\ast : H^1(M;\bL_M^\ast) \cong \Hom(H_1(M;\bL_M),\kk) \to H^0(\Lambda M;\bL_M^\ast).
$$
The natural inclusion $\Ldual_M \hookrightarrow \bL_M^\ast$ induces a map
$$
\theta : H^1(M;\sL_M) \cong H^1(M;\Ldual_M) \to H^1(M;\bL_M^\ast).
$$

\begin{proposition}
\label{prop:formula_bcs}
There is a map
$$
\csdual : H^1(M;\sL_M) \to H^0_\dR(\Lambda M)
$$
which is dual to $\cs$. That is, the diagram
$$
\xymatrix{
H^1(M;\sL_M) \ar[r]^\csdual\ar[d]_\theta & H^0_\dR(\Lambda M;\kk) \ar[d]^{\text{integration}} \cr
H^1(M;\bL_M^\ast) \ar[r]^{\cs^\ast} & H^0(\Lambda M;\kk)
}
$$
commutes. In terms of the complex $E^\bdot(M;\sL_M)$, the map $\csdual$ is induced by
$$
\csdual : [w_1|\dots|w_r]\otimes w \mapsto \sum_{j=0}^r [w_{j+1}|\dots|w_r|w|w_1|\dots|w_j],
$$
where $w,w_1,\dots,w_r \in E^1(M)$.
\end{proposition}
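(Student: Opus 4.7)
The plan is to construct $\csdual$ at the cochain level via an iterated-integral formula and then verify the duality diagram by pairing both sides against the explicit Chas--Sullivan cycle.

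First I would fix a convenient de~Rham model. Using Theorem~\ref{thm:pullback_DR} together with Remark~\ref{rem:qism}, choose a sub-DGA $A^\bdot\subseteq E^\bdot(M)$ with $A^0=\kk$ that is quasi-isomorphic to $E^\bdot(M)$. Sections of $\sL_M$ are then modelled by bar symbols $[w_1|\dots|w_r]\in B(A^\bdot;\kk)$, while iterated integrals on $\Lambda M$ are modelled by the circular bar construction $B(A^\bdot;A^\bdot)$ along the diagonal $M\to M\times M$. In this language, an element of $E^1(M;\sL_M)$ is a sum of terms $[w_1|\dots|w_r]\otimes w$ with $w\in A^1$, and the proposed formula produces a length-$(r+1)$ symbol in $B(A^\bdot;A^\bdot)$, i.e.\ a degree-zero iterated integral on $\Lambda M$. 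I would extend the cyclic-insertion rule to all bar components and verify directly, using the bar differential (\ref{eqn:diff}) and the relations (\ref{eqn:relns}), that the resulting map commutes with $d$. The key point is that the internal $d$ and wedge-product contributions to the boundary of $\sum_j[w_{j+1}|\dots|w|\dots|w_j]$ exactly reproduce $\csdual\bigl(d[w_1|\dots|w_r]\otimes w+[w_1|\dots|w_r]\otimes dw\bigr)$, while the ``endpoint'' terms from neighbouring summands telescope to zero in the circular bar construction.

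Next I would verify the asserted duality by pairing with a cycle. Fix $[\alpha]\in H_0(\Lambda M)=H_1(M;\bL_M)$; its image $\cs[\alpha]$ is represented by the 1-cycle $\alphahat:[0,1]\to\Lambda M$ with $\alphahat(t)=\alpha^t\alpha_t$. A de~Rham representative of $\theta\bigl([w_1|\dots|w_r]\otimes w\bigr)\in H^1(M;\bL_M^\ast)$, evaluated at $x\in M$ on a loop $\beta\in\Lambda_xM$, is $\bigl(\int_\beta(w_1|\dots|w_r)\bigr)\,w|_x$, so the pairing equals
$$
\bigl\langle\theta([w_1|\dots|w_r]\otimes w),\,\cs[\alpha]\bigr\rangle
=\int_0^1\Bigl(\int_{\alpha^t\alpha_t}(w_1|\dots|w_r)\Bigr)\,\alpha^\ast w(t).
$$
The standard concatenation formula for iterated integrals along a composition yields
$$
\int_{\alpha^t\alpha_t}(w_1|\dots|w_r)=\sum_{j=0}^r\int_{\alpha^t}(w_1|\dots|w_j)\,\int_{\alpha_t}(w_{j+1}|\dots|w_r),
$$
and integrating in $t$ identifies the $j$-th summand with $\int_\alpha(w_{j+1}|\dots|w_r|w|w_1|\dots|w_j)$, since $t$ occupies the $(r-j+1)$-st slot of the time-ordered simplex $\{u_1\le\cdots\le u_{r+1}\}$. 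Summing over $j$ reproduces $\int_\alpha\csdual([w_1|\dots|w_r]\otimes w)$, as required.

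The main obstacle I anticipate is the chain-map verification in the first step: the bar differential generates four kinds of terms and one must show that cyclic insertion of $w$ telescopes correctly against all of them, particularly at the ``extreme'' placements of $w$ where the relations (\ref{eqn:relns}) are needed to identify the boundary contributions living on the two sides of the circular bar construction. The duality identification in the second step, by contrast, is essentially a single application of the concatenation formula and is painless once the parameterisations are unravelled.
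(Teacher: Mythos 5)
Your second step reproduces the paper's proof verbatim: pair the cochain $\varphi = [w_1|\dots|w_r]\otimes w$ against the cycle $\alphahat$, split $\int_{\alpha^t\alpha_t}$ by the concatenation rule, and integrate in $t$ to recover $\sum_j \int_\alpha(w_{j+1}|\dots|w_r|w|w_1|\dots|w_j)$. Your first step (the cochain-level chain-map check) is not carried out in the paper and is in fact unnecessary: once the pairing computation shows that $\int_\alpha\csdual(\varphi)$ depends only on the class $\cs[\alpha]\in H_1(M;\bL_M)$ and on $[\varphi]\in H^1(M;\sL_M)$, the degree-zero iterated integral $\csdual(\varphi)$ is automatically constant on components of $\Lambda M$, hence closed, so it already defines a class in $H^0_\dR(\Lambda M)$. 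So the proposal is correct, but you can drop step 1 and the ``main obstacle'' you flagged there disappears.
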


\begin{proof}
Proposition~\ref{prop:ss} implies that elements of $E^1(M;\sL_M)$ are linear combinations $\sum_k I_k \otimes w_k$ where $w_k \in E^1(M)$ and
$$
I_k \in \Gr H^0\big( B(E^\bdot(M);E^\bdot(M))/([\blank]\otimes E^\bdot(M))\big).
$$
These, in turn, are linear combinations of expressions
$$
\varphi = [w_1|\dots|w_r]\otimes w
$$
where $w$ and each $w_j$ is in $E^1(M)$.

If $\alpha \in \Lambda M$, then the value of $\varphi$ on $\alphahat$ is
$$
\langle \varphi,\alphahat \rangle = \int_\alpha \langle \int(w_1|\dots|w_r), \alpha^t\alpha_t \rangle w.
$$
Let $f$ and $f_j$ be the smooth functions on the unit interval defined by
$$
\alpha^\ast w = f(t) dt \text{ and } \alpha^\ast w_j = f_j(t)dt,\qquad j=1,\dots, r.
$$
Since
\begin{align*}
F(t) : &= \langle \int(w_1|\dots|w_r), \alpha^t\alpha_t \rangle
\cr
&= \sum_{j=0}^r \int_{\alpha^t} (w_1|\dots|w_j) \int_{\alpha_t} (w_{j+1}|\dots|w_r)
\cr
&= \sum_{j=0}^r \ \ \idotsint\limits_{t\le t_1 \le \dots t_j \le 1} f_1(t_1) \dots f_j(t_j) \idotsint\limits_{0\le t_{j+1} \le \dots \le t_r \le t} f_{j+1}(t_{j+1}) \dots f_r(t_r)
\end{align*}
where all integrals are taken with respect to Lebesgue measure on each time ordered simplex. So

\begin{align*}
\langle \varphi,\alphahat \rangle
&=\int_0^1 f(t) F(t) dt
\cr
&= \sum_{j=0}^r\ \ \idotsint\limits_{0\le t_{j+1} \le \dots \le t_r \le t \le t_1 \le \dots t_j \le 1} f_{j+1}(t_{j+1}) \dots f_r(t_r) f(t) f_1(t_1) \dots f_j(t_j)
\cr
&= \sum_{j=0}^r \int_\alpha (w_{j+1}|\dots|w_r|w|w_1|\dots|w_j).
\end{align*}
The result follows as $\cs(\alpha) = [\alphahat]$.
\end{proof}

\begin{corollary}
The map $\cs : H_0(\Lambda M;\kk) \to H_1(M;\bL_M)$ is continuous in the $I$-adic topology and thus induces a map $\csdual : H^1(M;\Ldual_M) \to \Hdual^0(\Lambda M;\kk)$.
\end{corollary}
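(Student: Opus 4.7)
The plan is to extract $I$-adic continuity of $\cs$ — and simultaneously the description of its continuous dual — from the explicit formula of Proposition~\ref{prop:formula_bcs} together with the path-space de~Rham theorems. Since $H_1(M;\kk)$ is assumed finite dimensional, Chen's Theorem (Corollary~\ref{cor:path-dr}) gives a topological isomorphism $H^0_\dR(\Lambda M;\kk)\cong \Hdual^0(\Lambda M;\kk)$, and the definition $\Ldual_M = \Hom^\cts_\kk(\bL_M,\kk)$ identifies $H^1(M;\Ldual_M)$ with the continuous dual of $H_1(M;\bL_M) = \varprojlim_n H_1(M;\bL_M/I^n\bL_M)$ equipped with its inverse-limit topology. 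The standard de~Rham comparison $H^1(M;\Ldual_M)\cong H^1(M;\sL_M)$ further identifies this target of $\theta$ with the source of $\csdual$ as constructed in Proposition~\ref{prop:formula_bcs}.

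Second, I would unpack the commutative square of Proposition~\ref{prop:formula_bcs} as saying exactly that the algebraic dual $\cs^\ast$ sends the continuous subspace $H^1(M;\Ldual_M)\subseteq H^1(M;\bL_M^\ast)$ into the continuous subspace $\Hdual^0(\Lambda M;\kk)\subseteq H^0(\Lambda M;\kk)$. By the standard duality between a filtered module and its continuous dual, this containment is equivalent to the assertion that $\cs$ is continuous for the $I$-adic topologies, since a linear map into a complete filtered module is continuous precisely when pre-composition with every continuous functional is continuous. The formula
$$
\csdual\bigl([w_1|\dots|w_r]\otimes w\bigr) \;=\; \sum_{j=0}^{r} [w_{j+1}|\dots|w_r|w|w_1|\dots|w_j]
$$
makes the filtration shift transparent: a length-$r$ element of $H^1(M;\sL_M)$ is sent to a length-$(r+1)$ iterated integral on $\Lambda M$, hence vanishes on the $(r+2)$-nd power of the augmentation ideal in $\kk\pi_1(M,x)$ under the identification of Theorem~\ref{thm:chen_dr}.

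The resulting continuous map $\csdual : H^1(M;\Ldual_M) \to \Hdual^0(\Lambda M;\kk)$ is then, by construction, the dual of the $I$-adically continuous map $\cs$, and it agrees — under the de~Rham isomorphisms just invoked — with the $\csdual$ of Proposition~\ref{prop:formula_bcs}. The only real subtlety is the bookkeeping identification of the bar-length filtration on iterated integrals with the continuous dual of the $I$-adic filtration; this is a standard consequence of Chen's Theorem~\ref{thm:chen_dr}, but has to be invoked both fibrewise on $\Ldual_M$ (so that $H^1(M;\Ldual_M)$ really captures all continuous functionals on $H_1(M;\bL_M)$) and globally (so that the shift-by-one observed in the formula translates into a genuine topological estimate on $\cs$).
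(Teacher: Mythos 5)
Your proposal is correct and takes essentially the same route the paper implicitly does: the corollary is a direct consequence of Proposition~\ref{prop:formula_bcs}, whose commutative diagram already exhibits $\cs^\ast$ restricting to a map $H^1(M;\Ldual_M)\cong H^1(M;\sL_M)\to H^0_\dR(\Lambda M;\kk)\cong\Hdual^0(\Lambda M;\kk)$ of continuous duals, which (given the finite dimensionality of $H_1(M;\kk)$ so that all $I$-adic quotients are finite dimensional) is equivalent to $I$-adic continuity of $\cs$. Your filtration-shift remark — that the explicit formula raises bar length by exactly one, so a length-$r$ class in $H^1(M;\sL_M)$ lands in iterated integrals of length $r+1$ and hence kills $I^{r+2}$ — is a clean way to make the continuity estimate explicit and is a worthwhile addition to what the paper leaves implicit.
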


\subsection{The continuous dual of the Goldman bracket}

In this section, $X = \Xbar - Y$, where $\Xbar$ is a compact oriented surface (possibly with boundary) and $Y$ is the union of $\partial X$ and a finite set.\footnote{In a subsequent section, when we are applying Hodge theory, $\Xbar$ will be a compact Riemann surface and $Y$ will be finite.}

\begin{lemma}
\label{lem:cup_dual}
There is a map
\begin{equation}
\label{eqn:cup_dual}
\smile^\ast\ : H_2(\Xbar,Y;\Ldual_X) \to H_1(\Xbar,Y;\Ldual_X)^{\otimes 2}
\end{equation}
that is dual to the map
$$
\smile\ : H^1(\Xbar,Y;\bL_X)^{\otimes 2} \to H^2(\Xbar,Y;\bL_X)
$$
induced by the cup product and the fiberwise multiplication map $\bL_X^{\otimes 2} \to \bL_X$ via the natural pairings
$$
\bL_X \otimes \Ldual_X \to \kk_X
\text{ and }
H^\bdot(\Xbar,Y;\bL_X)\otimes H_\bdot(\Xbar,Y;\Ldual_X) \to \kk.
$$
\end{lemma}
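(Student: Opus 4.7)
The plan is to dualize the cup product at each finite quotient of the pro-unipotent local system $\bL_X$ and then pass to the direct limit. First I would observe that $\bL_X$ is pro-finite-dimensional: the augmentation-ideal filtration of $\kk\pi_1(X,x)$ gives sub-local-systems $I^n\bL_X \subseteq \bL_X$ (stable under the $\pi_1$-action by conjugation because $I^n$ is a two-sided ideal), and the quotients
$$
\bL_X^{(n)} := \bL_X/I^n\bL_X
$$
are local systems of finite-dimensional $\kk$-vector spaces (since $\pi_1(X)$ is finitely generated). By construction $\bL_X = \varprojlim_n \bL_X^{(n)}$, and since $\mu(I^p\otimes \bL) + \mu(\bL\otimes I^q) \subseteq I^{\min(p,q)}$, the fiberwise multiplication descends to $\mu_n:\bL_X^{(n)}\otimes\bL_X^{(n)}\to\bL_X^{(n)}$. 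Dually, $\Ldual_X = \varinjlim_n (\bL_X^{(n)})^\ast$, and the transposes $\mu_n^\ast:(\bL_X^{(n)})^\ast \to (\bL_X^{(n)})^\ast\otimes(\bL_X^{(n)})^\ast$ assemble to a continuous coproduct on $\Ldual_X$.

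Next I would carry out the dualization at each finite level. For each $n$ we have the cup product
$$
\smile_n\ :\ H^1(\Xbar,Y;\bL_X^{(n)}) \otimes H^1(\Xbar,Y;\bL_X^{(n)}) \to H^2(\Xbar,Y;\bL_X^{(n)})
$$
built from the usual sheaf cup product on $(\Xbar,i_!\bL_X^{(n)})$ followed by $\mu_n$. Since $\bL_X^{(n)}$ has finite-dimensional fibers, each $H^j(\Xbar,Y;\bL_X^{(n)})$ is finite-dimensional, and by the definition $H_j(\Xbar,Y;(\bL_X^{(n)})^\ast)=\Hom_\kk(H^j(\Xbar,Y;\bL_X^{(n)}),\kk)$. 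Applying $\Hom_\kk(-,\kk)$ to $\smile_n$ and using the canonical isomorphism $\Hom(V\otimes W,\kk)\cong \Hom(V,\kk)\otimes\Hom(W,\kk)$ in finite dimensions yields
$$
\smile_n^{\ast}\ :\ H_2(\Xbar,Y;(\bL_X^{(n)})^\ast) \to H_1(\Xbar,Y;(\bL_X^{(n)})^\ast)^{\otimes 2}.
$$
By construction $\smile_n^{\ast}$ is dual to $\smile_n$ with respect to the stated pairings.

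Finally I would take the direct limit in $n$. Naturality of cup product with respect to the surjections $\bL_X^{(n+1)}\twoheadrightarrow\bL_X^{(n)}$ (and of $\mu_n$ with respect to these) shows that the maps $\smile_n^{\ast}$ are compatible with the structure maps induced by the injections $(\bL_X^{(n)})^\ast \hookrightarrow (\bL_X^{(n+1)})^\ast$, so $\varinjlim_n \smile_n^{\ast}$ gives the desired map
$$
\smile^{\ast}\ :\ H_2(\Xbar,Y;\Ldual_X) \to H_1(\Xbar,Y;\Ldual_X)^{\otimes 2}
$$
after identifying $H_j(\Xbar,Y;\Ldual_X)$ with $\varinjlim_n H_j(\Xbar,Y;(\bL_X^{(n)})^\ast)$, and similarly for its tensor square.

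The routine step is finite-dimensional linear-algebraic dualization of cup product; the main technical point to get right is the book-keeping of the pro-/ind-structure, namely verifying that the augmentation filtration indeed gives finite-rank sub-local-systems, that $\mu$ descends to the quotients, and that $\Ldual_X$ and $H_1(\Xbar,Y;\Ldual_X)^{\otimes 2}$ are the correct colimits of the finite-level dualizations so that the system $\{\smile_n^{\ast}\}$ assembles to a single map. Once that infrastructure is in place, compatibility with the pairings $\bL_X\otimes\Ldual_X\to\kk_X$ and $H^\bdot(\Xbar,Y;\bL_X)\otimes H_\bdot(\Xbar,Y;\Ldual_X)\to\kk$ follows from the corresponding finite-level statements by passage to the limit.
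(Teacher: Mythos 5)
Your proof is correct and follows essentially the same strategy as the paper's: both exhibit $\bL_X$ as a pro-object via the augmentation filtration, dualize the cup product at each finite-dimensional quotient, and pass to direct limits. (If anything, your formulation $\mu_n:\bL_X^{(n)}\otimes\bL_X^{(n)}\to\bL_X^{(n)}$ is a cleaner way to record the descent of multiplication to the finite quotients than the bigraded direct sum the paper writes down.)
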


\begin{proof}
The fiber of $\bL_X$ over $x\in X$ is $\kk \pi_1(X,x)$. Denote the local system corresponding to the $n$th power of the the augmentation ideals by $\bI^n$. Multiplication induces a map $\bL_X/\bI^n \otimes \bL_X/\bI^n \to \bL_X/\bI^n$.
Since $\dim H_1(\Xbar;\kk)< \infty$, each $\bL_X/\bI^n$ is finite dimensional and
$$
\Ldual_X = \varinjlim_n \Hom_\kk(\bL_X/\bI^n,\kk).
$$
Applying $\Hom_\kk(\blank,\kk)$ to the cup product
$$
H^1(\Xbar,Y;\bL_X/\bI^n)\otimes H^1(\Xbar,Y;\bL_X/\bI^n) \to H^2(\Xbar,Y;\bL_X/\bI^n)
$$
and then taking direct limits gives the map (\ref{eqn:cup_dual}).
\end{proof}

Poincar\'e duality is an isomorphism $H_j(\Xbar,Y;\bV) \to H^{2-j}(X;\bV)$ for all local systems $\bV$ over $X$. Note that, since $\sL_X$ is a direct limit of finite dimensional flat vector bundles, de~Rham's theorem induces an isomorphism
$$
H^\bdot(X;\sL_X) \cong H^\bdot(X;\Ldual_X).
$$
Consequently, there are Poincar\'e duality isomorphisms
$$
\PD : H_j(\Xbar,Y;\Ldual_X) \overset{\simeq}{\To} H^{2-j}(X;\sL_X).
$$

\begin{proposition}
\label{prop:dual_goldman}
The pairing $\Ldual_X \otimes \bL_X \to \kk_X$ induces a pairing of the composition of the top row, the right-hand vertical map and the second row of
$$
\xymatrix{
\Hdual^0(\Lambda X) \ar[r]^\simeq \ar[d]_{\gold^\vee} & H^0(X;\Ldual_X) \ar[r]^\simeq &
H_2(\Xbar,Y;\Ldual_X) \ar[d]^{\smile^\ast} &
\cr
\Hdual^0(\Lambda X)^{\otimes 2}& H^1(X;\Ldual_X)^{\otimes 2} 
\ar[l]_{\csdual^{\otimes 2}}
& H_1(\Xbar,Y;\Ldual_X)^{\otimes 2} \ar[l]_\simeq
}
$$
pairs with the factorization (\ref{eqn:bracket}) of the Goldman bracket. Consequently, the dual of the Goldman bracket is continuous.
\end{proposition}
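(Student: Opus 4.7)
The plan is to verify that each arrow in the displayed diagram is the continuous dual, under the pairing $\Ldual_X \otimes \bL_X \to \kk_X$, of the corresponding arrow in the factorization (\ref{eqn:bracket}) of the Goldman bracket; then the whole diagram dualizes (\ref{eqn:bracket}). I would proceed from right to left along (\ref{eqn:bracket}) and from left to right along the diagram.

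First, identify $\Hdual^0(\Lambda X) \cong H^0(X;\Ldual_X)$ via Theorem~\ref{thm:pullback_DR} (Chen's de~Rham theorem for the free loop space), using that $\Ldual_X$ is the locally constant sheaf associated to the flat bundle $\sL_X$. Poincar\'e--Lefschetz duality $H^j(X;\sL_X) \cong H_{2-j}(\Xbar,Y;\Ldual_X)$ is obtained by writing $\Ldual_X = \varinjlim_n (\bL_X/\bI^n)^\vee$ as in the proof of Lemma~\ref{lem:cup_dual}, applying duality on each finite-dimensional stage, and passing to the direct limit. This gives the first two isomorphisms of the top row.

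Second, the core identification: the composite $\mu_\ast \circ \bil : H_1(X;\bL_X)^{\otimes 2} \to H_0(X;\bL_X)$ is, under Poincar\'e duality and the evaluation pairing, continuously dual to the map $\smile^\ast : H_2(\Xbar,Y;\Ldual_X) \to H_1(\Xbar,Y;\Ldual_X)^{\otimes 2}$ of Lemma~\ref{lem:cup_dual}. This uses the standard compatibility of the intersection product on homology with the cup product on cohomology under Poincar\'e duality, applied compatibly with the coefficient pairing $\mu : \bL_X \otimes \bL_X \to \bL_X$ and its transpose on the dual side. Finally, Proposition~\ref{prop:formula_bcs} gives that $\csdual^{\otimes 2}$ is continuously dual to $\cs^{\otimes 2}$. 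Composing the three dualities matches the diagram with the factorization (\ref{eqn:bracket}), proving the first assertion. Continuity of the dual Goldman bracket follows because each map in the diagram is manifestly defined on continuous duals (in particular $\csdual$ is continuous by the corollary to Proposition~\ref{prop:formula_bcs}, and the cup-product/Poincar\'e-duality arrows are built from finite-dimensional pieces via a direct limit).

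The main obstacle I expect is the bookkeeping in the coefficient-pairing step: one must track that the fiberwise multiplication $\mu$ in $\mu_\ast$, the intersection pairing $\bil$, and their transposes on the continuous dual side assemble into the single map $\smile^\ast$ defined in Lemma~\ref{lem:cup_dual}. Because $\Ldual_X$ is an ind-object along the filtration by powers of the augmentation ideal, this compatibility must be checked at each truncation $\bL_X/\bI^n$ (where everything is finite-dimensional and classical Poincar\'e duality applies without qualification) and then shown to commute with the limit. A secondary, but essentially formal, technical point is the sign: orientation, boundary, and Koszul signs in the cup/intersection duality must be chosen consistently with Chen's sign conventions used for $\csdual$, but this only affects overall constants and not the validity of the factorization.
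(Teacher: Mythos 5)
Your proposal is correct and is essentially the proof the paper leaves implicit (the paper states the proposition without a proof environment, since all the ingredients---Theorem~\ref{thm:pullback_DR}, the corollary to Proposition~\ref{prop:formula_bcs}, and Lemma~\ref{lem:cup_dual}---have already been set up). You correctly identify the three dualities to chain together and correctly note that the reduction to the finite-dimensional truncations $\bL_X/\bI^n$, already used in the proof of Lemma~\ref{lem:cup_dual}, is the device that makes both the Poincar\'e-duality and cup-product steps legitimate at the ind/pro level.
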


The continuity of the Goldman bracket was proved directly by Kawazumi and Kuno in \cite[\S 4.1]{kk:groupoid}.

\subsection{Adams operations}
\label{sec:adams}

Even though $H_0(\Lambda M)$ is not a ring, one can take the power of any loop. We therefore have, for each $n\ge 0$, endomorphisms
$$
\psi_n : H_0(\Lambda M) \to H_0(\Lambda M)
$$
defined by taking the class $[\alpha]$ of a loop to the class $[\alpha^n]$ of its $n$th power. Denote the filtration on $H_0(\Lambda M)$ induced by the filtration of the group algebra by powers of its augmentation ideal by $I^\bdot$. The Adams operator $\psi_n$ has the property that $\psi_n(I^m)\subseteq I^{mn}$ and is therefore continuous in the $I$-adic topology. It therefore induces a map
$$
\psi_n : H_0(\Lambda M)^\wedge \to H_0(\Lambda M)^\wedge.
$$
In Section~\ref{sec:mhs_paths} we show that when $M$ is a smooth complex algebraic variety, each $\psi_n$ is a morphism of MHS.

\begin{remark}
Choose a base point $x\in M$. The set of primitive elements of $\Q\pi_1(M,x)^\wedge$ is the Lie algebra $\p(M,x)$ of the unipotent completion of $\pi_1(M,x)$ and $\Q\pi_1(M,x)^\wedge$ is the completed enveloping algebra of $\p(M,x)$. By the PBW Theorem, there is a canonical complete coalgebra isomorphism 
$$
\Q\pi_1(M,x)^\wedge \cong \prod_{k\ge 0} \Sym^k \p.
$$
The lift of $\psi_n$ to $\Q\pi_1(M,x)^\wedge$ takes $\gamma \to \gamma^n$ and is therefore multiplication by $n$ on $\p$.

But since the $n$th power map that takes $\gamma \in \pi_1(M,x)$ to $\gamma^n$ commutes with the coproduct $\Delta : \Q\pi_1(M,x)^\wedge \to [\Q\pi_1(M,x)^\wedge]^{\otimes 2}$, $\psi_n$ also commutes with $\Delta$. It follows that the restriction of $\psi_n$ to $\Sym^k \p(M,x)$ is multiplication by $n^k$. Since the surjection $\Q\pi_1(M,x)^\wedge \to H_0(\Lambda M;\Q)^\wedge$ commutes with the $\psi_n$, the PBW decomposition descends to a decomposition
$$
H_0(\Lambda M;\Q)^\wedge \cong \prod_{k\ge 0} \overline{S}^k(\Lambda M)
$$
where $\psi_n$ acts on $\overline{S}^k$ by multiplication by $n^k$.
\end{remark}

\section{De~Rham Theory and the Dual of the KK-Action}
\label{sec:kk_DR}

In this section we factor the continuous dual of of the KK-action as a composition of maps, each of which can be expressed in terms of Poincar\'e duality or iterated integrals.

\subsection{Relative de~Rham theory for $P_{x_0,x_1}M$}
\label{sec:rel-DR-PM}

Here we sketch the relative de~Rham theory for the fibration $p:P_{x_0,x_1}M \to M$ given by evaluation at $t=1/2$. It is similar to the relative de~Rham theory for pullback path fibrations discussed in Section~\ref{sec:pullback_fibrations}. For this reason, we will be brief. For simplicity, we suppose that $M$ is a manifold which is homotopy equivalent to a finite complex. Throughout, $\kk$ will be either $\R$ or $\C$. As in Section~\ref{sec:pullback_fibrations}, $\sE_M^j$ denotes the sheaf of smooth $\kk$-valued $j$-forms on $M$.

Denote the restriction of $p:P_{x_0,x_1}M\to M$ to the open subset $U$ of $M$ by $P_{x_0,x_1}M|_U$. Let $\Pdual_{x_0,x_1}^k$ be the locally constant sheaf associated to the presheaf
$$
U \mapsto H^k_\dR(P_{x_0,x_1}M|_U;\kk).
$$
It is a direct limit of unipotent local systems over $M$ and has fiber isomorphic to
$$
\Hom^\cts_\kk(\bP_{M;x_0,x_1}^k,\kk) \cong \bigoplus_{i+j=k}\Hom^\cts_\kk(H_i(P_{x_0,x}M)\otimes H_j(P_{x,x_1}M),\kk)
$$
over $x\in M$. When $M$ is a rational $K(\pi,1)$, $\Pdual^k_{x_0,x_1}$ vanishes when $k>0$. In this case, we will sometimes denote $\Pdual^0_{x_0,x_1}$ by $\Pdual_{x_0,x_1}$.

Define $\sP_{x_0,x_1}^k = \Pdual_{x_0,x_1}^k\otimes_\kk \sE^0_M$. It has a natural flat connection $\nabla$.

\begin{proposition}
There is a spectral sequence converging to $H^{j+k}_\dR(P_{x_0,x_1} M;\kk)$ with $E_1^{j,k} = E^j(M;\sP_{x_0,x_1}^k)$ and $d_1 = \nabla$, so that $E_2^{j,k} = H^j(M;\sP^k_{x_0,x_1})$.
\end{proposition}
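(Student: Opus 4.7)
The plan is to adapt the argument of Proposition~\ref{prop:ss} to the evaluation-at-midpoint fibration. The key observation is that path multiplication gives a homeomorphism
$$
P_{x_0,x,x_1}M \cong P_{x_0,x}M \times P_{x,x_1}M,
$$
so $p = p_{1/2} : P_{x_0,x_1}M \to M$ can be viewed as a pullback path fibration in the generalized sense: it is the pullback of the iterated evaluation $PM \to M^3$, $\gamma \mapsto (\gamma(0),\gamma(1/2),\gamma(1))$, along the map $f : M \to M^3$, $x \mapsto (x_0,x,x_1)$.

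First I would choose, as in the proof of Proposition~\ref{prop:ss}, a quasi-isomorphic sub-DGA $A^\bdot \hookrightarrow E^\bdot(M)$ with $A^0 = \kk$; this is possible since $M$ is homotopy equivalent to a finite complex. The DGA of iterated integrals on $P_{x_0,x_1}M$ (including forms pulled back from $M$ along $p_{1/2}$) is then modeled algebraically by the coupled double bar construction
$$
B_f^\bdot := B(\kk,A^\bdot,A^\bdot)\otimes_{A^\bdot} E^\bdot(M) \otimes_{A^\bdot} B(A^\bdot,A^\bdot,\kk),
$$
where the augmentations on the two outer slots are evaluations at $x_0$ and $x_1$ (which restrict to the identity on $A^0 = \kk$), and the two tensor products over $A^\bdot$ correspond to the common midpoint. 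That this computes $H^\bdot_\dR(P_{x_0,x_1}M;\kk)$ follows from the usual splitting of an iterated integral on a concatenated path together with the analogue of Proposition~\ref{prop:qism}, which allows the replacement $E^\bdot(M) \leftrightsquigarrow A^\bdot$ on the outer ends. Since $A^0 = \kk$, the tensor factor $B(\kk,A^\bdot,A^\bdot)\otimes_{A^\bdot} B(A^\bdot,A^\bdot,\kk)$ is canonically graded (compare Remark~\ref{rem:qism}), so $B_f^\bdot$ is in fact a double complex.

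Next I would filter $B_f^\bdot$ by the degree in the $E^\bdot(M)$ factor. This is exactly the analogue of the filtration used in Proposition~\ref{prop:ss}, and its spectral sequence converges to $H^{j+k}_\dR(P_{x_0,x_1}M;\kk)$. The $E_1$-page is
$$
E_1^{j,k} = E^j(M) \otimes H^k\bigl(B(\kk,A^\bdot,A^\bdot)\otimes_{A^\bdot}B(A^\bdot,A^\bdot,\kk)\bigr),
$$
and Corollary~\ref{cor:stalk} applied to each half of the split path identifies the second tensor factor with the fiber of the locally constant sheaf $\Pdual^k_{x_0,x_1}$ over a point, trivialized via the PBW-style splitting at the midpoint. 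Hence $E_1^{j,k} = E^j(M;\sP_{x_0,x_1}^k)$, as required, and $E_2^{j,k} = H^j(M;\sP_{x_0,x_1}^k)$.

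The remaining point is to check that $d_1$ equals the flat connection $\nabla$ on $\sP_{x_0,x_1}^k$. I expect this to be the main technical step, since the midpoint now plays two roles simultaneously (as a base-point for each half-path and as the evaluation coordinate on $M$). As in the proof of Proposition~\ref{prop:ss}, the verification reduces to the Fundamental Theorem of Calculus: differentiating an iterated integral $\int_{\gamma_0^t}(w_1|\dots|w_p)\int_{\gamma_t^1}(w_{p+1}|\dots|w_r)$ with respect to the splitting parameter $t$ produces precisely the boundary-type terms $\langle w_p,\dot\gamma(t)\rangle(\cdots)$ and $\langle w_{p+1},\dot\gamma(t)\rangle(\cdots)$ that encode the connection on the two half-fibers. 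Writing this out carefully for general $k$ (where higher-degree iterated integrals appear in both halves) is more involved than the $k=0$ case but entirely parallel; it can be limited to $k=0$ in the intended applications, where $M$ will be a rational $K(\pi,1)$ and $\sP_{x_0,x_1}^k$ vanishes for $k>0$.
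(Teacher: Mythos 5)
Your proof is correct and takes essentially the same approach as the paper: choose a quasi-isomorphic sub-DGA $A^\bdot$ with $A^0=\kk$, form the glued bar complex (your $B_f^\bdot$ is canonically isomorphic to the paper's $B(\kk,A^\bdot,E^\bdot(M))\otimes_{E^\bdot(M)} B(E^\bdot(M),A^\bdot,\kk)$, since $B(\kk,A,A)\otimes_A E\cong B(\kk,A,E)$ and $E\otimes_A B(A,A,\kk)\cong B(E,A,\kk)$), filter by $E^\bdot(M)$-degree, identify the $E_1$-page via the factorization at the midpoint, and verify $d_1=\nabla$ by the Fundamental Theorem of Calculus in the $k=0$ case. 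The reframing of $p_{1/2}\colon P_{x_0,x_1}M\to M$ as the pullback of $PM\to M^3$, $\gamma\mapsto(\gamma(0),\gamma(1/2),\gamma(1))$, along $x\mapsto(x_0,x,x_1)$ is a pleasant conceptual gloss but does not change the underlying argument, which mirrors the paper's adaptation of Proposition~\ref{prop:ss}.
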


\begin{proof}
The proof is similar to the proof of Proposition~\ref{prop:ss}. As there, we choose a quasi-isomorphic sub DGA $A^\bdot$ of $E^\bdot(M)$ with $A^0 = \kk$. Since the restriction of the augmentation $\e_x$ to $A^\bdot$ does not depend on $x\in M$, for all $x,y\in M$, there is a natural quasi-isomorphism
$$
B(\kk,A^\bdot,\kk) \hookrightarrow \Ch^\bdot(P_{x,y}M).
$$

The de~Rham cohomology of the total space $P_{x_0,x_1}M$ is computed by the complex
\begin{equation}
\label{eqn:big-complex}
B(\kk,A^\bdot,E^\bdot(M))\otimes_{E^\bdot(M)} B(E^\bdot(M),A^\bdot,\kk).
\end{equation}
As a graded vector space, it is isomorphic to
$$
B(\kk,A^\bdot,\kk) \otimes E^\bdot(M) \otimes B(\kk,A^\bdot,\kk).
$$

Let $\sB^\bdot$ be the sheaf of DGAs over $M$ associated to the presheaf
$$
U \mapsto B(\kk,A^\bdot,E^\bdot(U))\otimes_{E^\bdot(U)} B(E^\bdot(U),A^\bdot,\kk).
$$
It is isomorphic to $B(\kk,A^\bdot,\kk) \otimes \sE_M^\bdot \otimes B(\kk,A^\bdot,\kk)$. Corollaries~\ref{cor:stalk} and \ref{cor:path-dr} imply that the homology sheaf associated to $\sB^\bdot$ is locally constant and isomorphic to $\Pdual_{x_0,x_1}\otimes_\kk \sE^0_M$. It is also isomorphic to
$$
H^0(B(\kk,A^\bdot,\kk))^{\otimes 2}\otimes \sE^0_M.
$$

The complex (\ref{eqn:big-complex}) can be filtered by degree in $E^\bdot(M)$. The resulting spectral sequence converges to $H^\bdot_\dR(P_{x_0,x_1}M;\kk)$. The $E_1$ term of the spectral sequence is given by
$$
E_1^{j,k} \cong E^j(M,\sP_{x_0,x_1}^k).
$$
We have to show that under this isomorphism, $d_1$ corresponds to the differential $\nabla$ given by the canonical flat connection on $\sP_{x_0,x_1}$.

To do this, first note that the $E_1$ term is also isomorphic to
$$
H^\bdot(B(\kk;A^\bdot;\kk))^{\otimes 2}\otimes \sE^\bdot_N.
$$
We need only show that $\nabla$ and $d_1$ agree on sections in $H^\bdot(B(\kk;A^\bdot;\kk))^{\otimes 2}\otimes \kk$.

In the case $k=0$, the equality of $d_1$ and $\nabla$ reduces to the Fundamental Theorem of Calculus. Fix $\gamma \in P_{x_0,x_1}M$ and $w_1,\dots,w_r\in A^1$. The Fundamental Theorem of Calculus implies that the the value of the exterior derivative of the function
$$
t \mapsto \int_{\gamma_t'} (w_1|\dots|w_j) \int_{\gamma_t''}(w_{j+1}|\dots|w_r)
$$
at $t=1$ is
\begin{multline*}
\langle w_j,\dot{\gamma}_t'(1)\rangle\int_{\gamma_t'}(w_1|\dots|w_{j-1})\int_{\gamma_t''}(w_{j+1}|\dots|w_r)
\cr
-\langle w_{r+1},\dot{\gamma}_t''(0)\rangle\int_{\gamma_t'} (w_1|\dots|w_r) \int_{\gamma_t''}(w_{r+2}|\dots|w_r)
\end{multline*}
This corresponds to the degree $(1,0)$ differential (and therefore the $d_1$ differential)
\begin{multline*}
[w_1|\dots|w_j] \otimes 1 \otimes [w_{j+1}|\dots|w_r]
\cr
\mapsto (-1)^{\deg[w_1|\dots|w_{r-1}]}[w_1|\dots|w_{j-1}]\otimes w_j \otimes [w_{j+1}|\dots|w_r] \cr
- (-1)^{\deg[w_1|\dots|w_{r}]}[w_1|\dots|w_j] \otimes w_{j+1} \otimes [w_{j+2}|\dots|w_r]
\end{multline*}
of the double complex (\ref{eqn:big-complex}).
\end{proof}

\begin{corollary}
\label{cor:rat-Kpi1}
If $M$ is a rational $K(\pi,1)$, then for all $j\ge 0$
$$
H^j(M;\sP_{x_0,x_1}) \cong H^j_\dR(P_{x_0,x_1}M;\kk) \cong \Hdual^j(P_{x_0,x_1}M;\kk)
$$
and these vanishes when $j>0$.
\end{corollary}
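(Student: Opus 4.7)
The plan is to run the spectral sequence from the preceding proposition against the rational $K(\pi,1)$ hypothesis on $M$ and verify that only the $k=0$ column survives.

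First I would identify the stalks of the sheaves $\Pdual^k_{x_0,x_1}$. For any $x\in M$, choose a contractible open neighbourhood $U$ of $x$; since the inclusion $\{x\}\hookrightarrow U$ is a homotopy equivalence, Corollary~\ref{cor:stalk} gives a quasi-isomorphism $\Ch^\bdot(P_{x_0,x_1}M|_U)\to \Ch^\bdot(p^{-1}(x))$. Hence the stalk of $\Pdual^k_{x_0,x_1}$ at $x$ is $H^k_\dR(P_{x_0,x,x_1}M;\kk)$. Because path concatenation gives a homeomorphism $P_{x_0,x}M\times P_{x,x_1}M\cong P_{x_0,x,x_1}M$, Künneth (applied to iterated integrals) identifies this stalk with $\bigoplus_{i+j=k}H^i_\dR(P_{x_0,x}M;\kk)\otimes H^j_\dR(P_{x,x_1}M;\kk)$. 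The rational $K(\pi,1)$ hypothesis, together with Proposition~\ref{prop:rat_Kpi1}, forces both factors to vanish in positive degrees. Thus $\Pdual^k_{x_0,x_1}=0$ and therefore $\sP^k_{x_0,x_1}=0$ for every $k>0$.

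With $\sP^k_{x_0,x_1}$ concentrated in $k=0$, the spectral sequence of the preceding proposition degenerates at $E_2$ onto the row $k=0$, producing the first isomorphism $H^j_\dR(P_{x_0,x_1}M;\kk)\cong H^j(M;\sP_{x_0,x_1})$ for every $j\geq 0$. In degree zero, Corollary~\ref{cor:path-dr} supplies the isomorphism with $\Hdual^0(P_{x_0,x_1}M;\kk)$. In positive degrees, Proposition~\ref{prop:rat_Kpi1} directly asserts that $H^j_\dR(P_{x_0,x_1}M;\kk)$ vanishes, which trivializes the remaining identifications and gives the vanishing claim.

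The main technical point is the stalk computation: one must know that the iterated integral description of the de~Rham cohomology of $P_{x_0,x,x_1}M$ is compatible with the product decomposition coming from path concatenation. This is precisely the content bundled into Corollary~\ref{cor:stalk} (and the reduced bar construction that underlies it), so no new work is needed beyond organizing the references. Everything else is formal manipulation of the spectral sequence.
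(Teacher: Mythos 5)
Your proof is correct and follows the route the paper implicitly takes: you use the spectral sequence of Proposition~9.2, the vanishing of $\sP^k_{x_0,x_1}$ for $k>0$ under the rational $K(\pi,1)$ hypothesis, and Corollary~\ref{cor:path-dr} plus Proposition~\ref{prop:rat_Kpi1} to pin down the remaining identifications. You actually supply a detail the paper only asserts (that $\Pdual^k_{x_0,x_1}=0$ for $k>0$), via the stalk computation $\Pdual^k_{x_0,x_1,x}\cong\bigoplus_{i+j=k}H^i_\dR(P_{x_0,x}M)\otimes H^j_\dR(P_{x,x_1}M)$ coming from the concatenation homeomorphism $P_{x_0,x}M\times P_{x,x_1}M\cong P_{x_0,x,x_1}M$; that K\"unneth splitting is exactly what the paper's proof of Proposition~9.2 encodes at the level of the bar complex $B(\kk,A^\bdot,\kk)^{\otimes 2}$, so this is a welcome spelling-out rather than a detour.
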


\subsection{Relative de~Rham cohomology}

We continue using the notation of the previous section. Before giving the factorization of the continuous dual of the KK-action, we need to understand how to express the relative cohomology group $H^\bdot(M,\{x_0,x_1\};\sP_{x_0,x_1})$ in terms of iterated integrals. For simplicity, we suppose that $M$ is a rational $K(\pi,1)$. By Corollary~\ref{cor:rat-Kpi1}, there is an isomorphism
$$
H^j(M;\sP_{x_0,x_1}) \cong H^j_\dR(P_{x_0,x_1} M;\kk)
$$
and this vanishes for all $j>0$. This vanishing implies that the sequence
\begin{multline}
\label{eqn:ses_rel_coho}
0 \to H^0(M;\sP_{x_0,x_1}) \to H^0(\{x_0,x_1\};\sP_{x_0,x_1})
\cr \overset{\delta}{\to} H^1(M,\{x_0,x_1\};\sP_{x_0,x_1}) \to H^1(M;\sP_{x_0,x_1})
\end{multline}
is exact.

To declutter the notation, we temporarily denote $B(\kk,A^\bdot,\kk)$ by $B(A^\bdot)$. There is a natural inclusion $B(A^\bdot) \hookrightarrow \Ch^\bdot(P_{x_0,x_1}M)$. Since $M$ is a rational $K(\pi,1)$, with this notation, $H^\bdot(M,\sP_{x_0,x_1})$ can be computed as the cohomology of the complex $B(A^\bdot)\otimes E^\bdot(M)\otimes B(A^\bdot)$. The cohomology of the fibers of $p : P_{x_0,x_1}M \to M$ over $x_0$ and $x_1$ are both computed by the complex
$$
B(A^\bdot)\otimes \kk \otimes B(A^\bdot).
$$
The restrictions to the fibers over $x_0$ and $x_1$
$$
\e_0,\ \e_1 : B(A^\bdot)\otimes E^\bdot(M)\otimes B(A^\bdot) \to B(A^\bdot)\otimes \kk \otimes B(A^\bdot)
$$
are induced by the augmentations
$$
\e_{x_0},\ \e_{x_1} : E^\bdot(M) \to \kk,
$$
on the middle factor. The relative cohomology $H^\bdot(M,\{x_0,x_1\};\sP_{x_0,x_1})$ is thus computed by the complex
$$
\cone\big[B(A^\bdot)\otimes E^\bdot(M) \otimes B(A^\bdot) \overset{\e_0 \oplus \e_1}{\To} B(A^\bdot)\otimes \kk \otimes B(A^\bdot) \oplus B(A^\bdot)\otimes \kk \otimes B(A^\bdot)\big][-1].
$$
Denote it by $B^\bdot(M,\{x_0,x_1\};\sP_{x_0,x_1})$. An element of this complex of degree $j$ will be written in the form
$$
\begin{bmatrix}
J & & K \cr & I &
\end{bmatrix}
\quad
\text{where}
\quad
\begin{cases}
I \in [B(A^\bdot)\otimes E^\bdot(M) \otimes B(A^\bdot)]^j\cr
J,\ K \in [B(A^\bdot)\otimes \kk \otimes B(A^\bdot)]^{j-1}.
\end{cases}
$$
Its differential is given by
$$
d \begin{bmatrix}
J & & K \cr & I &
\end{bmatrix}
=
\begin{bmatrix}
\xymatrix@C=-4pt@R=8pt{
\e_0(I) - dJ & & \e_1(I) - dK \cr & dI &
}
\end{bmatrix}.
$$

The exactness of (\ref{eqn:ses_rel_coho}) implies that the image of the connecting homomorphism $\delta$ in (\ref{eqn:ses_rel_coho}) is a pushout. Since $H^1(M;\sP_{x_0,x_1})$ vanishes when $M$ is a rational $K(\pi,1)$, we have:

\begin{lemma}
If $M$ is a rational $K(\pi,1)$, the diagram
$$
\xymatrix{
H^0_\dR(P_{x_0,x_1}M) \ar[r]^(.4){\Delta_0} \ar[d]_{\Delta_1} &
H^0_\dR(\Lambda_{x_0}M) \otimes H^0_\dR(P_{x_0,x_1}M) \ar[d]^{\iota_0}
\cr
H^0_\dR(P_{x_0,x_1}M)\otimes H^0_\dR(\Lambda_{x_0}M) \ar[r]_{\iota_1} &
H^1(M,\{x_0,x_1\};\sP_{x_0,x_1})
}
$$
is a pushout, where $\Delta_0$ and $\Delta_1$ are induced by the multiplication maps
$$
\Lambda_{x_0}M\times P_{x_0,x_1}M \to P_{x_0,x_1}M \text{ and }P_{x_0,x_1}M \times \Lambda_{x_1}M \to P_{x_0,x_1}M
$$
respectively and $\iota_0$ and $\iota_1$ are induced by the maps
$$
\iota_\alpha : B(A^\bdot)\otimes B(A^\bdot) \to B(M,\{x_0,x_1\};\sP_{x_0,x_1})
$$
defined by
$$
\iota_0(I\otimes J) = 
\begin{bmatrix}
\xymatrix@C=-42pt@R=8pt{
I\otimes 1 \otimes J & & 0 \cr & 0
}
\end{bmatrix}
\text{ and }
\iota_1(I \otimes J) = 
\begin{bmatrix}
\xymatrix@C=-42pt@R=8pt{
0 & & I\otimes 1 \otimes J \cr & 0
}
\end{bmatrix}.
$$
\end{lemma}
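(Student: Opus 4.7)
The plan is to extract the pushout directly from the long exact sequence of the pair, exploiting the vanishing provided by the rational $K(\pi,1)$ hypothesis. Since $M$ is a rational $K(\pi,1)$, Corollary~\ref{cor:rat-Kpi1} gives $H^1(M;\sP_{x_0,x_1}) = 0$, so the already-exact sequence (\ref{eqn:ses_rel_coho}) truncates to the short exact sequence
$$
0 \To H^0(M;\sP_{x_0,x_1}) \overset{r}{\To} H^0(\{x_0,x_1\};\sP_{x_0,x_1}) \overset{\delta}{\To} H^1(M,\{x_0,x_1\};\sP_{x_0,x_1}) \To 0.
$$
Since a cokernel in an additive category is the pushout against the zero map, it is enough to identify the three terms and the two outgoing maps with the data in the statement of the lemma.

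Next I would identify the stalks. By definition $\sP_{x_0,x_1}|_x = H^0_\dR(P_{x_0,x,x_1}M;\kk)$, and the path-multiplication homeomorphism $P_{x_0,x}M \times P_{x,x_1}M \overset{\simeq}{\To} P_{x_0,x,x_1}M$ yields, via a K\"unneth isomorphism in degree zero, a canonical splitting $H^0_\dR(P_{x_0,x,x_1}M) \cong H^0_\dR(P_{x_0,x}M) \otimes H^0_\dR(P_{x,x_1}M)$. Plugging in $x=x_0$ and $x=x_1$ yields the two tensor products appearing in the lemma. The restriction map $r$ then becomes the pair of pullbacks along the two multiplication maps $\Lambda_{x_0}M \times P_{x_0,x_1}M \to P_{x_0,x_1}M$ and $P_{x_0,x_1}M \times \Lambda_{x_1}M \to P_{x_0,x_1}M$; these are exactly the maps $\Delta_0$ and $\Delta_1$ in the statement.

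Finally I would identify the connecting map $\delta$ via the cone description spelled out above the lemma. A closed degree-$0$ element of the boundary complex is a pair $(J,K)$ with $J, K$ in $B(A^\bdot)\otimes\kk\otimes B(A^\bdot)$; under the cone differential its image in $H^1$ is represented by the cocycle with those entries in the two outer positions and $0$ in the middle position. By the explicit formulas defining $\iota_0$ and $\iota_1$, this class is precisely $\iota_0(J) + \iota_1(K)$. Hence $\delta$ factors as $\iota_0 \oplus \iota_1$ on the direct sum decomposition of $H^0(\{x_0,x_1\};\sP_{x_0,x_1})$, and the surjectivity of $\delta$ combined with $\ker\delta = \im r = \im(\Delta_0,\Delta_1)$ presents the upper right corner of the square as the pushout of $\Delta_0$ and $\Delta_1$ in $\Vec_\kk$.

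The one point requiring care is sign bookkeeping. The cone differential carries the sign $d[J,K,I] = [\epsilon_0(I) - dJ, \epsilon_1(I) - dK, dI]$, so after the identifications above the commutativity of the square actually reads $\iota_0\Delta_0 + \iota_1\Delta_1 = 0$ in cohomology rather than $\iota_0\Delta_0 = \iota_1\Delta_1$. In the additive category $\Vec_\kk$ this sign can be absorbed by replacing $\iota_1$ with $-\iota_1$ (which does not change its image or cokernel); the cokernel interpretation above then shows the square is a pushout on the nose. This is the only subtle step, and it is routine.
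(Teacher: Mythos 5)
Your proof is correct and takes essentially the same approach as the paper: the paper derives the lemma in a single sentence, noting that the exactness of~(\ref{eqn:ses_rel_coho}) presents the image of $\delta$ as a pushout and that the vanishing of $H^1(M;\sP_{x_0,x_1})$ for a rational $K(\pi,1)$ makes $\delta$ surjective. You have fleshed this out by identifying the stalks and the restriction/connecting maps explicitly, and the sign observation at the end — that the square commutes only up to sign $\iota_0\Delta_0 + \iota_1\Delta_1 = 0$, which is harmless for a pushout in an additive category — is a careful and accurate reading of the cone differential.
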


It will be useful to note that if $I\in B(A^\bdot)$ represents an element of $H^0_\dR(P_{x_0,x_1})M$ and $\Delta I = \sum_\alpha I_\alpha'\otimes I_\alpha''$, where $I_\alpha', I_\alpha'' \in H^0(B(A^\bdot))$, then
$$
d\begin{bmatrix}
\xymatrix@C=-16pt@R=8pt{
0 & & 0 \cr & \sum_\alpha I_\alpha'\otimes 1 \otimes I_\alpha''
}
\end{bmatrix}
=
\begin{bmatrix}
\xymatrix@C=-42pt@R=8pt{
\iota_0 \Delta I && \iota_1 \Delta I \cr & 0
}
\end{bmatrix}.
$$

\subsection{A formula for the dual of $\csp$}

Suppose that $M$ is a connected smooth manifold with finite Betti numbers. As in the previous section, we assume for simplicity that $M$ is a rational $K(\pi,1)$. Recall the definition of $\csp$ from Section~\ref{sec:CS}.

\begin{proposition}
The map
$$
\csp^t : H^1(M,\{x_0,x_1\} ;\sP_{M;x_0,x_1}) \to H^0_\dR(P_{x_0,x_1}M)
$$
is induced by the map
$$
\csp^\ast : B^\bdot(M,\{x_0,x_1\};\sP_{x_0,x_1}) \to B(\kk,A^\bdot,\kk)
\hookrightarrow \Ch^\bdot(P_{x_0,x_1}M)
$$
defined by
\begin{multline}
\label{eqn:formula_bkk}
\begin{bmatrix}
\xymatrix@C=-42pt@R=8pt{
I_0 \otimes 1 \otimes J_0 & & I_1 \otimes 1 \otimes J_1 \cr
& [w_1|\dots|w_j]\otimes w \otimes [w_{j+1}|\dots|w_r]
}
\end{bmatrix}
\cr
\mapsto
[w_1|\dots |w_j|w|w_{j+1}|\dots|w_r] + \e(I_0)J_0 - \e(J_1)I_1,
\end{multline}
where $w,w_1,\dots,w_r \in E^1(M)$ and $I_0,I_1,J_0,J_1 \in B(\kk,A^\bdot,\kk)^0$.
\end{proposition}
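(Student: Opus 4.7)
The plan is to follow the strategy used for Proposition~\ref{prop:formula_bcs}: evaluate both sides of the asserted identity on an arbitrary $\gamma\in P_{x_0,x_1}M$ and use the fact that $\csp([\gamma])=[\gammahat]$, where $\gammahat$ is the relative $1$-cycle in $(P_{x_0,x_1}M,\,\Lambda_{x_0}M\cup\Lambda_{x_1}M)$ constructed in Section~\ref{sec:CS}. Under the iterated-integral identification coming from the double complex~(\ref{eqn:big-complex}), a middle entry $[w_1|\dots|w_j]\otimes w\otimes[w_{j+1}|\dots|w_r]$ of the relative cochain complex corresponds, via the factorization $P_{x_0,\ast,x_1}M\cong P_{x_0,\ast}M\times P_{\ast,x_1}M$ of the midpoint fibers of $p=p_{1/2}$, to the $1$-form on $P_{x_0,x_1}M$ obtained by wedging the iterated integrals $\int(w_1|\dots|w_j)$ on the left half and $\int(w_{j+1}|\dots|w_r)$ on the right half with $p^{\ast}w$.

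The core calculation is then the evaluation of this $1$-form along the lift $\gammahat(t)=\gamma'_t\gamma''_t$. Because $p\circ\gammahat=\gamma$, it equals
\[
\int_0^1 \Big[\int_{\gamma'_t}(w_1|\dots|w_j)\Big]\Big[\int_{\gamma''_t}(w_{j+1}|\dots|w_r)\Big]\, w(\dot\gamma(t))\,dt,
\]
and the Fubini rearrangement over the time-ordered simplex $\{0\le s_1\le\dots\le s_j\le t\le s_{j+1}\le\dots\le s_r\le 1\}$ — the same combinatorial step that powers the proof of Proposition~\ref{prop:formula_bcs} — collapses this to $\int_\gamma(w_1|\dots|w_j|w|w_{j+1}|\dots|w_r)$. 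For the boundary entries, $\gammahat(0)$ is the loop-then-path $c_{x_0}\gamma$, lying in the midpoint fiber over $x_0$ identified with $\Lambda_{x_0}M\times P_{x_0,x_1}M$, and $\gammahat(1)=\gamma c_{x_1}$ lies in the fiber over $x_1$ identified with $P_{x_0,x_1}M\times\Lambda_{x_1}M$. Since any iterated integral of positive bar length vanishes on a constant path, evaluating $I_0\otimes 1\otimes J_0$ at $(c_{x_0},\gamma)$ gives $\e(I_0)\,J_0(\gamma)$, and evaluating $I_1\otimes 1\otimes J_1$ at $(\gamma,c_{x_1})$ gives $\e(J_1)\,I_1(\gamma)$, where $\e$ denotes the bar augmentation.

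Assembling the middle and boundary contributions with the signs dictated by the cone differential of the relative complex reproduces the claimed chain-level formula for $\csp^{\ast}$, and well-definedness on cohomology is then automatic because the iterated-integral pairing of relative cocycles with relative cycles descends to cohomology. I expect the main obstacle to be bookkeeping: matching Chen's sign conventions for iterated integrals against the cone-of-complexes convention used for $B^\bdot(M,\{x_0,x_1\};\sP_{x_0,x_1})$, so that the boundary corrections come out as $+\e(I_0)J_0$ and $-\e(J_1)I_1$ rather than with the opposite signs. Once those conventions are pinned down, the remainder is a direct generalization of the free-loop computation of Proposition~\ref{prop:formula_bcs}.
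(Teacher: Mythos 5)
Your evaluation of the middle entry $[w_1|\dots|w_j]\otimes w\otimes[w_{j+1}|\dots|w_r]$ along $\gammahat$ and the Fubini rearrangement over the time-ordered simplex is correct and matches the paper's calculation exactly, and the identification of the boundary contributions as $\e(I_0)J_0(\gamma)$ and $\e(J_1)I_1(\gamma)$ via $\gammahat(0)=c_{x_0}\gamma$ and $\gammahat(1)=\gamma c_{x_1}$ is also the right idea. However, there is a genuine gap in the step you dismiss as ``automatic.'' The proposition asserts that a specific \emph{chain-level} map $B^\bdot(M,\{x_0,x_1\};\sP_{x_0,x_1})\to B(\kk,A^\bdot,\kk)$ induces $\csp^\ast$ on cohomology. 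For that assertion to even be meaningful, one must check that the formula (\ref{eqn:formula_bkk}) vanishes on $1$-coboundaries and carries $1$-cocycles to \emph{closed} elements of $B(\kk,A^\bdot,\kk)^0$ — otherwise the image is not a class in $H^0_\dR(P_{x_0,x_1}M)$. Neither of these is a formal consequence of the pairing-with-cycles argument, and together they constitute the bulk of the paper's proof: vanishing on coboundaries is verified using the relations (\ref{eqn:relns}) after computing $\nabla(I\otimes f\otimes J)$ and the cone differential, while the preservation of cocycles is verified by comparing $\nabla(I\otimes w\otimes J)$ against the bar differential formula (\ref{eqn:diff}).

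The homotopy-invariance argument you invoke (a coboundary pairs to $0$ against any $\gammahat$) shows at most that the resulting \emph{functional} on paths is $0$ when $c$ is a coboundary; it does not show that the explicit bar element $\Phi(c)\in B^0(\kk,A^\bdot,\kk)$ is closed, and the evaluation map on $B^0$ is not injective, so one cannot conclude $\Phi(c)$ represents the zero class without knowing it is a cocycle in the first place. Your acknowledged uncertainty about the cone signs is also substantive: the specific $+\e(I_0)J_0-\e(J_1)I_1$ corrections are precisely what make the formula compatible with the cone differential of the relative complex, and getting them wrong would break the vanishing-on-coboundaries computation. So your outline correctly identifies the final verification step, but omits the two cochain-level compatibility checks that the paper treats as the ``first task'' and ``next task,'' and these are where the actual content lies.
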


In the expression above, the augmentation $I_0 \mapsto \e(I_0)$ corresponds to evaluation on trivial loop in $\Lambda_{x_0}M$ and $J_1 \mapsto \e(J_1)$ to evaluation on the trivial loop in $\Lambda_{x_1}M$. So the maps $I_0\otimes J_0\mapsto \e(I_0)J_0$ and $I_1\otimes J_1 \mapsto \e(J_1)I_1$ are induced by the projections $p^{-1}(x_0) \to P_{x_0,x_1}M$ and $p^{-1}(x_1) \to P_{x_0,x_1}M$, respectively.

\begin{proof}
Since
$$
E^j(M;\sP_{x_0,x_1}) = H^0(B(A^\bdot))\otimes E^j(M) \otimes H^0(B(A^\bdot)) \subseteq B(A^\bdot)\otimes E^\bdot(M) \otimes B(A^\bdot),
$$
every element of $E^j(M;\sP_{x_0,x_1})$ is of the form
$$
W = \sum I_k \otimes w_k \otimes J_k,
$$
where $w_k \in E^k(M)$ and $I_k,J_k\in H^0(B(A^\bdot)) \subseteq B(A^\bdot)$. Its exterior derivative is
\begin{align*}
\nabla W &= \sum_k \big(\nabla_0 I_k) \wedge w_k \otimes J_k + I_k \otimes dw_k \otimes J_k + I_k \otimes w_k \wedge \nabla_1 J_k
\cr
&=
\sum_k \big(
I_{k,\alpha} \otimes (w_{k,\alpha} \wedge w_k) \otimes J_k 
+ I_k \otimes dw_k \otimes J_k
+  I_k \otimes (w_k\wedge \xi_{k,\beta}) \otimes J_{k,\beta}\big),
\end{align*}
where
$$
\nabla_0 I_k = \sum_\alpha I_{k,\alpha}\otimes w_{k,\alpha}
\in H^0(B(A^\bdot))\otimes A^\bdot \subseteq B(\kk,A^\bdot,A^\bdot) \subseteq \Ch^1(P_{x_0,M}M)
$$
and
$$
\nabla_1 J_k = -\sum_\beta  \xi_{k,\beta} \otimes J_{k,\beta}
\in A^\bdot \otimes H^0(B(A^\bdot)) \subseteq B(A^\bdot,A^\bdot,\kk) \subseteq \Ch^1(P_{M,x_1}M).
$$
Here $P_{M,x_1}M$ denotes the restriction of $PM \to M^2$ to $M\times\{x_1\}$, and $P_{x_0,M}M$ its restriction to $\{x_0\}\times M$.

The first task is to show that the formula vanishes on 1-coboundaries. The elements of degree 0 in $B^\bdot(M,\{x_0,x_1\};\sP_{x_0,x_1})$ are linear combinations of elements of the form
$$
\begin{bmatrix}
\xymatrix@C=-16pt@R=8pt{
0 && 0 \cr & I \otimes f \otimes J
}
\end{bmatrix},
$$
where $f \in E^0(M)$ and $I,J \in H^0(B(A^\bdot))$. Write
$$
\nabla_0 I = \sum_\alpha I_\alpha \otimes w_\alpha
\text{ and }
\nabla_1 J = -\sum_\beta \xi_\beta \otimes J_\beta.
$$
Then
$$
\nabla(I \otimes f \otimes J) = I\otimes df \otimes J + \sum I_\alpha\otimes f w_\alpha \otimes J - \sum I \otimes f\xi_\beta \otimes J_\beta.
$$
At this stage, it is useful to introduce the notation $[I|w]$ for the concatenation of $I$ and $w\in A^+$, etc. Then the expressions above imply that
$$
I = \sum_\alpha [I_\alpha|w_\alpha] \text{ and } J = \sum_\beta [\xi_\beta|J_\beta]
$$
and that
$$
\csp^\ast : 
\begin{bmatrix}
\xymatrix@C=-16pt@R=8pt{
0 && 0 \cr & I'\otimes w \otimes J' 
}
\end{bmatrix}
\mapsto [I'|w|J'].
$$

Since
$$
d \begin{bmatrix}
\xymatrix@C=-16pt@R=8pt{
0 && 0 \cr & I \otimes f \otimes J
}
\end{bmatrix}
=
\begin{bmatrix}
\xymatrix@C=-16pt@R=8pt{
f(x_0)\iota_0(I\otimes J) && f(x_1)\iota_1(I\otimes J) \cr & \nabla(I \otimes f \otimes J)
}
\end{bmatrix}
$$
the relations (\ref{eqn:relns}) imply that this lies in the kernel of the map (\ref{eqn:formula_bkk}).\footnote{The easiest way to verify this is to note that $B(A^\bdot)$ can be decomposed $B(A^\bdot) = \kk[\blank] \oplus \ker \e$, where $\ker \e$ is the span of the $[w_1|\dots|w_r]$ with $r>0$. One then treats the cases (1) $I,J\in \ker\e$, (2) $I\in \ker \e$ and $J=[\blank]$, (3) $I = [\blank]$ and $J \in \ker \e$, (4) $I=J=[\blank]$. These correspond to the 4 types of relations.}

The next task is to show that (\ref{eqn:formula_bkk}) takes closed elements to closed iterated integrals. This follows from the fact that if $I,J\in H^0(B(A^\bdot))$, as above, and if $w\in E^1(M)$, then
$$
\nabla I\otimes w \otimes J = \sum_\alpha I_\alpha \otimes (w_\alpha\wedge w)\otimes J + I \otimes dw \otimes J + \sum_\beta I \otimes (w\wedge w_\beta)\otimes J_\beta,
$$
so that
$$
d \begin{bmatrix}
\xymatrix@C=-16pt@R=8pt{
0 && 0 \cr & I \otimes w \otimes J
}
\end{bmatrix}
=
\begin{bmatrix}
\xymatrix@C=-16pt@R=8pt{
0 && 0 \cr & \nabla(I \otimes w \otimes J)
}
\end{bmatrix}
$$
Then the formula (\ref{eqn:diff}) for the differential in the $B(A^\bdot)$ implies that this goes to $- d(I|w|J)$ under (\ref{eqn:formula_bkk}).

The final task is to verify that the formula (\ref{eqn:formula_bkk}) induces a map dual to $\csp$. To do this, it suffices to verify it on an element
$$
\begin{bmatrix}0 & & 0 \cr & \varphi \end{bmatrix}
$$
of $B(M,\{x_0,x_1\};\sP_{x_0,x_1})$, where
$$
\varphi := [w_1|\dots|w_j]\otimes w \otimes [w_{j+1}|\dots|w_r]
$$
as every element of $H^1(M,\{x_0,x_1\};\sP_{x_0,x_1})$ is represented by a linear combination of such elements. Since $\csp(\gamma) = [\gammahat]$ and since
$$
\langle \textstyle{\begin{bmatrix}0 & & 0 \cr & \varphi \end{bmatrix}},\csp(\gamma)\rangle
= \langle \varphi,\gammahat \rangle,
$$
it suffices to show that 
$$
\langle \varphi, \gammahat \rangle = \int_\gamma (w_1|\dots|w_j|w|w_{j+1}|\dots|w_r)
$$
for each $\gamma \in P_{x_0,x_1}M$. To prove this, fix $\gamma$ and let $f$ and $f_j$ be the smooth functions on the unit interval defined by
$$
\gamma^\ast w = f(t) dt \text{ and } \gamma^\ast w_j = f_j(t)dt,\qquad j=1,\dots, r.
$$
Then the value of $\varphi$ on $\gammahat$ is
\begin{align*}
\langle \varphi, \gammahat \rangle
&= \int_0^1 \langle \int (w_1|\dots|w_j),\gamma_t' \rangle\,
\langle \int(w_{j+1}|\dots|w_r),\gamma_t''\rangle \, f(t)dt
\cr
&= \int_0^1 f(t)
\idotsint\limits_{0\le t_1 \le \dots \le t_j \le t} f_1(t_1)\dots f_j(t_j)
\idotsint\limits_{t\le t_{j+1} \le \dots \le t_r \le 1} f_{j+1} \dots f_r(t_r)
\cr
&=
\idotsint\limits_{0 \le t_1 \le \dots \le t_j \le t \le t_{j+1} \le \dots t_r \le 1}
f_1(t_1) \dots f_j(t_j) f(t) f_{j+1}(t_{j+1}) \dots f_r(t_r)
\cr
&= \int_\gamma (w_1|\dots|w_j|w|w_{j+1}|\dots|w_r).
\end{align*}
\end{proof}

When $M$ is a compact manifold with boundary and a rational $K(\pi,1)$, we define $\cspdual$ to be the composite
$$
\xymatrix{
H^1(M,\{x_0,x_1\}\cup \partial M;\Pdual_M) \ar[r] & H^1(M,\{x_0,x_1\};\Pdual_M) \ar[r]^(.55){\csp^t} & \Hdual^0(P_{x_0,x_1}M).
}
$$
Here we are using the de~Rham theorems to make the identifications
$$
H^1(M,\{x_0,x_1\};\Pdual_M) \cong H^1_\dR(M,\{x_0,x_1\};\sP_M),\
\Hdual^0(P_{x_0,x_1}M) \cong H^0_\dR(P_{x_0,x_1}M).
$$

\subsection{The continuous dual of the KK-action}

Suppose that $\Xbar$ is a compact, connected, oriented surface and that $Y$ is the union of $\partial X$ with a finite set. In addition, suppose that $x_0,x_1\in \Xbar$. These points may (or may not) lie in $Y$. Set
$$
X= \Xbar- Y \text{ and } X' = X \setminus \{x_0,x_1\}.
$$
Denote the inclusion by $j : X'\to X$.

The proof of the following result is similar to that of Lemma~\ref{lem:cup_dual} and is omitted. Since $X$ is a $K(\pi,1)$, $\Pdual^j_{X;x_0,x_1} = 0$ when $j>0$. As in Section~\ref{sec:pullback_fibrations}, we will denote $\Pdual_{X;x_0,x_1}^0$ by $\Pdual_{X;x_0,x_1}$.

\begin{lemma}
There is a map
\begin{multline}
\label{eqn:ext_cup_dual}
\smile^\ast\ : H_2(\Xbar,Y\cup\{x_0,x_1\};\Pdual_{X;x_0,x_1})
\cr
\to H_1(\Xbar,Y\cup\{x_0,x_1\};\Ldual_{X'})\otimes H_1(X',j^\ast\Pdual_{X;x_0,x_1})
\end{multline}
that is the continuous dual of the map
\begin{multline*}
\smile\ : 
H^1(\Xbar,Y\cup\{x_0,x_1\};\bL_{X'})\otimes H^1(X',j^\ast\bP_{X;x_0,x_1})
\cr
\to
H^2(\Xbar,Y\cup\{x_0,x_1\};\bP_{X;x_0,x_1})
\end{multline*}
induced by the cup product and the fiberwise multiplication map
$\bL_{X'}\otimes j^\ast\bP_{X;x_0,x_1} \to j^\ast\bP_{X;x_0,x_1}$ via the natural pairings
\begin{equation}
\label{eqn:dual_pairings}
\bL_{X'} \otimes \Ldual_{X'} \to \kk_{X} \text{ and }
\bP_{X;x_0,x_1} \otimes \Pdual_{X;x_0,x_1} \to \kk_X.
\end{equation}
\end{lemma}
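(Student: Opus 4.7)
The plan follows exactly the strategy used to establish Lemma~\ref{lem:cup_dual}, adapted to the presence of two distinct coefficient systems. The idea is to filter both $\bL_{X'}$ and $\bP_{X;x_0,x_1}$ by powers of augmentation ideals, verify that the multiplication pairing respects these filtrations, dualize the cup product on each finite-dimensional quotient, and pass to a direct limit.

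First I would filter $\bL_{X'}$ by the sub-local systems $\bI_L^n$ whose fiber at $x \in X'$ is the $n$-th power of the augmentation ideal of $\kk\pi_1(X',x)$, and filter $\bP_{X;x_0,x_1}$ by sub-local systems $\bI_P^n$ defined analogously on the fiber $\kk\pi(X;x_0,x)\otimes\kk\pi(X;x,x_1)$, say by total augmentation degree in the two tensor factors. By Proposition~\ref{prop:pairing}, the multiplication pairing $\mu : \bL_{X'}\otimes j^\ast\bP_{X;x_0,x_1} \to j^\ast\bP_{X;x_0,x_1}$ carries $\bI_L^{j+1}\otimes j^\ast\bI_P^{k+1}$ into $j^\ast\bI_P^{j+k+1}$, hence induces compatible maps
$$
\bigoplus_{j+k=r}\bL_{X'}/\bI_L^{j+1}\otimes j^\ast\bP_{X;x_0,x_1}/\bI_P^{k+1} \To j^\ast\bP_{X;x_0,x_1}/\bI_P^{r+1}
$$
of finite-dimensional local systems, the finite-dimensionality of each quotient following from $\dim H_1(\Xbar;\kk)<\infty$.

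Next, for each $j+k=r$ I would combine these with the cup product
$$
\smile\ :\ H^1(\Xbar,Y\cup\{x_0,x_1\};\bL_{X'}/\bI_L^{j+1})\otimes H^1(X';j^\ast\bP_{X;x_0,x_1}/\bI_P^{k+1}) \To H^2(\Xbar,Y\cup\{x_0,x_1\};\bP_{X;x_0,x_1}/\bI_P^{r+1}),
$$
apply $\Hom_\kk(\blank,\kk)$, and use the definition $H_\bdot(\blank;\bV) = \Hom_\kk(H^\bdot(\blank;\bV^\ast),\kk)$ of relative homology with local coefficients to interpret the resulting duals as the relative homology groups appearing in (\ref{eqn:ext_cup_dual}). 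Using the identifications
$$
\Ldual_{X'}=\varinjlim_n\Hom_\kk(\bL_{X'}/\bI_L^n,\kk)\quad\text{and}\quad \Pdual_{X;x_0,x_1}=\varinjlim_n\Hom_\kk(\bP_{X;x_0,x_1}/\bI_P^n,\kk)
$$
and passing to the direct limit in $r$ then yields the desired map (\ref{eqn:ext_cup_dual}). By construction it pairs with the cup product via the pairings (\ref{eqn:dual_pairings}).

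The only point requiring genuine care, and the main obstacle, is fixing a filtration $\bI_P^\bdot$ on $\bP_{X;x_0,x_1}$ that is both $\bL_{X'}$-stable in the sense indicated and cofinal with the unipotent topology on the fibers. The total-augmentation-degree filtration above satisfies both requirements essentially automatically from Proposition~\ref{prop:pairing}; beyond checking this, the argument is a line-by-line transcription of the proof of Lemma~\ref{lem:cup_dual}.
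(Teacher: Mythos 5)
Your proposal matches the paper's intended argument exactly: the paper omits the proof, stating only that it is ``similar to that of Lemma~\ref{lem:cup_dual},'' and you have carried out precisely that transcription, with the correct choice of filtration on $\bP_{X;x_0,x_1}$ (total augmentation degree, i.e.\ the $I$-adic filtration of the tensor product $\kk\pi(X;x_0,x)\otimes\kk\pi(X;x,x_1)$). The one cosmetic quibble is your citation: Proposition~\ref{prop:pairing} only constructs $\mu$; the compatibility of $\mu$ with the filtrations is a separate (easy) observation made from the explicit fiberwise formula $(\alpha,\gamma',\gamma'')\mapsto(\gamma'\alpha,\gamma'')$ in that proposition's proof, not a statement of the proposition itself.
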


\begin{proposition}
\label{prop:dual_kk}
The pairings (\ref{eqn:dual_pairings}) induce a pairing of the composition of the arrows on the bottom and two sides of
$$
\xymatrix@C=12pt{
\Hdual^0(P_{x_0,x_1} X) \ar[d]^\cong \ar[r]^(.45){\kappa^\ast} &
\Hdual^0(\Lambda X') \otimes \Hdual^0(P_{x_0,x_1} X)
\cr
H^0(X';\Pdual_{X;x_0,x_1}) \ar[d]^\cong &
H^1(X';\Ldual_{X'})\otimes H^1(\Xbar,Y \cup \{x_0,x_1\};\Pdual_{X;x_0,x_1}) 
\ar[u]_{\csdual\otimes \cspdual}
\cr
H_2(\Xbar,Y\cup\{x_0,x_1\};\Pdual_{X;x_0,x_1}) \ar[r]^(.4){\smile^\ast} &
H_1(\Xbar,Y\cup\{x_0,x_1\};\Ldual_{X'})\otimes H_1(X',j^\ast\Pdual_{X;x_0,x_1})\ar[u]_\cong
}
$$
pairs with the factorization (\ref{eqn:action}) of the KK-action. Consequently, the dual of the KK-action is continuous.
\end{proposition}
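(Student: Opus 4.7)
The plan is to dualize each arrow in the factorization (\ref{eqn:action}) of $\kappa$ and match it with the corresponding arrow in the proposed diagram, following the same pattern used for the Goldman bracket in Proposition~\ref{prop:dual_goldman}. First I would observe that the composite (\ref{eqn:action}) is built from three pieces: (i) the product of Chas--Sullivan and Kawazumi--Kuno maps $\cs\otimes\csp$, (ii) the intersection pairing paired with the local system multiplication $\bL_{X'}\otimes j^\ast\bL_X\to j^\ast\bL_X$ --- wait, here with $j^\ast\bP_{X;x_0,x_1}$ --- i.e.\ the combined intersection/$\mu$ map, and (iii) the identification $H_0(X';j^\ast\bP_{X;x_0,x_1})\cong H_0(P_{x_0,x_1}X)$. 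I would dualize these three pieces separately.

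For piece (iii), the dual is just the isomorphism $\Hdual^0(P_{x_0,x_1}X)\cong H^0(X';\Pdual_{x_0,x_1})$ provided by Corollary~\ref{cor:rat-Kpi1} and the fact that the sheaf $\Pdual_{X;x_0,x_1}$ is locally constant with fibers dual to those of $\bP_{X;x_0,x_1}$. For piece (i), the duals of $\cs$ and $\csp$ are exactly the maps $\csdual$ and $\cspdual$ constructed in Proposition~\ref{prop:formula_bcs} and the preceding formula (\ref{eqn:formula_bkk}); the de~Rham identifications of the previous section then identify $\csdual\otimes\cspdual$ with the continuous dual of $\cs\otimes\csp$.

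For piece (ii) I would invoke Poincar\'e--Lefschetz duality for local systems. The intersection pairing
$$
\bil : H_1(X';\bL_{X'})\otimes H_1(X,\{x_0,x_1\};\bP_{X;x_0,x_1}) \to H_0(X';\bL_{X'}\otimes j^\ast\bP_{X;x_0,x_1}),
$$
composed with $\mu_\ast$, is well known to be Poincar\'e dual to the cup product together with the coefficient multiplication $\bL_{X'}\otimes j^\ast\bP_{X;x_0,x_1}\to j^\ast\bP_{X;x_0,x_1}$. Explicitly, after identifying $H_j(\Xbar,Y\cup\{x_0,x_1\};\blank)$ with $H^{2-j}$ of appropriate sheaves on the relevant open surface, the dual of $\bil$ is, up to a sign, the cup product $\smile$ of the lemma preceding this proposition, paired on coefficients by (\ref{eqn:dual_pairings}). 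This is precisely the left-hand column and bottom row of the claimed diagram.

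The final step is to check that the three dualized pieces assemble correctly; this is a diagram chase that reduces to the compatibility between the coefficient pairing $\bL_{X'}\otimes\Ldual_{X'}\to\kk$ and the $\mu$ on $\bL_{X'}$. The main obstacle I anticipate is keeping track of the relative coefficients --- in particular, that cup product duality for $H^\bdot(\Xbar,Y\cup\{x_0,x_1\};\blank)$ against $H_\bdot(\Xbar,Y\cup\{x_0,x_1\};\blank)$ matches $\bP_{X;x_0,x_1}$ (defined on $X$) against $j^\ast\bP_{X;x_0,x_1}$ (restricted to $X'$). Here one uses that the intersection pairing only sees the interior of the surface away from $\{x_0,x_1\}$ and that the Kawazumi--Kuno cycle $\gammahat$ relative to $\{x_0,x_1\}$ is the correct representative; these are exactly the reasons the excision/extension-by-zero arguments underlying the previous lemma go through. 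Once this is verified, continuity of the dual of $\kappa$ follows automatically, since each map in the factorization lands in a continuous dual.
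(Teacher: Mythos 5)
Your proposal is sound and follows what the paper leaves implicit: the paper states Proposition~\ref{prop:dual_kk} without proof, with the only supporting argument being the unproved lemma that constructs $\smile^\ast$ as the continuous dual of the cup product with coefficients in the pairings (\ref{eqn:dual_pairings}). Your sketch supplies exactly the missing glue: you dualize each of the three constituents of (\ref{eqn:action}) --- the de~Rham identification of $H_0(X';j^\ast\bP_{X;x_0,x_1})$ with $\Hdual^0(P_{x_0,x_1}X)$ via Corollary~\ref{cor:rat-Kpi1}, the Poincar\'e--Lefschetz duality between the intersection/$\mu_\ast$ pairing and the cup product $\smile$, and the identification of $\csdual\otimes\cspdual$ with the continuous dual of $\cs\otimes\csp$ via the iterated-integral formulas --- and then assemble. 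That is the intended argument.

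One place where you gloss too quickly is the Poincar\'e duality bookkeeping in step (ii): the isomorphisms used in the left column and right column of the diagram are $H_2(\Xbar,Y\cup\{x_0,x_1\};\Pdual_{X;x_0,x_1})\cong H^0(X';\Pdual_{x_0,x_1})$ and $H_1(\Xbar,Y\cup\{x_0,x_1\};\Ldual_{X'})\cong H^1(X';\Ldual_{X'})$ together with $H_1(X';j^\ast\Pdual_{X;x_0,x_1})\cong H^1(\Xbar,Y\cup\{x_0,x_1\};\Pdual_{X;x_0,x_1})$, which involve two \emph{different} Lefschetz dualities (one for the noncompact surface $X'$ against the boundary $Y\cup\{x_0,x_1\}$ of $\Xbar$, the other the ``flipped'' version), and the $\smile^\ast$ of the preceding lemma is built to land exactly in this mixed configuration. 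You correctly flag the issue (``keeping track of the relative coefficients'') but do not actually resolve it; that verification is the substantive content of the omitted lemma proof, and your statement that the diagram chase ``reduces to the compatibility'' of the coefficient pairings is asserting, not checking. Since you present this as a sketch that would be rigorous, it is fair to say you have the correct outline but would need to expand precisely the point you already identified as the main obstacle.
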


The continuity of the KK-action was proved directly by Kawazumi and Kuno in \cite[\S 4.1]{kk:groupoid}.

\section{Hodge Theory Background}
\label{sec:Hodge}

The reader is assumed to have some familiarity with Deligne's mixed Hodge theory. The basic reference is \cite{deligne:hodge2}. The book \cite{peters-steenbrink} is a comprehensive introduction. The paper \cite{hain:bowdoin} is an introduction to the Hodge theory of the unipotent fundamental groups of smooth complex algebraic varieties. In this section, we summarize the results from Hodge theory needed in the proofs of the main results.

\subsection{Basics}

Recall that a {\em mixed Hodge structure} (hereafter, a MHS) $V$ consists of a finite dimensional $\Q$-vector space $V_\Q$, an increasing {\em weight filtration}
$$
0 = W_n V_\Q \subseteq \dots \subseteq W_{j-1}V_\Q \subseteq W_j V_\Q \subseteq \dots \subseteq W_N V_\Q = V_\Q
$$
of $V_\Q$, and a decreasing {\em Hodge filtration}
$$
V_\C = F^a V_\C \supseteq \dots \supseteq F^p V_\C \supseteq F^{p+1} V_\C \supseteq \dots \supseteq F^b V_\C = 0,
$$
where $V_\C$ denotes $V_\Q\otimes\C$. These are required to satisfy the condition that, for all $m\in\Z$, the $m$th weight graded quotient $\Gr^W_m V$ of $V$, whose underlying $\Q$ vector space is
$$
\Gr^W_m V := W_m V_\Q/W_{m-1}V_\Q
$$
is a Hodge structure of weight $m$. This means simply that
$$
\Gr^W_m V_\C = \bigoplus_{p+q=m} (F^p \Gr^W_m V_\C) \cap (\Fbar^q \Gr^W_m V_\C)
$$
where $\Fbar^q V_\C$ is the conjugate of $F^q V_\C$ under the action of complex conjugation on $V_\C$. A MHS $V$ is said to be {\em pure} of weight $m\in \Z$ if $\Gr^W_r V = 0$ when $r\neq m$. A {\em Hodge structure} of weight $m$ is simply a MHS that is pure of weight $m$.

Morphisms of MHS are defined in the obvious way --- they are weight filtration preserving $\Q$-linear maps of the underlying $\Q$-vector spaces which induce Hodge filtration preserving maps after tensoring with $\C$. The category of MHS is a $\Q$-linear abelian tensor category. This is not obvious, as the category of filtered vector spaces is not an abelian category. For each $m\in \Z$, the functor $\Gr^W_m$ from the category of MHS to $\Vec_\Q$, the category of $\Q$ vector spaces, is exact. In particular, this implies that if $\phi : V \to V'$ is a morphism of MHS, then there are natural isomorphisms
$$
\Gr^W_\bdot \ker \phi \cong \ker \Gr^W_\bdot \phi
\text{ and }
Gr^W_\bdot \im \phi \cong \im \Gr^W_\bdot \phi.
$$

Most invariants of complex algebraic varieties that one can compute using differential forms carry a natural mixed Hodge structure; morphisms of varieties induce morphisms of MHS. These MHS have the additional property that they are {\em graded polarizable}, which means that each weight graded quotient admits a polarization, which is an inner product that satisfies the Riemann--Hodge bilinear relations. All MHS of ``geometric origin'' are graded polarizable. Graded polarizations play an important, if hidden, role in the theory as we'll indicate below.

The Hodge structure $\Q(1)$ is the 1-dimensional Hodge structure $V$ of weight $-2$ with Hodge filtration
$$
V_\C = F^{-1} V_\C \supset F^0 V_\C = 0.
$$
It arises naturally as $H_1(\C^\ast;\Q)$. Its dual is the Hodge structure $\Q(-1) \cong H^1(\C^\ast;\Q)$ which has weight 2. The Hodge structure $\Q(n)$ is defined to be $\Q(1)^{\otimes n}$ when $n\ge 0$ and $\Q(-1)^{\otimes(-n)}$ when $n<0$. If $X$ is an irreducible projective variety of dimension $d$, then there are natural isomorphisms
$$
H^{2d}(X) \cong \Q(-d) \text{ and } H_{2d}(X) \cong \Q(d).
$$
The {\em Tate twist} $V(n)$ of a MHS $V$ is defined by
$$
V(n) := V\otimes \Q(n).
$$
Tensoring with $\Q(n)$ shifts the weight filtration by $-2n$ and the Hodge filtration by $-n$.

\subsection{Tannakian considerations}
\label{sec:tannaka}

The rational vector space $V_\Q$ that underlies a MHS $V$ is naturally, though not canonically, isomorphic to its associated weight graded $\Gr^W_\bdot V$. These splittings form a principal homogeneous space under the action of the unipotent radical of the Mumford--Tate group of $V$. This fact will allow us to use Hodge theory to give natural (but not canonical) isomorphisms of the Goldman Lie algebra with its associated graded Lie algebra and to show that they comprise a principal homogeneous space under a unipotent $\Q$-group. The best way to explain this is to use tannakian formalism. An excellent reference for Tannakian categories is Deligne's paper \cite{deligne:tannakian}.

Denote the category of {\em graded polarizable} MHS by $\MHS$. The semi-simple objects of this category are direct sums of Hodge structures.\footnote{The existence of a polarization on a pure Hodge structure implies that it is a semi-simple object of $\MHS$.} The functor
$$
\omega : \MHS \to \Vec_\Q
$$
that takes a MHS to its underlying vector space is faithful and preserves tensor products. This implies that $\MHS$ is a $\Q$-linear neutral tannakian category and is therefore equivalent to the category of representations of the affine $\Q$-group
$$
\pi_1(\MHS,\omega) := \Aut^{\otimes} \omega.
$$
The $\Q$-vector space underlying each MHS is naturally a $\pi_1(\MHS,\omega)$-module.

\begin{definition}
The {\em Mumford--Tate group} $\MT_V$ of a MHS $V$ is the image of the associated homomorphism
$$
\rho_V : \pi_1(\MHS,\omega) \to \Aut_\Q V_\Q.
$$
It is an affine algebraic $\Q$-group.
\end{definition}

Denote the sub-category of semi-simple objects of $\MHS$ by $\MHS^\ss$. Since its objects are direct sums of polarizable Hodge structures, $\MHS^\ss$ is a tannakian subcategory of $\MHS$. The inclusion $\MHS^\ss \hookrightarrow \MHS$ is fully-faithful and therefore induces a surjection $\pi_1(\MHS,\omega) \to \pi_1(\MHS^\ss,\omega)$. Its kernel is prounipotent, so that one has an extension
\begin{equation}
\label{eqn:pi_1MHS}
1 \to \U^\MHS \to \pi_1(\MHS,\omega) \to \pi_1(\MHS^\ss,\omega) \to 1.
\end{equation}

The category of graded vector spaces $\Vec^\bdot_\Q$ is tannakian with fiber functor $\oplus : V^\bdot \mapsto \oplus_m V^m$. There is a natural isomorphism $\pi_1(\Vec^\bdot_\Q,\oplus) \cong \Gm$,
where $\Gm$ acts on $V^m$ by the $m$th power of the standard character. Since every object of $\MHS^\ss$ is graded by weight, the fiber functor $\omega$ of $\MHS^\ss$ factors
$$
\MHS^\ss \To \Vec^\bdot_\Q \overset{\oplus}{\To} \Vec_\Q.
$$
Consequently, $\MHS^\ss \to \Vec_\Q^\bdot$ induces a central cocharacter
$$
\chi : \Gm \to \pi_1(\MHS^\ss,\omega)
$$
of $\Q$-groups.

Levi's Theorem implies that there is a lift $\chitilde : \Gm \to \pi_1(\MHS,\omega)$ (no longer central) and that any two such lifts are conjugate by an element of $\U^\MHS(\Q)$.

The abelianization $H_1(\U^\MHS)$ is an inverse limit of $\pi_1(\MHS^\ss,\omega)$-modules and is thus a pro-object of $\MHS^\ss$. The fact that $\Ext^1_\MHS(\Q(0),V)$ vanishes whenever $V$ is a polarizable Hodge structure of weight $\ge 0$ implies that $\Gr^W_m H_1(\U^\MHS)=0$ for all $m\ge 0$. This means that (\ref{eqn:pi_1MHS}) is a negatively weighted extension in the sense of \cite{hain-matsumoto}. The following result follows from \cite[\S 3]{hain-matsumoto}.

\begin{proposition}
\label{prop:splittings}
Each lift $\chitilde$ of $\chi$ determines an isomorphism
$$
V_\Q \overset{\sigma^\chitilde_V}{\To} \bigoplus_m \Gr^W_m V_\Q
$$
which is natural in the sense that if $\phi : V \to V'$ is a morphism of MHS, then the diagram
$$
\xymatrix{
V_\Q \ar[r]^(.35){\sigma^\chitilde_V}\ar[d]_{\phi} & \bigoplus_m \Gr^W_m V_\Q \ar[d]^{\Gr^W\phi} \cr
V_\Q' \ar[r]^(.35){\sigma^\chitilde_{V'}} & \bigoplus_m \Gr^W_m V_\Q'
}
$$
commutes for all $\phi$. The isomorphisms $\sigma^\chitilde_V$ are also compatible with tensor products and taking duals.
\end{proposition}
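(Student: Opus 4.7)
The plan is to use $\chitilde$ to equip the underlying $\Q$-vector space of every $V\in\MHS$ with a canonical $\Gm$-action (through the tautological $\pi_1(\MHS,\omega)$-action on the fiber functor $\omega$), and then to show that the resulting weight decomposition splits the weight filtration of $V$. Fix such a lift $\chitilde$. Since every MHS $V$ is a representation of $\pi_1(\MHS,\omega)$ with underlying vector space $V_\Q$, the composition $\Gm\xrightarrow{\chitilde}\pi_1(\MHS,\omega)\to \Aut_\Q(V_\Q)$ yields a grading $V_\Q=\bigoplus_m V_\Q^{(m)}$ by characters of $\Gm$. Morphisms of MHS are $\pi_1(\MHS,\omega)$-equivariant by construction and therefore preserve this grading; tensor products and duals are respected because $\chitilde$ is a morphism of affine $\Q$-groups. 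So the naturality, tensor, and duality assertions are formal consequences, provided I can identify $\bigoplus_{k\le m}V_\Q^{(k)}$ with $W_m V_\Q$.

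The substantive step is to prove
\begin{equation*}
W_m V_\Q \;=\; \bigoplus_{k\le m}V_\Q^{(k)}
\end{equation*}
for every graded polarizable $V$, after which $\sigma^\chitilde_V$ is defined as the composite of the inclusion $V_\Q^{(m)}\hookrightarrow W_m V_\Q$ with the projection $W_m V_\Q\twoheadrightarrow \Gr^W_m V_\Q$, summed over $m$. The key point is that $\chitilde$ lifts the central cocharacter $\chi$. Any semi-simple object of $\MHS$ is a direct sum of pure polarizable Hodge structures, and on such an object the $\pi_1(\MHS,\omega)$-action factors through $\pi_1(\MHS^\ss,\omega)$; so the $\chitilde$-action on a pure Hodge structure $H$ of weight $n$ agrees with the $\chi$-action, which by construction of $\chi$ via the factorization $\MHS^\ss\to\Vec^\bdot_\Q\to\Vec_\Q$ is the $n$-th power character. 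Applied to each $\Gr^W_m V$, this forces $\Gr^W_m V_\Q$ to be concentrated in $\chitilde$-weight $m$. Now one inducts on the length of the weight filtration, using that the short exact sequences
\begin{equation*}
0\to W_{m-1}V\to W_m V\to \Gr^W_m V\to 0
\end{equation*}
are exact sequences of $\Gm$-representations via $\chitilde$ (because $W_\bdot$ and $\Gr^W_\bdot$ are exact on $\MHS$). At each step the new summand is the $m$-th weight space of $\Gm$, which together with the inductive hypothesis gives the displayed identity.

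The main thing to be careful about is not an obstacle but the very existence of $\chitilde$; this is supplied by Levi's theorem applied to the prounipotent extension (\ref{eqn:pi_1MHS}), as already recorded before the statement, and the description of the $U^\MHS(\Q)$-torsor structure on the set of such lifts is the content of the work \cite{hain-matsumoto} on negatively weighted extensions. Once $\chitilde$ is fixed, the construction of $\sigma^\chitilde_V$ and the verification of its naturality, tensor compatibility, and duality compatibility are entirely formal: they all reduce to the fact that $\chitilde$ is a homomorphism of group schemes and that $\omega$ is a faithful tensor functor.
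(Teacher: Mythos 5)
Your proof is correct and gives the standard Tannakian argument: the paper itself offers no proof, instead citing \cite[\S 3]{hain-matsumoto} for the result, and your reasoning --- using $\chitilde$ to grade $V_\Q$ by $\Gm$-characters, identifying the $\chitilde$-grading with the $\chi$-grading on the pure quotients $\Gr^W_m V$ (which are semi-simple by polarizability, so the action factors through $\pi_1(\MHS^\ss,\omega)$), and then inducting up the weight filtration using exactness of $W_\bdot$ and $\Gr^W_\bdot$ together with complete reducibility of $\Gm$-representations --- is exactly the content of that reference. The naturality, tensor-compatibility, and duality assertions do indeed reduce to $\chitilde$ being a homomorphism of affine $\Q$-group schemes and $\omega$ being a faithful tensor functor, as you say.
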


\begin{remark}
\label{rem:splittings}
For each object $V$ of $\MHS$, the set of splittings $\sigma^\chitilde_V$ is a principal homogeneous space over the unipotent radical $U_V^\MT$ of $\MT_V$.
\end{remark}

\subsection{Admissible variations of MHS}
\label{sec:vmhs}

The local systems that occur naturally in Hodge theory are admissible variations of MHS. Very roughly speaking, a variation $\bV$ of MHS over a smooth complex algebraic variety $X$ consists of a $\Q$-local system $\bV_\Q$ over $X$, an increasing filtration $W_\bdot$ of it by sub-local systems, and a decreasing filtration $F^\bdot$ of the associated flat holomorphic vector bundle $\sV := \bV\otimes_\Q \cO_X$ by holomorphic sub-bundles. The first requirement is that each fiber of $\bV$ is a MHS with the induced Hodge and weight filtrations. But to be a variation of MHS, many other conditions need to be satisfied. Here we briefly mention them. Basic references include the foundational papers \cite{steenbrink-zucker} and \cite{kashiwara} of Steenbrink--Zucker, and the paper \cite{hain-zucker} on unipotent variations of MHS, which is particularly relevant in this paper.

To enumerate the extra conditions, we need to write $X = \Xbar - D$, where $\Xbar$ is smooth and complete, and where $D$ is a normal crossings divisor in $\Xbar$. This is possible by Hironaka's resolution of singularities. Denote Deligne's canonical extension of $\sV$ to $\Xbar$ by $\sVbar$. For simplicity, we suppose that the monodromy of $\bV$ about each component of $D$ is unipotent. This condition is satisfied by all variations of MHS in this paper and implies that canonical extension commutes with tensor products. In order that $\bV$ be an admissible variation of MHS, the following additional conditions must be satisfied:
\begin{enumerate}

\item \label{item:hodge} The Hodge bundles $F^p\sV$ extend to sub-bundles $F^p\sVbar$ of $\sVbar$;

\item for each $p$, the connection $\nabla$ satisfies ``Griffiths transversality''
$$
\nabla : F^p\sV \To \Omega^1_\Xbar(\log D)\otimes F^{p-1}\sVbar;
$$

\item variation is graded polarizable in the sense that there exist flat inner products on each $\Gr^W_m \bV$ which induce a polarization on each fiber;

\item for each stratum of $D$, there is a ``relative weight filtration''.

\end{enumerate}

The category of admissible variations of MHS over $X$ will be denoted $\MHS(X)$. It is tannakian.

\subsection{Poincar\'e duality}

The next result follows from Saito's work \cite{saito:mhm} in the general case. The unipotent case, which is all we will need, is a consequence of \cite[Prop.~8.6]{hain-zucker}.

\begin{theorem}[Saito]
\label{thm:PD}
Suppose that $X$ is a smooth projective variety and that $Y$ and $Z$ are proper closed subvarieties of $X$. If $\bV$ and $\bW$ are admissible variations of MHS over $X-Y$ and $X-Z$, respectively, then
\begin{enumerate}

\item the cohomology groups $H^\bdot(X-Y,Z\setminus Y;\bV)$ and $H^\bdot(X-Z,Y\setminus Z;\bW)$ have natural mixed Hodge structures;

\item the cup product
$$
H^j(X-Y,Z\setminus Y;\bV) \otimes H^k(X-Z,Y\setminus Z;\bW) \to H^{j+k}(X,Y\cup Z;\bV\otimes\bW)
$$
is a morphism of MHS.

\end{enumerate}
\end{theorem}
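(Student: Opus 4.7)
The plan is to deduce the theorem from Saito's formalism of mixed Hodge modules \cite{saito:mhm}, which upgrades the topological six-functor formalism to the derived category $D^b\mathrm{MHM}(X)$. Set $U=X-Y$, $V=X-Z$, and write $j_U:U\hookrightarrow X$, $j_V:V\hookrightarrow X$, and $j:U\cap V\hookrightarrow X$ for the open immersions. First I would invoke the fact that an admissible VMHS on a smooth open subvariety corresponds, after an appropriate shift, to an object of the abelian category of mixed Hodge modules on that open; thus $\bV$ determines an object of $D^b\mathrm{MHM}(U)$, and its restriction $\bV|_{U\cap V}$ gives one in $D^b\mathrm{MHM}(U\cap V)$, and similarly for $\bW$.

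Next I would realize the cohomology groups appearing in the theorem as hypercohomology of objects of $D^b\mathrm{MHM}(X)$. By the paper's convention for relative cohomology,
$$
H^\bdot(X-Y,Z\setminus Y;\bV) = H^\bdot\bigl(X;\, j_!(\bV|_{U\cap V})\bigr),
$$
and similarly for $\bW$ and for $\bV\otimes \bW$. Since $j$ is an open immersion, Saito lifts $j_!$ to a functor $D^b\mathrm{MHM}(U\cap V)\to D^b\mathrm{MHM}(X)$. Because $X$ is projective, the pushforward to a point $a_*: D^b\mathrm{MHM}(X)\to D^b\MHS$ is defined and takes an object to the graded MHS on its hypercohomology. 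Applying $a_*$ to $j_!(\bV|_{U\cap V})$, to $j_!(\bW|_{U\cap V})$, and to $j_!((\bV\otimes \bW)|_{U\cap V})$ endows each of the groups appearing in (i) and in the target of (ii) with a natural MHS, which proves (i).

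For (ii), the cup product is realized by a morphism in $D^b\mathrm{MHM}(X)$. Using the projection formula together with the identity $j^*j_! = \mathrm{id}$ for the open immersion $j$, one obtains a natural morphism
$$
j_! A\otimes j_! B \To j_!\bigl(A\otimes j^*j_! B\bigr) = j_!(A\otimes B)
$$
in $D^b\mathrm{MHM}(X)$, applied with $A=\bV|_{U\cap V}$ and $B=\bW|_{U\cap V}$; this uses that $\otimes$ and the projection formula lift to $\mathrm{MHM}$. Pushing forward by $a_*$ and applying the K\"unneth map produces the cup product as a morphism of graded MHS. Saito's rational-structure functor $\mathrm{rat}:D^b\mathrm{MHM}(X)\to D^b_c(X;\Q)$ intertwines this construction with the classical cup product of constructible sheaves, yielding the desired compatibility.

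The main obstacle is exactly this last compatibility: that Saito's tensor and projection-formula morphisms recover, after forgetting the Hodge data, the standard topological cup product used elsewhere in the paper. This is a standard but notationally heavy unwinding in \cite{saito:mhm}. In the unipotent setting, which is all that this paper actually requires, one can bypass $\mathrm{MHM}$ entirely and follow \cite[Prop.~8.6]{hain-zucker}: a unipotent admissible variation is an iterated extension of trivial variations, so an induction on the weight length together with the five lemma reduces part~(ii) to the classical compatibility of Poincar\'e duality with the MHS on $H^\bdot(X-Y,Z\setminus Y;\Q)$ constructed in \cite{deligne:hodge2}, and part~(i) follows by the same d\'evissage.
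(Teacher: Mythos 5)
Your key identification
$$
H^\bdot(X-Y,Z\setminus Y;\bV) \;=\; H^\bdot\bigl(X;\, j_!(\bV|_{U\cap V})\bigr)
$$
is incorrect, and the error is fatal to the main line of the argument. Under the paper's convention the group on the left should be read as $H^\bdot\bigl(X-Y;\,(i_Y)_!(\bV|_{U\cap V})\bigr)$, where $i_Y:U\cap V=X-(Y\cup Z)\hookrightarrow X-Y$ is the open inclusion; pushing this to $X$ one applies the \emph{derived direct image} $Rj_{U*}$ along the open immersion $j_U:X-Y\hookrightarrow X$, not the extension by zero $(j_U)_!$, because the hypercohomology of a sheaf on $X-Y$ over $X-Y$ equals the hypercohomology over $X$ of its $Rj_{U*}$, not of its $(j_U)_!$. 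Thus the correct realization is $H^\bdot\bigl(X;\,Rj_{U*}(i_Y)_!(\bV|_{U\cap V})\bigr)$, and since $j_!=(j_U)_!\circ(i_Y)_!$, your formula instead imposes compact supports towards $Y$ as well as towards $Z\setminus Y$. Concretely, for $X=\P^1$, $Y=\{\infty\}$, $Z=\emptyset$, $\bV=\Q$, your formula computes $H^\bdot_c(\C;\Q)$ (which is $\Q(-1)$ in degree $2$, zero otherwise), while the intended group is $H^\bdot(\C;\Q)$ ($\Q$ in degree $0$, zero otherwise). With your realization the three groups in part (ii) would all become $H^\bdot\bigl(X;j_!(\,\cdot\,)\bigr)$, and the ``cup product'' would collapse to the K\"unneth map followed by the canonical isomorphism $j_!A\otimes j_!B\cong j_!(A\otimes B)$; this is not the topological relative cup product the theorem refers to, which mixes the support conditions of the two factors (compact towards $Z\setminus Y$ in the first, compact towards $Y\setminus Z$ in the second, compact towards $Y\cup Z$ in the target). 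The morphism that actually needs to be produced in the derived category of mixed Hodge modules is
$$
Rj_{U*}(i_Y)_!(\bV|_{U\cap V})\ \otimes\ Rj_{V*}(i_Z)_!(\bW|_{U\cap V})\ \To\ j_!\bigl(\bV\otimes\bW|_{U\cap V}\bigr),
$$
and constructing it, then checking that Saito's rational-structure functor carries it to the topological cup product, is the genuine (and non-formal) content here.

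Your closing remark, by contrast, is on target. The paper does not give a proof of this theorem; it simply cites Saito for the general case and \cite[Prop.~8.6]{hain-zucker} for the unipotent case, which is all the paper actually uses. Your suggested d\'evissage --- inducting on the length of the weight filtration of a unipotent variation, using exactness of $\Gr^W$ and the five lemma to reduce to constant $\Q$ coefficients, where the assertion is Deligne's Hodge~II --- is essentially the Hain--Zucker route and is the argument the paper relies on. If you repair the realization of the relative cohomology groups as above and carry out that induction (for both (i) and (ii)), you obtain a correct proof of the case the paper needs.
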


\begin{corollary}
If $\bV$ is an admissible variation of MHS over a smooth variety $X$ of dimension $d$, then Poincar\'e duality 
$$
H_{2d-j}(X-Y,Z\setminus Y;\bV) \overset{\simeq}{\To} H^j(X-Z,Y\setminus Z;\bV)(d)
$$
is an isomorphism of MHS.
\end{corollary}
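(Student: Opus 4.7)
The plan is to derive the corollary directly from Theorem~\ref{thm:PD} by combining the cup product pairing in cohomology with the trace map of the variation and its dual, and then dualizing. First I would observe that the dual local system $\bV^\ast$ is again an admissible variation of MHS (admissibility is stable under duals under our hypotheses), and that the evaluation map $\bV\otimes\bV^\ast\to \Q(0)$ is a morphism of admissible variations. Applying Theorem~\ref{thm:PD} to $\bV$ on $X-Z$ and to $\bV^\ast$ on $X-Y$ equips
$$
H^j(X-Z,Y\setminus Z;\bV), \qquad H^{2d-j}(X-Y,Z\setminus Y;\bV^\ast)
$$
with natural mixed Hodge structures, and shows that the cup product
$$
H^j(X-Z,Y\setminus Z;\bV)\otimes H^{2d-j}(X-Y,Z\setminus Y;\bV^\ast)
\to H^{2d}(X,Y\cup Z;\bV\otimes\bV^\ast)
$$
is a morphism of MHS.

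Next I would post-compose with the map induced by the trace $\bV\otimes\bV^\ast\to \Q(0)$, which is a morphism of MHS, landing in $H^{2d}(X,Y\cup Z;\Q)$. Since $Y\cup Z$ is a proper closed subvariety of the smooth projective variety $X$ of dimension $d$, the long exact sequence of the pair together with the vanishing of $H^{2d-1}(Y\cup Z;\Q)$ and $H^{2d}(Y\cup Z;\Q)$ (for dimension reasons) gives an isomorphism of MHS
$$
H^{2d}(X,Y\cup Z;\Q)\overset{\simeq}{\To} H^{2d}(X;\Q)\cong \Q(-d).
$$
Composing produces a pairing
$$
H^j(X-Z,Y\setminus Z;\bV)\otimes H^{2d-j}(X-Y,Z\setminus Y;\bV^\ast)\To \Q(-d)
$$
that is a morphism of MHS.

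The classical Poincar\'e--Lefschetz duality theorem (for constructible sheaves on complex manifolds, with boundary split between the two sides as above) implies that this pairing is perfect on the underlying $\Q$-vector spaces. Hence it induces a $\Q$-linear isomorphism
$$
H^j(X-Z,Y\setminus Z;\bV)\overset{\simeq}{\To} \Hom_\Q\!\big(H^{2d-j}(X-Y,Z\setminus Y;\bV^\ast),\Q\big)\otimes \Q(-d),
$$
which, by the definition of homology with coefficients recalled in Section~2, is precisely $H_{2d-j}(X-Y,Z\setminus Y;\bV)(-d)$; equivalently, after twisting, we obtain the claimed map
$$
H_{2d-j}(X-Y,Z\setminus Y;\bV)\overset{\simeq}{\To} H^j(X-Z,Y\setminus Z;\bV)(d).
$$
This underlying linear isomorphism is a morphism of MHS by construction, and a MHS morphism that is a $\Q$-linear isomorphism is automatically an isomorphism of MHS by strictness of the weight and Hodge filtrations, so the conclusion follows.

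The only non-formal step is the perfectness of the cup product pairing; everything else is bookkeeping on filtrations and Tate twists. This perfectness is the standard Poincar\'e--Lefschetz duality for local systems on a smooth variety with boundary partitioned into two closed pieces, and is well documented in Saito's framework (indeed it is built into the duality functor on mixed Hodge modules). The unipotent case that is all we actually need in this paper is recorded in \cite[Prop.~8.6]{hain-zucker}, so no genuinely new argument is required.
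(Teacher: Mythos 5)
Your argument is correct and follows essentially the same route as the paper's own proof: use the duality pairing $\bV\otimes\bV^\ast\to\Q(0)$, apply Theorem~\ref{thm:PD} to get a cup product pairing of MHS landing in $H^{2d}(X,Y\cup Z)\cong\Q(-d)$, and conclude from perfectness of the pairing. The paper states this more tersely (omitting the long exact sequence step identifying $H^{2d}(X,Y\cup Z)$ with $H^{2d}(X)$ and the final strictness remark), but the underlying reasoning is identical.
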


\begin{proof}
Denote the dual variation by $\bV^\ast$. There is a non-degenerate pairing $\bV\otimes \bV^\ast \to \Q(0)$ of admissible variations of MHS. Poincar\'e duality and the proposition imply that the pairing
$$
H^{2d-j}(X-Y,Z\setminus Y;\bV^\ast) \otimes H^j(X-Z,Y\setminus Z;\bV) \to H^{2d}(X,Y\cup Z) \cong H^{2d}(X) \cong \Q(-d)
$$
is a non-singular pairing of MHS.
\end{proof}

\subsection{Mixed Hodge structures on cohomology of path torsors}
\label{sec:mhs_paths}

Since every complex algebraic variety has the homotopy type of a finite complex, we have a natural isomorphism
$$
\Hdual^0(P_{x_0,x_1}X;\kk) \cong H^0_\dR(P_{x_0,x_1}X;\kk)
$$
when $\kk = \R$ and $\C$.

\begin{theorem}
If $X$ is a smooth complex algebraic variety and $x_0,x_1 \in X$, then there is a canonical ind-MHS on $\Hdual^0(P_{x_0,x_1}X)$. It is natural in the triple $(X,x_0,x_1)$ and admits a graded polarization, so that it is an object of ind-$\MHS$. The product
$$
\Hdual^0(P_{x_0,x_1}X)^{\otimes 2} \To \Hdual^0(P_{x_0,x_1}X)
$$
and coproducts
$$
\Hdual^0(P_{x_0,x_2}X) \To \Hdual^0(P_{x_0,x_1}X) \otimes \Hdual^0(P_{x_1,x_2}X)
$$
are morphisms of ind-MHS.
\end{theorem}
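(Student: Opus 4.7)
The plan is to transport an ind-MHS from the source object $\kk\pi(X;x_0,x_1)^\wedge$ across the de~Rham isomorphism, by building all the filtrations from Chen's reduced bar construction applied to Deligne's logarithmic de~Rham complex of a good compactification. By Corollary~\ref{cor:path-dr}, integration gives an isomorphism $H^0_\dR(P_{x_0,x_1}X;\kk) \cong \Hom^\cts_\kk(\kk\pi(X;x_0,x_1),\kk)$ for $\kk=\R,\C$, so it suffices to put a natural ind-MHS on the rational version and extend scalars. First I would choose a smooth compactification $\Xbar$ of $X$ with $D = \Xbar - X$ a normal crossings divisor and work with $A^\bdot = E^\bdot(\Xbar,\log D)$ equipped with Deligne's Hodge and weight filtrations; by Proposition~\ref{prop:qism}, the bar complex $B(\kk,A^\bdot,\kk)$ (with augmentations $\e_{x_0},\e_{x_1}$) computes $H^0_\dR(P_{x_0,x_1}X;\kk)$.

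Next I would lift the Hodge and weight filtrations on $A^\bdot$ to filtrations on the bar complex by the standard recipe of \cite{hain:dht}: $F^p$ is spanned by bar symbols $[w_1|\cdots|w_r]$ with $\sum_i p_i \ge p$ when each $w_i \in F^{p_i}A^\bdot$, and $W_m$ is defined with a length-dependent shift reflecting the degree drop of iterated integration. Following Sections~3--5 of \cite{hain:dht} (adapted to the path torsor case in \cite{hain:dht2}), one verifies that each length-$\le n$ truncation $B_{\le n}(\kk,A^\bdot,\kk)$ is a mixed Hodge complex in Deligne's sense, so that its degree-zero cohomology $\Hom(\kk\pi(X;x_0,x_1)/I^{n+1},\kk)$ acquires a genuine MHS. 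The ind-MHS on $\Hdual^0(P_{x_0,x_1}X)$ is then defined as the direct limit over $n$.

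For naturality, a morphism of pointed varieties $(X',x_0',x_1') \to (X,x_0,x_1)$ lifts, after resolving indeterminacies on a common refinement of compactifications, to a map of pairs $(\Xbar',D') \to (\Xbar,D)$, and pullback of logarithmic forms is bifiltered; the base-point augmentations are clearly compatible, so the induced map of bar constructions is a morphism of mixed Hodge complexes. Graded polarizability follows from the classical fact that $\Gr^W_\bdot E^\bdot(\Xbar,\log D)$ is quasi-isomorphic, via the Poincar\'e residue, to a sum of shifted cohomologies of smooth projective intersections of components of $D$, each of which is polarizable; tensor powers inside the bar complex preserve polarizability. The product and coproduct admit explicit descriptions on the bar complex as the shuffle product $[w_1|\cdots|w_r]\otimes[w_{r+1}|\cdots|w_{r+s}] \mapsto \sum_{\sigma\in\mathrm{Sh}(r,s)}\pm[w_{\sigma(1)}|\cdots|w_{\sigma(r+s)}]$ and the deconcatenation coproduct $[w_1|\cdots|w_r] \mapsto \sum_j [w_1|\cdots|w_j]\otimes[w_{j+1}|\cdots|w_r]$, respectively, and both manifestly preserve $F^\bdot$ and $W_\bdot$ slot by slot.

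The principal obstacle, and the technical heart of the argument, is verifying that the proposed Hodge and weight filtrations actually assemble each truncation $B_{\le n}(\kk,A^\bdot,\kk)$ into a mixed Hodge complex, which amounts to checking strictness and the degeneration of the associated Hodge-to-de~Rham spectral sequence. This requires the full strength of Deligne's mixed Hodge complex formalism applied termwise to the tensor factors $A^\bdot$, together with a careful induction on bar length to propagate the MHS axioms through the internal differential (\ref{eqn:diff}) and the relations (\ref{eqn:relns}); once this is in place, everything else (naturality, polarizability, and compatibility of product and coproduct) follows by inspection of the explicit formulas.
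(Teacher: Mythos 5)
Your proposal reconstructs the standard bar-construction argument from \cite{hain:dht,hain:dht2}, which is exactly what the paper relies on: the paper itself states this theorem without proof and simply invokes that earlier work. The key ingredients — taking the reduced bar construction of the logarithmic de~Rham mixed Hodge complex of a good compactification, transporting filtrations to the length-$\le n$ truncations, observing these are mixed Hodge complexes, and noting that the shuffle product and deconcatenation coproduct visibly preserve both filtrations — are all in place. One small caution: "put a MHS on the rational version and extend scalars" slightly misstates the architecture; in Deligne's formalism the rational complex carries only the $\Q$-structure and weight filtration, and the Hodge filtration lives on the comparison-quasi-isomorphic complex side (here $E^\bdot(\Xbar,\log D)$), with the comparison provided by integration via Corollary~\ref{cor:path-dr} — but your final paragraph shows you intend the mixed-Hodge-complex formalism, so this is a matter of phrasing rather than a gap.
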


The dual statement is that $H_0(P_{x_0,x_1}X;\Q)^\wedge$ is an object of pro-$\MHS$.

\begin{corollary}
\label{cor:lambda}
If $X$ is a smooth complex algebraic variety, then $\Hdual^0(\Lambda X)$ is an object of ind-$\MHS$. This MHS depends functorially on $X$ and, for each $x\in X$, the restriction mapping $\Hdual^0(\Lambda X) \to \Hdual^0(\Lambda_x X)$ is a morphism of MHS.
\end{corollary}

\begin{proof}
Since this fact is central to the paper, we sketch two proofs. The first is to note that for each $n\ge 0$ the sequence
$$
\big(\Q\pi_1(X,x)/I^n\big)^{\otimes 2} \to \Q\pi_1(X,x)/I^n \to H_0(\Lambda X;\Q)/I^n H_0(\Lambda X) \to 0
$$
is exact, where the first map takes $u\otimes v$ to $uv-vu$. Since $\Q\pi_1(X,x)/I^n$ is a ring in $\MHS$, the first map is a morphism of MHS. Since $\MHS$ is closed under quotients, $H_0(\Lambda X;\Q)/I^n H_0(\Lambda X)$ has an induced MHS. The result follows as
$$
\Hdual^0(\Lambda X;\Q) = \varinjlim_n \Hom_\Q(H_0(\Lambda X;\Q)/I^n H_0(\Lambda X),\Q).
$$
This MHS does not depnd on the choice of $x\in X$ as these MHS on $\Hdual^0(\Lambda X)$ constructed from the $x\in X$ form an admissible variation of MHS over $X$ with trivial monodromy; the admissibility follows as this variation is a quotient of the variation whose fiber over $x\in X$ is $\Q\pi_1(X,x)^\wedge$, which is admissible. See \cite{hain:dht2} or \cite{hain-zucker}. The theorem of the fixed part \cite[Thm.~4.1]{steenbrink-zucker} implies that it is a constant variation of MHS.

The result also follows by combining Theorem~\ref{thm:pullback_DR} with \cite[Thm.~3.2.1]{hain:dht} applied to the de~Rham mixed Hodge complex \cite[Prop.~5.6.2]{hain:dht} associated to a smooth variety.
\end{proof}

For later use we show that the Adams operations $\psi_n$ (Sec.~\ref{sec:adams}) are morphisms of MHS.

\begin{proposition}
If $X$ is smooth complex algebraic variety, then for each $n \ge 0$, the Adams operation
$$
\psi_n : H_0(\Lambda X;\Q)^\wedge \to H_0(\Lambda X;\Q)^\wedge
$$
is a morphism of MHS.
\end{proposition}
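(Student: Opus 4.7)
The plan is to lift $\psi_n$ to an endomorphism of the complete Hopf algebra $\Q\pi_1(X,x)^\wedge$ and express that lift as a composition of the (completed) coproduct and multiplication, both of which are morphisms of pro-MHS by (the dual of) the preceding theorem.

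I would fix any basepoint $x \in X$ and consider
$$
\tilde\psi_n := \mu^{(n)} \circ \Delta^{(n-1)} \,:\, \Q\pi_1(X,x)^\wedge \to \Q\pi_1(X,x)^\wedge,
$$
where $\Delta^{(n-1)}$ is the iterated coproduct landing in $[\Q\pi_1(X,x)^\wedge]^{\otimes n}$ and $\mu^{(n)}$ is the iterated product. Every $\alpha \in \pi_1(X,x)$ is group-like in $\Q\pi_1(X,x)^\wedge$, so $\Delta^{(n-1)}(\alpha) = \alpha^{\otimes n}$ and hence $\tilde\psi_n(\alpha) = \alpha^n$. Thus $\tilde\psi_n$ is the continuous linear extension of the $n$th-power map on $\pi_1(X,x)$. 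Because conjugate group elements have conjugate $n$th powers, $\tilde\psi_n$ preserves the kernel of the natural surjection $\Q\pi_1(X,x)^\wedge \to H_0(\Lambda X;\Q)^\wedge$ and descends to $\psi_n$.

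The rest is formal. Dualizing the statement of the preceding theorem, the multiplication and comultiplication on the complete Hopf algebra $\Q\pi_1(X,x)^\wedge$ are morphisms of pro-MHS, so their iterates $\mu^{(n)}$ and $\Delta^{(n-1)}$ are too, and therefore so is their composite $\tilde\psi_n$. Since the quotient $\Q\pi_1(X,x)^\wedge \to H_0(\Lambda X;\Q)^\wedge$ is a surjection of pro-MHS (as exhibited in the proof of Corollary~\ref{cor:lambda}), the induced endomorphism $\psi_n$ is a morphism of pro-MHS.

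There is no serious obstacle in this plan; the one observation requiring any thought is the Hopf-algebraic identity $\psi_n = \mu^{(n)} \circ \Delta^{(n-1)}$, and that is immediate from the group-likeness of the elements of $\pi_1(X,x)$.
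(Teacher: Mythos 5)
Your proof is correct and takes essentially the same approach as the paper: lift $\psi_n$ to $\Q\pi_1(X,x)^\wedge$, express the lift as iterated coproduct followed by multiplication, and descend to the quotient $H_0(\Lambda X;\Q)^\wedge$. The paper phrases the iterated coproduct as the map induced by the diagonal $X \to X^n$, but this is the same morphism; you also make explicit the (correct) observation that conjugate elements have conjugate $n$th powers, so the lift preserves the kernel of the quotient map, which the paper leaves implicit.
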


\begin{proof}
Choose a base point $x\in X$. Since $H_0(\Lambda X;\Q)^\wedge$ is a quotient of $\Q\pi_1(X,x)^\wedge$ as a MHS, it suffices to show that the endomorphism $\psi_n$ of $\Q\pi_1(X,x)^\wedge$ is a morphism of pro-MHS. There is a natural isomorphism
$$
\Q\pi_1\big(X^n,(x,\dots,x)\big)^\wedge \cong \big[\Q\pi_1(X,x)^\wedge\big]^{\otimes n}
$$
The diagonal $X \to X^n$ induces the $n$th power
$$
\Delta_n : \Q\pi_1\big(X^n,(x,\dots,x)\big)^\wedge \to \big[\Q\pi_1(X,x)^\wedge\big]^{\otimes n}.
$$
of the coproduct. It is a morphism of pro-MHS as the coproduct is a morphism. The Adams map $\psi_n$ on $\Q\pi_1(X,x)^\wedge$ is the composition
$$
\xymatrix{
\Q\pi_1(X,x)^\wedge \ar[r]^(.45){\Delta_n} &
\big[\Q\pi_1(X,x)^\wedge\big]^{\otimes n} \ar[r]^(.55){\text{mult}} & 
\Q\pi_1(X,x)^\wedge
}
$$
and is therefore a morphism of MHS.
\end{proof}

\subsection{Path variations of MHS}

Suppose that $X$ is a smooth complex algebraic variety. Denote by $\Pdual_{X\times X}$ the local system over $X\times X$ whose fiber over $(x_0,x_1)$ is $\Hdual^0(P_{x_0,x_1}X;\Q)$.

\begin{theorem}[{Hain--Zucker \cite{hain-zucker}}]
\label{thm:HZ}
The local system $\Pdual_{X\times X}$ underlies an object of ind-$\MHS(X\times X)$. Its restriction to the diagonal is $\Ldual_X$. It underlies an object of ind-$\MHS(X)$.
\end{theorem}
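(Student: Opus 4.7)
The plan is to follow the construction of \cite{hain-zucker} using the iterated-integral description of the fibers, promoting the fiberwise MHS of the previous theorem to an admissible variation on $X\times X$ and then restricting to the diagonal.

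First, recall from the previous theorem that for each fixed pair $(x_0,x_1)$ the fiber $\Hdual^0(P_{x_0,x_1}X;\Q)$ of $\Pdual_{X\times X}$ carries a canonical MHS built from the de~Rham mixed Hodge complex on $X$ (\cite[\S 5.6]{hain:dht}) via the reduced bar construction. To globalize, I would fix a smooth compactification $\Xbar\supset X$ with normal crossings boundary and choose a sub-DGA $A^\bdot\subseteq E^\bdot_\C(X)$ of logarithmic forms on $\Xbar$ with $A^0=\C$, quasi-isomorphic to $E^\bdot_\C(X)$ and carrying the Hodge and weight filtrations from the de~Rham side of the mixed Hodge complex. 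The relative de~Rham theory of Section~\ref{sec:rel-DR-PM} (used via Proposition~\ref{prop:qism} and Remark~\ref{rem:qism}) then identifies $\Pdual_{X\times X}\otimes_\Q\cO_{X\times X}$, locally on $U\subseteq X\times X$, with $H^0(B(\C,A^\bdot,\C))\otimes\cO(U)$, equipped with the canonical flat connection induced by the bar differential.

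Second, I would transport the bar Hodge and weight filtrations on $B(\C,A^\bdot,\C)$ (cf.\ \cite[\S 3]{hain:dht}) to $\Pdual_{X\times X}$ to produce the candidate variation of MHS. Admissibility over $X\times X$ then requires checking the four conditions of Section~\ref{sec:vmhs}: (i) extension of the Hodge sub-bundles across $(\Xbar\times\Xbar)\setminus(X\times X)$, which holds because $A^\bdot$ is already constructed from log forms on $\Xbar$; (ii) Griffiths transversality, which follows from the bar differential shifting the Hodge filtration by at most one; (iii) graded polarizability, which holds because each $\Gr^W_m\Pdual_{X\times X}$ is a subquotient of a tensor power of the polarizable variation of Hodge structure $H^1(X)$; and (iv) existence of the relative weight filtration at each boundary stratum. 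Condition (iv) is the main content of \cite{hain-zucker}; it follows from the fact that the logarithm of each local monodromy operator around a boundary component acts on the bar complex as a derivation that raises bar length by one, and is therefore compatible with the bar weight filtration.

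For the claim about the diagonal, the identification $P_{x,x}X=\Lambda_x X$ yields an equality of local systems $\Delta^\ast\Pdual_{X\times X}=\Ldual_X$, where $\Delta:X\hookrightarrow X\times X$ is the diagonal. Since pullback of admissible variations along a morphism of smooth complex algebraic varieties preserves admissibility, $\Ldual_X$ inherits a structure of ind-object of $\MHS(X)$. The MHS on each fiber matches the one constructed in Corollary~\ref{cor:lambda} because both are computed by the same reduced bar complex $B(\C,A^\bdot,\C)$ with the same induced bifiltration. The principal obstacle is item (iv): constructing the relative weight filtration along the boundary of $\Xbar\times\Xbar$; once this is established, the remaining compatibilities follow from the naturality of the bar construction in its DGA input.
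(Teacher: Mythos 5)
The paper offers no proof of this statement: it is cited directly to Hain--Zucker \cite{hain-zucker} and used as a black box. So there is no internal argument to compare against; what you have written is a reconstruction of the proof in the cited reference.

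As a reconstruction it follows the right strategy: identify the fiber via the reduced bar construction on a sub-DGA of log forms, globalize via the bar flat connection as in Section~\ref{sec:rel-DR-PM}, verify the admissibility conditions of Section~\ref{sec:vmhs}, and pull back along the diagonal. The identification $\Delta^\ast\Pdual_{X\times X}=\Ldual_X$ and the stability of admissibility under pullback are both correct and standard. Your treatment of conditions (i)--(iii) is adequate.

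The one point that is imprecise is condition (iv). Saying that $N=\log$ of the local monodromy ``raises bar length by one, and is therefore compatible with the bar weight filtration'' does not by itself give the required inequality $N\,W_k\subseteq W_{k-2}$, which is what one needs so that the relative weight filtration exists and (for a unipotent variation) coincides with $W_\bdot$ (cf.\ \cite[p.~85]{hain-zucker}, quoted in Section~\ref{sec:vmhs}). Raising bar length by one would a priori only lower the weight by $1$ if the inserted class had weight $-1$. The actual mechanism is that, for a boundary component corresponding to a puncture $p$, $N$ is (in the bar picture) right or left multiplication by $\log\sigma_p$, which spans a copy of $\Q(1)$ and hence has weight $-2$; it is this weight, not the change in bar length, that forces the filtration to drop by $2$. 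Moreover $\log\sigma_p$ has components in all bar lengths $\ge 1$, so ``raises bar length by one'' should really be ``by at least one.'' Once this is corrected, the rest of your sketch matches the structure of the Hain--Zucker argument.
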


\begin{corollary}
For each $(x_0,x_1)\in X^2$, the local system $\Pdual_{X;x_0,x_1}$ underlies an object of ind-$\MHS(X)$.
\end{corollary}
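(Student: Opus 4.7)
The plan is to realize $\Pdual_{X;x_0,x_1}$ as a tensor product of two local systems pulled back from $\Pdual_{X\times X}$ under morphisms of smooth complex algebraic varieties, and then invoke Theorem~\ref{thm:HZ} together with the fact that ind-$\MHS(X)$ is a tannakian tensor category closed under pullback by morphisms of smooth varieties.

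More precisely, I would first define the two closed immersions
$$
\iota_0 : X \to X\times X,\quad x \mapsto (x_0,x),
\qquad
\iota_1 : X \to X\times X,\quad x \mapsto (x,x_1).
$$
Theorem~\ref{thm:HZ} asserts that $\Pdual_{X\times X}$ underlies an ind-object of $\MHS(X\times X)$. Since pullback of admissible variations of MHS along a morphism of smooth complex algebraic varieties preserves admissibility (a standard property following from Saito's theory, or in the unipotent case of interest here from \cite{hain-zucker}), both $\iota_0^\ast \Pdual_{X\times X}$ and $\iota_1^\ast \Pdual_{X\times X}$ are ind-objects of $\MHS(X)$. The fiber of $\iota_0^\ast \Pdual_{X\times X}$ at $x\in X$ is $\Hdual^0(P_{x_0,x}X)$ and that of $\iota_1^\ast\Pdual_{X\times X}$ is $\Hdual^0(P_{x,x_1}X)$. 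Since ind-$\MHS(X)$ is closed under tensor products, the tensor product
$$
\sE := \iota_0^\ast \Pdual_{X\times X} \otimes \iota_1^\ast \Pdual_{X\times X}
$$
is an ind-object of $\MHS(X)$.

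It remains to identify $\sE$ with $\Pdual_{X;x_0,x_1}$ as local systems. The homeomorphism $P_{x_0,x}X \times P_{x,x_1}X \cong P_{x_0,x,x_1}X$ of Section~\ref{sec:path_space} gives, fiberwise,
$$
\bP_{X;x_0,x_1}|_x = H_0(P_{x_0,x,x_1}X) \cong H_0(P_{x_0,x}X)\otimes H_0(P_{x,x_1}X),
$$
and dually an isomorphism $\Pdual_{X;x_0,x_1}|_x \cong \sE|_x$. The key verification is that this fiberwise identification is compatible with the monodromy actions of $\pi_1(X,x)$ on both sides. For a loop $\mu$ based at $x$, parallel transport on $\iota_0^\ast\Pdual_{X\times X}$ is dual to the action $\gamma' \mapsto \gamma'\mu$ on $\pi(X;x_0,x)$, while parallel transport on $\iota_1^\ast\Pdual_{X\times X}$ is dual to $\gamma'' \mapsto \mu^{-1}\gamma''$ on $\pi(X;x,x_1)$; their tensor product therefore implements the action $(\gamma',\gamma'')\mapsto (\gamma'\mu,\mu^{-1}\gamma'')$, which is precisely the parallel transport on $\bP_{X;x_0,x_1}$ recorded in Definition~\ref{def:P}. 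This matches exactly.

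I expect no serious obstacle here: the only nontrivial ingredient is the preservation of admissibility under pullback by a closed immersion, which is classical. The remaining content is a matter of unwinding the definition of $\bP_{X;x_0,x_1}$ and checking that the monodromy formula in Definition~\ref{def:P} agrees with the tensor product of the monodromies of $\iota_0^\ast\Pdual_{X\times X}$ and $\iota_1^\ast\Pdual_{X\times X}$. Once that compatibility is recorded, the identification $\Pdual_{X;x_0,x_1} \cong \sE$ transports the ind-$\MHS(X)$ structure of $\sE$ to $\Pdual_{X;x_0,x_1}$, completing the proof.
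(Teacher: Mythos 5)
Your proof is correct and is essentially the argument in the paper: the paper restricts $\Pdual_{X\times X}$ to $\{x_0\}\times X$ and $X\times\{x_1\}$ (i.e., pulls back along your $\iota_0,\iota_1$) and writes $\Pdual_{X;x_0,x_1} = \Pdual_{x_0\times X}\otimes \Pdual_{X\times x_1}$. You simply make explicit the monodromy check that the paper leaves implicit.
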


\begin{proof}
Denote the restriction of $\Pdual_{X\times X}$ to $A\times B$ by $\Pdual_{A\times B}$. This is an admissible variation of MHS over $A\times B$ when $A$ and $B$ are subvarieties of $X$. In particular, the restrictions of $\Pdual_{X\times X}$ to $\{x_0\}\times X$ and $X\times\{x_1\}$ are objects of ind-$\MHS(X)$. The result follows as $\Pdual_{X;x_0,x_1} = \Pdual_{x_0\times X}\otimes \Pdual_{X\times x_1}$.
\end{proof}

\begin{corollary}
\label{cor:restriction}
The restriction mapping $\Hdual^0(\Lambda X) \to H^0(X,\Ldual_X)$ is a morphism of MHS.
\end{corollary}

\begin{proof}
Since $\Ldual_X$ is in ind-$\MHS(X)$, the Theorem of the Fixed part \cite[Thm.~4.1]{steenbrink-zucker} implies that for all $x\in X$, the restriction mapping $H^0(X,\Ldual_X) \hookrightarrow \Hdual^0(\Lambda_x X)$ is an inclusion of MHS. Since the image of the restriction mapping $\Hdual^0(\Lambda X)$ is contained in $H^0(X,\Ldual_X)$, the last assertion of Corollary~\ref{cor:lambda} implies the result.
\end{proof}

\begin{lemma}
If $j : U \hookrightarrow X-\{x_0,x_1\}$ is the inclusion of a Zariski open subset, then the continuous duals
$$
\mu^\ast : \Ldual_U \to \Ldual_U \otimes \Ldual_U \text{ and }
\mu^\ast : j^\ast : \Pdual_{X;x_0,x_1} \to \Ldual_U\otimes j^\ast\Pdual_{X;x_0,x_1}
$$
of the natural pairings (\ref{eqn:pairings}) are morphisms in ind-$\MHS(U)$.
\end{lemma}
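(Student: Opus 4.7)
The plan is to identify each continuous dual $\mu^\ast$ with a deconcatenation coproduct on the iterated-integral model of $\Ldual_U$ (respectively $j^\ast\Pdual_{X;x_0,x_1}$) and then invoke compatibility of this coproduct with the Hodge and weight filtrations that define the VMHS structure. By Theorem~\ref{thm:HZ} and its corollary, both $\Ldual_U$ and $j^\ast\Pdual_{X;x_0,x_1}$ are already admissible ind-variations of MHS over $U$, so the remaining content is that the pairings (\ref{eqn:pairings}) are morphisms of (pro-)VMHS.

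First I would pin down the fiberwise identification. Over $x\in U$ the pairing $\mu:\bL_{U,x}\otimes\bL_{U,x}\to\bL_{U,x}$ is multiplication in the group algebra $\Q\pi_1(U,x)$, while $\mu:\bL_{U,x}\otimes(j^\ast\bP_{X;x_0,x_1})_x\to(j^\ast\bP_{X;x_0,x_1})_x$ is left-multiplication by loops at $x$ on the torsor $\Q\pi(M;x_0,x,x_1)$. After $I$-adic completion, the continuous duals are the coproduct on $\cO(\pi_1^\un(U,x)_{/\Q})\cong \Hdual^0(\Lambda_x U)$ and its coaction on $\Hdual^0(P_{x_0,x_1}X)$. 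On iterated integrals these are the standard deconcatenation maps
$$\int(w_1|\dots|w_r)\;\mapsto\;\sum_{j=0}^r \int(w_1|\dots|w_j)\otimes\int(w_{j+1}|\dots|w_r),$$
by exactly the bookkeeping done in the proofs of Proposition~\ref{prop:formula_bcs} and the formula (\ref{eqn:formula_bkk}) for $\csp^\ast$.

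Next I would invoke the construction of the MHS on $\Hdual^0(\Lambda_x U)$ and $\Hdual^0(P_{x_0,x_1}X)$ via mixed Hodge complex structures on the reduced bar and circular bar complexes, as in \cite{hain:dht} and recalled in Section~\ref{sec:it_ints}. In those models the Hodge and weight filtrations are of convolution type: the length filtration is the ``bar length'' filtration shifted by the weights of the individual forms, and $F^p$ is defined analogously. Splitting a single bar symbol $[w_1|\dots|w_r]$ into two sub-symbols $[w_1|\dots|w_j]\otimes[w_{j+1}|\dots|w_r]$ therefore preserves both $W_\bdot$ and $F^\bdot$ termwise. Hence $\mu^\ast$ is a morphism of ind-MHS at each fiber.

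Finally I would upgrade the fiberwise statement to a statement of admissible ind-variations. The VMHS structures on $\Ldual_U$ and $j^\ast\Pdual_{X;x_0,x_1}$ used in Theorem~\ref{thm:HZ} are assembled from these very iterated-integral complexes varying with $x\in U$, with Gauss--Manin connection supplied by the Fundamental Theorem of Calculus (as in the proofs of Proposition~\ref{prop:ss} and its $P_{x_0,x_1}$ analogue). The deconcatenation coproduct is $\cO_U$-linear, is flat for $\nabla$ by a direct Leibniz computation, and preserves $F^\bdot$ and $W_\bdot$ by the previous paragraph, so it defines a morphism in ind-$\MHS(U)$. The only non-routine point in this argument is verifying that the fiberwise compatibility of the coproduct with $(F^\bdot,W_\bdot)$ globalises over $U$; this is where one must quote the specific construction of the VMHS in \cite{hain-zucker}, but, since the coproduct is already built into the Hopf-algebra structure of the bar complex used there, no new Hodge-theoretic input beyond Theorem~\ref{thm:HZ} is needed.
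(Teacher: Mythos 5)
Your proposal is correct, but it takes a more hands-on route than the paper. Both arguments begin by observing that $\Ldual_U$ and $j^\ast\Pdual_{X;x_0,x_1}$ are admissible ind-variations (via Theorem~\ref{thm:HZ}), which reduces the statement to a fiberwise check. At that point the paper stays on the homological side: it writes the fiber of the pairing explicitly as $\alpha\otimes\gamma'\otimes\gamma''\mapsto\gamma'\alpha\otimes\gamma''$, observes this is just multiplication $H_0(\Lambda_x U)^\wedge\otimes H_0(P_{x_0,x}X)^\wedge\to H_0(P_{x_0,x}X)^\wedge$ tensored with the identity on the $H_0(P_{x,x_1}X)^\wedge$ factor, and then invokes the already-established theorem of Section~\ref{sec:mhs_paths} that the product on $\Q$-completions of path torsors is a morphism of ind-MHS. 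You instead pass to the dual and identify the fiberwise coaction with the deconcatenation coproduct on the (circular) bar construction, then verify by hand that deconcatenation preserves $F^\bdot$ and $W_\bdot$ using the definitions in \cite{hain:dht}. This is essentially reproving the compatibility of the Hopf algebra structure with the MHS rather than citing it; your route is more self-contained and makes the de~Rham mechanism explicit, at the cost of length and of relying on the reader to accept that the ``convolution-type'' description of the filtrations is accurate enough to justify the termwise argument.

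Two small remarks. First, the globalisation step that you flag as ``the only non-routine point'' is in fact routine once both sides are known to be admissible ind-variations and the map is a morphism of local systems: a flat morphism of admissible VMHS that is a morphism of MHS on one fiber is a morphism of variations (theorem of the fixed part), which is exactly why the paper's one-sentence reduction ``it suffices to show that its restriction to each fiber is a morphism'' is justified. Second, your description of the second dual map as a coaction ``on $\Hdual^0(P_{x_0,x_1}X)$'' glosses over the fact that, over the fiber at $x$, the relevant object is the tensor product $\Hdual^0(P_{x_0,x}X)\otimes\Hdual^0(P_{x,x_1}X)$ and only the first factor is coacted upon nontrivially; this is worth spelling out, since it is what makes the reduction to the product on a single path torsor go through.
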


\begin{proof}
We will prove that the second map is a morphism of admissible variations over $U$. The proof that first map is a morphism is similar, but simpler.

Since $\Pdual_{X;x_0,x_1}$ and $\Ldual_U$ are admissible ind-variations, it suffices to show that its restriction to each fiber is a morphism of ind-MHS. To do this, it suffices to prove that the pairing
$$
H_0(\Lambda_x U)^\wedge \otimes H_0(P_{x_0,x}X)^\wedge\otimes H_0(P_{x,x_1})^\wedge \to H_0(P_{x_0,x}X)^\wedge \otimes H_0(P_{x,x_1})^\wedge
$$
dual to this map is a morphism of pro-MHS for each $x\in U$. Since this map takes $\alpha \otimes \gamma'\otimes\gamma''$ to $\gamma'\alpha\otimes \gamma''$, it suffices to show that the multiplication map
$$
H_0(\Lambda_x U)^\wedge \otimes H_0(P_{x_0,x}X)^\wedge \to H_0(P_{x_0,x}X)^\wedge
$$
is a morphism. But this follows from the properties of the MHS on path torsors.
\end{proof}

Denote the diagonal in $X\times X$ by $\Delta$. We also need to consider the local system $\bLambda_{X\times X}'$ over $X\times X$ whose fiber over $(x_0,x_1)$ is $\Hdual^0\big(\Lambda (X -\{x_0,x_1\});\Q\big)$.

\begin{proposition}
The local system $\bLambda_{X\times X}'$ underlies an object of ind-$\MHS(X^2-\Delta)$.
\end{proposition}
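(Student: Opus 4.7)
The strategy is to realize $\bLambda'_{X\times X}$ as the descent along a smooth projection of an admissible ind-variation, produced by a family version of Theorem~\ref{thm:HZ}.

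First, I would set up the tautological family of punctured curves. Let
$$
C_3 = \{(x_0,x_1,y)\in X^3 : x_0,x_1,y \text{ are pairwise distinct}\},
$$
a smooth Zariski open subvariety of $X^3$, and let $\pi : C_3 \to X^2-\Delta$ be the projection onto the first two factors. This is a smooth morphism with connected fibers $\pi^{-1}(x_0,x_1) = X-\{x_0,x_1\}$. Applying the family/relative version of Theorem~\ref{thm:HZ} to this smooth family of open curves would yield on $C_3$ an admissible ind-variation of MHS $\bL'$ whose fiber at $(x_0,x_1,y)$ is the unipotent completion $H_0(\Lambda_y(X-\{x_0,x_1\});\Q)^\wedge$. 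The ingredients for such a family statement are already implicit in \cite{hain-zucker}, whose iterated-integral construction of the Hodge and weight filtrations is natural in the pair (curve, base point) and therefore extends to smooth families.

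Next, I would descend $\bL'$ from $C_3$ to $X^2-\Delta$. The key observation is that $H_0(\Lambda M)$ is the free $\Q$-module on conjugacy classes in $\pi_1(M,y)$ and is thus intrinsically independent of the base point; equivalently, the parallel transport of $\bL'$ along fibers of $\pi$ acts by conjugation (Definition~\ref{def:L}), which is trivial on the conjugation-coinvariant quotient. Working level by level, the exact sequence
\begin{equation*}
(\Q\pi_1(X-\{x_0,x_1\},y)/I^n)^{\otimes 2} \To \Q\pi_1/I^n \To H_0(\Lambda(X-\{x_0,x_1\}))/I^n H_0(\Lambda) \To 0
\end{equation*}
from the proof of Corollary~\ref{cor:lambda} --- whose first two terms are finite-rank admissible variations on $C_3$ by the family Hain--Zucker --- produces a tower of finite-rank admissible variations $\bL'_n$ on $C_3$ with trivial $\pi$-fiber monodromy. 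Taking $R^0 \pi_*$ (equivalently, the fiber-monodromy invariants) yields admissible variations $\bLambda'_n$ on $X^2-\Delta$, whose fibers compute $H_0(\Lambda(X-\{x_0,x_1\}))/I^n H_0(\Lambda)$ by Theorem~\ref{thm:pullback_DR}. Passing to the inverse limit and dualizing gives $\bLambda'_{X\times X}$ its structure as an object of ind-$\MHS(X^2-\Delta)$.

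\textbf{The main obstacle} is making rigorous the relative version of Theorem~\ref{thm:HZ} for the smooth non-proper family $\pi$. The statement in \cite{hain-zucker} treats a fixed variety; upgrading to a family of open curves requires showing that the Hodge and weight filtrations on the $I$-adic completion of the fiber-wise $\Q\pi_1$ vary holomorphically with Griffiths transversality, admit relative weight filtrations along strata at infinity, and are graded polarizable over the base --- which can be organized either by redoing the mixed Hodge complex construction of \cite{hain:dht} in families or by invoking Saito's mixed Hodge module direct-image formalism along $\pi$. A secondary technicality is tracking the infinite-rank ind-structure through the descent, which is handled by working at each finite level $n$ and passing to the limit only at the end.
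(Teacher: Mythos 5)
Your blueprint and the paper's proof share the idea of reducing to fiberwise unipotent $\pi_1$ variations and then passing to conjugation-coinvariants via the exact sequence from Corollary~\ref{cor:lambda}, but they diverge on the step you yourself flag as the main obstacle, and there the divergence is decisive. You propose to build a universal admissible ind-variation over the configuration space $C_3 = \{(x_0,x_1,y)\text{ pairwise distinct}\}$ by a ``family version of Theorem~\ref{thm:HZ}'' and then descend along $\pi : C_3\to X^2-\Delta$. That family existence theorem is not in the literature as stated, and establishing it via a relative mixed Hodge complex or Saito's direct image formalism is genuinely more work than the target proposition warrants; as written, your argument has an unfilled hole exactly there. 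The paper avoids this entirely by the cheaper device of fixing a single base point $\xbar\in X$: over the Zariski open $U = X^2 - (\Delta\cup\{\xbar\}\times X\cup X\times\{\xbar\})$ there is a local system $\bV$ with fiber $\Q\pi_1(X-\{x_0,x_1\},\xbar)^\wedge$, and this is already known to be an admissible pro-variation by \cite[Thm.~1.5.1]{hain:dht2} (or by the classification theorem of \cite{hain-zucker}, via a splitting of the relevant $\pi_1$-extension coming from the constant section at $\xbar$). The fiberwise quotient to conjugation-coinvariants then makes $\bLambda'_{X\times X}|_U$ an object of ind-$\MHS(U)$, and the final extension from $U$ to all of $X^2-\Delta$ is handled either by noting that $\bLambda'_{X\times X}$ extends as a local system with trivial monodromy about $\{\xbar\}\times X$ and $X\times\{\xbar\}$, or via the Hain--Zucker classification of unipotent admissible variations. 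Two remarks: first, your observation that the quotient $\Q\pi_1\twoheadrightarrow H_0(\Lambda)$ kills the conjugation monodromy is exactly why the paper can simply transfer the ind-$\MHS(U)$ structure through the fiberwise quotient — it is the same observation made base-point-free. Second, the space $C_3$ does appear in the paper (as $Z$, in the proof of Proposition~\ref{prop:weak_result}), but even there it is used only to set up a split extension of fundamental groups after fixing a section; the paper never needs an admissible variation over $C_3$ itself. If you want to pursue your route, the most economical repair is to replace the hypothetical ``family Hain--Zucker'' with this section-plus-classification argument, which is what the paper actually does.
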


\begin{proof}
Fix a base point $\xbar \in X$. Let
$$
U = X\times X -
\big(\Delta \cup (\{\xbar\}\times X) \cup (X \times \{\xbar\})\big).
$$
Consider the the local system $\bV$ over $U$ whose fiber over $(x_0,x_1)$ is
$$
\Q\pi_1(X-\{x_0,x_1\},\xbar)^\wedge.
$$
The main result of \cite{hain:dht2} (or of \cite{hain-zucker}) implies that it is an object of pro-$\MHS(U)$. The dual of the restriction of $\bLambda_{X\times X}'$ to $U$ is a quotient of $\bV$. The quotient map is a morphism of ind-MHS on each fiber. This implies that the restriction of $\bLambda_{X\times X}'$ to $U$ is in ind-$\MHS(U)$. But since it extends to a local system over $X\times X - \Delta$, it is also an object of ind-$\MHS(X^2-\Delta)$. Alternatively, one can show that it extends as a variation of MHS from $U$ to $X^2 - \Delta$ using the classification in \cite{hain-zucker}. Just use the fact that it has trivial monodromy about the divisors $\{\xbar\}\times X$ and $X \times \{\xbar\}$.
\end{proof}

\subsection{Tangent vectors and limit MHS}

Suppose that $\Xbar$ is a smooth complex projective variety and that $D$ is a divisor with normal crossings in $\Xbar$. Set $X = \Xbar - D$ and suppose that $\bV$ is an admissible variation of MHS over $X$. For simplicity, we suppose that the local monodromy of $\bV$ about each codimension 1 component of $D$ is unipotent. This condition holds in all variations used in this paper. Then, for each $p\in D$ and tangent vector $\vv \in T_p \Xbar$ not tangent to any component of $D$ at $p$, there is a {\em limit} MHS $V_\vv$. The $\Q$-vector space underlying $V_\vv$ can be thought of as the fiber of $\bV$ over $\vv$. We will explain this in the current context in more detail in Section~\ref{sec:tangent_vecs}.

Here is a brief explanation of the limit MHS. The complex vector space underlying $V_\vv$ is the fiber $V_p$ of Deligne's canonical extension $\sV$ to $\Xbar$ of the flat vector bundle $\bV\otimes_\Q \cO_X$. Its Hodge filtration is the restriction of the extended Hodge bundles\footnote{See (\ref{item:hodge}) in Section~\ref{sec:vmhs}.} to $V_p$. The rational structure depends on the tangent vector $\vv$. Details of its construction can be found in \cite{steenbrink-zucker} in the case when $X$ is a curve. In general, following Kashiwara \cite{kashiwara}, one reduces to the 1-dimensional case by restricting to a curve $C$ in $X$ that passes through $p$ and is tangent to $\vv$. The weight filtration $W_\bdot$ of $\bV$ restricts to a weight filtration $W_\bdot$ on $V_p$. In general, the weight filtration of the limit MHS $V_\vv$, denoted $M_\bdot$, is not equal to $W_\bdot$. Rather, it is the {\em relative weight filtration} associated to the action of the residue of the connection on $(V_p,W_\bdot)$. See \cite{steenbrink-zucker} for definitions and details. However, when the global monodromy action $\pi_1(X,x) \to \Aut V_x$ is unipotent, the relative weight filtration $M_\bdot$ equals $W_\bdot$. (Cf.\ \cite[p.~85]{hain-zucker}.) That is the case for all of the variations of MHS considered in this paper.

Finally, the limit MHS associated to $\vv \in T_p \Xbar$ themselves form an admissible VMHS over
$$
T_p \Xbar - \bigcup_{p\in D_\alpha} T_p D_\alpha,
$$
where $D_\alpha$ ranges over the local components of $D$ at $p$.

\section{Proof of Theorems~\ref{thm:free} and \ref{thm:naive}}
\label{sec:proofs}

\subsection{Proof of Theorem~\ref{thm:free}}

We prove the theorem by showing that each map in the diagram in the statement of Proposition~\ref{prop:dual_goldman} is a morphism of MHS. We use the notation of the proposition.

The first map is a morphism of MHS by Corollary~\ref{cor:restriction}. Theorem~\ref{thm:PD} implies that the second and fourth maps are isomorphisms of MHS. The third map is a morphism of MHS because it is the direct limit of the duals to the cup products
$$
H^1(\Xbar,Y;\bL/\bI^n)^{\otimes 2} \to H^2(\Xbar,Y;\bL/\bI^n)
$$
induced by the product pairing $\bL^{\otimes 2} \to \bL$, which is a morphism of variations of MHS. This leaves $\cs$.

\begin{lemma}
\label{lem:dual_cs_mhs}
The dual $\csdual : H^1(X;\Ldual_X) \to \Hdual^0(\Lambda X)$ of the Chas--Sullivan map is a morphisms of MHS.
\end{lemma}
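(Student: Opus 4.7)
The plan is to use the explicit bar-construction formula for $\csdual$ from Proposition~\ref{prop:formula_bcs} and verify directly that it preserves the Hodge and weight filtrations that define the two MHS. The advantage of this approach is that everything reduces to a transparent symbolic manipulation on iterated integrals once the filtrations are matched up.

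First I would invoke Proposition~\ref{prop:qism} to replace $E^\bdot(X)$ by a quasi-isomorphic sub-DGA $A^\bdot$ with $A^0=\kk$ that carries the structure of a multiplicative mixed Hodge complex; such an $A^\bdot$ is supplied by the logarithmic de~Rham complex on a smooth compactification of $X$ together with its Deligne Hodge and weight filtrations, as constructed in \cite[\S 5]{hain:dht}. With this replacement, Proposition~\ref{prop:ss} realizes $H^1(X;\sL_X) \cong H^1(X;\Ldual_X)$ as a subquotient of the double complex $H^0(B(A^\bdot;\kk)) \otimes A^\bdot$, while Theorem~\ref{thm:pullback_DR} identifies $\Hdual^0(\Lambda X) \cong H^0(X;\sL_X)$ with the degree-zero cyclic cohomology of $A^\bdot$. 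The bar and exterior-degree filtrations on $B(A^\bdot;A^\bdot)$ inherited from the Hodge and weight filtrations on $A^\bdot$ then induce MHS on both sides.

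Next I would match these filtrations with the intrinsic ones. For $\Hdual^0(\Lambda X)$, the bar-construction MHS agrees with the MHS of Corollary~\ref{cor:lambda}: this is the content of \cite[Thm.~3.2.1]{hain:dht} applied to the de~Rham mixed Hodge complex of $X$ (cf.\ the second proof of Corollary~\ref{cor:lambda}). For $H^1(X;\Ldual_X)$, the same filtered complex gives a MHS that agrees with the one coming from the admissibility of $\Ldual_X$ as an ind-variation (Theorem~\ref{thm:HZ}) via the compatibility established in \cite{hain-zucker}. Under these identifications, the weight of a representative $[w_1|\dots|w_r]\otimes w$ is the sum of the weights of $w,w_1,\dots,w_r$ together with a uniform shift depending only on the bar length, and similarly for the Hodge filtration.

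With the filtrations in hand, the lemma is immediate from the formula
$$\csdual : [w_1|\dots|w_r]\otimes w \;\longmapsto\; \sum_{j=0}^r [w_{j+1}|\dots|w_r\,|\,w\,|\,w_1|\dots|w_j]:$$
each term on the right is a cyclic permutation of the multiset $\{w,w_1,\dots,w_r\}$, of uniform bar length $r+1$, so it has the same total Hodge degree and total weight as the input. The main obstacle in the argument is the identification in the preceding paragraph, in particular the compatibility of the bar-construction Hodge theory of \cite{hain:dht} with Saito's Hodge theory of the admissible ind-variation $\Ldual_X$; once this is in place, the cyclic-rotation formula is manifestly filtration-preserving and the lemma follows.
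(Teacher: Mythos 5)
Your proposal is correct and takes essentially the same route as the paper: the paper's proof is also to invoke the cyclic formula of Proposition~\ref{prop:formula_bcs} together with the definition of the Hodge and weight filtrations on the bar construction from \cite[\S3.2]{hain:dht}, then note that the map is defined over $\Q$. You have simply made explicit what the paper treats as routine — namely the replacement of $E^\bdot(X)$ by a multiplicative mixed Hodge complex and the compatibility of the bar-construction MHS with the one coming from the admissible ind-variation $\Ldual_X$, for which \cite{hain-zucker} is indeed the correct source.
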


\begin{proof}
It is clearly defined over $\Q$. So it suffices to show that it preserves the Hodge and weight filtrations. But this follows from the formula for $\cs$ in Proposition~\ref{prop:formula_bcs} and the definition \cite[\S3.2]{hain:dht} of the Hodge and weight filtrations on the bar construction on a mixed Hodge complex.
\end{proof}

\subsection{Proof of Theorem~\ref{thm:naive}}

We prove the theorem by showing that each map in the diagram in the statement of Proposition~\ref{prop:dual_kk} is a morphism of MHS. As in the proof of Theorem~\ref{thm:free}, the first four maps (going counter clockwise) in the diagram in the statement of the proposition are morphisms of MHS. To complete the proof, we need to show that $\csp$ is a morphism.

\begin{lemma}
The dual $\csp : H^1(\Xbar,Y\cup\{x_0,x_1\};\Pdual_{X,x_0,x_1}) \to \Hdual^0(P_{x_0,x_1} X)$ of the Chas--Sullivan map is a morphisms of MHS.
\end{lemma}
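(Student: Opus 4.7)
The plan is to proceed in direct analogy with the proof of Lemma~\ref{lem:dual_cs_mhs}. The map $\csp$ is visibly defined over $\Q$, being the continuous dual of a $\Q$-linear map. It therefore remains to check that it preserves Hodge and weight filtrations, and this will follow from the explicit de~Rham description recorded in equation~(\ref{eqn:formula_bkk}) together with the definition of the Hodge and weight filtrations on the bar construction from \cite[\S3.2]{hain:dht}.

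First I would identify the source of $\csp$ as a MHS. By Theorem~\ref{thm:HZ} and its corollary, $\Pdual_{X;x_0,x_1}$ underlies an ind-object of $\MHS(X)$. Saito's theorem (Theorem~\ref{thm:PD}) then endows $H^1(\Xbar,Y\cup\{x_0,x_1\};\Pdual_{X;x_0,x_1})$ with a natural ind-MHS. This MHS is computed by the cone complex $B^\bdot(X,\{x_0,x_1\};\sP_{x_0,x_1})$ of iterated integrals introduced earlier, where the bar construction is built from a de~Rham mixed Hodge complex of forms with logarithmic poles along $D=\Xbar\setminus X$ (cf.\ \cite[Prop.~5.6.2]{hain:dht}), and where the Hodge and weight filtrations on the cone are inherited via the bar construction formalism of \cite[\S3.2]{hain:dht}. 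On the target side, the MHS on $\Hdual^0(P_{x_0,x_1}X)$ introduced in Section~\ref{sec:mhs_paths} agrees, by \cite[Thm.~3.2.1]{hain:dht}, with the MHS on $H^0\bigl(B(\kk,A^\bdot,\kk)\bigr)$ for a de~Rham model $A^\bdot$.

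With both sides identified as cohomologies of bar-type mixed Hodge complexes, I would inspect the formula~(\ref{eqn:formula_bkk}) term by term. The main term inserts the 1-form $w$ into the bar symbol between positions $j$ and $j+1$; this preserves the total bar length, preserves the de~Rham degrees of the entries, and therefore preserves both the Hodge filtration (defined by sums of degrees in $F^\bdot A^\bdot$) and the weight filtration (defined in terms of bar length together with weights of the entries) of \cite[\S3.2]{hain:dht}. The two correction terms $\e(I_0)J_0$ and $-\e(J_1)I_1$ factor through the augmentations $\e_{x_0},\e_{x_1}:E^\bdot(X)\to\kk$ followed by the natural inclusions $\iota_0,\iota_1$ into the cone complex; since evaluation at a point is a morphism of mixed Hodge complexes and $\iota_0,\iota_1$ are strict for both filtrations, these terms also respect Hodge and weight.

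The main obstacle will be the identification of the Saito-theoretic MHS on the relative cohomology group $H^1(\Xbar,Y\cup\{x_0,x_1\};\Pdual_{X;x_0,x_1})$ with the bar-construction MHS on the cone complex $B^\bdot(X,\{x_0,x_1\};\sP_{x_0,x_1})$. This is the de~Rham–with–coefficients analogue of the identifications carried out in \cite{hain:dht,hain-zucker}; once it is in place, the filtration checks above are essentially formal. In practice I would reduce to the absolute case by realising the cone as the shifted mapping fibre of a morphism of mixed Hodge complexes $B^\bdot(X;\sP_{x_0,x_1})\to B^\bdot(\{x_0\};\sP_{x_0,x_1})\oplus B^\bdot(\{x_1\};\sP_{x_0,x_1})$ induced by $\e_{x_0}\oplus\e_{x_1}$, and then invoke the compatibility of this construction with Saito's six-functor formalism, as in \cite[\S8]{hain-zucker}.
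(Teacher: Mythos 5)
Your proposal is essentially on track and uses the same key ingredients as the paper: the explicit formula (\ref{eqn:formula_bkk}), the Hodge and weight filtrations on the bar construction from \cite[\S3.2]{hain:dht}, Deligne's construction of the MHS on the cone of a map of mixed Hodge complexes, and the naturality of Saito's theory for the relative cohomology with coefficients in $\Pdual_{X;x_0,x_1}$. Your term-by-term verification of the formula is also what the paper has in mind when it says the proof is ``similar to the proof of Lemma \ref{lem:dual_cs_mhs}.''

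The one place where you go astray is in the final paragraph, where you pose as the ``main obstacle'' the identification of the Saito-theoretic MHS on $H^1(\Xbar,Y\cup\{x_0,x_1\};\Pdual_{X;x_0,x_1})$ with the bar-construction MHS on the cone complex $B^\bdot(X,\{x_0,x_1\};\sP_{x_0,x_1})$. These are \emph{two different cohomology groups}: the cone complex you describe (the shifted mapping fibre of $\e_{x_0}\oplus\e_{x_1}$) computes $H^1(X,\{x_0,x_1\};\sP_{x_0,x_1})$ --- relative cohomology of the \emph{open} surface $X=\Xbar-Y$ modulo two points --- whereas the source of $\cspdual$ is $H^1(\Xbar,Y\cup\{x_0,x_1\};\cdot)$, which by the conventions of the paper is $H^1(\Xbar;i_!\,j^\ast\Pdual_{X;x_0,x_1})$ for $i:X'\hookrightarrow\Xbar$, so it ``vanishes near $Y$.'' There is a natural restriction map from the latter to the former, but they do not coincide, so what you are trying to ``identify'' cannot be identified. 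The paper avoids this issue by explicitly factoring $\cspdual$ as the composite
$$
H^1(\Xbar,Y\cup\{x_0,x_1\};\Pdual_{X;x_0,x_1}) \To H^1(X,\{x_0,x_1\};\sP_{x_0,x_1}) \overset{\csp^\ast}{\To} H^0_\dR(P_{x_0,x_1}X),
$$
applies formula (\ref{eqn:formula_bkk}) and the cone-of-MHC structure only to the second arrow, where the bar complex genuinely computes the source, and invokes Saito naturality for the first arrow. With this factorization in place, your filtration checks and the observation about the correction terms go through as written.
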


\begin{proof}
The proof is similar to the proof of Lemma~\ref{lem:dual_cs_mhs} above. The only point that may need addressing is to explain why
$$
\csp^\ast : H^1(X,Y\cup\{x_0,x_1\};\sP_{x_0,x_1}) \to H^0_\dR(P_{x_0,x_1}X)
$$
is a morphism of MHS. The formula (\ref{eqn:formula_bkk}) and the fact that a cone of mixed Hodge complexes is a mixed Hodge complex \cite{deligne:hodge2} imply that
$$
\csp^\ast : H^1(X,\{x_0,x_1\};\sP_{x_0,x_1}) \to H^0_\dR(P_{x_0,x_1}X)
$$
preserves the Hodge and weight filtrations. Since it is defined over $\Q$, it is also a morphism of MHS. Naturality implies that
$$
H^1(X,Y\cup\{x_0,x_1\};\sP_{x_0,x_1}) \To H^1(X,\{x_0,x_1\};\sP_{x_0,x_1})
$$
is a morphism of ind-MHS. This completes the proof.
\end{proof}

\begin{remark}
\label{rem:restn}
If $U$ is a Zariski open subset of $X-\{x_0,x_1\}$, then the map $H_0(\Lambda U)^\wedge \to H_0(\Lambda X')^\wedge$ induced by the inclusion is a morphism of MHS. It follows that the composition
$$
H_0(\Lambda U)^\wedge \otimes H_0(P_{x_0,x_1}X)^\wedge \to H_0(P_{x_0,x_1}X)^\wedge
$$
of this with $\csp$ is also a morphism of MHS.
\end{remark}

\section{Proof of Theorem~\ref{thm:action}}
\label{sec:proof_technical}


\subsection{Boundary Components and Tangent Vectors}
\label{sec:tangent_vecs}

Complex algebraic curves do not have boundary components. In order to apply Hodge theory to study path torsors of surfaces with boundary, we need to explain the algebraic equivalent, ``tangent vectors at infinity''.

\subsubsection{Real oriented blowups}

Suppose that $S$ is a real 2-manifold and that $\Sigma$ a finite subset of $S-\partial S$. The real oriented blow up of $S$ at $\Sigma$
$$
\pi : \BlR{\Sigma} S \to S
$$
replaces each $p\in \Sigma$ by the circle of oriented rays
$$
\pi^{-1}(p) = \big((T_p S) - \{0\}\big)/\R_+
$$
in the tangent space of $S$ at $p$. It is a manifold with one additional boundary circle for each $p\in \Sigma$. For example, the real oriented blow up
$$
\BlR{0} \D \to \D
$$
of the disk $\D$ at the origin is the annulus $S^1 \times [0,1]$; the projection takes $(r,\theta)\in S^1\times [0,1]$ to $r^{i\theta}\in\D$.

Denote by $[\vv]$ the point of $\Bl_p S$ that corresponds to the oriented ray in $T_p S$ spanned by the non-zero vector $\vv \in T_p $. Since the natural inclusion $S-\Sigma \hookrightarrow \BlR{\Sigma} S$ is a homotopy equivalence, every local system $\bV$ on $S-\Sigma$ extends canonically to a local system on $\BlR{\Sigma} S$. Denote its fiber over $[\vv]$ by $V_\vv$. It depends only on $[\vv]$. The local monodromy operator at $p$ is the action of parallel translation around the corresponding boundary circle of $\BlR{\Sigma} S$. It acts on $V_\vv$.

One can also form the augmented surface
$$
\widehat{S}_\Sigma := \BlR{\Sigma} \cup \bigcup_{p\in \Sigma} \BlR{0} T_p S,
$$
where $[\vv] \in \BlR{p}$ is identified with $[\vv]\in \BlR{0} T_p S$. It retracts onto $\BlR{\Sigma} S$.

\subsubsection{Tangential base points}
\label{sec:tang_bps}
Suppose that $S^c$ is a topological surface and that $p_0,p_1\in S^c$ are not necessarily distinct points. Suppose that $\vv_j \in T_{p_j} S$, $j=0,1$, are non-zero tangent vectors. Set $S = S^c-\{p_0,p_1\}$ and $\Shat := \Bl_{p_0,p_1}S^c$. The inclusion $S \hookrightarrow \Shat$ is a homotopy equivalence with dense image. Define $P_{\vv_0,\vv_1} S$ to be $P_{[\vv_0],[\vv_1]} \Shat$. For $x_0,x_1\in S$, we define (following Deligne \cite[\S15.9]{deligne:p1})
\begin{align*}
\pi(S;\vv_0,\vv_1) &:= \pi(\Bl_{p_0,p_1}S^c;[\vv_0],[\vv_1]) \cr
\pi(S;x_0,\vv_1) &:= \pi(\Bl_{p_0}S^c-\{p_1\};\vv_0,x_1), \cr
\pi(S;\vv_0,x_1) &:= \pi(\Bl_{p_1}S^c-\{p_0\};x_0,[\vv_1]).
\end{align*}
When $p_0 = p_1$ and $\vv = \vv_0 = \vv_1$, define $\pi_1(S,\vv)$ to be $\pi(S;\vv,\vv)$.
\begin{center}
\begin{figure}[!ht]
\epsfig{file=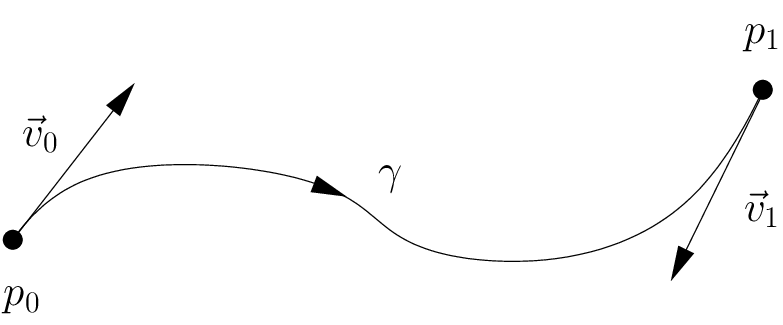, width=2in}
\caption{A path from $\vv_0$ to $\vv_1$}
\label{fig:path}
\end{figure}
\end{center}

\subsubsection{Extended local systems and their pairings}

We continue with the notation of the previous section (\ref{sec:tang_bps}). To simplify exposition, we suppose that $p_0$ and $p_1$ are distinct. Denote the diagonal in $T\times T$ by $\Delta_T$. Denote the local system over $S^2-\Delta_S$ with fiber $\Hdual^0(P_{x_0,x_1}S)$ over $(x_0,x_1)\in S^2$ by $\Pdual_{S\times S}$. It extends canonically to a local system over $\Shat^2 - \Delta_\Shat$. Its fiber over $a_j \in \{x_j,\vv_j\}$ is
$$
\Hdual^0(P_{a_0,a_1}S;\Q) = \Hom^\cts_\Q(\Q\pi(S;a_0,a_1),\Q).
$$

We also need to consider the local system $\bLambda_{S^2}$ over $S^2 - \Delta_S$ whose fiber over $(x_0,x_1)$, $x_0\neq x_1$, is
$$
\Hdual^0(\Lambda (S-\{x_0,x_1\})).
$$
This extends to a local system on $\Shat^2 - \Delta_\Shat$. Its fiber over $([\vv_0],[\vv_1])$ will be denoted by $\Hdual^0(\Lambda S'_{\vv_0,\vv_1})$. The surjection $\Lambda (S-\{x_0,x_1\}) \to \Lambda S$ induces an inclusion
$$
\Hdual^0(\Lambda S) \hookrightarrow \Hdual^0(\Lambda (S-\{x_0,x_1\}))
$$
for all $(x_0,x_1)$, and therefore an inclusion $\Hdual^0(\Lambda S)_{S^2-\Delta_S} \hookrightarrow \bLambda_{S^2}$ of the constant local system over $S^2-\Delta_S$ with fiber $\Hdual^0(\Lambda S)$ into $\bLambda_{S^2}$ and therefore an inclusion
\begin{equation}
\label{eqn:canon_inclusion}
\Hdual^0(\Lambda S) \hookrightarrow \Hdual^0(\Lambda S'_{\vv_0,\vv_1}).
\end{equation}

The dual of the KK-action
$$
\kappa^\ast : \Hdual^0(P_{x_0,x_1} S) \To \Hdual^0(\Lambda (S-\{x_0,x_1\})) \otimes \Hdual^0(P_{x_0,x_1} S)
$$
induces a coaction $j^\ast\Pdual_{S^2} \to \bLambda_{S^2} \otimes j^\ast\Pdual_{S^2}$ of local systems over $S^2-\Delta_S$ and their extensions to $\Shat^2 - \Delta_\Shat$. In particular, it induces a limit coaction
\begin{equation}
\label{eqn:coaction}
\Hdual^0(P_{\vv_0,\vv_1} S) \To \Hdual^0(\Lambda S'_{\vv_0,\vv_1}) \otimes \Hdual^0(P_{\vv_0,\vv_1} S)
\end{equation}
on the fiber over $([\vv_0],[\vv_1])$.

\subsection{Relation to limit MHS}
\label{sec:limitMHS}

Suppose that $\Xbar$ is a compact Riemann surface and that $D$ is a non-empty finite subset of $\Xbar$. Set $X=\Xbar-D$. Suppose that $p_0,p_1\in D$ and that $\vv_0 \in T_{p_0} \Xbar$ and $\vv_1 \in T_{p_1}\Xbar$ are non-zero tangent vectors.\footnote{The case where $p_0=p_1$ is similar. Instead of using the variation of MHS $\Pdual$ over $X^2-\Delta_X$, one uses the variation $\Ldual$ over $X$. Details are left to the reader.} We use the notation of the previous section.

Recall from Theorem~\ref{thm:HZ} that $\Pdual_{X\times X}$ is in ind-$\MHS(X\times X)$.

\begin{proposition}
\label{prop:weak_result}
The local system $\bLambda_{X\times X}$ underlies an object of ind-$\MHS(X^2 - \Delta_X)$ and the dual of the KK-action
$$
\kappa^\ast: \Hdual^0(P_{\vv_0,\vv_1} X) \To \Hdual^0(\Lambda X'_{\vv_0,\vv_1}) \otimes \Hdual^0(P_{\vv_0,\vv_1} X)
$$
is a morphism in ind-$\MHS(X^2-\Delta_X)$.
\end{proposition}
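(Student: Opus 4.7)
The plan is to derive this proposition by combining Theorem~\ref{thm:naive} with the admissibility results of Hain--Zucker (Theorem~\ref{thm:HZ}) and the machinery of limit mixed Hodge structures recalled in Section~\ref{sec:limitMHS}, in three steps.

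First I would record that $\bLambda_{X\times X}$ is already an object of ind-$\MHS(X^2-\Delta_X)$. This is essentially the content of the previous proposition about $\bLambda'_{X\times X}$: the two local systems have the same fiber $\Hdual^0(\Lambda(X-\{x_0,x_1\}))$ over $(x_0,x_1)$. The argument uses that the local system with fiber $\Q\pi_1(X-\{x_0,x_1\},\xbar)^\wedge$ is a pro-VMHS on the complement $U$ of the diagonal together with the two axes through $\xbar$ (by \cite{hain:dht2,hain-zucker}), that the target is a quotient of this via a fiberwise morphism of ind-MHS, and that the extension to the larger base $X^2-\Delta_X$ is admissible because the monodromy around the axes $\{\xbar\}\times X$ and $X\times\{\xbar\}$ is trivial.

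Next I would promote Theorem~\ref{thm:naive} from a fiberwise statement to a statement about morphisms of variations. For each fixed $(x_0,x_1) \in X^2-\Delta_X$, Theorem~\ref{thm:naive} (together with Remark~\ref{rem:restn}, specialized to the Zariski open set $U=X-\{x_0,x_1\}$) says that the fiber over $(x_0,x_1)$ of the coaction
$$
j^\ast\Pdual_{X\times X} \To \bLambda_{X\times X} \otimes j^\ast\Pdual_{X\times X}
$$
is a morphism of ind-MHS. The map is a morphism of local systems by naturality of the KK-construction. Since the source and target are both admissible ind-VMHS over $X^2-\Delta_X$, and since preservation of the weight sub-local-systems and of the Hodge subbundles can be checked pointwise, the fiberwise statement promotes to an assertion that the coaction is a morphism in ind-$\MHS(X^2-\Delta_X)$.

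Finally I would take the limit at the tangential basepoint $([\vv_0],[\vv_1])$. The local systems $\bLambda_{X\times X}$ and $\Pdual_{X\times X}$ extend across the relevant divisors in the real oriented blow-up of $\Xbar^2$ along $(D\times\Xbar)\cup(\Xbar\times D)$, and Section~\ref{sec:limitMHS} produces limit MHS on each tangential fiber. A morphism of admissible VMHS with unipotent monodromy induces a morphism on limit MHS at tangential basepoints, yielding the displayed morphism
$$
\kappa^\ast : \Hdual^0(P_{\vv_0,\vv_1}X) \To \Hdual^0(\Lambda X'_{\vv_0,\vv_1}) \otimes \Hdual^0(P_{\vv_0,\vv_1}X).
$$

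The hardest part is the second step: one must verify that the coaction is an admissible morphism, not merely fiberwise a morphism of MHS. Because every monodromy that arises is unipotent, the relative weight filtrations at each boundary stratum coincide with the global weight filtration (cf.\ \cite[p.~85]{hain-zucker}), so admissibility reduces to preservation of Hodge and weight filtrations, which is indeed a pointwise condition and follows from Theorem~\ref{thm:naive}. The remaining conceptual subtlety, namely that the target of $\kappa^\ast$ here is the larger object $\Hdual^0(\Lambda X'_{\vv_0,\vv_1})$ rather than $\Hdual^0(\Lambda X)$, is precisely what forces the splitting argument for the surjection $\Hdual^0(\Lambda X'_{\vv_0,\vv_1})\twoheadleftarrow\Hdual^0(\Lambda X)^{\vee}$ that will be carried out in the remainder of Section~\ref{sec:proof_technical} to deduce the full Theorem~\ref{thm:action}.
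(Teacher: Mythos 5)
Your proposal follows the same route as the paper's proof: first establish that $\bLambda_{X\times X}$ is an admissible ind-VMHS over $X^2-\Delta_X$ (you cite the preceding proposition about $\bLambda_{X\times X}'$, which has the same fiber; the paper re-derives this via the family $Z\to X^2-\Delta_X$ and the splitting of the unipotent fundamental group exact sequence, but the underlying content is the same and your shortcut through the earlier proposition is legitimate), then observe that the KK-coaction is simultaneously a morphism of local systems and, by Theorem~\ref{thm:naive}, a fiberwise morphism of ind-MHS, hence a morphism in ind-$\MHS(X^2-\Delta_X)$, and finally pass to the limit MHS at the tangential basepoint. Your added remarks that the promotion from fiberwise to variation-level is pointwise because all monodromy is unipotent, and your flag that the target at this stage is $\Hdual^0(\Lambda X'_{\vv_0,\vv_1})$ rather than $\Hdual^0(\Lambda X)$ so that Theorem~\ref{thm:splitting} is still needed, are both accurate and consistent with the paper's organization.
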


\begin{proof}
Denote the coordinates in $X^3$ by $(x_0,x_1,x_2)$. Denote the diagonal $x_j=x_k$ by $\Delta_{j,k}$. Set
$$
Z = X\times X \times X - (\Delta_{0,1} \cup \Delta_{0,2} \cup \Delta_{1,2}).
$$
The projection $\pi : Z \to X^2-\Delta_X$ along the 3rd factor $\pi : (x_0,x_1,x_2) \mapsto (x_0,x_1)$ is a topologically locally trivial family of affine curves. Fix a point $p\in X$. Then one has the prounipotent local system over $Y := (X-\{p\})^2-\Delta$ whose fiber over $(x_0,x_1)$ is
$$
\Q\pi_1(X-\{x_0,x_1\},p)^\wedge.
$$
It underlies a pro-object of $\MHS(Y)$. This is an immediate consequence of \cite[Thm.~1.5.1]{hain:dht2}. Alternatively, it follows from the main theorem of \cite{hain-zucker} as follows: Let $Z_Y$ be the inverse image of $Y$. Choose a point $q\in X^2$ such that $(q,p)\in Z_Y$. Then one has the sequence
\begin{equation}
\label{eqn:gp_extn}
\pi_1^\un(Z_{Y,p},(q,p)) \to \pi_1^\un(Z_Y,(q,p)) \to \pi_1^\un(Y,p) \to 1
\end{equation}
of unipotent completions. It is well-know and easy to see that it is also exact on the left. This is equivalent to the fact that
$$
\Q\pi_1(Z_{Y,p},(q,p))^\wedge \to \Q\pi_1(Z_Y,(q,p))^\wedge
$$
is injective. The section $p$ induces a splitting of (\ref{eqn:gp_extn}). It is a morphism of pro-MHS. It follows that the monodromy action
$$
\Q \pi_1^\un(Y,p)^\wedge \to \End \Q\pi_1(Z_{Y,p},(q,p))^\wedge
$$
is a morphism of pro-MHS, which implies that the local system is a pro-object of $\MHS(Y)$.

Since $\bLambda_{X^2}$ is a quotient of this local system, and since the quotient map is a morphism of MHS on each fiber, it follows that $\bLambda_{X^2}$ is in ind-$\MHS(Y)$. But since it extends as a local system to $X^2-\Delta_X$, it is also in ind-$\MHS(X^2-\Delta_X)$.

The second assertion follows as the fiberwise dual KK-action is a morphism
$$
\Pdual_{X\times X} \to \bLambda_{X^2}\otimes \Pdual_{X\times X}
$$
of local systems over $X^2-\Delta_X$ and, on each fiber, a morphism of MHS by Theorem~\ref{thm:naive}. It is therefore a morphism in ind-$\MHS(X^2-\Delta_X)$ and, consequently, induces a morphism of limit MHS on the fiber over $(\vv_0,\vv_1)$.
\end{proof}

\begin{corollary}
\label{cor:inclusion}
There is a canonical MHS on $\Hdual^0(\Lambda X'_{\vv_0,\vv_1})$ and the canonical inclusion (\ref{eqn:canon_inclusion}) $\Hdual^0(\Lambda X) \hookrightarrow \Hdual^0(\Lambda X'_{\vv_0,\vv_1})$ is a morphism of ind-MHS.
\end{corollary}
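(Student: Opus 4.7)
The plan is to obtain both assertions by passing to the limit MHS at the tangential base point $(\vv_0,\vv_1)$ of the morphism of admissible ind-variations of MHS supplied by Proposition~\ref{prop:weak_result}. First, since $\bLambda_{X\times X}$ is an object of ind-$\MHS(X^2-\Delta_X)$ whose local monodromy about the divisors $\{p_j\}\times X$ and $X\times\{p_j\}$ is unipotent (it is trivial, because the local system extends across these strata), the limit MHS machinery recalled at the end of Section~\ref{sec:Hodge} produces a canonical limit MHS on the fiber of the extended local system at the point $([\vv_0],[\vv_1])$ of $\Shat^2-\Delta_\Shat$. By definition, that fiber is $\Hdual^0(\Lambda X'_{\vv_0,\vv_1})$. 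This gives the canonical ind-MHS asserted in the first half of the corollary.

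For the second assertion, I would realize the inclusion (\ref{eqn:canon_inclusion}) as the limit of a morphism of admissible ind-variations of MHS over $X^2-\Delta_X$. Specifically, I would verify that the fiberwise inclusion
\[
\Hdual^0(\Lambda X) \hookrightarrow \Hdual^0\bigl(\Lambda(X-\{x_0,x_1\})\bigr)
\]
coming from the surjection $\Lambda(X-\{x_0,x_1\}) \to \Lambda X$ assembles into an inclusion of the constant ind-VMHS with fiber $\Hdual^0(\Lambda X)$ (which is a MHS by Corollary~\ref{cor:lambda}) into the ind-VMHS $\bLambda_{X^2}$. That this is a morphism of variations of MHS follows from the naturality of the construction: the inclusion is induced by the open immersion $X - \{x_0,x_1\} \hookrightarrow X$, and by Remark~\ref{rem:restn} (together with the argument of the second half of the proof of Proposition~\ref{prop:weak_result}) the map is a morphism of MHS on each fiber. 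Once the inclusion is shown to be a morphism in ind-$\MHS(X^2-\Delta_X)$, functoriality of the limit MHS construction with respect to morphisms of admissible variations yields that the induced map on limits, which is precisely (\ref{eqn:canon_inclusion}), is a morphism of ind-MHS.

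The one technical point that will require care is the verification that the inclusion of the constant ind-VMHS $\Hdual^0(\Lambda X)_{X^2-\Delta_X}$ into $\bLambda_{X^2}$ is genuinely a morphism of admissible variations, and not merely a fiberwise morphism of MHS. This follows from the theorem of the fixed part together with the fact that the constant variation $\Hdual^0(\Lambda X)$ is the largest sub-variation of $\bLambda_{X^2}$ on which the monodromy is trivial; alternatively, one can argue directly using the description of $\bLambda_{X^2}$ as a quotient of the prounipotent variation with fiber $\Q\pi_1(X-\{x_0,x_1\},p)^\wedge$ appearing in the proof of Proposition~\ref{prop:weak_result}, noting that the corresponding subobject there is the image of $\Q\pi_1(X,p)^\wedge$, which is a constant sub-variation of MHS. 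I expect this compatibility with the Hodge and weight filtrations of the canonical extension to be the main technical step, after which the conclusion is immediate from functoriality of limit MHS.
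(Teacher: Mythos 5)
Your proposal is correct and takes essentially the same route as the paper: both identify the limit MHS on $\Hdual^0(\Lambda X'_{\vv_0,\vv_1})$ as the one furnished by Proposition~\ref{prop:weak_result}, and both obtain the inclusion (\ref{eqn:canon_inclusion}) as the limit at $([\vv_0],[\vv_1])$ of the morphism of ind-variations over $X^2-\Delta_X$ given by the constant variation $\Hdual^0(\Lambda X)$ mapping into $\bLambda_{X\times X}$, with the fiberwise maps coming from the open immersion $X-\{x_0,x_1\}\hookrightarrow X$. The ``technical point'' you flag is handled more briskly in the paper and is not really an obstacle: a local-system map between admissible (ind-)variations that is a morphism of MHS on a single fiber is automatically a morphism of admissible variations, so once functoriality gives the fiberwise statement, the variation-level statement and hence the limit follow immediately.
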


\begin{proof}
The MHS on $\Hdual^0(\Lambda X'_{\vv_0,\vv_1})$ is the limit MHS constructed in the proof above. By the functoriality of the MHS on the continuous cohomology of free loop spaces, for each $(x_0,x_1) \in X^2-\Delta_X$, the inclusion $X-\{x_0,x_1\} \to X$ of varieties induces a morphism of MHS
$$
\Hdual^0(\Lambda X) \to \Hdual^0(\Lambda(X-\{x_0,x_1\})).
$$
This induces a morphism from the constant variation over $X^2-\Delta_X$ to the variation whose fiber is $\Hdual^0(\Lambda(X-\{x_0,x_1\}))$. It therefore induces a morphism of MHS from $\Hdual^0(\Lambda X)$ into the limit MHS $\Hdual^0(\Lambda X'_{\vv_0,\vv_1})$.
\end{proof}

\subsection{The splitting theorem}
\label{sec:splitting}

We continue using the notation of the preceding sections. Choose disjoint imbedded closed disks $B_0$ and $B_1$ in $\Xbar$ such that $B_j\cap D = \{p_j\}$. Set $U = X\setminus (B_0\cup B_1)$. When $p_0\in B_0$ and $p_1 \in B_1$, the inclusion $U\hookrightarrow X$ is a homotopy equivalence and thus induces a homotopy equivalence $\Lambda U \to \Lambda X$ which factors through $\Lambda(X-\{p_0,p_1\})$. Consequently, $\Lambda X$ is a retract of $\Lambda(X-\{p_0,p_1\})$ and the composite
$$
\Hdual^0(\Lambda X) \to \Hdual^0(\Lambda (X-\{p_0,p_1\})) \to \Hdual^0(\Lambda U) \cong \Hdual^0(\Lambda X)
$$
is the identity. Taking the limit as $p_j \to [\vv_j]$, one obtains the sequence
$$
\Hdual^0(\Lambda X) \to \Hdual^0(\Lambda X'_{\vv_0,\vv_1}) \to \Hdual^0(\Lambda X).
$$
whose composite is the identity. The left-hand map is a morphism of MHS by Corollary~\ref{cor:inclusion}. It is not clear that the right-hand map is a morphism of MHS as it is not in the fiber over a general point $(x_0,x_1) \in X^2-\Delta_X$.\footnote{This follows from the fact that the inclusion in Corollary~\ref{cor:inclusion} is not split in the category of local systems, and therefore cannot be split in ind-$\MHS(X^2-\Delta_X)$ by the Theorem of the Fixed Part.}

The following ``splitting theorem'' is proved in Section~\ref{sec:proof}.

\begin{theorem}
\label{thm:splitting}
The projection
\begin{equation}
\label{eqn:projn}
\Hdual^0(\Lambda X'_{\vv_0,\vv_1}) \to \Hdual^0(\Lambda X)
\end{equation}
is a morphism of MHS.
\end{theorem}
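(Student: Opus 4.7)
The plan is to obtain the splitting by taking the limit of a morphism of admissible variations of MHS defined on an analytic neighborhood of the tangential base point $(\vv_0,\vv_1)$ in the real oriented blow-up of $\Xbar^2$ at $(p_0,p_1)$.

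First I would set up the geometric splitting. Fix disjoint closed analytic disks $B_j\subset \Xbar$ with $B_j\cap D=\{p_j\}$, set $U=X\setminus(B_0\cup B_1)$, and let $V=(B_0^\circ\setminus\{p_0\})\times(B_1^\circ\setminus\{p_1\})$, viewed as an open subset of $X^2-\Delta_X$ whose closure in the real oriented blow-up at $(p_0,p_1)$ contains $(\vv_0,\vv_1)$. For each $(x_0,x_1)\in V$, let $i_1:X-\{x_0,x_1\}\hookrightarrow X$, $i_2:U\hookrightarrow X-\{x_0,x_1\}$ and $i_3:U\hookrightarrow X$ be the topological inclusions, with $i_3$ a homotopy equivalence. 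The composite
$$
r_{(x_0,x_1)} := (i_3^\ast)^{-1}\circ i_2^\ast : \Hdual^0(\Lambda(X-\{x_0,x_1\})) \To \Hdual^0(\Lambda X)
$$
is a set-theoretic section of the canonical inclusion of Corollary~\ref{cor:inclusion}, and as $(x_0,x_1)$ varies these assemble into a morphism $r$ of the underlying local systems over $V$ that splits the constant sub-local-system $(\Hdual^0(\Lambda X))_V\hookrightarrow \bLambda_{X^2}|_V$.

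Second, I would upgrade $r$ to a morphism of admissible variations of MHS on $V$. Using Theorem~\ref{thm:pullback_DR} and the spectral sequence of Proposition~\ref{prop:ss}, $\Hdual^0(\Lambda X)$ is computed by the bar construction on a mixed-Hodge DGA of log forms on $\Xbar$ with poles along $D$, while $\Hdual^0(\Lambda(X-\{x_0,x_1\}))$ is computed by the bar construction on log forms with additional poles at $\{x_0,x_1\}$. The morphism $r$ is realized at the de~Rham level by restricting iterated integrals to loops contained in $U$ and transferring back via $i_3$. The key analytic input is that restriction to $U$-loops annihilates the polar contributions at $x_0,x_1$, so the image of $r$ lies in the sub-bar-complex modelling $\Hdual^0(\Lambda X)$; one then checks that this restriction-and-transfer respects the Hodge and weight filtrations on the bar constructions.

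With $r$ established as a morphism of admissible VMHS on $V$ and the monodromy of $\bLambda_{X^2}$ unipotent around the boundary divisors (Sections~\ref{sec:vmhs}--\ref{sec:limitMHS}), the limit MHS functor at $(\vv_0,\vv_1)$ sends $r$ to a morphism of MHS between the limits. Since these limits are by definition $\Hdual^0(\Lambda X'_{\vv_0,\vv_1})$ and $\Hdual^0(\Lambda X)$ respectively, this yields (\ref{eqn:projn}) as a morphism of MHS. The main obstacle is the middle step: since $U$ is only analytically open in $X$, neither $i_2^\ast$ nor $(i_3^\ast)^{-1}$ is induced by a morphism of algebraic varieties, so Hodge-filtration preservation is not formal and must be verified through the explicit comparison of bar-DGA models sketched above. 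A possible alternative, avoiding the non-algebraic auxiliary $U$ entirely, is to exploit the tangential-base-point structure directly: the limit MHS admits distinguished splittings associated with lifts of the central cocharacter of Proposition~\ref{prop:splittings} (compare Theorem~\ref{thm:splittings}), and one could attempt to identify the canonical topological section with the Hodge-theoretic one in the limit, reducing the problem to a monodromy computation at $(\vv_0,\vv_1)$.
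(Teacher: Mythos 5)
Your geometric picture matches the paper's: restrict to loops in a subsurface $U \cong X$ disjoint from the disks $B_j$, and transfer back via the homotopy equivalence $U \hookrightarrow X$. You also correctly identify the hard point — that this topological retraction needs to be shown to respect the Hodge and weight filtrations, which is not formal since $U$ is only analytically open. However, the central step of your argument — "upgrade $r$ to a morphism of admissible variations of MHS on $V$" by claiming that "restriction to $U$-loops annihilates the polar contributions at $x_0,x_1$" — is where the proof has a genuine gap. That heuristic is not a proof: an iterated integral of log forms with poles at $\{x_0,x_1\}$, restricted to $U$-loops, gives a well-defined function, but you have not exhibited it as an iterated integral of forms with poles only along $D$, nor shown that this operation lands in the correct Hodge-filtration step. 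This is exactly the technical heart of the theorem and your proposal leaves it unresolved. Moreover, your strategy requires the stronger fiberwise claim that $r_{(x_0,x_1)}$ is a morphism of ind-MHS for \emph{every} $(x_0,x_1)$ in a punctured bidisc $V$, a claim which the paper neither makes nor uses, and which is at best non-obvious.

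The paper's actual argument sidesteps the fiberwise claim entirely. Rather than working over the analytic open set $V$, it passes to a one-parameter degeneration over the disk: it chooses holomorphic coordinates $t_j$ at $p_j$ with $\vv_j = \partial/\partial t_j$, forms the family over $\D$ with $x_j(t) = t\cdot\vv_j$, and blows up $\Xbar\times\D$ at $(p_0,0)$, $(p_1,0)$ so that the total space $Z\to\D$ is log smooth with special fiber $\Xbar\cup E_0\cup E_1$. Choosing simplicial neighbourhoods $N_\Xbar$ of the strict transform and $N_j$ of the exceptional divisors, it produces explicit de~Rham mixed Hodge complexes (following \cite{hain:dht2}) and realizes the projection (\ref{eqn:projn}) as the map on circular bar constructions induced by the restriction $\sD^\bdot(N')\to\sD^\bdot(N_\Xbar')$, which is manifestly a morphism of MHC. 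This gives the MHS-compatibility of (\ref{eqn:projn}) directly at the limit, without needing (and without asserting) that the retraction is an MHS morphism at nearby points $(x_0,x_1)$. The nilpotent-orbit reduction you invoke implicitly (shrinking $V$) appears in the paper too, but there it is used to justify choosing the local coordinates $t_j$, not to reduce to a fiberwise statement. Your alternative suggestion via the Tannakian splittings of Proposition~\ref{prop:splittings} does not work either: those splittings of the weight filtration are not morphisms of MHS (that would make them trivial), whereas the theorem asserts precisely that (\ref{eqn:projn}) \emph{is} a morphism of MHS.
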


\subsubsection{Theorem~\ref{thm:splitting} implies Theorem~\ref{thm:action}}

The KK-action
$$
\kappa : H_0(\Lambda X)^\wedge \otimes H_0(P_{\vv_0,\vv_1} X) \to H_0(P_{\vv_0,\vv_1} X)
$$
is the restriction of the action
$$
H_0(\Lambda X'_{\vv_0,\vv_1})^\wedge \otimes H_0(P_{\vv_0,\vv_1} X)^\wedge \to H_0(P_{\vv_0,\vv_1} X)^\wedge
$$
to $H_0(\Lambda U)^\wedge \cong H_0(\Lambda X)^\wedge$. The dual action is thus obtained from the coaction (\ref{eqn:coaction}) by pushing out along the projection (\ref{eqn:projn}). The result follows as both are morphisms of MHS by Theorems~\ref{thm:naive} and \ref{thm:splitting}.

\subsection{Proof of Theorem~\ref{thm:splitting}}
\label{sec:proof}

In this section, we assume that $p_0$ and $p_1$ are distinct. We use the notation and setup of Sections~\ref{sec:limitMHS} and \ref{sec:splitting}.  To prove the theorem, we need to prove that when we give $H_0(\Lambda X')^\wedge$ the limit MHS $H_0(\Lambda X'_{\vv_0,\vv_1})^\wedge$, the splitting
\begin{equation}
\label{eqn:splitting}
\Hdual^0(\Lambda X'_{\vv_0,\vv_1}) \to \Hdual^0(\Lambda X)
\end{equation}
induced by the inclusion $U \hookrightarrow X-\{x_0,x_1\}$ (when $x_j\in B_j-\{p_j\}$) is a morphism of ind-MHS. This will imply that $H_0(\Lambda X)^\wedge$ is a direct summand of $H_0(\Lambda X'_{\vv_0,\vv_1})^\wedge$ in ind-$\MHS$. We do this using the construction of the limit MHS in \cite{hain:dht2}. This requires familiarity with the constructions in \cite{hain:dht,hain:dht2}.

The first observation is that, since the variation over
$$
(T_{p_0}\Xbar-\{0\}) \times (T_{p_1}\Xbar-\{0\})
$$
whose fiber over $(\vv_0,\vv_1)$ is $\Hdual^0(\Lambda X'_{\vv_0,\vv_1})^\wedge$ is a nilpotent orbit of ind-variations of MHS, it suffices to prove the result for $\vv_j$ in a punctured neighbourhood of $0 \in T_{p_j}\Xbar$.

To compute the limit MHS on invariants of $X-\{x_0,x_1\}$ as $x_j \to p_j$ along $\vv_j$ for $j=0,1$, choose local holomorphic parameters $t_j:U_j \overset{\simeq}{\To} \D$ on $\Xbar$ centered at $p_j$ such that $\vv_j = \partial/\partial t_j$. This is possible because we can, as remarked above, take $\vv_0$ and $\vv_1$ to be as small as we like.

Denote $t_j^{-1} : U_j \to \D$ by $x_j$. Consider the pullback
$$
\xymatrix{
\Xbar \times \D \ar[r]\ar[d] & \Xbar\times U_0\times U_1 \ar[r]\ar[d] & \Xbar \times \Xbar\times \Xbar \ar[d] \cr
\D \ar[r]^(.45){(x_0,x_1)} & U_0\times U_1 \ar[r] & \Xbar\times \Xbar
}
$$
of the trivial family
$$
\Xbar \times \Xbar \times \Xbar \to \Xbar \times \Xbar,\qquad (x,x_0,x_1)\mapsto (x_0,x_1)
$$
to $\D$. For $S\subset \Xbar$, set $D_S = D\times S$. Set
$$
Y = \Xbar\times \Xbar - (D_\Xbar \cup \Delta_\Xbar).
$$
The restriction $Y'$ of the projection $Y \to \Xbar\times\Xbar$ to the punctured disk $\D'$ is a topologically locally trivial fiber bundle whose fiber over $t$ is $X-\{x_0(t),x_1(t)\}$.

The next step is to write the restriction $Y_\D$ to $\D$ as the complement of a normal crossing divisor. We do this by blowing up the points $(p_0,0)$ and $(p_1,0)$ of $\Xbar\times \D$. Denote the blowup by $\phi : Z \to \Xbar\times\D$ and the projection of $Z$ to the disk by $\pi : Z \to \D$. The fiber over $t\in \D$ will be denoted by $Z_t$. Note that $Z_t=\Xbar$ when $t\neq 0$. The special fiber $Z_0$ has 3 components: the strict transform of $\Xbar\times \{0\}$, which, by abuse of notation, we will denote by $\Xbar$, and the two exceptional divisors $E_0$, $E_1$ that lie over $p_0$, $p_1$, respectively.
\begin{center}
\begin{figure}[!ht]
\epsfig{file=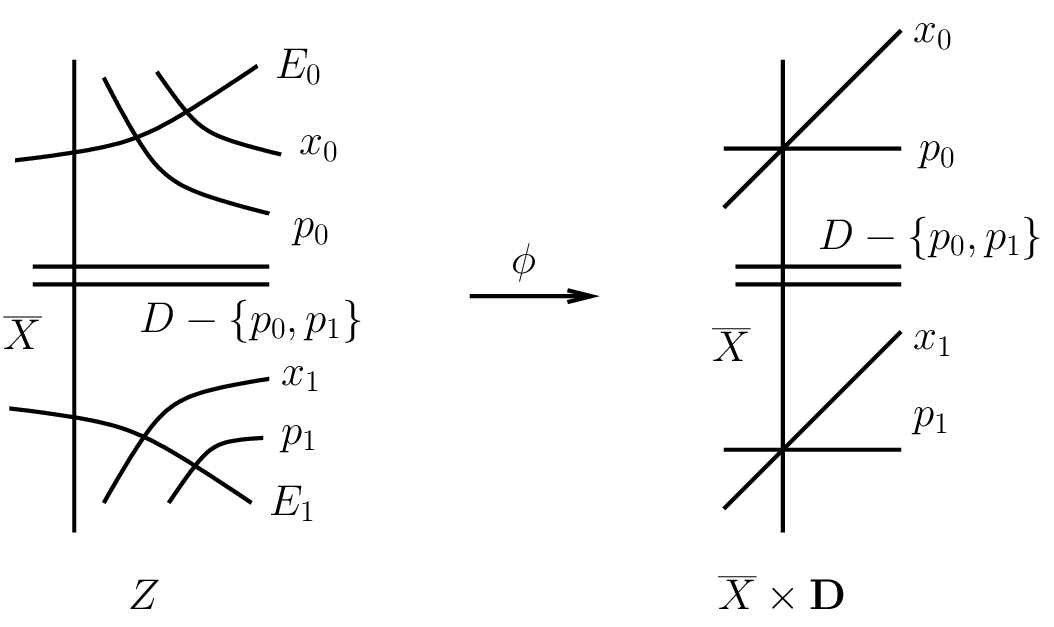, width=2.75in}
\label{fig:blowup}
\caption{The map $\phi : Z \to \Xbar\times\D$}
\end{figure}
\end{center}

\begin{remark}
One can think of $X_{\vv_0,\vv_1}'$ as the union of $\BlR{p_0,p_1}\Xbar - (D-\{p_0,p_1\})$ with the real oriented blow ups $\BlR{E_j\cap \Xbar}E_j -\{p_0,x_j\}$ of the exceptional divisors at the double points. The identification of the boundary circle of $\BlR{E_j\cap \Xbar} (E_j -\{p_0,x_j\})$ with the boundary circle of $\BlR{p_0,p_1}\Xbar$ above $p_j\in \Xbar$ is determined by $\vv_j$ and the Hessian of $\pi : Z \to \D$ at $p_j\in Z_0$.
\end{remark}

As in \cite{hain:dht2}, we choose neighbourhoods $N_\Xbar$ of $\Xbar$, $N_j$ of $E_j$ such that the inclusions
$$
\Xbar \hookrightarrow N_\Xbar,\quad E_j \hookrightarrow N_j,\quad \{p_j\} \hookrightarrow N_j\cap N_\Xbar,\quad j=0,1
$$
are homotopy equivalences. Then $N' \hookrightarrow Y'$ is a homotopy equivalence. We can further assume that there is a $\delta> 0$ such that when $0<|t|<\delta$, the intersection $Z_t\cap N_\Xbar$ is $\Xbar$ with two small disks removed, one about $p_0$ and the other about $p_1$. This implies that the inclusion $Z_t\cap N_\Xbar \hookrightarrow \Xbar-\{p_0,p_1\}$ is a homotopy equivalence. By rescaling $t$ (and thus $\vv_0$ and $\vv_1$ as well), we may assume that $\delta > 1$, so that $Z_1$ is contained in $N_\Xbar$ and the inclusion $(Z_1\cap N_\Xbar)-D\hookrightarrow X$ is a homotopy equivalence.
\begin{center}
\begin{figure}[!ht]
\epsfig{file=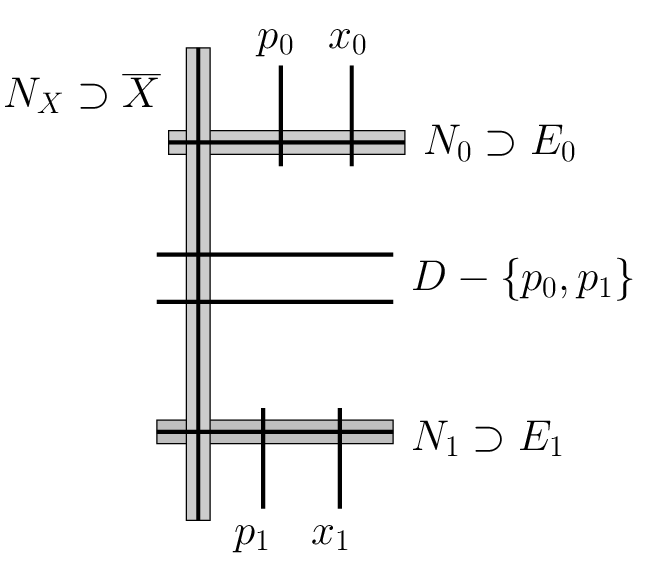, width=2in}
\label{fig:covering}
\caption{The simplicial neighbourhood $N_\bdot$ of $Z_0$}
\end{figure}
\end{center}

We will use the generic notation $\sD^\bdot(T)$ to denote a de~Rham mixed Hodge complex (see \cite[\S4.1]{hain:dht} for the definition) that computes the cohomology of a topological space $T$. For example, the standard de~Rham mixed Hodge complex$\sD^\bdot(X)$ that computes the cohomology of $X = \Xbar -D$ has complex part the $C^\infty$ log complex $E^\bdot(\Xbar\log D)$. (Cf.\ \cite[\S5.6]{hain:dht} and \cite{navarro}.)

Natural de~Rham mixed Hodge complexes (MHC) for $N'$, $N_\Xbar'$, $\D'$ are constructed in \cite[\S1.2]{hain:dht2}. The projection $N_\Xbar \to X$ and the maps $N_\Xbar' \to N'\to \D'$ induced by $\pi$ induce maps of de~Rham MHC, as does the inclusion $\{1\}\hookrightarrow \D'$. These morphisms fit into the commutative diagram
$$
\xymatrix@R=4pt{
\sD^\bdot(X) \ar[r] &\sD^\bdot(N_\Xbar') \ar[dd] \cr
&& \sD^\bdot(\D') \ar[ul]\ar[dl] \ar[r]^\e & \Q \cr
  & \sD^\bdot(N')
}
$$
of de~Rham MHC. One can therefore form the 2-sided bar constructions
$$
\sD^\bdot(X_{\vv_0,\vv_1}') := B(\sD^\bdot(N'),\sD^\bdot(\D'),\Q) \text{ and } B(\sD^\bdot(N_\Xbar'),\sD^\bdot(\D'),\Q).
$$
These are MHC (by \cite[\S3.2]{hain:dht}) which compute the cohomology of the homotopy fibers of $N_\Xbar' \to \D'$ and $N_\Xbar' \to \D'$, respectively. That is, they compute
$$
H^\bdot(X-\{x_0(1),x_1(1)\}) \text{ and } H^\bdot(X),
$$
respectively. As shown in \cite{hain:dht2}, the first complex computes the limit MHS $H^\bdot(X_{\vv_0,\vv_1}')$. The morphism $\sD^\bdot(X) \to \sD^\bdot(N_\Xbar')$ induces a quasi-isomorphism
\begin{equation}
\label{eqn:qism}
\sD^\bdot(X) \to B(\sD^\bdot(N_\Xbar'),\sD^\bdot(\D'),\Q)
\end{equation}
of MHC. So the second computes the MHS on $H^\bdot(X)$.

The limit MHS on $\Hdual^0(\Lambda X'_{\vv_0,\vv_1})$ is computed by the circular bar construction
$$
B(\sD^\bdot(X_{\vv_0,\vv_1}');\sD^\bdot(X_{\vv_0,\vv_1}')),
$$
which computes the correct cohomology by Theorem~\ref{thm:pullback_DR}. The quasi-isomorphism (\ref{eqn:qism}) induces a quasi-isomorphism on circular bar constructions which is a morphism of MHC. It follows that
$$
B\big(B(\sD^\bdot(N_\Xbar'),\sD^\bdot(\D'),\Q);B(\sD^\bdot(N_\Xbar'),\sD^\bdot(\D'),\Q)\big)
$$
computes the MHS on $\Hdual^0(\Lambda X)$. The morphism $\sD^\bdot(N') \to \sD^\bdot(N_\Xbar)$ induces a morphism
$$
B(\sD^\bdot(X_{\vv_0,\vv_1}');\sD^\bdot(X_{\vv_0,\vv_1}')) \to
B\big(B(\sD^\bdot(N_\Xbar'),\sD^\bdot(\D'),\Q);B(\sD^\bdot(N_\Xbar'),\sD^\bdot(\D'),\Q)\big)
$$
which induces the splitting (\ref{eqn:splitting}). This implies that the splitting is, in fact, a morphism of MHS.

\section{Mapping class group actions}
\label{sec:mcg}

In this section we prove Theorem~\ref{thm:mcg} and, as we finally have all the pieces assembled, Theorem~\ref{thm:splittings}.

\subsection{Mapping class groups and their relative completions}

For a compact oriented surface $S$ and a collection $V=\{\vv_p \in T_p S : p\in A\}$ of non-zero tangent vectors indexed by a finite subset $A$ of $S$, we have the mapping class group
$$
\Gamma_{S,V} := \pi_0 \Diff^+(S,V)
$$
of isotopy classes of diffeomorphisms of $S$ that fix $V$ pointwise. This depends only on the genus $g$ of $S$ and the number $n$ of tangent vectors. When the particular surface is not important, we will denote it by $\G_{g,\vec{n}}$.\footnote{We also have the mapping class group $\Gamma_{S,\partial S} := \pi_0 \Diff^+ (S,\partial S)$ of isotopy classes of orientation preserving diffeomorphisms of $S$ that fix the boundary. There is a canonical isomorphism $\Gamma_{S,V} \cong \Gamma_{\Shat,\partial \Shat}$, where $\Shat := \BlR{A} S$ denotes the real oriented blow up of $S$ at $A$. Because of our interest in applications of Hodge theory to algebraic curves, we will use the tangent vector formulation rather than the more traditional boundary component formulation.}

For a commutative ring $R$, set $H_R = H_1(S;R)$. Let $\Sp(H)$ be the $\Q$-affine algebraic group whose group of $F$ rational points ($F$ a $\Q$-algebra) is the subgroup of $\Aut H_F$ that fixes the intersection form. The action of the mapping class group on $S$ induces a homomorphism
$$
\rho : \G_{S,V} \to \Sp(H_\Q).
$$
Denote the completion of $\G_{S,V}$ with respect to $\rho$ by $\cG_{S,V}$. (See \cite{hain:torelli} for the definition and basic properties.) This is an affine $\Q$ group that is an extension
$$
1 \to \U_{S,V} \to \cG_{S,V} \to \Sp(H) \to 1
$$
of $\Sp(H)$ by a prounipotent group $\U_{S,V}$. There is a canonical, Zariski dense homomorphism
$$
\rhotilde : \Gamma_{S,V} \to \cG_{S,V}(\Q).
$$
Denote the Lie algebras of $\cG_{S,V}$, $\U_{S,V}$ and $\Sp(H)$ by $\g_{S,V}$, $\u_{S,V}$ and $\sp(H)$, respectively.

Suppose that $X$ is a compact Riemann surface of genus $g$ and $V$ is a set of $n$ non-zero tangent vectors on $X$ anchored at $n$ distinct points. By the main result of \cite{hain:torelli}, $\cG_{X,V}$ and $\U_{X,V}$ have natural mixed Hodge structures. More precisely, their coordinate rings $\cO(\cG_{X,V})$ and $\cO(\U_{X,V})$ are Hopf algebras in ind-$\MHS$. This implies that their Lie algebras are Lie algebras in pro-$\MHS$. Furthermore,
$$
\g_{X,V} = W_0 \g_{X,V},\ \u_{X,V} = W_{-1}\g_{X,V} \text{ and } \sp(H) = \Gr^W_0 \g_{X,V}.
$$

When $2g-2+n>0$ one has the moduli space $\M_{g,\vec{n}}$ of compact Riemann surfaces with $n$ non-zero tangent vectors. It will be regarded as a complex analytic orbifold. The local system over $\M_{g,\vec{n}}$ with fiber $\g_{X,V}$ over $(X,V)$ is a pro-object of $\MHS(\M_{g,\vec{n}})$.

\subsection{The Lie algebra homomorphism $\g_{S,V} \to H_0(\Lambda S')^\wedge$}
\label{sec:lie}

As above, we suppose that $S$ is a compact oriented surface, that $A$ is a finite subset of $S$. Set $S'=S-A$. Choose $p_o\in A$ and a non-zero tangent vector $\vv_o \in T_{p_o} S$. A continuous endomorphism $\delta$ of $\Q\pi_1(S',\vv_o)^\wedge$ is a derivation if
$$
\delta(\gamma\mu) = \delta(\gamma)\mu + \gamma\delta(\mu).
$$

Denote by $\sigma_o$ the element of $\pi_1(S',\vv_o)$ that rotates the base point vector once about $p$ in the positive direction. (In other words, this is the image of the positive generator of the fundamental group of the boundary component of $\BlR{p_o} S$.) Denote the set of {\em continuous} derivations of $\Q\pi_1(S',\vv_o)^\wedge$ that vanish on $\sigma_o$ by
$$
\Der^\theta \Q\pi_1(S',\vv_o)^\wedge.
$$
The KK-action induces induces the continuous Lie algebra homomorphism
$$
\kappahat_o : H_0(\Lambda S')^\wedge \To \Der^\theta \Q\pi_1(S',\vv_o)^\wedge
$$
defined by $\kappahat(\alpha) : \gamma \mapsto \kappa(\alpha \otimes \gamma)$. Kawazumi and Kuno have shown \cite[Thm.~4.13]{kk:johnson} that, when $A$ consists of a single point, it is surjective and has kernel spanned by the unit $1$. When $\#A>1$, they show \cite[Thm.~6.2.1]{kk:groupoid} that the direct sum
\begin{equation}
\label{eqn:A-action}
\kappahat_A : H_0(\Lambda S')^\wedge \to \bigoplus_{a\in A} \Der^\theta \Q\pi_1(S',\vv_a)^\wedge
\end{equation}
 of the $\kappahat_a$ has kernel spanned by the unit $1$, where each $\vv_a \in T_a S$ is non-zero.

\begin{remark}
Define a filtration $I^\bdot_\cD$ of $\Der^\theta \Q\pi_1(S',\vv_o)^\wedge$ by
$$
I^m_\cD \Der^\theta \Q\pi_1(S',\vv_o)^\wedge = \{\delta : \delta(I) \subset I^{m+1}\}
$$
where $I$ denotes the augmentation idea of $\Q\pi_1(S',\vv_o)^\wedge$. This satisfies
$$
\Der^\theta \Q\pi_1(S',\vv_o)^\wedge = I^{-1}_\cD \supseteq I^0_\cD \supseteq I^1_\cD \supseteq \cdots
$$
As we will see below, $I^{-1}_\cD/I^0_\cD$ is non-zero. 
\end{remark}

\begin{remark}
The Lie algebra $\p(S',\vv_o)$ of the unipotent (aka, Malcev) completion of $\pi_1(S',\vv_o)$ is the set of primitive elements of $\Q\pi_1(S',\vv_o)^\wedge$. The logarithm $\log \sigma_o$ of the boundary loop lies in $\p(S',\vv_o)$. The Lie algebra $\Der^\theta\p(S',\vv_o)$ of derivations that vanish on $\log\sigma_o$ is a Lie subalgebra of $\Der^\theta\Q\pi_1(S',\vv_o)^\wedge$.
\end{remark}

The action of $\G_{S,V}$ on $\pi_1(S',\vv_a)$ induces a homomorphism $\varphi_a : \cG_{S,V} \to \Aut \Q\pi(S\,\vv_a)^\wedge$ and a Lie algebra homomorphisms
\begin{equation}
\label{eqn:canon}
d\varphi_a : \g_{S,V} \to \Der^\theta\Q\pi_1(S',\vv_a)^\wedge.
\end{equation}

\begin{proposition}[Kawazumi--Kuno]
\label{prop:factorization}
There is a homomorphism $\phitilde : \g_{S,V} \to H_0(\Lambda S')^\wedge$ such that the diagram
$$
\xymatrix{
\g_{S,V} \ar[r]^(.45){d\varphi_a}\ar[d]_\phitilde & \Der^\theta \p(S',\vv_a)\ar[d]
\cr
H_0(\Lambda S')^\wedge \ar[r]^(.42){\kappahat_a} & \Der^\theta\Q\pi_1(S',\vv_a)^\wedge
}
$$
commutes for all $a\in A$.
\end{proposition}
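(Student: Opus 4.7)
The plan is to construct $\phitilde$ by inverting the direct-sum map $\kappahat_A$ on its image (which is determined modulo its central kernel $\Q \cdot 1$) and composing with $d\varphi := \bigoplus_{a\in A} d\varphi_a$. First, observe that $\Q \cdot 1 \subset H_0(\Lambda S')^\wedge$ is central for the Goldman bracket: the trivial loop can be represented by a constant path disjoint from any transverse representative of a given loop, so $\{1, \alpha\} = 0$ for all $\alpha$. The cited Kawazumi--Kuno result $\ker \kappahat_A = \Q \cdot 1$ then makes $\kappahat_A$ factor as an injection of topological Lie algebras
$$\bar\kappa_A : H_0(\Lambda S')^\wedge / \Q \cdot 1 \hookrightarrow \bigoplus_{a \in A} \Der^\theta \Q\pi_1(S', \vv_a)^\wedge.$$

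The crux is to show $\im(d\varphi) \subseteq \im(\kappahat_A)$. The input is the Kawazumi--Kuno formula for a Dehn twist: for a simple closed curve $C$ in $S'$, the automorphism of $\Q\pi_1(S', \vv_a)^\wedge$ induced by $T_C$ equals $\exp(\kappahat_a(L(C)))$ for an explicit element $L(C) \in H_0(\Lambda S')^\wedge$ (the ``half-square'' of $\log[C]$ in the completed group algebra, projected to conjugacy classes). Hence $\log T_C = \kappahat_A(L(C)) \in \im \kappahat_A$. Since $\G_{S,V}$ is generated by Dehn twists along simple closed curves disjoint from $V$, and $\im \kappahat_A$ is a Lie subalgebra, the Baker--Campbell--Hausdorff formula shows that the logarithm of every element of the image of $\G_{S,V} \to \prod_a \Aut^\theta \Q\pi_1(S', \vv_a)^\wedge$ lies in $\im \kappahat_A$. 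Passing to the relative completion using Zariski density of $\G_{S,V}$ in $\cG_{S,V}$, and the closedness of $\im \kappahat_A$ in the pro-nilpotent topology on the target, we obtain $\im(d\varphi) \subseteq \im \kappahat_A$.

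Given this, $\bar\kappa_A^{-1} \circ d\varphi : \g_{S,V} \to H_0(\Lambda S')^\wedge / \Q \cdot 1$ is a well-defined Lie algebra homomorphism. To lift it to the asserted $\phitilde$ into $H_0(\Lambda S')^\wedge$ itself, take the lift determined on generators by $\phitilde(\log T_C) := L(C)$; the Kawazumi--Kuno formula applied to compositions guarantees that the Lie relations among the $\log T_C$'s in $\g_{S,V}$ translate into the corresponding identities among the $L(C)$'s in the Goldman algebra, so that $\phitilde$ is a Lie algebra homomorphism (the potential scalar defect in $\Q \cdot 1$ is ruled out by matching $I$-adic filtration degrees of both sides). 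The diagram in the statement commutes by construction, since $\kappahat_a(L(C)) = \log T_C = d\varphi_a(\log T_C)$.

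The main obstacle is the containment step --- transferring the Kawazumi--Kuno Dehn twist formula, a statement about individual elements of the discrete group $\G_{S,V}$, into a statement about the full pro-algebraic Lie algebra $\g_{S,V}$. This requires verifying that $\im \kappahat_A$ is a closed Lie subalgebra in the appropriate pro-topology and that logarithms of Dehn twists topologically generate the prounipotent radical of $\g_{S,V}$. Both facts follow from the structure theory of relative completion \cite{hain:torelli} combined with the generation and kernel results of \cite{kk:groupoid}, but they constitute the substantive technical content of the argument.
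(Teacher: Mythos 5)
Your proof follows essentially the same strategy as the paper: invoke the Kawazumi--Kuno Dehn twist formula to show $d\varphi_A(\log t_C) \in \im\kappahat_A$, deduce $\im d\varphi_A \subseteq \im\kappahat_A$ from topological generation by $\{\log \rhotilde(t_C)\}$ and Zariski density, and invert $\kappahat_A$ to obtain $\phitilde$. The only difference is cosmetic: the paper restricts $\kappahat_A$ directly to the augmentation ideal $IH_0(\Lambda S')^\wedge$, where it is already injective by the kernel result, whereas you pass to the quotient by $\Q\cdot 1$ and then construct a lift, which is a more roundabout route to the same map.
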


\begin{proof}
The restriction
$$
\kappahat_A|_I : IH_0(\Lambda S')^\wedge \to
\bigoplus_{a\in A} I_\cD^1\Der^\theta\Q\pi_1(S',\vv_a)^\wedge
$$
of (\ref{eqn:A-action}) to $IH_0(\Lambda S')^\wedge$ is injective. To prove the result, it suffices to show that for each $a\in A$, the image of the inclusion
$$
d\varphi_a : \g_{S,V} \to I_\cD^1 \Der^\theta\Q\pi_1(S'\vv_a)^\wedge
$$
is contained in the image of $\kappahat_a|_I$. Once we have done this, we can define $\phitilde = (\kappahat_A|_I)^{-1}d\varphi_a$. It remains to prove prove that $\im d\varphi_a \subseteq \im \kappahat_a|_I$.

Denote the Dehn twist on the simple closed curve $C$ in $S'$ by $t_C$. Its image under the homomorphism $\rhotilde : \G_{S,V} \to \cG_{S,V}(\Q)$ lies in a prounipotent subgroup $U$, say. It therefore has a logarithm, which lies in the Lie algebra $\u$ of $U$. Since $\u\subseteq \g_{S,V}$, we have $\log t_C \in \g_{S,V}$. Since Dehn twist generate $\G_{S,V}$, and since the image of $\rhotilde : \G_{S,V} \to \cG_{S,V}(\Q)$ is Zariski dense, the Lie algebra $\g_{S,V}$ is generated as a topological Lie algebra by the set $\{\log\rhotilde(t_C) : C \text{ is a SCC in }S'\}$.

Kawazumi and Kuno \cite[Thm.~5.2.1]{kk:groupoid} have shown that for each $a\in A$,
$$
d\varphi_a(\log t_C) = \big(\log \kappahat_a(\alpha)\big)^2/2 :=
\frac{1}{2}\bigg(\sum_{n=1}^\infty \frac{1}{n}\,\kappahat_a\big(\psi_n(1-\alpha)\big)\bigg)^2,
$$
where $\alpha$ is a loop that represents $C$, which lies in the image of $\kappa_a$. This completes the proof.
\end{proof}

\begin{remark}
The Kawazumi--Kuno formula above implies that
$$
\phitilde(\log t_C) = (\log \alpha)^2/2 :=
\frac{1}{2}\bigg(\sum_{n=1}^\infty \frac{1}{n}\,\big(\psi_n(1-\alpha)\big)\bigg)^2,
$$
where $\alpha$ is a loop that represents $C$.
\end{remark}

\subsection{Proof of Theorem~\ref{thm:mcg}}

As in the Introduction, we suppose that $A$ is a finite subset of a compact Riemann surface $\Xbar$. Set $X=\Xbar-A$. Suppose that $V = \{\vv_a:a\in A\}$ is a collection of non-zero tangent vectors, where $\vv_a \in T_a \Xbar$. That 
$$
\phitilde : \g_{\Xbar,V} \to H_0(\Lambda X)^\wedge\otimes \Q(-1)
$$
is a morphism of MHS follows from the facts that the morphism $\kappa_A$ (see (\ref{eqn:A-action})) is an injective morphism of MHS and that each $d\varphi_a$ (see (\ref{eqn:canon}) )is a morphism of MHS.

\begin{remark}
As $(X,V)$ varies in $\M_{g,\vec{n}}$, $\phitilde$ defines a morphism in pro-$\MHS(\M_{g,\vec{n}})$.
\end{remark}


\subsection{Proof of Theorem~\ref{thm:splittings}} The existence of the splittings follows directly from Proposition~\ref{prop:splittings} and Remark~\ref{rem:splittings}. We will show that the splitting given by the lift $\chitilde$ gives a solution of $\KV^{(g,|A|)}$ as defined in \cite[Def.~4]{akkn1}. When $|A|=1$, a solution is a symplectic Magnus expansion of $\pi_1(X,\vv_a)$.

We first recall the condition $\KV^{(g,n+1)}$. Assume that  $A=\{0,\dots,n\}$. For each $k\in A$, let $\sigma_k$ be the loop in $\pi_1(X,\vv_k)$ that rotates the tangent vector $\vv_k$ once in the {\em negative} direction about $0 \in T_k\Xbar$. For $1\le k \le n$, choose a path $\mu_k$ in $X$ from $\vv_0$ to $\vv_k$. Let $\gamma_0 = \sigma_0$ and
$$
\gamma_k = \mu_k \sigma_k \mu_k^{-1} \qquad 1 \le j \le n.
$$
We may choose loops $\alpha_1,\dots,\alpha_g,\beta_1,\dots,\beta_g$ based at $\vv_0$ in $X$ and the paths $\mu_j$ such that $\pi_1(X,\vv_0)$ is freely generated by
$$
\{\alpha_j,\beta_j,\gamma_k : 1\le j \le g,\ 1 \le k \le n\}
$$
and
\begin{equation}
\label{eqn:reln0}
\gamma_0^{-1} = \prod_{j=1}^g (\alpha_j,\beta_j)\prod_{k=1}^n\gamma_k,
\end{equation}
where $(u,v)$ denotes the group commutator $uvu^{-1}v^{-1}$.

Taking $\alpha_j$ to $e^{x_j}$, $\beta_j$ to $e^{y_j}$ and $\gamma_k$ to $e^{z_k}$ when $k>0$ defines a complete Hopf algebra isomorphism
\begin{equation}
\label{eqn:theta_iso}
\Theta : \Q\pi_1(X,\vv_0)^\wedge \overset{\simeq}{\To}
\Q\ll x_1,\dots, x_g, y_1,\dots, y_g, z_1,\dots,z_n \rr
\end{equation}
onto the completed free associative algebra generated by the $x_j,y_j,z_k$. The Hopf algebra structure on the RHS is defined by declaring that each of the generators is primitive. The canonical weight filtration on the left hand side corresponds to the filtration of the RHS obtained by giving each $x_j$ and $y_j$ weight $-1$, and each $z_k$ weight $-2$.

The problem $\KV^{(g,n+1)}$ is to show that there is a continuous Hopf algebra automorphism $\Phi$ of the RHS of (\ref{eqn:theta_iso}) that preserves the natural weight filtration, acts trivially on its $W_\bdot$ associated graded, and satisfies
$$
\Phi\Big(\sum_{j=1}^g[x_j,y_j] + \sum_{k=1}^n z_k\Big) = \log\Big(\prod_{j=1}^g (e^{x_j}e^{y_j})\prod_{k=1}^n e^{z_k}\Big)
$$
and $\Phi(z_k) = g_k z_k g_k^{-1}$ for each $k\ge 1$, where the $g_k$ are group-like.

Observe that $\theta$ induces a Hopf algebra isomorphism
$$
\Thetabar : \Gr^W_\bdot \Q\pi_1(X,\vv_0)^\wedge \overset{\simeq}{\To}
\Q\langle x_1,\dots, x_g, y_1,\dots, y_g, z_1,\dots,z_n \rangle.
$$
The natural splitting of the weight filtration on $\Q\pi_1(X,\vv_0)^\wedge$ given by $\chitilde$ induces the complete Hopf algebra isomorphism
$$
\Theta^{\chitilde} : \Q\pi_1(X,\vv_0)^\wedge \to \Q\ll x_1,\dots, x_g, y_1,\dots, y_g, z_1,\dots,z_n \rr
$$
that is defined to be the composite
$$
\xymatrix@C=16pt{
\Q\pi_1(X,\vv_0)^\wedge \ar[r]^(.37)\simeq_(.37){\sigma^\chitilde} \ar@/^1.75pc/[rr]^{\Theta^{\chitilde}} & \prod_{n\ge 0}\Gr^W_\bdot \Q\pi_1(X,\vv_0) \ar[r]^(.4)\simeq_(.4){\Thetabar} & \Q\ll x_1,\dots, x_g, y_1,\dots, y_g, z_1,\dots,z_n \rr,
}
$$
where $\sigma^\chitilde$ is the natural isomorphism determined by $\chitilde$ that is defined in Proposition~\ref{prop:splittings}. We will show that $\Phi := \Thetabar \circ \sigma^\chitilde \circ (\Theta^{\chitilde})^{-1}$ is a solution of $\KV^{(g,n+1)}$. It suffices to show that
\begin{equation}
\label{eqn:gamma_k}
\Theta^\chitilde(\gamma_k) = g_k e^{z_k} g_k^{-1}
\end{equation}
when $k\ge 1$, where each $g_k$ is group-like, and that
\begin{equation}
\label{eqn:gamma_0}
\log\Theta^\chitilde(\gamma_0) = z_0 := -\Big(\sum_{j=1}^g[x_j,y_j] + \sum_{k=1}^n z_k\Big).
\end{equation}

To prove this, first observe that for all base points $a,b$ of $X$, the local system $\Gr^W_\bdot H_0(P_{a,b}X)^\wedge$ over $X\times X$ is constant. This implies that $\chitilde$ determines simultaneous isomorphisms
$$
\Theta_{k,m}^{\chitilde}: H_0(P_{\vv_k,\vv_m}X)^\wedge \overset{\simeq}{\To} \Q\ll x_1,\dots, x_g, y_1,\dots, y_g, z_1,\dots,z_n \rr
$$
for $1\le k,m\le n$. Since path multiplication induces a morphism of MHS, these are compatible with path multiplication. For $k\ge 1$, set
$
g_k = \Theta_{0,k}^\chitilde(\mu_k).
$
Then for $k\ge 1$
\begin{equation}
\label{eqn:conj}
\Theta^\chitilde(\gamma_k) = g_k\Theta^\chitilde_{k,k}(\sigma_k)^{-1} g_k^{-1}.
\end{equation}
 Note that equation~\ref{eqn:reln0} implies that $\log\Theta^\chitilde(\gamma_0) \equiv {z_0} \bmod W_{-3}$. To complete the proof, we show that $\Theta_{k,k}^\chitilde(\log\sigma_k) = z_k$ for all $k\ge 0$.

For each $k \ge 0$, the homomorphism
$
\Q\pi_1(\D',\partial/\partial t)^\wedge \to \Q\pi_1(X',\vv_k)^\wedge
$
induced by the inclusion $(\D',\partial/\partial t) \hookrightarrow (X',\vv_k)$ of an ``infinitesimal punctured neighbourhood'' of $k \in A$ is a morphism of MHS. It takes the negative generator $\sigma$ of $\pi_1(\D',\partial/\partial t)$ to $\sigma_k$. Since $\log \sigma$ spans a copy of $\Q(1)$ in $\Q\pi_1(\D^\ast,\partial/\partial t)^\wedge$, its image $\log \sigma_k$ spans a copy of $\Q(1)$ in $\Q\pi_1(X,\vv_k)^\wedge$. Since $\Theta_{k,k}^\chitilde$ induces the identity on $\Gr^W_\bdot$, it follows that $\Theta^\chitilde_{k,k}(\log\sigma_k) = z_k$ for all $k\ge 0$. This implies (\ref{eqn:gamma_0}) when $k=0$. When combined with (\ref{eqn:conj}), it implies that (\ref{eqn:gamma_k}) holds for all $k\ge 1$.

\section{Double Pairings}
\label{sec:pairings}

For potential applications, we consider certain ``double pairings'' and show that their completions are morphisms of MHS. The proof is only outlined as it is similar to that of Theorem~\ref{thm:action}. Double pairings occur in \cite[\S3.2]{kk:intersections}, \cite[\S4.3]{kk:johnson} and have antecedents in \cite{turaev:78} and \cite{massuyeau-turaev}.

\subsection{Topological version}

Suppose that $S$ is a compact oriented surface. Write the boundary of $S$ as a disjoint union
$$
\partial S = \partial' S \amalg                     \partial'' S
$$
of non-empty subsets. Suppose that $x_0',x_1'\in \partial'S$ and that $x_0'',x_1''\in\partial''S$. Note that $\{x_0',x_1'\}$ and $\{x_0'',x_1''\}$ may be singletons. There is a paring
\begin{equation}
\label{eqn:pairing}
\bP_{x_0',x_1'} \otimes \bP_{x_0'',x_1''} \to \bP_{x_0',x_1''} \otimes \bP_{x_0'',x_1'}
\end{equation}
of local systems defined by $\gamma'\gamma'' \otimes \mu'\mu'' \mapsto \gamma'\mu'' \otimes \gamma''\mu'$.

Consider the pairing
$$
H_0(P_{x_0',x_1'} S) \otimes H_0(P_{x_0'',x_1''}S) \to
H_0(P_{x_0',x_1''} S) \otimes H_0(P_{x_0'',x_1'}S)
$$
defined as the composite (dotted arrow) of the solid arrows in the diagram
$$
\xymatrix{
H_0(P_{x_0',x_1'}S) \otimes H_0(P_{x_0'',x_1''}S)
\ar[r] \ar@{.>}[d] &
H_1(S,\partial'S;\bP_{x_0',x_1'})\otimes H_1(S,\partial''S;\bP_{x_0'',x_1''}) \ar[d]^\simeq
\cr
H_0(P_{x_0',x_1''}S) \otimes H_0(P_{x_0'',x_1'}S) & H^1(S,\partial''S;\bP_{x_0',x_1'})\otimes H^1(S,\partial'S;\bP_{x_0'',x_1''}) \ar[d]
\cr
H_0(S;\bP_{x_0',x_1''}) \otimes H_0(S;\bP_{x_0'',x_1'}) \ar[u]^\simeq &
H^2(S,\partial S; \bP_{x_0',x_1'} \otimes \bP_{x_0'',x_1''}) \ar[d]
\cr
H_0(S;\bP_{x_0',x_1''} \otimes \bP_{x_0'',x_1'}) \ar[u] & H^2(S,\partial S; \bP_{x_0',x_1''} \otimes \bP_{x_0'',x_1'}) \ar[l]
}
$$
Like the Goldman bracket and Kawazumi-Kuno action, it can be defined geometrically. It is continuous in the $I$-adic topology and therefore induces a map
$$
H_0(P_{x_0',x_1'} X)^\wedge \hat{\otimes} H_0(P_{x_0'',x_1''}X)^\wedge \to
H_0(P_{x_0',x_1''} X)^\wedge \hat{\otimes} H_0(P_{x_0'',x_1'}X)^\wedge
$$
on $I$-adic completions.

\subsection{Algebraic version}

Suppose that $\Xbar$ is a compact Riemann surface and that $D$ is a finite subset of $\Xbar$. Set $X=\Xbar-D$. Suppose that $\{p_j',p_j'':j=0,1\}$ is a subset of $D$ satisfying
$$
\{p_0',p_1'\} \cap \{p_0'',p_1''\} = \emptyset.
$$
Note that $p_0'$ may equal $p_1'$ or $p_0''$ may equal $p_1''$. Suppose that, for $j=0,1$, we have non-zero tangent vectors $\vv_j' \in T_{p_j'}\Xbar$ and $\vv_j'' \in T_{p_j''}\Xbar$. We take $\vv_0'=\vv_1'$ when $p_0'=p_1'$, and take $\vv_0''=\vv_1''$ when $p_0''=p_1''$. Then, as above, we have the pairing
\begin{equation}
\label{eqn:bi-pairing}
H_0(P_{\vv_0',\vv_1'} X) \otimes H_0(P_{\vv_0'',\vv_1''}X) \to
H_0(P_{\vv_0',\vv_1''} X) \otimes H_0(P_{\vv_0'',\vv_1'}X).
\end{equation}

\begin{theorem}
The ``bi-pairing'' (\ref{eqn:bi-pairing}) is continuous in the $I$-adic topology and its completion is a morphism of pro-MHS.
\end{theorem}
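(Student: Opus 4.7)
The plan is to mimic the proof of Theorem~\ref{thm:action} but in a simpler form, since all four objects involved carry only $\Pdual$-type variations of MHS (no free loop spaces). First I would treat the ``interior'' analogue in which $\vv_j',\vv_j''$ are replaced by distinct interior points $x_j',x_j''\in X$, showing the bi-pairing is a morphism of pro-$\MHS$ there; then I would upgrade this to a morphism of admissible ind-variations of MHS parametrized by the position of the four base points; and finally I would specialize along Deligne's canonical extension to obtain the claimed statement on limit MHS at the tangent vectors.

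For the interior case, the pairing is given by exactly the factorization written in the theorem statement: two $\csp$-maps followed by a cup product on relative cohomology with coefficients in $\bP_{x_0',x_1'}$ and $\bP_{x_0'',x_1''}$, then Poincar\'e duality back to $H_0$ of the target path spaces, and finally the dual of a multiplication pairing $\Pdual_{X;x_0',x_1''}\otimes \Pdual_{X;x_0'',x_1'}\to\Pdual_{X;x_0',x_1'}\otimes\Pdual_{X;x_0'',x_1''}$ implementing the endpoint swap. The continuity in the $I$-adic topology follows from this de~Rham description (as in Propositions~\ref{prop:dual_goldman} and \ref{prop:dual_kk}). Each factor of the continuous dual is a morphism of ind-$\MHS$: the $\cspdual$'s by the formula (\ref{eqn:formula_bkk}) and the argument of Lemma~\ref{lem:dual_cs_mhs} for the Hodge/weight filtrations on the bar construction; the cup product and the Poincar\'e duality isomorphisms by Saito's Theorem~\ref{thm:PD}; and the multiplication pairing dual by the same reasoning used in Section~\ref{sec:vmhs} for the structure pairings of $\Pdual$. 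Composing, the continuous dual of the bi-pairing is a morphism of ind-$\MHS$, and dualizing gives a morphism of pro-$\MHS$ on the $I$-adic completions.

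Next, I would promote this to a statement about an admissible variation. By Theorem~\ref{thm:HZ}, $\Pdual_{X\times X}$ is an ind-object of $\MHS(X\times X)$; external tensor products then make both the source and target of (\ref{eqn:bi-pairing}) into ind-variations of MHS on the complement in $X^4$ of the diagonals $\{x_0'=x_j''\}$, $\{x_1'=x_j''\}$. The bi-pairing is assembled from the structure morphisms of $\Pdual_{X\times X}$ (cup product, Poincar\'e duality, multiplication pairings), all of which are morphisms of admissible variations, so the bi-pairing itself is a morphism of ind-variations on this open part of $X^4$.

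Finally, to reach the tangent-vector version, I would restrict this variation of bi-pairings to the punctured product of small disks around $(p_0',p_1',p_0'',p_1'')$ and extend everything to the real oriented blow-up as in Section~\ref{sec:tang_bps}, using Deligne's canonical extension of the Hain--Zucker variation; the fibers over $([\vv_0'],[\vv_1'],[\vv_0''],[\vv_1''])$ are by construction the limit MHS on $H_0(P_{\vv_i,\vv_j}X)^\wedge$, and the limit of a morphism of admissible variations is a morphism of limit MHS. The main obstacle is checking that, unlike the KK-action case, no splitting theorem is needed: because no free loop system $\bLambda$ appears, every local system in sight extends to the blow-up as an admissible variation in a canonical way, and the defining morphisms (cup product, Poincar\'e duality, and the $\Pdual\otimes\Pdual\to\Pdual$ multiplications) all commute with canonical extension in the unipotent-monodromy setting reviewed in Section~\ref{sec:vmhs}. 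Once this compatibility is verified, the desired statement is immediate.
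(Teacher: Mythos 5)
Your overall three-stage plan (interior base points, then promote to a morphism of admissible variations, then pass to the limit MHS over the tangent vectors) matches the strategy of the paper's proof. However, your final claim --- that \emph{no} splitting theorem is needed because no free-loop local system $\bLambda$ appears --- is a genuine gap, and it contradicts what the paper's own outline does.

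The point you are missing is that the mechanism forcing a splitting theorem in the proof of Theorem~\ref{thm:action} is not special to free loop spaces; it comes from transversality of the intersection pairing. To express the bi-pairing via an intersection-theoretic factorization (the analogue of the $\csp$-maps followed by cup product and Poincar\'e duality), the $\csp$-representative of a class in $H_0(P_{x_0',x_1'})$ must be a $1$-chain transverse to, and hence disjoint from, the double-primed marked points, and symmetrically. So the natural factorization in the interior case is a pairing
$$
H_0(P_{x_0',x_1'} X') \otimes H_0(P_{x_0'',x_1''}X'') \To
H_0(P_{x_0',x_1''} X) \otimes H_0(P_{x_0'',x_1'}X),
$$
with $X'=X\setminus\{x_0'',x_1''\}$ and $X''=X\setminus\{x_0',x_1'\}$, \emph{not} a self-pairing of $H_0(P_{\bdot,\bdot}X)$. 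When you then pass to the limit along $\vv_j'$ and $\vv_j''$, the sources carry the limit MHS $H_0(P_{\vv_0',\vv_1'}X')^\wedge$ and $H_0(P_{\vv_0'',\vv_1''}X'')^\wedge$, which a priori differ from the limit MHS $H_0(P_{\vv_0',\vv_1'}X)^\wedge$ and $H_0(P_{\vv_0'',\vv_1''}X)^\wedge$ that appear in the theorem. To close the gap one must show that the natural surjections
$$
H_0(P_{\vv_0',\vv_1'} X')^\wedge \To H_0(P_{\vv_0',\vv_1'} X)^\wedge,
\qquad
H_0(P_{\vv_0'',\vv_1''}X'')^\wedge \To H_0(P_{\vv_0'',\vv_1''}X)^\wedge
$$
are split in pro-$\MHS$. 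That is exactly a path-torsor analogue of Theorem~\ref{thm:splitting}, and its proof follows the same $\sD^\bdot$-mixed-Hodge-complex argument of Section~\ref{sec:proof} (replacing the circular bar construction by the two-sided bar construction computing $\Hdual^0(P_{\vv_0',\vv_1'})$). So the splitting step cannot be dispensed with; without it the argument only proves the pairing on the wrong source objects.
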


The proof is similar to that of Theorem~\ref{thm:action}. For simplicity, we assume that $p_0',p_1',p_0'',p_1''$ are distinct. The first step is to show that when $x_0',x_1',x_0'',x_1''$ are distinct, the $I$-adic completion of the natural pairing
$$
H_0(P_{x_0',x_1'} X') \otimes H_0(P_{x_0'',x_1''}X'') \to
H_0(P_{x_0',x_1''} X) \otimes H_0(P_{x_0'',x_1'}X)
$$
is a morphism of MHS, where $X'=X-\{x_0'',x_1''\}$ and $X''=X-\{x_0',x_1'\}$. The proof is similar to the proof of Theorem~\ref{thm:naive} and is relatively straightforward. This pairing defines a pairing of admissible variations of MHS over $X^4 - \Delta$, where $\Delta$ denotes the ``fat diagonal'' in $X^4$. One can take the limit as $x_j'\to p_j'$ along $\vv_j'$ and $x_j'' \to p_j''$ along $\vv_j''$ to obtain a pairing
$$
H_0(P_{\vv_0',\vv_1'} X')^\wedge \otimes H_0(P_{\vv_0'',\vv_1''}X'')^\wedge \to
H_0(P_{\vv_0',\vv_1''} X)^\wedge \otimes H_0(P_{\vv_0'',\vv_1'}X)^\wedge,
$$
where $H_0(P_{\vv_0',\vv_1'} X')^\wedge$ and $H_0(P_{\vv_0'',\vv_1''}X'')^\wedge$ denote the limit MHS.

To complete the proof, one has to prove a splitting result similar to Theorem~\ref{thm:splitting}. Specifically, one has to show that the natural homomorphisms
$$
H_0(P_{\vv_0',\vv_1'} X')^\wedge \to H_0(P_{\vv_0',\vv_1'} X)^\wedge \text{ and } H_0(P_{\vv_0'',\vv_1''}X'')^\wedge \to H_0(P_{\vv_0'',\vv_1''}X)^\wedge
$$
are split in the category of pro-MHS. The proof of this is almost identical to that of Theorem~\ref{thm:splitting}.

\end{document}